\documentclass[11pt, british, a4paper]{article}

\usepackage{aliascnt}
\usepackage{amsmath}
\usepackage{amssymb}
\usepackage{amsthm}
\usepackage[utf8]{inputenc}
\usepackage[style=alphabetic, maxalphanames=5, maxnames=99, giveninits=true]{biblatex}
\usepackage{bm} 
\usepackage{charter}
\usepackage{csquotes}
\usepackage{dsfont}
\usepackage{graphicx}
\usepackage{hyperref}
\usepackage{hyphenat}
\usepackage{mathtools}
\usepackage{multirow}
\usepackage[parfill]{parskip}
\usepackage{setspace}
\usepackage{subcaption}
\usepackage{thmtools}
\usepackage{tikz}
\usepackage{tikz-cd}
\usepackage{titlesec}
\usepackage{titletoc}
\usepackage{xcolor}
\usepackage{ytableau}

\usepackage[
  paper=a4paper,
  left=80pt,
  right=80pt,
  top=60pt,
  bottom=80pt,
  bindingoffset=0mm,
  includefoot,
  includehead,
  head=16.5pt,
  foot=16.5pt
]{geometry}

\renewcommand{\AA}{\mathbb{A}}

\newcommand{\CC}{\mathbb{C}}

\newcommand{\MM}{\mathbb{M}}

\renewcommand{\SS}{\mathbb{S}}

\DeclareSymbolFont{uppercaseletters}{OT1}{cmss}{m}{n}
\DeclareMathSymbol{A}{\mathalpha}{uppercaseletters}{`A}
\DeclareMathSymbol{B}{\mathalpha}{uppercaseletters}{`B}
\DeclareMathSymbol{C}{\mathalpha}{uppercaseletters}{`C}
\DeclareMathSymbol{D}{\mathalpha}{uppercaseletters}{`D}
\DeclareMathSymbol{E}{\mathalpha}{uppercaseletters}{`E}
\DeclareMathSymbol{F}{\mathalpha}{uppercaseletters}{`F}
\DeclareMathSymbol{G}{\mathalpha}{uppercaseletters}{`G}
\DeclareMathSymbol{H}{\mathalpha}{uppercaseletters}{`H}
\DeclareMathSymbol{I}{\mathalpha}{operators}{`I} 
\DeclareMathSymbol{J}{\mathalpha}{uppercaseletters}{`J}
\DeclareMathSymbol{K}{\mathalpha}{uppercaseletters}{`K}
\DeclareMathSymbol{L}{\mathalpha}{uppercaseletters}{`L}
\DeclareMathSymbol{M}{\mathalpha}{uppercaseletters}{`M}
\DeclareMathSymbol{N}{\mathalpha}{uppercaseletters}{`N}
\DeclareMathSymbol{O}{\mathalpha}{uppercaseletters}{`O}
\DeclareMathSymbol{P}{\mathalpha}{uppercaseletters}{`P}
\DeclareMathSymbol{Q}{\mathalpha}{uppercaseletters}{`Q}
\DeclareMathSymbol{R}{\mathalpha}{uppercaseletters}{`R}
\DeclareMathSymbol{S}{\mathalpha}{uppercaseletters}{`S}
\DeclareMathSymbol{T}{\mathalpha}{uppercaseletters}{`T}
\DeclareMathSymbol{U}{\mathalpha}{uppercaseletters}{`U}
\DeclareMathSymbol{V}{\mathalpha}{uppercaseletters}{`V}
\DeclareMathSymbol{W}{\mathalpha}{uppercaseletters}{`W}
\DeclareMathSymbol{X}{\mathalpha}{uppercaseletters}{`X}
\DeclareMathSymbol{Y}{\mathalpha}{uppercaseletters}{`Y}
\DeclareMathSymbol{Z}{\mathalpha}{uppercaseletters}{`Z}

\newcommand{\A}{\mathnormal{A}}
\newcommand{\B}{\mathnormal{B}}
\newcommand{\C}{\mathnormal{C}}
\newcommand{\D}{\mathnormal{D}}
\newcommand{\E}{\mathnormal{E}}
\newcommand{\F}{\mathnormal{F}}
\newcommand{\G}{\mathnormal{G}}

\newcommand{\J}{\mathnormal{J}}

\newcommand{\M}{\mathnormal{M}}
\newcommand{\N}{\mathnormal{N}}

\newcommand{\R}{\mathnormal{R}}
\renewcommand{\S}{\mathnormal{S}}
\newcommand{\T}{\mathnormal{T}}

\newcommand{\V}{\mathnormal{V}}

\newcommand{\Z}{\mathnormal{Z}}

\declaretheoremstyle[
headfont={\bfseries},
notefont=\bfseries\mathversion{bold},
notebraces={(}{)},
bodyfont=\itshape,
postheadspace=0.5em,
]{mydefinition}

\declaretheoremstyle[
headfont={\bfseries},
notefont=\bfseries\mathversion{bold},
notebraces={(}{)},
bodyfont=\itshape,
postheadspace=0.5em,
]{mytheorem}

\declaretheoremstyle[
headfont={\bfseries},
notefont=\bfseries\mathversion{bold},
notebraces={(}{)},
bodyfont=\normalfont,
postheadspace=0.5em,
qed=$\circ$
]{myremark}

\declaretheoremstyle[
headfont={\bfseries},
notefont=\bfseries\mathversion{bold},
notebraces={(}{)},
bodyfont=\normalfont,
postheadspace=0.5em,
qed=$\circ$
]{myexample}

\declaretheorem
[style=mydefinition, 
name=Definition, 
refname={Definition,Definitions},
numberwithin=section]
{definition}

\declaretheorem
[style=mytheorem,
name=Theorem,
refname={Theorem,Theorems},
sibling=definition]
{theorem}

\declaretheorem
[
style=mytheorem,
name=Theorem,
refname={Theorem,Theorems},
numbered=no
]{theorem*}

\declaretheorem
[style=mytheorem,
name=Proposition, 
refname={Proposition,Propositions},
sibling=definition]
{proposition}

\declaretheorem
[style=mytheorem,
name=Lemma, 
refname={Lemma,Lemmas},
sibling=definition]
{lemma}

\declaretheorem
[style=mytheorem,
name=Corollary, 
refname={Corollary,Corollaries},
sibling=definition]
{corollary}

\declaretheorem
[style=myremark,
name=Remark, 
refname={Remark,Remarks},
sibling=definition]
{remark}

\declaretheorem
[style=myexample,
name=Example, 
refname={Example,Examples},
sibling=definition]
{example}

\newcommand{\Aut}{\operatorname{\mathsf{Aut}}}
\renewcommand{\cal}[1]{{\normalfont\mathcal{#1}}}
\newcommand{\catname}[1]{\textup{#1}}
\newcommand*{\coleq}{\mathrel{\mathop:}=}
\newcommand{\coliff}{\mathrel{\mathop:}\Leftrightarrow}

\let\Coprod\coprod
\renewcommand{\coprod}{\amalg}

\newcommand{\del}{\partial}
\newcommand{\dfn}[1]{\textbf{\textit{\nohyphens{#1}}}}
\newcommand{\diag}{\operatorname{diag}}
\newcommand{\End}{\operatorname{\mathsf{End}}}
\newcommand*{\eqcol}{=\mathrel{\mathop:}}
\newcommand{\GL}{\operatorname{\mathsf{GL}}}
\newcommand{\Hom}{\operatorname{\mathsf{Hom}}}
\newcommand{\id}{\mathrm{id}}
\newcommand{\idd}{\mathds{1}}
\newcommand{\inj}{\lhook\joinrel\rightarrow}
\newcommand{\inner}[1]{\left\langle #1 \right\rangle} 
\newcommand{\iso}{
	\overset{\,\smash{
			\raisebox{-0.45ex}{\ensuremath{\scriptstyle\sim}}}
		\,}{\too}}
\newcommand{\mapstoo}{\longmapsto}
\newcommand{\mtx}[1]{\begin{bmatrix} #1 \end{bmatrix}}
\newcommand{\op}[1]{\operatorname{#1}}
\newcommand{\PGL}{\operatorname{\mathsf{PGL}}}
\newcommand*{\qtext}[1]{\quad\text{#1}\quad}
\newcommand*{\qqtext}[1]{\qquad\text{#1}\qquad}
\newcommand{\rank}{\operatorname{rank}}
\renewcommand{\rm}[1]{{\normalfont\mathrm{#1}}}
\newcommand{\set}[1]{\left\{ #1 \right\}} 
\newcommand{\sfop}[1]{\mathop{\mathsf{#1}}\nolimits}
\newcommand{\Sym}{\operatorname{\mathsf{Sym}}}
\newcommand{\tinyunderset}[2]{\underset{\hbox{\tiny$#2$}}{#1}}
\newcommand{\too}{\longrightarrow}

\let\oldvarphi\varphi
\let\oldphi\phi
\renewcommand{\phi}{\oldvarphi}
\renewcommand{\varphi}{\oldphi}

\numberwithin{equation}{section}

\DeclareDocumentEnvironment
{eqn}
{}	
{\csname equation*\endcsname}
{\csname endequation*\endcsname\ignorespacesafterend}

\DeclareDocumentEnvironment
{eqntag}
{}	
{\csname equation\endcsname}
{\csname endequation\endcsname\ignorespacesafterend}

\DeclareDocumentEnvironment
{eqns}
{}	
{\csname flalign*\endcsname}
{\csname endflalign*\endcsname\ignorespacesafterend}

\DeclareDocumentEnvironment
{eqnstag}
{}	
{\csname flalign\endcsname}
{\csname endflalign\endcsname\ignorespacesafterend}

\allowdisplaybreaks 

\hypersetup{hidelinks} 

\definecolor{darkblue}{RGB}{0,20,115}
\definecolor{darkgreen}{rgb}{0,0.4,0}

\titleformat{\chapter}
{\normalfont\LARGE\bfseries}{\thechapter}{1em}{} 

\titlespacing{\chapter}{0pt}{0pt}{20pt} 

\usepackage{csquotes}
\usepackage[style=alphabetic, maxalphanames=5, maxnames=99, giveninits=true]{biblatex}

\appto{\bibsetup}{\sloppy}
\AtBeginBibliography{}

\begin{document}

\title{Decorated Local Systems and Character Varieties}

\author{Benedetta Facciotti, Marta Mazzocco, Nikita Nikolaev}

\date{\today}

\vspace{-0.5cm}

\maketitle

\begin{abstract}
	\noindent The focus of this paper is the study of the moduli space of representations of fundamental groupoids of 
surfaces $\Sigma$ with boundaries with values in $G:=GL_n(\mathbb C)$. In absence of marked points on the boundary, this moduli space is realized in many equivalent ways: as the moduli space of linear local systems on $\Sigma$, as the moduli space of representations of the fundamental groupoid $\Pi_1 (\Sigma)$, as the space of monodromy data and as character variety. 
By adding marked points to the boundary of $\Sigma$ in order to capture irregular singularities, the Betti moduli space has been generalized in several ways by many authors. Although it is clear that these different approaches describe essentially the same spaces of mathematical objects, 
exactly how they fit together has not yet been established.
Motivated by the broader programme of establishing an explicit and conceptually coherent relationship between the existing approaches to the study of the  decorated Betti moduli space, in this paper, we develop a categorical framework that allows for a systematic definition of the 
 \dfn{decorated Betti moduli spaces} space, in the presence of higher order poles, designed to specialize to the different points of view encountered in the literature.
\end{abstract}


\begin{spacing}{0.9}
	\small
	\setcounter{tocdepth}{2}
	\tableofcontents
\end{spacing}

\setcounter{section}{0}
\section*{Introduction}

The focus of this paper is the study of moduli spaces of representations of fundamental groupoids of 
surfaces $\Sigma$ with boundaries with values in $G:=GL_n(\mathbb C)$. In absence of marked points on the boundary, this moduli space is realized in many equivalent ways: as the moduli space $\mathfrak{Loc}_G (\Sigma)$ of linear local systems on $\Sigma$, as the moduli space $\mathfrak{Rep}_G \big( \Pi_1 (\Sigma) \big)$ of representations of the fundamental groupoid $\Pi_1 (\Sigma)$, as the moduli space $\mathfrak{Rep}_G \big( \pi_1 (\Sigma, p) \big)$ of monodromy data and as character variety $\mathfrak{X}_G (\Sigma)$. We call all these moduli spaces\footnote{When we speak of a \textit{moduli space}, we think of an appropriate moduli stack, but in almost every instance it is sufficient to restrict our attention to the underlying set of isomorphism classes.} collectively as \dfn{Betti moduli space}. Their equivalence can be stated as follows:
\begin{eqn}
	\text{$\mathfrak{Loc}_G (\Sigma)$}
	\cong \text{$\mathfrak{Rep}_G \big( \Pi_1 (\Sigma) \big)$}
	\cong \text{$\mathfrak{Rep}_G \big( \pi_1 (\Sigma, p) \big)$}
	\cong \text{$\mathfrak{X}_G (\Sigma)$}.
\end{eqn}
For convenience of the reader, we recall these definitions and the proof of these equivalences in Sections \ref{suse:locsys}, \ref{251209141021} and \ref{251217152313}.

By adding marked points to the boundary of $\Sigma$ in order to capture irregular singularities, the Betti moduli space has been generalized in several ways by many authors as summarized below. Although it is clear that these different approaches describe essentially the same spaces of mathematical objects, 
exactly how they fit together has not yet been established.
In this paper, we develop a categorical framework that allows for a clear and systematic definition of the \dfn{decorated Betti moduli spaces} designed to specialize to the different points of view encountered in the literature.

Let us briefly recall the generalizations considered in this paper. 

The moduli space of linear local systems was enriched by adding filtrations at marked points implicitly in Deligne’s work on regular singular connections \cite{MR0417174}. The systematic construction of moduli spaces of such filtered local systems was later carried out by Simpson in the context of parabolic structures \cite{MR1040197}, and further extended to irregular singularities by Boalch. Building on this framework, Fock and Goncharov study special positive and decorated loci inside these moduli spaces, endowing them with cluster structures and giving rise to higher Teichm\"uller spaces \cite{MR2233852}. The notion of  moduli space of filtered\footnote{We note that in \cite{MR2233852} the authors use the term framed instead of filtered.} was further generalized by Goncharov and Shen \cite{250619181547} who introduced the moduli space of pinnings, namely an enrichment of the moduli space of filtered $G$–local systems on $\Sigma$ by adding decorations on some of the flags.
Note that the concept of \textit{pinning} was essentially introduced by Chevalley and later formalised by Tits, Grothendieck, and Demazure in the study of reductive algebraic groups.
In the context of filtered local systems, pinnings were used by Fock-Goncharov \cite[§2.2]{MR2233852} and Goncharov-Shen who gave an alternative definition of \textit{pinning} \cite[Def 3.7, fn.\,12]{250619181547}.
If the group is adjoint (e.g., $\PGL(\CC)$ but not $\GL_n(\CC)$), then their definition of \textit{pinning} is equivalent to the classical notion \cite[Lemma 3.30]{250619181547}.

The moduli space of representations of the fundamental groupoid was generalized by Gual-tieri, Li and Pym in \cite{gualtieri2013stokes} exploting the theory of Lie groupoids and Lie algebroids which is naturally involved in the holomorphic case, in which the Betti moduli space is naturally homeomorphic to the de Rham moduli space of 
holomorphic  connections on a Riemann surface.
Indeed, any holomorphic  connection on a Riemann surface $\Sigma$ defines a representation of the space of vector fields $\mathcal{T}_\Sigma$, which has a structure of a Lie algebroid.
A distinguished integration of such a Lie algebroid is the fundamental Lie groupoid $\Pi_1(\Sigma)$, whose representation corresponds to the parallel transport determined by the connection.
By Lie's second theorem, representations of $\mathcal{T}_\Sigma$ and $\Pi_1(\Sigma)$ are equivalent.
What they showed is that this construction can be naturally extended to the case of higher order poles by incorporating the effective divisor $D$ encoding the poles and their multiplicities.
Hence, as meromorphic connections are representations of the Lie algebroid $\mathcal{T}_\Sigma(-D)$ of vector fields vanishing along $D$, Lie’s second theorem implies they are equivalent to representations of the integrating Lie groupoid $\Pi_1(\Sigma,D)$.

The moduli space of monodromy data was generalized by Boalch in \cite{Boalch2001}, where he introduced a moduli space that was later called \textit{wild character variety} in \cite{Boalch14}.
Topologically, instead of the original Riemann surface, one considers its real oriented blow-up at the poles of the connection. 
Additional points are then removed at the intersections of circles in tubular neighborhoods around the boundaries created by higher-order poles with the so-called Stokes directions. 
Algebraically, rather than using the usual notion of representation, the construction employs Stokes representations, defined in \cite{Boalch2001}.
The moduli space is then defined taking the quotient by a specific action of the centralizer of the irregular type defined around each pole.

Finally, in \cite{MR3802126}, Chekhov, Mazzocco and Rubtsov defined the so called \textit{decorated character variety}, now renamed \textit{bordered cusped character variety}.
The construction of this moduli space involves some topological modifications to the starting surface, based on \cite{gaiottoMooreNeitzke13}.
More precisely, while one still needs to consider the real oriented blow-up of the surface at the poles, the approach here differs in that, instead of introducing additional punctures, one designates marked points on the boundaries arising from higher order poles only.
Treating the marked points as cusps, the authors assign a Borel subgroup  to each of them.
The bordered cusped character variety is then the quotient of the representation space of the groupoid of arcs with base points at the cusps by the action by mixed conjugation (see \autoref{251217183124}) of the unipotent radical of the Borel subgorups associated to the cusps.
This approach was then generalized to the $n$ dimensional case in \cite{MR3932256}.

This paper is motivated by the broader programme of establishing an explicit and conceptually coherent relationship between all the above existing approaches to the study of the  decorated Betti moduli space. As a concrete contribution to this programme, we develop a categorical framework that allows for a systematic definition of the Betti moduli space in the presence of higher order poles.
This viewpoint is also compatible with the Riemann–Hilbert correspondence, whose emphasis on the classification of geometric and algebraic objects makes a categorical perspective natural.
The first key step of our approach is therefore the introduction of suitable groupoids that accurately describe these structures.

In order to state the main result of this paper, we briefly introduce the notation for the relevant groupoids by providing a rough description of their objects, referring the reader to the appropriate sections for the precise definitions. A common feature of all these groupoids is that their objects carry certain decorations on the fibers/stalks over the marked points.

In this paper, we consider surfaces with marked boundary $(\Sigma,P)$, where $\Sigma$ is a compact connected, oriented, smooth real two-dimensional
manifold with boundary components each of which is diffeomorphic to a circle,  and $P\subset \partial \Sigma$ is a finite set of points. These points carry some extra information as described in Definition \ref{defMarkedBorderedSurface}, but we neglect it in this introduction.

Following \cite{MR2233852}, 
a     \dfn{filtered local system} on a surface with marked boundary $(\Sigma, P)$ is a pair $(\cal{E}, \cal{F})$ consisting of a local system $\cal{E}$ on $\Sigma$ endowed with a collection $\cal{F} = \set{ \cal{F}_p : p \in P}$ where $\cal{F}_p$ is a filtration near $p$ (see Definition \ref{251126214152}). We denote the groupoid of
filtered local systems, and the corresponding moduli space, respectively by 
\begin{eqn}
	\catname{Loc}_G^\rm{Fi} (\Sigma, P)
	\qtext{and}
	\mathfrak{Loc}_G^\rm{Fi} (\Sigma, P)
	\coleq \catname{Loc}_G^\rm{Fi} (\Sigma, P) \big/{\sim}.
\end{eqn}

Next, a \dfn{filtered representation} of the fundamental groupoid $\Pi_1 (\Sigma)$ of a surface with marked boundary $(\Sigma, P)$ is a triple $(\E, \F, \Phi)$ consisting of a representation $(\E, \Phi)$ of the fundamental groupoid $\Pi_1 (\Sigma)$ endowed with a collection $\F \coleq \set{ \F_p : p \in P}$ of filtrations on the fibres over the marked points which are flat in the sense specified in   Definition \ref{251127151457}. We denote the groupoid of filtered representations of rank $n$ on $(\Sigma, P)$ with filtered isomorphisms, and the corresponding moduli space (see Definition \ref{251127153641}), by 
\begin{eqn}
	\catname{Rep}_G^\rm{Fi} \big( \Pi_1 (\Sigma), P \big)
	\qtext{and}
	\mathfrak{Rep}_G^\rm{Fi} \big( \Pi_1 (\Sigma), P \big)
	\coleq \catname{Rep}_G^\rm{Fi} \big( \Pi_1 (\Sigma), P \big) \big/{\sim}.
\end{eqn}

Thanks to the fact that the decorations arise only on marked points, the introduction of  a discrete model for this groupoid is natural. Therefore, we introduce the groupoid of filtered representations of $\pi_1 (\Sigma, P)$ of rank $n$, and the corresponding moduli space (see Definitions \ref{250912195848} and \ref{250914115030}) by
\begin{eqn}
	\catname{Rep}_G^\rm{Fi} \big( \pi_1 (\Sigma, P) \big)
	\qtext{and}
	\frak{Rep}_G^\rm{Fi} \big( \pi_1 (\Sigma, P) \big)
	\coleq \catname{Rep}_G^\rm{Fi} \big( \pi_1 (\Sigma, P) \big) \big/{\sim}
\end{eqn}
respectively.

Finally we consider (see Definition
\ref{251218003508})
the \dfn{decorated representation variety} of $(\Sigma, P)$, namely  the subset $R_G (\Sigma, P)$ of $\Hom \big( \pi_1 (\Sigma, P), G \big)$ consisting of all groupoid homomorphisms $\rho : \pi_1 (\Sigma, P) \to G$ which are upper-triangular on loops based at marked points that lie on boundaries corresponding to simple poles. On this, we act by mixed conjugation by the affine algebraic subgroup of $G_P :=G^{|P|}$:
$$
B_{P} \coleq \prod_{p \in P} B = \Aut (\underline{\CC}^n, \underline{\CC}^\bullet)
$$
where $ \sfop{Aut} (\underline{\CC}^n, \underline{\CC}^\bullet)$ is the set of automorphisms of $\underline{\CC}^n$ that preserve the standard flag $\underline{\CC}^\bullet$.
In this way, we get an action groupoid on an affine algebraic variety:
\begin{eqn}
	B_P \ltimes R_G (\Sigma, P)
	\qtext{with}
	g . \rho \coleq \rm{t}^\ast g \cdot \rho \cdot \rm{s}^\ast g^{-1}.
\end{eqn}
Its moduli space (see Definition \ref{251217191739})
is called \dfn{filtered character stack}  and is defined as the following affine algebraic quotient stacks:
\begin{eqn}
	\frak{X}^\rm{Fi}_G (\Sigma, P) \coleq R_G (\Sigma, P) / B_P.
\end{eqn}

Similarly, we can introduce  framed (and projectively framed) analogues of all the above categories by framing every flag with a basis (or the equivalence class of a basis) and requiring all filtered isomorphisms to be unipotent (or projectively unipotent). We denote the corresponding groupoids and moduli spaces in the same way as above replacing the index $\rm{Fi}$ by  $\rm{Fr}$ (or $\rm{PFr}$). Therefore we will speak of \dfn{decorated} groupoids when referring to both filtered and framed (or projectively framed) groupoids.

Establishing the equivalence of these groupoids yields, at the level of objects, a bijection between the underlying sets of the corresponding moduli spaces.

\begin{theorem*}
	For any surface with marked boundary $(\Sigma,P)$, there are canonical equivalences of categories between the groupoids of decorated local systems, decorated representations of the fundamental groupoid $\Pi_1 (\Sigma)$, decorated representations of the discrete fundamental groupoid $\pi_1 (\Sigma, P)$, and suitable mixed conjugation action groupoids over the decorated representation variety $R_G (\Sigma, P)$:
	\begin{eqn}
		\catname{Loc}_G^\square (\Sigma, P) 
		\cong \catname{Rep}_G^\square \big( \Pi_1 (\Sigma), P \big)
		\cong \catname{Rep}_G^\square \big( \pi_1 (\Sigma, P) \big)
		\cong K_P^\square \ltimes R_G (\Sigma, P),
	\end{eqn}
	for any $\square \in \set{\rm{Fi}, \rm{Fr},\rm{PFr}}$ where $K^\rm{Fi}_P \coleq B_P$,  $K^\rm{Fr}_P \coleq U_P$ and  $K^\rm{Fr}_P \coleq U_P^\times$ (see \eqref{251217191045}).
	As a consequence, there are canonical bijections between the corresponding moduli sets::
	\begin{eqn}
		\frak{Loc}_G^\square (\Sigma, P) 
		\cong \frak{Rep}_G^\square \big( \Pi_1 (\Sigma), P \big)
		\cong \frak{Rep}_G^\square \big( \pi_1 (\Sigma, P) \big)
		\cong \frak{X}_G^\square (\Sigma, P).
	\end{eqn}
\end{theorem*}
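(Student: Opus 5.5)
The plan is to prove the three equivalences in turn. Each is obtained by upgrading the corresponding undecorated equivalence (recalled in Sections \ref{suse:locsys}, \ref{251209141021} and \ref{251217152313}) so that it also carries the decorations at the marked points. Every functor will be explicit: fibres or stalks at the marked points, restriction to the discrete groupoid, and trivialisation by a choice of adapted frames. Each will be shown to be fully faithful and essentially surjective. The bijections of moduli sets then follow formally, because an equivalence of groupoids induces a bijection on isomorphism classes. The framed and projectively framed cases $\square \in \set{\rm{Fr},\rm{PFr}}$ will be handled in parallel with the filtered case: we replace $B$ by $U$ (respectively $U^\times$) and filtrations by frames (respectively classes of frames), and check at each step that isomorphisms are unipotent (respectively projectively unipotent).

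\textbf{Step 1: $\catname{Loc}_G^\square(\Sigma,P)\cong\catname{Rep}_G^\square(\Pi_1(\Sigma),P)$.} Send a local system $\cal{E}$ to the representation $(\E,\Phi)$ with $\E_x=\cal{E}_x$ and $\Phi$ given by parallel transport. A filtration $\cal{F}_p$ defined near $p$ is a filtration by sub-local systems on a contractible neighbourhood of $p$. It therefore restricts to a filtration $\F_p$ of the stalk at $p$, and this filtration is flat along paths inside the neighbourhood, which is the flatness condition of Definition \ref{251127151457}. Conversely, a flat filtration of the fibre at $p$ spreads by parallel transport to a filtration of the local system on a small contractible neighbourhood of $p$. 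Morphisms correspond because a morphism of local systems is determined by its stalk maps, and it preserves $\cal{F}_p$ exactly when the stalk map at $p$ preserves $\F_p$.

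\textbf{Step 2: $\catname{Rep}_G^\square(\Pi_1(\Sigma),P)\cong\catname{Rep}_G^\square(\pi_1(\Sigma,P))$.} Restrict along the full inclusion $\pi_1(\Sigma,P)\hookrightarrow\Pi_1(\Sigma)$. This inclusion is an equivalence of groupoids, since $\Sigma$ is connected and $P\neq\emptyset$; if $P=\emptyset$ one adds a base point, as in the undecorated case. Hence restriction is an equivalence on plain representations. All decorations live at points of $P$, so they are untouched by restriction, and a decorated morphism is exactly a natural transformation whose components at $p\in P$ preserve the decoration. The flatness condition has to be matched with the discrete condition in Definition \ref{250912195848}: the filtration at $p$ is preserved by monodromy along the boundary loop at $p$ when $p$ lies on a boundary corresponding to a simple pole. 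I will check this directly against the definitions.

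\textbf{Step 3: $\catname{Rep}_G^\square(\pi_1(\Sigma,P))\cong K_P^\square\ltimes R_G(\Sigma,P)$.} A functor in the backward direction sends $\rho\in R_G(\Sigma,P)$ to the representation with $\E_p=\underline{\CC}^n$, the standard flag or frame at each $p$, and $\Phi=\rho$. It sends a group element $g\in K_P^\square$ to the natural transformation with components $g_p$. The formula $g.\rho=\rm{t}^\ast g\cdot\rho\cdot\rm{s}^\ast g^{-1}$ is precisely naturality, and $g_p\in B$ (respectively $U$, $U^\times$) is precisely the condition of preserving the decoration. So the functor is fully faithful, and the upper-triangularity on simple-pole loops is exactly the flatness condition from Step 2. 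For essential surjectivity, choose for each $p$ a basis of $\E_p$ adapted to $\F_p$; in the framed case the frame itself supplies the basis. This gives an isomorphism to an object in the image.

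The main obstacle I expect is the bookkeeping in the projectively framed case, together with the precise matching of the flatness condition for simple versus higher-order poles across the three models. For $U^\times$ I would first verify that two representatives of a projective frame differ by an element of $U^\times$, so that the essential-surjectivity choice is well defined up to morphisms in $K^{\rm{PFr}}_P$.
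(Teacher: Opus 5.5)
Your proposal is correct and follows the same route as the paper. The paper assembles the theorem from exactly your Steps 1--3: the decorated holonomy equivalence (Proposition~\ref{251216105015}), the decorated restriction equivalence (Lemma~\ref{251107085612}), and the adapted-trivialisation equivalence with the mixed conjugation action groupoid (Lemma~\ref{251217192359}).

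One point in Step 1 needs fixing. For a secondary marked point $p \in P_{\mathrm{II}}$, the set $U_p$ is the boundary circle or an annulus, not a contractible neighbourhood. So a flag in the fibre at $p$ spreads over $U_p$ precisely when it is invariant under the boundary monodromy along $\delta_p$ (Lemmas~\ref{251127102439} and~\ref{251127155249}). This invariance is therefore already present on both sides of Step 1, and it is what becomes the condition $\rho_{\delta_p} \in B$ in $R_G(\Sigma,P)$.
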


Since $\mathfrak{Loc}_G^\rm{Fi} (\Sigma, P)$ carries a natural cluster variety structure \cite{MR2233852}, this theorem shows that this cluster structure is intrinsic to the filtered Betti moduli space. 

In the case when all the boundaries coming from simple poles are unmarked, i.e. when $P=P_{\rm I}$ (see Definition \ref{defMarkedBorderedSurface}), and the group is $SL_n(\mathbb C)$, the framed character variety is of the form $\frak{X}^\rm{Fr}_{SL_n(\mathbb C)} (\Sigma, P_{\rm I}) = \Hom \big( \pi_1 (\Sigma, P_{\rm I}), SL_n(\mathbb C) \big) / U_{P_{\rm I}}$. This was introduced by Chekhov, Mazzocco and Rubtsov in \cite[§4]{MR3802126} (who called it the \textit{decorated character variety}) based on the work of Li-Bland and Ševera in \cite[§6]{MR3424475}.
These papers have served as one of the main inspirations for this project. In particular, we wanted to prove that 
the Fock-Goncharov-Shen variables can be used to coordinatize any of the above moduli sets even when some decorations are forgotten - see Remark \ref{250918103523}. Motivated by this, 
in Section \ref{se:5nuova}, we show that the map induced by the forgetful functor
\begin{eqn}
	\catname{Loc}_G^{\rm{Fi}} (\Sigma, P) \too \catname{Loc}_G^{\rm{Fi}} (\Sigma, P_\rm{I})
\end{eqn}
is a ramified covering. 

This paper is organized as follows. In Section \ref{re:1},
we give a careful treatment of the
classical setting of linear local systems on surfaces with boundary, representations of the fundamental groupoid, and the relationship between the corresponding character variety and the moduli space of monodromy data. In Section 
\ref{se:2} we motivate and define surfaces with marked boundaries, and give a finite presentation  for the fundamental groupoid $\pi_1(\Sigma,P)$. In Section \ref{se:3}, we introduce various notions of local systems, which we collectively call \dfn{decorated local systems}, and we discuss the relationships between them. In Section \ref{se:4}, we reformulate decorated local systems as \textit{decorated} representations of the fundamental groupoid and the discrete fundamental groupoid. In Section \ref{se:5nuova}, we study the forgetful functors that drop the secondary marked points. In Section \ref{se:5}, we give an explicit description of the moduli space of decorated local systems.

\vskip 2mm

\noindent\textbf{Acknowledgements} The authors are grateful to Anton Alexseev, Marco Gualtieri, Gabriele Rembado and Volodya Rubtsov for many discussions on the topics of this paper. This was funded by 
the Leverhulme Trust Research Project Grant RPG-2021-047 and by the Proyecto de Generaci\'on de Conocimiento, PID2024-155686NB-I00 of the Spanish Ministry of Science, Innovation and Universities.

\section{Review: Local Systems and Character Varieties}\label{re:1}

In this section, we set the stage by reviewing the classical setting of linear local systems on surfaces with boundary, representations of the fundamental groupoid, and the relationship between the corresponding character variety and the moduli space of monodromy data.
We give a rather careful treatment of the subject for the purpose of completeness as well as to introduce notation and recall basic concepts needed in the rest of the paper.

In what follows, we put $G \coleq \GL_n (\CC)$ for arbitrary $n$, and let $B$ and   $U$ denote respectively the subgroups consisting of upper-triangular and unipotent upper-triangular matrices in $G$.
We also let $\Sigma$ be any surface by which we mean a connected, oriented, topological real two-dimensional manifold with or without boundary.

\subsection{Groupoids and Moduli Spaces}

\subsubsection{Groupoids}
First, recall that a \dfn{groupoid} is a category in which every morphism is invertible.
To spell this out a little, a groupoid consists of a set of objects $X$ and a set of morphisms $\cal{G}$ together with several structure maps including a pair of maps $\rm{s}, \rm{t} : \cal{G} \to X$ called the \textit{source} and \textit{target maps} which respectively extract the source and target objects of a morphism.
We usually say that $\cal{G}$ is a groupoid over $X$ and write $\cal{G} \rightrightarrows X$.
There is also an embedding of $X$ into $\cal{G}$ as the subset of identity morphisms and we typically identify $X$ with its embedded image inside $\cal{G}$.

A \dfn{Lie groupoid} is a groupoid $\cal{G}\rightrightarrows X$ such that $\cal{G}$ and $X$ are topological manifolds (although $\cal{G}$ may be non-Hausdorff), all the structure maps are continuous, the embedding $X \inj \cal{G}$ is closed, and the source and target maps $\rm{s}, \rm{t}$ are submersions.
Similarly, we can define \textit{holomorphic} and \textit{algebraic} Lie groupoids.
A \dfn{groupoid homomorphism} $\F : \cal{G} \to \cal{G}'$ is simply a functor and a \dfn{Lie groupoid homomorphism} is a functor which is also a continuous (resp. holomorphic or algebraic) map.

\subsubsection{Moduli spaces}
Any groupoid $\cal{G}\rightrightarrows X$ has a corresponding \textit{moduli set}: it is the set of isomorphism classes of objects in $X$ that we shall denote in any of the following ways:
\begin{eqn}
	\frak{G}
	= X/\cal{G}
	= X /{\sim}.
\end{eqn}
If $x$ is an object of $X$, we usually denote its isomorphism class by $[x]$.
If the groupoid $\cal{G} \rightrightarrows X$ carries additional structure --- such as continuous, smooth, analytic, or algebraic --- then the moduli set $\frak{G}$ may be enriched to a \textit{moduli stack} in the respective category.
However, these details will not play any appreciable role in our story.
So, from now on, when we speak of a \textit{moduli space}, we think of an appropriate moduli stack, but in almost every instance it is sufficient to restrict our attention to the underlying moduli set.

\subsubsection{Three examples of Lie groupoids}
The first example of a Lie groupoid is as follows.
If $K \times X \to X$ is a group action on a set $X$, then we can form the \dfn{action groupoid} $\cal{G} = K \ltimes X \rightrightarrows X$.
Its set of morphisms is given by $K \times X$ with source and target maps to $X$ given by the natural projection and the group action, respectively: $\rm{s} (k,x) = x$ and $\rm{t} (k,x) = k.x$.
The composition is given by $(k_2, x_2) \circ (k_1,x_1) = (k_2 k_1, x_1)$ whenever $x_2 = k_1 . x_1$.
The corresponding moduli set $X / K$ is the set of all $K$-orbits in $X$.
Moreover, if $X$ is an affine algebraic variety, $K$ is an affine algebraic group, and the action is algebraic, then $\cal{G} = K \ltimes X \rightrightarrows X$ is an algebraic Lie groupoid, and the corresponding moduli space $\frak{G}$ is an algebraic stack equal to the stack-theoretic quotient $X / K$.

Second, the \dfn{fundamental groupoid} $\Pi_1 (\Sigma) \rightrightarrows \Sigma$ of the surface $\Sigma$ is the space of all continuous paths $\gamma : [0,1] \to X$ considered up to homotopy with fixed endpoints.
It is a real four-dimensional topological manifold.
The \textit{restriction} of the groupoid $\Pi_1 (\Sigma)$ to a subset $S \subset \Sigma$ is by definition the full subgroupoid $\Pi_1 (\Sigma) |_S \coleq \rm{s}^{-1} (S) \cap \rm{t}^{-1} (S) \subset \Pi_1 (\Sigma)$ consisting of paths on $\Sigma$ that begin and end in $S$.
In particular, for any point $p \in \Sigma$, the restriction of the fundamental groupoid $\Pi_1 (\Sigma)$ to $p$ is nothing but the fundamental group:
\begin{eqn}
	\pi_1 (\Sigma, p) = \Pi_1 (\Sigma) \big|_p = \rm{s}^{-1} (p) \cap \rm{t}^{-1} (p) \subset \Pi_1 (\Sigma).
\end{eqn}
Thus, the fundamental group $\pi_1 (\Sigma, p)$ is an example of a groupoid over a single point.

Third, to any topological complex vector bundle $\E \to \Sigma$, we can naturally associate a Lie groupoid $\GL (\E) \rightrightarrows \Sigma$ called the \dfn{transport groupoid}.
Its objects are the points of $\Sigma$ and its morphisms are the linear isomorphisms between the fibres of $\E$; that is, for any pair of points $p,q \in \Sigma$, the set of morphisms in $\GL (\E)$ from $p$ to $q$ is the set $GL (\E |_p, \E |_q)$ of all linear isomorphisms $\E |_p \iso \E |_q$.
The restriction of $\GL (\E)$ to a point $p \in \Sigma$ is nothing but the automorphism group of the fibre: $\GL (\E) |_p = \GL (\E |_p) \simeq G$.

\subsection{Linear Local Systems}\label{suse:locsys}

In this paper, by a \dfn{local system} on $\Sigma$ we shall mean either a linear local system or equivalently a $G$-local system.
Let us quickly remind the reader of what these are and how to pass back and forth between the two notions.

\textit{Linear local systems} (more precisely, \textit{complex-linear local systems of finite rank}) are locally constant sheaves $\cal{E}$ on $\Sigma$ of finite-dimensional complex vector spaces.
In particular, all stalks $\cal{E}_p$ of $\cal{E}$ are complex vector spaces of the same dimension called the \dfn{rank} of $\cal{E}$.
Likewise, a $G$-\textit{local system} is a locally constant sheaf $\hat{\cal{E}}$ of sets on $\Sigma$ equipped with a right $G$-action such that the stalk $\hat{\cal{E}}_p$ of $\hat{\cal{E}}$ at every point $p \in \Sigma$ is a $G$-torsor; i.e., a set equipped with a free and transitive right $G$-action.

Linear local systems on $\Sigma$ together with linear sheaf isomorphisms (i.e., sheaf morphisms which are linear isomorphisms on stalks) form a groupoid which, along with the corresponding \dfn{moduli space of local systems}, will be respectively denoted by
\begin{eqn}
	\catname{Loc}_G (\Sigma)
	\qqtext{and}
	\frak{Loc}_G (\Sigma)
	\coleq \catname{Loc}_G (\Sigma) \big/{\sim}.
\end{eqn}
We usually draw no distinction between $G$-local systems and linear local systems because their categories are naturally equivalent thanks to the following lemma.

\begin{lemma}
	\label{251008162022}
	There is an equivalence of categories between the groupoid of linear local systems and the groupoid of $G$-local systems (where morphisms are sheaf morphisms which are $G$-equivariant maps on stalks) given by the following pair of inverse functors:
	\begin{eqn}
		\cal{E} \mapstoo \hat{\cal{E}} = GL (\CC^n, \cal{E})
		\qqtext{and}
		\hat{\cal{E}} \mapstoo \cal{E} \coleq \hat{\cal{E}} \times_G \underline{\CC}^n_\Sigma.
	\end{eqn}
\end{lemma}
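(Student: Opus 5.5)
We construct the two functors explicitly, check that they are well defined, and then write down natural isomorphisms between each composite and the identity. Throughout, $\underline{\CC}^n_\Sigma$ is the constant sheaf and $G$ acts on the left of $\CC^n$.

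\textbf{Step 1: from linear to $G$-local systems.} Given a linear local system $\cal{E}$ of rank $n$, let $\hat{\cal{E}} = GL(\CC^n,\cal{E})$ be the sheaf whose sections over an open $V$ are isomorphisms of sheaves $\underline{\CC}^n_V \iso \cal{E}|_V$, i.e.\ local frames. The group $G$ acts on the right by precomposition. Since $\cal{E}$ is locally constant, on a connected trivializing open $V$ we have $\hat{\cal{E}}|_V \cong \underline{G}_V$, so $\hat{\cal{E}}$ is locally constant. Its stalk at $p$ is $GL(\CC^n,\cal{E}_p)$, which is a $G$-torsor.

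A linear isomorphism $f:\cal{E}\to\cal{E}'$ is sent to the map $\phi\mapsto f\circ\phi$. This map commutes with precomposition, so it is $G$-equivariant, and the assignment is functorial.

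\textbf{Step 2: from $G$-local systems to linear ones.} Given $\hat{\cal{E}}$, let $\hat{\cal{E}}\times_G\underline{\CC}^n_\Sigma$ be the sheafification of the presheaf $V\mapsto \hat{\cal{E}}(V)\times_G\CC^n$. Here $(e g,v)\sim(e,gv)$ for sections over connected $V$.

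On an open $V$ where $\hat{\cal{E}}|_V\cong \underline{G}_V$, the choice of the section corresponding to $\idd$ identifies $\underline{G}_V\times_G\underline{\CC}^n_V\cong\underline{\CC}^n_V$. Hence the result is locally constant of rank $n$. The stalk at $p$ is $\hat{\cal{E}}_p\times_G\CC^n$, and it is a vector space via $\lambda[e,v]+\mu[e,w]=[e,\lambda v+\mu w]$. This is well defined because any two representatives can be brought to a common $e$ by freeness and transitivity. Equivariant morphisms $\psi$ induce the linear maps $[e,v]\mapsto[\psi(e),v]$.

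\textbf{Step 3: natural isomorphisms.} For the first composite define
\begin{eqn}
	\eta_{\cal{E}}: GL(\CC^n,\cal{E})\times_G\underline{\CC}^n \too \cal{E},
	\qquad [\phi,v]\mapstoo \phi(v).
\end{eqn}
It is well defined since $\phi g(v)=\phi(gv)$. It is an isomorphism on stalks, because fixing a frame $\phi$ identifies both sides with $\CC^n$.

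For the other composite define
\begin{eqn}
	\epsilon_{\hat{\cal{E}}}:\hat{\cal{E}}\too GL\big(\CC^n,\hat{\cal{E}}\times_G\underline{\CC}^n\big),
	\qquad e\mapstoo \big(v\mapsto[e,v]\big).
\end{eqn}
It is $G$-equivariant, since $[eg,v]=[e,gv]$. It is bijective on stalks because both sides are $G$-torsors and any equivariant map between torsors is bijective.

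Naturality in morphisms is immediate from the formulas. Since both functors preserve and reflect isomorphisms on stalks, they map morphisms of the respective groupoids to morphisms, which completes the equivalence.

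\textbf{Main obstacle.} The only delicate point is the bookkeeping in Step 2. One must check that the quotient defining $\times_G$ commutes with taking stalks, so that it is compatible with sheafification. One must also check that the vector space structure on the stalks glues to a sheaf of vector spaces. I would handle both by working only over a basis of connected trivializing opens, where everything is literally the constant sheaf $\underline{\CC}^n$. The transition maps are then given by constant elements of $G$ acting linearly, and linearity of that action is what makes the structure glue.
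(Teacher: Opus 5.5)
Your proof is correct and follows essentially the same route as the paper: the same pair of functors (frames with post-composition on morphisms, and the balanced product with $[e,v]\mapsto[\psi(e),v]$) and the same canonical isomorphism $[\phi,v]\mapsto\phi(v)$. Your version is somewhat more complete than the paper's, for two reasons. First, you define $\hat{\cal{E}}$ as the sheaf of isomorphisms $\underline{\CC}^n_V\to\cal{E}|_V$ rather than as $V\mapsto GL(\CC^n,\cal{E}(V))$; the latter agrees with it only on connected opens. Second, you write down and check the second natural isomorphism $e\mapsto(v\mapsto[e,v])$ explicitly, where the paper only asserts that it exists.
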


The notation for this pair of functors will be explained in the course of the proof.

\begin{proof}
	If $\cal{E}$ is a linear local system of rank $n$, then for any $p \in \Sigma$ the set $\hat{\cal{E}}_p \coleq GL (\CC^n, \cal{E}_p)$ of all linear isomorphisms $\CC^n \iso \cal{E}_p$ is a $G$-torsor: it has a free and transitive right $G$-action by pre-composition; i.e., $f.g = f \circ g$ for any $f \in GL (\CC^n, \cal{E}_p)$ and any $g \in G = GL(\CC^n)$.
	Similarly, the set $GL \big( \CC^n, \cal{E} (U))$ for any open set $U \subset \Sigma$ has a right $G$-action though not necessarily free and transitive.
	Therefore, the correspondence
	\begin{eqn}
		\hat{\cal{E}} : U \mapsto GL \big( \CC^n, \mathcal{E} (U))
	\end{eqn}
	is a locally constant sheaf of sets with a right $G$-action whose stalk at any $p \in \Sigma$ is the $G$-torsor $\hat{\cal{E}}_p = \GL (\CC^n, \cal{E}_p)$.
	In other words, $\hat{\cal{E}}$ is a $G$-local system, which we call $\hat{\cal{E}} \eqcol GL (\CC^n, \cal{E})$ by a slight abuse of notation.
	
	Conversely, if $\hat{\cal{E}}$ is a $G$-local system, then $\cal{E} \coleq \hat{\cal{E}} \times_G \underline{\CC}^n_\Sigma$ is a linear local system on $\Sigma$, where $\underline{\CC}^n_\Sigma$ is the constant sheaf with value $\CC^n$ with its natural right $G$-action.
	Recall that in general, if $X$ and $Y$ are two sets equipped with a right $G$-action, then $X \times_G Y$ (sometimes called the \textit{tensor product of $G$-sets}) is defined as the quotient set $X \times Y / {\sim}$ by the equivalence relation $(x.g,y) \sim (x,y.g)$ for all $x \in X$, $y \in Y$, and $g \in G$.

	Let us now explain how these constructions extend to morphisms, thus defining a pair of functors.
	Recall that any morphism of sheaves is completely determined on stalks.
	So a morphism of linear local systems $\phi : \cal{E} \to \cal{E}'$ induces a linear map on stalks $\phi_p : \cal{E}_p \to \cal{E}'_p$ which in turn induces a $G$-equivariant map $\hat{\phi}_p : GL (\CC^n, \cal{E}_p) \to GL (\CC^n, \cal{E}'_p)$ given by post-composition with $\phi_p$ (i.e., $f \mapsto \phi_p \circ f$), which in turn induces a sheaf morphism $\hat{\phi} : GL (\CC^n, \cal{E}) \to GL (\CC^n, \cal{E}')$; i.e., $\hat{\phi} : \hat{\cal{E}} \to \hat{\cal{E}}'$.
	Conversely, any morphism of $G$-local systems $\hat{\phi} : \hat{\cal{E}} \to \hat{\cal{E}}'$ induces a $G$-equivariant map on stalks $\hat{\phi}_p : \hat{\cal{E}}_p \to \hat{\cal{E}}'_p$ which in turn induces a linear map $\phi_p : \hat{\cal{E}}_p \times_G \CC^n \to \hat{\cal{E}}'_p \times_G \CC^n$ given by $(f,v) \mapsto \big(\hat{\phi}_p (f), v \big)$ and therefore a linear morphism $\phi : \cal{E} \to \cal{E}'$.
	To see that $\phi_p$ is indeed a linear map, recall that $\hat{\cal{E}}'_p$ is a $G$-torsor, so for any $f \in \hat{\cal{E}}_p$ there is a unique element $g \in G$ such that $\hat{\phi}_p (f) = f.g = f \circ g$, and so $\big(\hat{\phi}_p (f), v \big) = (f.g,v) \sim (f, vg)$.
	
	Finally, we explain why these two functors determine an equivalence of categories. For the sake of lighter notations, let us denote by $\mathsf{C}$ and $D$ the categories of linear local systems on $\Sigma$ and the one of $G$-local systems on $\Sigma$ respectively, by $\mathcal{F}$ and $\mathcal{G}$ the functors 
	\begin{eqn}
		{\mathcal F}:\catname{C}  \mapstoo \catname{D} 
		\qqtext{and}
		{\mathcal G}:\catname{D}  \mapstoo \catname{C}.
	\end{eqn}
	
	If $\cal{E}$ is a linear local system and $\hat{\cal{E}} = GL (\CC^n, \cal{E})$ is the associated $G$-local system, then the associated linear local system $\cal{E}' \coleq \hat{\cal{E}} \times_G \underline{\CC}^n_\Sigma$ is canonically isomorphic to $\cal{E}$.
	The reason is that, at any $p \in \Sigma$, the stalk $\cal{E}'_p = \hat{\cal{E}}_p \times_G \CC^n = GL (\CC^n, \cal{E}_p) \times_G \CC^n$ is canonically isomorphic to $\cal{E}_p$ because, for any $n$-dimensional vector space $E$, there is a canonical isomorphism $GL (\CC^n, E) \times_G \CC^n \iso E$ given by $(f,v) \mapsto f (v)$.
	
	This isomorphism provides natural transformations $\eta: \operatorname{Id}_{\C} \Rightarrow \mathcal{G} \circ \mathcal{F}$ and $\zeta: \operatorname{Id}_{D} \Rightarrow \mathcal{F} \circ \mathcal{G}$.
\end{proof}

Finally, we also mention that any diffeomorphism $f : \Sigma' \to \Sigma$ --- or, more generally, any homotopy equivalence --- induces an equivalence $f^\ast : \catname{Loc}_G (\Sigma) \iso \catname{Loc}_G (\Sigma')$ by pullback.
In particular, any local system on $\Sigma$ restricts to a local system on the interior $\Sigma^\circ \subset \Sigma$, and this restriction functor is actually an equivalence of categories $\catname{Loc}_G (\Sigma) \iso \catname{Loc}_G (\Sigma^\circ)$ which has a canonical inverse.

\begin{lemma}
	\label{251111194051}
	For any surface $\Sigma$, any local system $\cal{E}^\circ$ on the interior $\Sigma^\circ$ has a canonical extension to a local system $\cal{E}$ on $\Sigma$, thus yielding an inverse pair of restriction-extension functors:
	\begin{eqn}
		\catname{Loc}_G (\Sigma) \overset{\,\smash{
				\raisebox{-0.55ex}{\ensuremath{\scriptstyle\sim}}}
			\,}{\rightleftharpoons} \catname{Loc}_G (\Sigma^\circ).
	\end{eqn}
\end{lemma}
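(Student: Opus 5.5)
The plan is to define the extension functor directly as a pushforward and then check that it is inverse to restriction. Write $j : \Sigma^\circ \inj \Sigma$ for the inclusion of the interior, and for a local system $\cal{E}^\circ$ on $\Sigma^\circ$ put
\begin{eqn}
	\cal{E} \coleq j_\ast \cal{E}^\circ,
	\qquad
	\cal{E} (U) \coleq \cal{E}^\circ (U \cap \Sigma^\circ)
\end{eqn}
for every open $U \subset \Sigma$. Being a pushforward, this is functorial in sheaf morphisms, and it is canonical because no choices are made. Since $j^{-1} j_\ast \cal{E}^\circ = \cal{E}^\circ$ for an open embedding, we get $\cal{E}|_{\Sigma^\circ} = \cal{E}^\circ$ on the nose. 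It then remains to show two things: that $\cal{E}$ is locally constant near the boundary, and that the unit $\cal{F} \to j_\ast (\cal{F}|_{\Sigma^\circ})$ is an isomorphism for every local system $\cal{F}$ already defined on $\Sigma$.

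Both claims are local near a boundary point $p \in \partial \Sigma$. Choose a collar chart in which $p$ has arbitrarily small neighbourhoods $U \cong (-1,1) \times [0,1)$. For each of these, $U \cap \Sigma^\circ \cong (-1,1) \times (0,1)$ is contractible, and the same holds for every smaller collar neighbourhood. A locally constant sheaf on a contractible (indeed simply connected and locally path-connected) space is constant. Hence $\cal{E}^\circ|_{U \cap \Sigma^\circ}$ is constant with value some $E \cong \CC^n$, and for nested collar neighbourhoods $V \subset U$ the restriction map $\cal{E}(U) = E \to \cal{E}(V) = E$ is the identity, because both intersections with the interior are connected. From this the stalk at $p$ is $\cal{E}_p = E$ and $\cal{E}|_U \cong \underline{E}_U$, so $\cal{E}$ is locally constant of rank $n$. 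For a local system $\cal{F}$ on $\Sigma$, the same computation on a collar neighbourhood $U$ on which $\cal{F}$ is constant shows that $\cal{F}(U) \to \cal{F}(U \cap \Sigma^\circ)$ is an isomorphism, since both $U$ and $U \cap \Sigma^\circ$ are connected. So the unit is an isomorphism on stalks at boundary points, and trivially at interior points.

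Putting these together, $j^{-1} \circ j_\ast \cong \id$ and $j_\ast \circ j^{-1} \cong \id$ naturally, which gives the claimed inverse equivalence. The main point needing care is the local constancy step: the value $\cal{E}(U)$ must be computed from sections over $U \cap \Sigma^\circ$, and one has to ensure that a cofinal system of such neighbourhoods has connected, simply connected intersection with the interior. The collar neighbourhood theorem for topological manifolds with boundary supplies exactly this system. An alternative route would be to use the deformation retraction of $\Sigma$ onto a compact subsurface inside $\Sigma^\circ$ together with homotopy invariance of $\catname{Loc}_G$. However, that route gives an equivalence that is only canonical up to choices, whereas the pushforward $j_\ast$ is canonical.
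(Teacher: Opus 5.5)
Your proof is correct and is essentially the paper's argument: the paper defines the stalk of the extension at a boundary point $p$ as $\varinjlim \cal{E}^\circ(U)$ over sectorial neighbourhoods $U$ (simply connected boundary charts intersected with $\Sigma^\circ$). That is exactly the stalk of your $j_\ast\cal{E}^\circ$, and the paper relies on the same stabilisation coming from local constancy on these simply connected sets. Your version is more complete, since it specifies the sheaf itself rather than only its stalks and explicitly checks that the unit and counit are isomorphisms; note also that ordinary boundary charts already supply the cofinal system you need, so the collar neighbourhood theorem is not required.
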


\begin{proof}
	The canonical extension $\cal{E}$ to $\Sigma$ of a local system $\cal{E}^\circ$ on $\Sigma^\circ$ is defined as follows.
	For every boundary point $p \in \del \Sigma$, the stalk $\cal{E}_p$ of $\cal{E}$ at $p$ is defined as the restriction of $\cal{E}^\circ$ to the germ of a sectorial neighbourhood of $p$.
	Recall that, since $\Sigma$ is a manifold with boundary, a \textit{sectorial neighbourhood} of a boundary point $p$ is the intersection with $\Sigma^\circ$ of a simply connected open boundary chart containing $p$.
	More precisely, $\cal{E}_p \coleq \varinjlim \cal{E}^\circ (U)$ is the direct limit of vector spaces $\cal{E}^\circ (U)$ indexed over all sectorial neighbourhoods $U \subset \Sigma$ of $p$.
	Notice that since $\cal{E}^\circ$ is locally constant, the vector spaces $\cal{E}^\circ (U)$ stabilise in this limit, and so there is a canonical linear isomorphism $\cal{E}_p \iso \cal{E}^\circ (U)$ for any sectorial neighbourhood $U$ of $p$.
\end{proof}

\subsection{Groupoid Representations and Holonomy}
\label{251209141021}

A \dfn{representation} of a groupoid $\cal{G} \rightrightarrows X$ is by definition a pair $(\E, \Phi)$ consisting of a complex vector bundle $\E \to X$ and a groupoid morphism $\Phi : \cal{G} \to \GL (\E)$.
We usually write $\Phi_\gamma \coleq \Phi (\gamma) \in \GL (\E)$ for any $\gamma \in \cal{G}$.
A morphism $\phi : (\E, \Phi) \to (\E', \Phi')$ is a bundle map $\phi : \E \to \E'$ which intertwines the homomorphisms $\Phi, \Phi'$ in the following sense.
For any $\gamma \in \cal{G}$ with source and target $p,q \in X$, we have the following commutative diagram of maps between the fibres of $\E$ and $\E'$:
\begin{eqntag}
	\label{250911161645}
	\begin{tikzcd}
		E_p
		\ar[r, "\Phi_\gamma"]
		\ar[d, "\phi_p"']
		&	E_q
		\ar[d, "\phi_q"]
		\\
		E'_p
		\ar[r, "\Phi'_\gamma"']
		&	E'_q
	\end{tikzcd}
	\qqtext{i.e.}
	\phi_q \circ \Phi_\gamma = \Phi'_\gamma \circ \phi_p.
\end{eqntag}
Moreover, if $\mathcal{G}$ is a topological (resp. smooth, holomorphic, algebraic) Lie groupoid, then the vector bundles $\E$ and the bundle maps $\phi$ are required to be continuous (resp. smooth, holomorphic, algebraic).
We denote the groupoid of representations of $\cal{G} \rightrightarrows X$ of rank $n$ and the corresponding moduli space by
\begin{eqn}
	\catname{Rep}_G (\cal{G})
	\qqtext{and}
	\text{$\frak{Rep}_G$} (\cal{G})
	\coleq \catname{Rep}_G (\cal{G}) \big/{\sim}.
\end{eqn}

In the case of the fundamental groupoid, we give the following

\begin{definition}
	\label{251209142102}
	A \dfn{representation} (or, more precisely, a \dfn{linear representation} of rank $n$, or a \dfn{$G$-representation}) of the fundamental groupoid $\Pi_1 (\Sigma) \rightrightarrows \Sigma$ is a pair $(\E, \Phi)$ consisting of a topological complex vector bundle $\E \to \Sigma$ of rank $n$ and a (continuous) groupoid homomorphism $\Phi : \Pi_1 (\Sigma) \to \GL (\E)$.
\end{definition}


Likewise, since the fundamental group is a groupoid over a single point and a vector bundle over a single point is a vector space, we recover representations of the fundamental group as groupoid representations.
Recall that, given a point $p \in \Sigma$, a \dfn{representation} of the fundamental group $\pi_1 (\Sigma, p)$ is a pair $(E, \varphi)$ consisting of an $n$-dimensional complex vector space $E$ and a group homomorphism $\varphi : \pi_1 (\Sigma, p) \to \GL (E)$.

Since the fundamental group $\pi_1 (\Sigma, p)$ is the restriction of the fundamental groupoid $\Pi_1 (\Sigma)$ to the point $p$, any representation $(\E, \Phi)$ of $\Pi_1 (\Sigma)$ restricts to a representation of $\pi_1 (\Sigma, p)$ by taking $E$ to be the fibre $\E_p$ of $\E$ at $p$ and $\varphi$ to be the restriction of $\Phi$.
This yields a \dfn{restriction functor} on representations:
\begin{equation*}
    \R : \catname{Rep}_G \big( \Pi_1 (\Sigma) \big) \to \catname{Rep}_G \big( \pi_1 (\Sigma, p) \big)
\end{equation*}
sending
\begin{equation*}
    (\E, \Phi) \mapsto (E, \varphi) \coleq \Big( \E_p, \Phi \big|_{\pi_1 (\Sigma, p)} \Big).
\end{equation*}
This functor turns out to be an equivalence of categories with a preferred explicit inverse equivalence that gives a convenient way to extend representations from the fundamental group $\pi_1 (\Sigma, p)$ to the fundamental groupoid $\Pi_1 (\Sigma)$.
We will state and prove this equivalence as part of a more general result (\autoref{250911161735}) after discussing the holonomy representation.

\subsubsection{Holonomy representation}
Any local system $\cal{E}$ on $\Sigma$ induces a representation $(\E, \Phi)$ of the fundamental groupoid $\Pi_1 (\Sigma)$ by taking the holonomy along paths, and vice versa by taking the deck-transformation-equivariant sections of a constant sheaf on the universal cover of $\Sigma$.
Let us briefly recall how this works.

The \dfn{holonomy} of a local system $\cal{E}$ along a path $\gamma : [0,1] \to \Sigma$ from $p$ to $q \in \Sigma$ is by definition the linear isomorphism between on stalks,
\begin{eqntag}
	\label{250930122017}
	\op{hol}_\gamma : \cal{E}_p \iso \cal{E}_q,
\end{eqntag}
defined as the composition of canonical isomorphisms 
$$
{\cal{E}_p \iso \gamma^{-1} \cal{E} \big( [0,1] \big) \iso \cal{E}_q}.
$$
When $p = q$, the isomorphism $\op{hol}_\gamma$ is usually called the \dfn{monodromy} of $\cal{E}$ along a loop $\gamma$ based at $p$ and denoted by $\op{mon}_\gamma$.
The isomorphisms defining $\op{hol}_\gamma$ depend only on the homotopy class of $\gamma$ relative its endpoint and they exist by virtue of the fact that the inverse-image sheaf $\gamma^{-1} \cal{E}$ on the interval $[0,1]$ is constant because the interval is contractible.
Furthermore, observe that any sheaf morphism $\phi : \cal{E} \to \cal{E}'$ defines a linear map $\gamma^{-1} \cal{E} \big( [0,1] \big) \to \gamma^{-1} \cal{E}' \big( [0,1] \big)$, and therefore the holonomies of $\cal{E}$ and $\cal{E}'$ along any path $\gamma$ from $p$ to $q$ are related by the commutative diagram
\begin{eqntag}
	\label{250924114505}
	\begin{tikzcd}
		\cal{E}_p
		\ar[r, "\op{hol}_\gamma"]
		\ar[d, "\phi_p"']
		&		\cal{E}_q
		\ar[d, "\phi_q"]
		\\
		\cal{E}'_p
		\ar[r, "\op{hol}'_\gamma"']
		&		\cal{E}'_q.
	\end{tikzcd}
\end{eqntag}

Given a local system $\cal{E}$, let $\cal{O}_\Sigma$ be the sheaf of continuous functions on $\Sigma$ and consider the vector bundle $\E \coleq \cal{E} \otimes \cal{O}_\Sigma$ (where, in line with common practice, we treat vector bundles interchangeably with their sheaves of sections).
The fibre $\E_p$ of this bundle at any point $p \in \Sigma$ is canonically isomorphic to the stalk $\cal{E}_p$.
Then the holonomy of $\cal{E}$ defines a groupoid homomorphism 
\begin{eqn}
	\op{hol} : \Pi_1 (\Sigma) \to \GL (\E)
	\qtext{sending}
	\gamma \mapsto \op{hol}_\gamma.
\end{eqn}
The result is a representation $(\E,\Phi) \coleq (\cal{E} \otimes \cal{O}_\Sigma, \op{hol})$ of the fundamental groupoid $\Pi_1 (\Sigma)$ called the \dfn{holonomy representation} of $\cal{E}$.
If, in addition, we fix a basepoint $p \in \Sigma$, then the restriction of the holonomy representation of $\cal{E}$ to the fundamental group $\pi_1 (\Sigma, p)$ determines a representation $(E, \Phi) \coleq (\cal{E}_p, \op{mon}_p)$ with 
\begin{eqn}
	\op{mon}_p : \pi_1 (\Sigma, p) \to GL(\cal{E}_p)
	\qtext{sending}
	\gamma \mapsto \op{mon}_\gamma
\end{eqn}
called the \dfn{monodromy representation} of $\cal{E}$ at $p$.

\begin{proposition}
	\label{250911161735}
	There is an equivalence of categories
	\begin{eqn}
		\op{Hol} : \catname{Loc}_G (\Sigma) \iso \catname{Rep}_G \big( \Pi_1 (\Sigma) \big)
	\end{eqn}
	sending
	\begin{eqn}
		\cal{E} \mapstoo (\E, \Phi) \coleq (\cal{E} \otimes \cal{O}_\Sigma, \op{hol})
	\end{eqn}
	called the \dfn{holonomy representation functor} which associates a local system to its holonomy representation.
	Furthermore, for any $p \in \Sigma$, the composition of the restriction functor with the holonomy representation functor is an equivalence of categories
	\begin{eqn}
		\op{Mon}_p : \catname{Loc}_G (\Sigma) \iso \catname{Rep}_G \big( \pi_1 (\Sigma, p) \big)
	\end{eqn}
	sending
	\begin{eqn}
		\cal{E} \mapstoo (E, \varphi) \coleq (\cal{E}_p, \op{mon}_p)
	\end{eqn}
	called the \dfn{monodromy representation functor}.
	Consequently, the restriction of representations from the fundamental groupoid $\Pi_1 (\Sigma)$ to the fundamental group $\pi_1 (\Sigma, p)$ is also an equivalence of categories:
	\begin{eqn}
		\R : \catname{Rep}_G \big( \Pi_1 (\Sigma) \big) \iso \catname{Rep}_G \big( \pi_1 (\Sigma, p) \big)
	\end{eqn}
	sending
	\begin{eqn}
		\hspace{2cm} (\E, \Phi) \mapsto (E, \varphi) \coleq \Big( \E_p, \Phi \big|_{\pi_1 (\Sigma, p)} \Big).
	\end{eqn}
\end{proposition}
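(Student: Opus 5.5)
The plan is to prove the three equivalences by exhibiting explicit quasi-inverse functors. We assume, as is standard, that $\Sigma$ is path-connected and locally simply connected. Then the equivalence $\op{Mon}_p = \R\circ\op{Hol}$ follows once two of the three functors are shown to be equivalences. The third follows by 2-out-of-3, since equivalences are closed under composition and under taking quasi-inverses. I would prove directly that $\op{Hol}$ and $\R$ are equivalences.

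\textbf{$\op{Hol}$ is a functor.} On morphisms, a sheaf morphism $\phi:\cal{E}\to\cal{E}'$ gives a bundle map $\phi\otimes\id:\cal{E}\otimes\cal{O}_\Sigma\to\cal{E}'\otimes\cal{O}_\Sigma$. The commutative diagram \eqref{250924114505} is exactly the intertwining condition \eqref{250911161645}, so this is a morphism of representations.

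\textbf{Quasi-inverse of $\op{Hol}$.} Given $(\E,\Phi)$, I define a sheaf $\cal{E}$ by letting $\cal{E}(U)$ be the space of sections $s$ of $\E|_U$ that are \emph{$\Phi$-flat}, meaning $s(q)=\Phi_\gamma s(p)$ for every path $\gamma$ in $U$ from $p$ to $q$. I then check that $\cal{E}$ is locally constant. On a simply connected open $U$, fix $p\in U$; evaluation at $p$ gives a bijection $\cal{E}(U)\iso\E_p$, with inverse $v\mapsto(q\mapsto\Phi_{\gamma_{pq}}v)$. This is well defined because $\Phi$ is a homotopy invariant and $U$ is simply connected. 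The section is continuous because $\Phi$ is a continuous groupoid homomorphism and the paths $\gamma_{pq}$ can be chosen to vary continuously in $q$ in a chart. The natural isomorphisms are as follows.
\begin{itemize}
\item $\cal{E}\cong$ (the flat sections of $\op{Hol}\cal{E}$), since the flat sections of $\op{Hol}\cal{E}$ are the sections of $\cal{E}$.
\item $(\E,\Phi)\cong\op{Hol}(\cal{E})$, via the evaluation map $\cal{E}\otimes\cal{O}_\Sigma\to\E$. This map is a fibrewise isomorphism by local constancy, and it intertwines $\op{hol}$ with $\Phi$ by construction.
\end{itemize}
Naturality in morphisms is a routine check.

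\textbf{$\R$ is an equivalence.} I would choose, for every $q\in\Sigma$, a path class $\gamma_q$ from $p$ to $q$, with $\gamma_p$ constant. Given $(E,\varphi)$, I build the bundle $\E\coleq\widetilde\Sigma\times_{\pi_1(\Sigma,p)}E$ over $\Sigma$, where $\widetilde\Sigma$ is the universal cover based at $p$. It carries the representation $\Phi_\gamma[\tilde x,v]=[\tilde\gamma\cdot\tilde x,v]$, i.e. transport by lifting; equivalently, in the trivialisation given by the $\gamma_q$, $\Phi_\gamma=\varphi(\gamma_{q}^{-1}\gamma\gamma_{q'})$ with the appropriate orientation conventions. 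Its restriction to $p$ is $(E,\varphi)$ on the nose. Conversely, for $(\E,\Phi)$ the maps $\Phi_{\gamma_q}:\E_p\to\E_q$ assemble into an isomorphism from the extension of $\R(\E,\Phi)$ to $(\E,\Phi)$. These maps do not depend on the choice of the $\gamma_q$ up to canonical isomorphism. For full faithfulness, a morphism of representations is determined by $\phi_p$, because $\phi_q=\Phi'_{\gamma_q}\phi_p\Phi_{\gamma_q}^{-1}$. Conversely, any $\pi_1$-equivariant $\phi_p$ extends by this formula, and equivariance makes the extension independent of the chosen paths.

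\textbf{Main obstacle.} The only non-formal point is continuity: showing that the flat sections form a locally constant sheaf, and that the bundle built from $(E,\varphi)$ is a genuine topological bundle with a continuous groupoid homomorphism. Both reduce to the local structure of $\Pi_1(\Sigma)$ over simply connected charts, where $\Pi_1(\Sigma)|_U\cong U\times U$. I would handle this by using the associated bundle over the universal cover, which avoids arbitrary noncontinuous choices of the $\gamma_q$.
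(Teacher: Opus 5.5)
Your proposal is correct, but it is organised differently from the paper's proof.

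The paper proves that $\op{Hol}$ and $\op{Mon}_p$ are equivalences and then deduces the statement for $\R$. It checks full faithfulness of both functors stalkwise. It gets essential surjectivity of $\op{Mon}_p$ from the universal-cover local system $(\tilde\Sigma\times E)/\pi_1(\Sigma,p)$, built as a $G$-local system and linearised via Lemma~\ref{251008162022}. Essential surjectivity of $\op{Hol}$ is obtained only indirectly: restrict $(\E,\Phi)$ to $p$, apply $\op{Mon}_p^{-1}$, then glue local comparison isomorphisms $\mu_q=\nu_q\circ\eta_q\circ\psi_q$ over simply connected opens.

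You instead prove $\op{Hol}$ and $\R$ directly and get $\op{Mon}_p=\R\circ\op{Hol}$ by composition. Your inverse to $\op{Hol}$, the sheaf of $\Phi$-flat sections, is canonical and basepoint-free. It therefore avoids the detour through $\pi_1(\Sigma,p)$ and the comparison-and-gluing step altogether.

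Your treatment of $\R$ is purely groupoid-theoretic. The transport formula $\phi_q=\Phi'_{\gamma_q}\phi_p\Phi_{\gamma_q}^{-1}$ gives faithfulness, and equivariance, i.e.\ path-independence, gives fullness. Fullness is the half that genuinely needs an argument. The paper calls $\R$ ``obviously full'' and settles bijectivity on $\sfop{Iso}$-sets only by transporting through $\catname{Loc}_G(\Sigma)$.

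When you write out fullness, say explicitly that the extended $\phi$ is continuous. By path-independence you may compute $\phi_q$ near $q_0$ using $\gamma_{q_0}$ followed by a short path in a chart, so continuity follows from that of $\Phi,\Phi'$, as for your flat sections.

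The paper's route produces explicit identifications $\eta_q:\E_q\iso\cal{E}_q=(\pi^{-1}(q)\times E)/\pi_1(\Sigma,p)$, which it reuses later, in Lemma~\ref{250914172733} and Proposition~\ref{251007202059}. In your approach the associated bundle $\tilde\Sigma\times_{\pi_1(\Sigma,p)}E$ plays the same role and would supply these identifications if you needed them downstream.
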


Explicitly, an inverse to the monodromy representation functor $\op{Mon}_p$ is given by 
\begin{eqn}
	\op{Mon}_p^{-1} : (E, \varphi) \mapsto \cal{E} \coleq \big(\tilde{\Sigma} \times E \big) / \pi_1 (\Sigma, p)
\end{eqn}
where $\tilde{\Sigma} \to \Sigma$ is the universal cover based at $p$ and the quotient is by the diagonal action of $\pi_1 (\Sigma, p)$ by deck transformations on $\tilde{\Sigma}$ and by the representation $\varphi$ on $E$.
Similarly, an inverse functor to the restriction functor $\R$ lifts a representation $(E, \varphi)$ of $\pi_1 (\Sigma, p)$ to the holonomy representation of the local system $\cal{E} = \big(\tilde{\Sigma} \times E \big) / \pi_1 (\Sigma, p)$; i.e.,
\begin{eqn}
	\op{\R}^{-1} \coleq \op{Hol} \circ \op{Mon}_p^{-1}.
\end{eqn}
These notations will be fully explained in the course of the proof (specifically, see \eqref{251008145350}) of this classical result.
For clarity, we remark that the holonomy and monodromy representation functors fit into the commutative diagram
\begin{eqn}
	\begin{tikzcd}
		\catname{Loc}_G (\Sigma)
		\ar[r, "\op{Hol}"]
		\ar[dr, "\op{Mon}_p"']
		&		\catname{Rep}_G \big( \Pi_1 (\Sigma) \big)
		\ar[d, "\R"]
		\\
		&		\catname{Rep}_G \big( \pi_1 (\Sigma, p) \big).
	\end{tikzcd}	
\end{eqn}
We also state the following immediate corollary of \autoref{250911161735}.

\begin{corollary}
	\label{250912175127}
	There are canonical bijections between the moduli spaces of local systems, of representations of the fundamental groupoid $\Pi_1 (\Sigma)$, and of representations of the fundamental group $\pi_1 (\Sigma, p)$ for any $p \in \Sigma$:
	\begin{eqn}
		\frak{Loc}_G (\Sigma) 
		\cong \frak{Rep}_G \big( \Pi_1 (\Sigma) \big)
		\cong \frak{Rep}_G \big( \pi_1 (\Sigma, p) \big),
	\end{eqn}
	given by $[\cal{E}] \mapsto [\op{Hol} (\cal{E})] \mapsto [\op{Mon}_p (\cal{E})]$.
\end{corollary}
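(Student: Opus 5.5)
The corollary follows formally from \autoref{250911161735}. An equivalence of categories induces a bijection on isomorphism classes of objects, and this is the only ingredient needed. So the plan is: first record this general fact, then apply it to the functors $\op{Hol}$ and $\R$, and finally check that the composite bijection is the stated one.

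\textbf{Step 1 (general fact).} Let $\F : \cal{C} \to \cal{D}$ be a functor between groupoids that is an equivalence. Define $[x] \mapsto [\F(x)]$.
\begin{itemize}
\item This is well defined, since functors send isomorphisms to isomorphisms.
\item It is surjective, because $\F$ is essentially surjective: every object of $\cal{D}$ is isomorphic to some $\F(x)$.
\item It is injective, because $\F$ is full. An isomorphism $\F(x) \iso \F(y)$ is the image of some morphism $f : x \to y$. Since $\cal{C}$ is a groupoid, $f$ is automatically invertible, so $x \cong y$.
\end{itemize}
Alternatively, for injectivity one can pick a quasi-inverse $\mathcal{G}$ with $\mathcal{G}\F \cong \id$. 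Then $\F(x) \cong \F(y)$ gives $x \cong \mathcal{G}\F(x) \cong \mathcal{G}\F(y) \cong y$.

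\textbf{Step 2 (apply it).} Apply Step 1 to $\op{Hol} : \catname{Loc}_G(\Sigma) \iso \catname{Rep}_G(\Pi_1(\Sigma))$. This gives the first bijection $\frak{Loc}_G(\Sigma) \cong \frak{Rep}_G(\Pi_1(\Sigma))$, namely $[\cal{E}] \mapsto [\op{Hol}(\cal{E})]$. Apply it next to $\R : \catname{Rep}_G(\Pi_1(\Sigma)) \iso \catname{Rep}_G(\pi_1(\Sigma,p))$. This gives the second bijection $[(\E,\Phi)] \mapsto [\R(\E,\Phi)]$.

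\textbf{Step 3 (identify the composite).} By the commutative triangle $\R \circ \op{Hol} = \op{Mon}_p$, the composite bijection sends $[\cal{E}]$ to $[\op{Mon}_p(\cal{E})]$, as claimed. The triangle commutes on the nose: the restriction of $(\cal{E}\otimes\cal{O}_\Sigma,\op{hol})$ to $p$ is $(\cal{E}_p,\op{mon}_p)$, using the canonical identification of the fibre $\E_p$ with the stalk $\cal{E}_p$.

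\textbf{Canonicity.} The maps are canonical because they are induced by the explicitly defined functors, with no choices made. Changing a functor by a natural isomorphism would not alter the induced map on classes anyway.

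There is no real obstacle. The only point needing care is that injectivity uses fullness together with invertibility of morphisms, or equivalently a quasi-inverse, and not merely essential surjectivity.
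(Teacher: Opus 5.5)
Your proposal is correct and follows the paper's route. The paper states this as an immediate corollary of \autoref{250911161735}: the equivalences $\op{Hol}$ and $\R$ induce bijections on isomorphism classes, and the triangle $\R \circ \op{Hol} = \op{Mon}_p$ (commuting up to the canonical identification of fibres with stalks) shows that the composite is $[\cal{E}] \mapsto [\op{Mon}_p(\cal{E})]$.
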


Although \autoref{250911161735} is classical, we have found no single reference that cleanly addresses all of its aspects.
For this reason, and because we shall use parts of this proof in the next section, we supply a detailed proof in Appendix \ref{sec:A}.

\begin{remark}
	\label{250930121926}
	Let us explain in more detail the definition \eqref{250930122017} of the holonomy of a local system $\cal{E}$ along a path $\gamma$.
	First, the inverse image $\gamma^{-1} \cal{E}$ is by definition the sheaf on the interval $[0,1]$ associated with the presheaf that sends open subsets $I \subset [0,1]$ to the direct limit of $\cal{E} (U)$ over open subsets $U \subset \Sigma$ containing the image $\gamma (I)$; in symbols, $I \mapsto \varinjlim \cal{E} (U)$.
	Concretely, this implies that for any open subset $I \subset [0,1]$, there is an open neighbourhood $U \subset \Sigma$ of $\gamma (I)$ together with a linear map $\cal{E} (U) \to \gamma^{-1} \cal{E} (I)$.
	
	Since $\cal{E}$ is locally constant, it follows that the inverse image sheaf $\gamma^{-1} \cal{E}$ is locally constant, too: for any sufficiently small open subset $I \subset [0,1]$, there is an open neighbourhood $U$ of $\gamma (I)$ such that the restriction $\cal{E} |_U$ is a constant sheaf on $U$, which implies that the restriction $\gamma^{-1} \cal{E} \big|_I$ is a constant sheaf on $I$ with a canonical isomorphism $\cal{E} (U) \iso \gamma^{-1} \cal{E} (I)$.
	But since $[0,1]$ is contractible, the locally constant sheaf $\gamma^{-1} \cal{E}$ is in fact a constant sheaf on $[0,1]$, which means means there is a canonical isomorphism $\gamma^{-1} \cal{E} (I) \iso \gamma^{-1} \cal{E} \big( [0,1] \big)$ for any open subset $I \subset [0,1]$.
	
	Next, any half-open interval $[0,\epsilon)$ is an open subset of $[0,1]$, so any open neighbourhood $U$ of the image $\gamma \big( [0,\epsilon) \big)$ is in particular an open neighbourhood of $p = \gamma (0)$.
	But since $\cal{E}_p$ is the stalk of the locally constant sheaf $\cal{E}$ at $p$, there is by definition an isomorphism $\cal{E}_p \iso \cal{E} (U)$ for a sufficiently small $U$.
	Consequently, we find the canonical isomorphism $\cal{E}_p \iso \gamma^{-1} \cal{E} \big( [0,1] \big)$ used in the definition of the holonomy $\op{hol}_\gamma$.
\end{remark}

\subsection{Character Variety}
\label{251217152313}

The advantage of fixing a point $p \in \Sigma$ and reducing our attention to the representations of the fundamental group $\pi_1 (\Sigma, p)$ is that it can be used to give an explicit, finite-dimensional model of the moduli space $\frak{Loc}_G (\Sigma)$.

A \dfn{frame} (or \dfn{framing}) on an $n$-dimensional vector space $E$ is an isomorphism $\beta : E \iso \CC^n$.
Equivalently, a frame is a choice of an ordered basis of $E$ or of a group isomorphism $\GL(E) \iso G$.
Therefore, given any representation $(E, \Phi)$ of $\pi_1 (\Sigma, p)$, a frame on $E$ induces a group homomorphism $\rho : \pi_1 (\Sigma, p) \to G$ defined by $\rho : \gamma \mapsto \rho_\gamma \coleq \beta \circ \Phi_\gamma \circ \beta^{-1}$.
Conversely, any $\rho : \pi_1 (\Sigma, p) \to G$ gives rise to the representation $(\CC^n, \rho)$.

If $(E, \Phi)$ is a representation with frame $\beta$, then changing the frame on $E$ has the effect of conjugating the homomorphism $\rho$ by an element of $G$.
Namely, $\rho' : \gamma \mapsto \rho'_\gamma = g \rho_\gamma g^{-1}$ whenever $\beta' \circ \beta^{-1} = g \in G$.
This formula defines a conjugation action of $G$ on the set of group homomorphisms, called the \dfn{representation variety},
\begin{eqn}
	R_G (\Sigma, p) \coleq \Hom \big( \pi_1 (\Sigma, p), G \big).
\end{eqn}
Likewise, if $\phi : (E, \Phi) \iso (E', \Phi')$ is an isomorphism and $\beta, \beta'$ are frames on $E, E'$ that determine $\rho, \rho' \in R_G (\Sigma, p)$, then $\phi$ gives a group element $g \coleq \beta' \circ \phi \circ \beta^{-1} \in G$ which relates $\rho$ and $\rho$ by the same conjugation action: $\rho'_\gamma = g \rho_\gamma g^{-1}$ for all $\gamma \in \pi_1 (\Sigma, p)$.

This defines an inclusion functor from the action groupoid $G \ltimes R_G (\Sigma, p)$ to the groupoid of representations of $\pi_1 (\Sigma, p)$, and this functor turns out to be an equivalence.

\begin{proposition}
	\label{251009141029}
	There is a canonical equivalence of categories
	\begin{eqn}
		\catname{Rep}_G \big( \pi_1 (\Sigma, p) \big) 
		\cong G \ltimes R_G (\Sigma, p). 
	\end{eqn}
	Furthermore, for any $p, p' \in \Sigma$, there is an equivalence of categories $G \ltimes R_G (\Sigma, p) \cong G \ltimes R_G (\Sigma, p')$.
\end{proposition}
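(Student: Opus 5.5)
The plan is to define an explicit functor in the direction opposite to the inclusion described just before the statement. Concretely, I would first construct the inclusion functor $\iota : G \ltimes R_G (\Sigma, p) \to \catname{Rep}_G \big( \pi_1 (\Sigma, p) \big)$ on objects by $\rho \mapsto (\CC^n, \rho)$. On morphisms, an arrow $(g, \rho)$ from $\rho$ to $g.\rho$ is sent to the linear map $g : \CC^n \to \CC^n$. The intertwining condition \eqref{250911161645} then reads $g \rho_\gamma = (g.\rho)_\gamma\, g$, which is exactly the definition of the conjugation action. Compatibility with composition, $(g_2,g_1.\rho)\circ(g_1,\rho) = (g_2g_1,\rho) \mapsto g_2 g_1$, is immediate.

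Next I would show that $\iota$ is fully faithful and essentially surjective; with the axiom of choice this gives an equivalence. Faithfulness is clear, since distinct $g$ give distinct linear maps. For fullness, any morphism $\phi : (\CC^n,\rho) \to (\CC^n,\rho')$ is an invertible matrix $g$ with $g\rho_\gamma = \rho'_\gamma g$ for all $\gamma$. Hence $\rho' = g.\rho$, and $\phi = \iota(g,\rho)$. For essential surjectivity, given $(E,\Phi)$ choose a frame $\beta : E \iso \CC^n$ and put $\rho_\gamma = \beta\Phi_\gamma\beta^{-1}$. This $\rho$ is a group homomorphism, and $\beta$ is an isomorphism $(E,\Phi) \iso (\CC^n,\rho)$ in the representation groupoid. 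If one prefers an explicit quasi-inverse, choosing a frame $\beta_E$ for each $E$, with $\beta = \id$ when $E=\CC^n$, gives the functor $(E,\Phi)\mapsto \rho$ and $\phi \mapsto \beta_{E'}\phi\beta_E^{-1}$. The natural isomorphisms are then given by the frames themselves, as in the paragraph preceding the statement. The word ``canonical'' refers to $\iota$, which involves no choices.

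For the second claim, I would not compare $R_G(\Sigma,p)$ and $R_G(\Sigma,p')$ directly. Instead I would chain equivalences already available:
\[
G\ltimes R_G(\Sigma,p) \simeq \catname{Rep}_G\big(\pi_1(\Sigma,p)\big) \simeq \catname{Loc}_G(\Sigma) \simeq \catname{Rep}_G\big(\pi_1(\Sigma,p')\big) \simeq G\ltimes R_G(\Sigma,p'),
\]
using $\op{Mon}_p$ and $\op{Mon}_{p'}$ from \autoref{250911161735}. Alternatively, and more concretely, I would fix a path $\delta$ from $p$ to $p'$ (possible because $\Sigma$ is connected). This yields the group isomorphism $\pi_1(\Sigma,p') \to \pi_1(\Sigma,p)$, $\gamma \mapsto \delta^{-1}\gamma\delta$ (composing paths left to right). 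Pulling back along it gives a $G$-equivariant bijection $R_G(\Sigma,p)\to R_G(\Sigma,p')$, and hence an isomorphism of action groupoids.

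The main subtlety is that this isomorphism depends on $\delta$. Different choices differ by an element of $\pi_1(\Sigma,p)$, so the resulting isomorphisms differ by conjugation by $\rho_{\text{loop}}$. That conjugation is a natural isomorphism rather than the identity. So I expect the only real care needed to be in stating that the equivalence is well defined up to natural isomorphism, not on the nose. Everything else is routine verification.
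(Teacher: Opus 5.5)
Your argument is correct and is essentially the paper's: the paper also uses the choice-free inclusion $\rho\mapsto(\CC^n,\rho)$. It checks that a frame-chosen functor (identity frame on $\CC^n$) is a quasi-inverse, with the frames $\beta$ themselves giving the natural isomorphism, whereas you phrase the same content as full faithfulness plus essential surjectivity.

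The paper gives no separate argument for the change of basepoint, so your route through $\op{Mon}_p$ and $\op{Mon}_{p'}$ (or through a path $\delta$) validly fills that in. One small fix: the formula $\gamma\mapsto\delta^{-1}\gamma\delta$ gives a loop at $p$ only under the paper's right-to-left composition convention, not left-to-right as you wrote.
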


This equivalence is given by the inclusion functor
\begin{eqn}
	G \ltimes R_G (\Sigma, p) \iso \catname{Rep}_G \big( \pi_1 (\Sigma, p) \big)
	\qtext{sending}
	\rho \mapstoo (E,\Phi) \coleq (\CC^n, \rho),
\end{eqn}
and viewing any $g \in G$ as an isomorphism $g : \CC^n \to \CC^n$.
An inverse functor requires a choice of frame $\beta : E \iso \CC^n$ for every vector space $E \in \catname{Vec}_G$.
Once chosen, it is given on objects by $(E,\Phi) \mapsto \rho$ where $\rho_\gamma \coleq \beta \circ \Phi_\gamma \circ \beta^{-1}$ for all $\gamma \in \pi_1 (\Sigma, p)$, and on morphisms by $\phi \mapsto g \coleq \beta' \circ \phi \circ \beta^{-1}$.

\begin{proof}[Proof of \autoref{251009141029}]
	We just need to check that any inverse functor defined as suggested above is indeed inverse to the inclusion functor.
	That it is a left inverse is obvious especially if we choose the identity frame on $\CC^n$.
	For a right inverse, if $(E,\Phi) {\mapsto} \rho$ via a frame $\beta$, then $\beta$ in fact defines the desired isomorphism from $(E,\Phi)$ to $(\CC^n, \rho)$.
\end{proof}

If $\Sigma$ is compact, the representation variety $R_G (\Sigma, p)$ is a smooth affine algebraic variety \cite{MR1380633,MR2931326}.
If $\Sigma$ has genus $g$ and no boundary, then the representation variety has dimension $n^2 (2g-1) + 1$ when $g \geq 2$ and $(n^2 + n) g$ when $g = 0, 1$.
If $\Sigma$ has $s \geq 1$ holes then the representation variety has dimension $n^2 (2g + s - 1)$.

The affine algebraic group $G$ is acting algebraically on the affine algebraic variety $R_G (\Sigma, p)$, so the action groupoid $G \ltimes R_G (\Sigma, p)$ is algebraic.
Its moduli space is the well-known algebraic quotient stack called the \dfn{character variety}:
\begin{eqn}
	\mathfrak{X}_G (\Sigma) \coleq R_G (\Sigma, p) \big/ G.
\end{eqn}

\begin{corollary}
	\label{250912175812}
	There are canonical bijections between the moduli spaces of local systems, representations of the fundamental groupoid $\Pi_1 (\Sigma)$, representations of the fundamental group $\pi_1 (\Sigma, p)$ for any basepoint $p \in \Sigma$, and the character stack of $\Sigma$:
	\begin{eqn}
		\text{$\mathfrak{Loc}_G (\Sigma)$}
		\cong \text{$\mathfrak{Rep}_G \big( \Pi_1 (\Sigma) \big)$}
		\cong \text{$\mathfrak{Rep}_G \big( \pi_1 (\Sigma, p) \big)$}
		\cong \text{$\mathfrak{X}_G (\Sigma)$}.
	\end{eqn}
	Consequently, all these moduli spaces have a natural structure of an affine algebraic quotient stack.
\end{corollary}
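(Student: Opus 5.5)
The plan is to obtain the bijections in \autoref{250912175812} by composing the equivalences of categories already established, and then passing to isomorphism classes. An equivalence of groupoids $\F : \cal{G} \to \cal{G}'$ induces a well-defined bijection on moduli sets $X/\cal{G} \to X'/\cal{G}'$, $[x] \mapsto [\F(x)]$. Well-definedness holds because functors send isomorphisms to isomorphisms. Surjectivity follows from essential surjectivity. Injectivity follows from fullness: if $\F(x) \cong \F(y)$, the isomorphism lifts to a morphism $x \to y$, which is invertible because $\cal{G}$ is a groupoid. I will state this general fact once and then apply it three times.

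First, \autoref{250912175127} already gives the bijections
\begin{eqn}
	\frak{Loc}_G (\Sigma) \cong \frak{Rep}_G \big( \Pi_1 (\Sigma) \big) \cong \frak{Rep}_G \big( \pi_1 (\Sigma, p) \big),
\end{eqn}
induced by $\op{Hol}$ and $\op{Mon}_p$ from \autoref{250911161735}. Second, \autoref{251009141029} gives an equivalence $\catname{Rep}_G \big( \pi_1 (\Sigma, p) \big) \cong G \ltimes R_G (\Sigma, p)$. The moduli set of the action groupoid is by definition the orbit set $R_G (\Sigma, p)/G$, whose underlying set is the character stack $\frak{X}_G (\Sigma)$. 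Composing these yields the chain of canonical bijections. They are canonical because the inclusion functor $G \ltimes R_G (\Sigma,p) \to \catname{Rep}_G(\pi_1(\Sigma,p))$ involves no choices. The inverse functor depends on choices of frames, but the induced map on isomorphism classes does not: two frames differ by an element of $G$, which changes $\rho$ only within its $G$-orbit. I would also remark that, by the second half of \autoref{251009141029}, the result is independent of the choice of basepoint $p$, which justifies writing $\frak{X}_G(\Sigma)$ without $p$.

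For the final sentence, I transport structure along these bijections. When $\Sigma$ is compact, $R_G(\Sigma,p)$ is an affine algebraic variety (as recalled before the corollary), and $G$ acts algebraically. Hence $R_G(\Sigma,p)/G$ is an algebraic quotient stack, and each of the other moduli spaces acquires this structure through the canonical bijections.

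The only point needing care, and the main obstacle, is this last step. The claim is really about moduli stacks rather than sets, so one should observe that the equivalences are compatible with families. At the level the paper works, though, I would simply define the stack structure on the other moduli spaces by transport along the canonical bijections, consistent with the paper's convention of restricting attention to underlying moduli sets.
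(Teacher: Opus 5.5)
Your proposal is correct and follows the paper's route. The paper's argument is also just to compose the equivalences of \autoref{250911161735} (via \autoref{250912175127}) with \autoref{251009141029}, pass to isomorphism classes via $[\cal{E}] \mapsto [\op{Mon}_p(\cal{E})] \mapsto [\rho]$ (the frame choice in the last step is irrelevant on $G$-orbits), and inherit the stack structure from the algebraic $G$-action on the affine variety $R_G(\Sigma,p)$.
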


Explicitly, the above bijection is defined as the composition 
\begin{eqns}
	\mathfrak{Loc}_G (\Sigma) 
	&\to \mathfrak{Rep}_G \big( \pi_1 (\Sigma, p) \big) 
	\to \text{$\Hom \big( \pi_1 (\Sigma, p), G) / G$}
	= \mathfrak{X}_G (\Sigma)
	\\
	[\cal{E}] 
	&\mapsto [\op{Mon}_p (\cal{E})] 
	= [(\cal{E}_p, \op{mon}_p)] 
	\mapsto [\rho]
\end{eqns}
where the last arrow involves choosing a frame on the vector space $\cal{E}_p$.

\subsection{Monodromy Data}
\label{251115110733}

Finally, if $\Sigma$ is compact of genus $g$, then it has at most finitely many boundary components.
Then the fundamental group $\pi_1 (\Sigma, p)$ is finitely generated by $2g + s$ generators with one relation, where $s \geq 0$ is the total number of punctures and boundary components of $\Sigma$, collectively referred to as \dfn{holes}.
In fact, if there is at least one hole (i.e., $s \geq 1$), then $\pi_1 (\Sigma, p)$ is a free group on $2g + s - 1$ generators.
A generating set can be obtained by selecting $s$ loops going round the holes of $\Sigma$ as well as $2g$ loops representing the $\rm{A}$ and $\rm{B}$-cycles.
More precisely:
\begin{enumerate}
	\item Choose any collection of $2g$ oriented loops $\alpha_1, \ldots, \alpha_g, \beta_1, \ldots, \beta_g$ based at $p$ that determine a basis of $\rm{A}$ and $\rm{B}$-cycles.
	Namely, if $\Sigma'$ denotes the closed surface obtained from $\Sigma$ by capping off all boundary components, then the fundamental group $\pi_1 (\Sigma', p)$ is generated by the $\alpha_i, \beta_i$ such that $\prod_{i=1}^g [\alpha_i, \beta_i] = 1$ where $[\alpha_i, \beta_i] \coleq \alpha_i \beta_i \alpha_i^{-1} \beta_i^{-1}$ is the commutator and $1 = 1_p$ is the constant path at $p$.
	\item If $s \geq 1$, enumerate all holes of $\Sigma$ as $1, \ldots, s$.
	For each $i = 1, \ldots, s$, let $\gamma_i$ be a positively-oriented loop based at $p$ going round only the $i$-th hole; i.e., $\gamma_i$ belongs to the conjugacy class in $\pi_1 (\Sigma, p)$ determined by the $i$-th hole.
\end{enumerate}

These generators can be chosen in such a way as to satisfy the relation
\begin{eqntag}
	\label{250922123329}
	\prod_{i=1}^{\substack{g\\\longleftarrow}} [\alpha_i, \beta_i] 
	\prod_{i=1}^{\substack{s\\\longleftarrow}} \gamma_i
	= 1.
\end{eqntag}
Here, the left arrow over the product sign indicates that the product is taken from right to left.
Therefore, the fundamental group has an explicit presentation
\begin{eqntag}
	\label{251005162302}
	\pi_1 (\Sigma, p)
	\cong \inner{ \alpha_i, \beta_i, \gamma_i ~\big|~ \text{ relation \eqref{250922123329}}}.
\end{eqntag}
The relation \eqref{250922123329} can be solved for any $\gamma_i$, but not for any of the $\alpha_i$ or $\beta_i$.
Consequently, if $s \geq 1$, then $\pi_1 (\Sigma, p)$ is isomorphic to the free group on $2g + s - 1$ generators.

Next, we introduce the following definition.

\begin{definition}
	\label{251005162057}
	For any integers $g,s \geq 0$, a \dfn{monodromy data} in $G$ of type $(g,s)$ is any collection of matrices $( \bm{\A}, \bm{\B}, \bm{\M} ) \subset G$ consisting of:
	\begin{itemize}
		\item $\bm{\A} \coleq \set{ \A_1, \ldots, \A_g}, \bm{\B} \coleq \set{ \B_1, \ldots, \B_g }$ called \dfn{period monodromies};
		\item $\bm{\M} \coleq \set{ \M_1, \ldots, \M_s}$	 called \dfn{standard monodromies};
	\end{itemize}
	and satisfying the following relation:
	\begin{eqntag}
		\label{251005161035}	
		\prod_{i=1}^{\substack{g\\\longleftarrow}} [\A_i, \B_i] 
		\prod_{i=1}^{\substack{s\\\longleftarrow}} \M_i = \idd.
	\end{eqntag}
	We denote the \dfn{space of monodromy data} in $G$ of type $(g,s)$ by 
	\begin{eqn}
		\sf{Mon}_G (g,s)
		\coleq \set{ (\bm{\A}, \bm{\B}, \bm{\M}) \in G^{2g + s} 
			~\Big|~ \textup{ relation \eqref{251005161035} } \big.}.
	\end{eqn}
\end{definition}

\begin{definition}
	\label{251216170752}
	To equip a surface $\Sigma$ of genus $g$ with $s$ holes with monodromy data is to choose a point $p\in\Sigma$ and an isomorphism
	\begin{eqn}
		R_G (\Sigma, p) \iso \sf{Mon}_G (g,s).
	\end{eqn}
	A choice of finite presentation of the fundamental group $\pi_1 (\Sigma, p)$ of the form \eqref{251005162302} determines such an isomorphism by associating monodromy data $( \bm{\A}, \bm{\B}, \bm{\M} ) \in \sf{Mon}_G (g,s)$ to the generators $(\bm{\alpha}, \bm{\beta}, \bm{\gamma})$ of $\pi_1 (\Sigma, p)$ like so:
	\begin{eqn}
		\alpha_i \mapsto \A_i ~,
		\qquad
		\beta_i \mapsto \B_i ~,
		\qquad
		\gamma_i \mapsto \M_i.
	\end{eqn}
\end{definition}


Note that if $s \geq 1$, then by eliminating one of the matrices $\M_i$ from the list $(\bm{\A}, \bm{\B}, \bm{\M})$ of generators destroys the relation \eqref{251005161035}.
This yields a bijection $\sf{Mon}_G (g,s) \cong G^{2g + s - 1}$ which shows that the set of monodromy data $\sf{Mon}_G (g,s)$ is a smooth affine algebraic variety of dimension $(2g + s - 1) n^2$ provided that $s \geq 1$.

The space of monodromy data $\sf{Mon}_G (g,s)$ is equipped with a natural left $G$-action given by simultaneous conjugation:
\begin{eqn}
	g . (\bm{\A}, \bm{\B}, \bm{\M}) = (g \bm{\A} g^{-1}, g \bm{\B} g^{-1}, g \bm{\M} g^{-1}).
\end{eqn}
Then we arrive at the following classical result.

\begin{proposition}
	\label{251109102652}
	Let $\Sigma$ be a surface of genus $g$ with $s$ holes.
	Then there is a non-canonical equivalence of categories between the groupoid of $G$-local systems on $\Sigma$ and an action groupoid over the space of monodromy data of type $(g,s)$:
	\begin{eqn}
		\catname{Loc}_G (\Sigma) \simeq G \ltimes \sf{Mon}_G (g,s).
	\end{eqn}
	To obtain such an equivalence, fix a point $p \in \Sigma$ and choose any presentation of the fundamental group $\pi_1 (\Sigma, p)$ of the form \eqref{251005162302}.
	Consequently, there is a bijection between the character variety and the moduli space of monodromy data:
	\begin{eqn}
		\text{$\mathfrak{X}_G (\Sigma)$} \cong \sf{Mon}_G (g,s) / G.
	\end{eqn}
	In particular, if $s \geq 1$, then $\mathfrak{X}_G (\Sigma) \cong G^{2g + s - 1} / G$.
	Consequently, the moduli space $\mathfrak{Loc}_G (\Sigma)$ of $G$-local systems on $\Sigma$ is an algebraic stack of dimension $(2g-2)n^2 + 2$ if $s = 0$ and $(2g+s-2)n^2 + 1$ if $s \geq 1$.
\end{proposition}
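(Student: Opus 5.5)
The equivalence will be built as a composite of results already established above, followed by a presentation-dependent identification. By \autoref{250911161735} (the monodromy functor $\op{Mon}_p$) and \autoref{251009141029} we already have the chain
\begin{eqn}
	\catname{Loc}_G (\Sigma) \iso \catname{Rep}_G \big( \pi_1 (\Sigma, p) \big) \cong G \ltimes R_G (\Sigma, p).
\end{eqn}
The remaining step is to fix a presentation of $\pi_1 (\Sigma, p)$ of the form \eqref{251005162302}. This yields, as in \autoref{251216170752}, a bijection $R_G (\Sigma, p) \iso \sf{Mon}_G (g,s)$ given by $\rho \mapsto (\rho_{\alpha_i}, \rho_{\beta_i}, \rho_{\gamma_i})$. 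It is a bijection because a homomorphism out of a group with presentation \eqref{251005162302} is exactly an assignment of matrices to the generators satisfying the image of the relation, and that image is precisely \eqref{251005161035}. I would then check that this bijection is $G$-equivariant: conjugating $\rho$ by $g$ conjugates each generator image by $g$. Hence it upgrades to an isomorphism of action groupoids $G \ltimes R_G(\Sigma,p) \cong G \ltimes \sf{Mon}_G(g,s)$. Composing gives the claimed equivalence, and it is non-canonical only because of the choices of $p$, the presentation, and the frames used in \autoref{251009141029}.

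Passing to isomorphism classes gives $\mathfrak{X}_G(\Sigma) = R_G(\Sigma,p)/G \cong \sf{Mon}_G(g,s)/G$. When $s \geq 1$, I would solve \eqref{251005161035} for $\M_s$ as $\M_s = \big(\prod^{\leftarrow}_{i} [\A_i,\B_i] \prod^{\leftarrow}_{i<s} \M_i\big)^{-1}$, which gives a $G$-equivariant isomorphism $\sf{Mon}_G(g,s) \cong G^{2g+s-1}$ with $G$ acting by simultaneous conjugation. This shows $\mathfrak{X}_G(\Sigma) \cong G^{2g+s-1}/G$.

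The dimension count is where I expect the main care to be needed, since it depends on reading "dimension" as stack dimension, $\dim R - \dim G$, in a way that accounts for generic stabilisers.
\begin{itemize}
	\item For $s \geq 1$: $\dim R = (2g+s-1)n^2$. The stack quotient would give $(2g+s-2)n^2$. The stated value $(2g+s-2)n^2 + 1$ instead corresponds to dividing by $\PGL_n$, because the centre $\CC^\times$ acts trivially and the generic stabiliser is one-dimensional. So I would argue via the effective action of $G/Z(G)$, equivalently by adding back the dimension of the generic stabiliser $Z(G)$.
	\item For $s=0$ and $g \geq 2$: the commutator map $G^{2g} \to \mathsf{SL}_n(\CC)$ is a submersion at irreducible points, so $\dim R = 2gn^2 - (n^2-1)$. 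Subtracting $n^2 - 1$ for the effective $\PGL_n$-action gives $(2g-2)n^2 + 2$.
\end{itemize}
The obstacle here is justifying the submersion and generic-irreducibility claims. I would cite the smoothness results referenced after \autoref{251009141029} rather than reprove them. The low genus cases $g = 0, 1$ with $s = 0$ would be treated as matching the dimension formula quoted there, or noted as exceptions.
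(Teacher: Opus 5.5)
This is the argument the paper intends: it gives no separate proof beyond composing $\op{Mon}_p$ and \autoref{251009141029} with the $G$-equivariant identification $R_G(\Sigma,p)\cong\mathsf{Mon}_G(g,s)$ of \autoref{251216170752}, and eliminating one $\M_i$ when $s\geq 1$. Your diagnosis of the dimension is also right: the stated values are $\dim R_G(\Sigma,p)-\dim\PGL_n$, whereas the stack $R_G(\Sigma,p)/G$ has dimension $\dim R_G(\Sigma,p)-n^2$, and the latter is the count the paper itself uses in \autoref{251109195847} (it gives $(2g+s-2)n^2$ for $P=P_{\mathrm{II}}$, which by \autoref{251217194858} is a ramified cover of $\mathfrak{X}_G(\Sigma)$).

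Three small corrections:
\begin{itemize}
\item From $X\M_sY=\idd$, with $X=\prod_{i}^{\leftarrow}[\A_i,\B_i]$ and $Y=\M_{s-1}\cdots\M_1$, one gets $\M_s=X^{-1}Y^{-1}$, not $(XY)^{-1}$.
\item The generic stabiliser is $Z(G)$ only when $2g+s-1\geq 2$. For $(g,s)=(0,1)$ and $(0,2)$ with $n\geq 2$ it is $G$ and a maximal torus respectively, although the $\PGL_n$ count still yields the stated value in those cases.
\item For $s=0$, $g\leq 1$, $n\geq 2$, the formula $(2g-2)n^2+2$ is false under either reading (for the torus one gets $n+1$ from the $\PGL_n$ count and $2n$ for the coarse quotient). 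So these cases are genuine exceptions, not ones that match.
\end{itemize}
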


We stress that the above equivalence and bijection depend nontrivially on the chosen presentation \eqref{251005162302} of $\pi_1 (\Sigma, p)$.

\section{Surfaces with Marked Boundary }
\label{se:2}

Recall that a \dfn{bordered surface} is a connected, oriented, smooth real two-dimensional manifold-with-boundary $\Sigma$ which we also assume to be compact.
It has finitely many boundary components each of which is diffeomorphic to a circle.
A \dfn{marked bordered surface} is a pair consisting of a bordered surface equipped with a finite nonempty subset of \dfn{marked points} such that each boundary component contains at least one marked point.
An isomorphism of marked bordered surfaces is an orientation-preserving diffeomorphism that sends marked points to marked points.

\begin{remark}
	The terminology for \textit{marked bordered surfaces}  matches what seems to be a broadly accepted meaning in the literature, including Fomin-Shapiro-Thurston \cite[§2]{MR2448067}, Smith \cite{MR3416110}, Bridgeland-Smith \cite[§8]{MR3349833}, Allegretti-Bridgeland \cite[§3.6]{MR4155179}, Allegretti \cite[§2]{MR4205119}, to name a few.
	However, it corresponds to what Goncharov \cite[§1]{MR3702383} and Goncharov-Shen \cite[§2]{250619181547} call a ``decorated surface''.
\end{remark}

Marked points in the interior are called \dfn{punctures}.
For our purposes, it will be more natural to work not with marked bordered surfaces but with their real-oriented blowups along the punctures.
Concretely, the real-oriented blowup (see, e.g., \cite[§1.1]{240622121512}) of any marked bordered surface is a bordered surface $\Sigma$ obtained by replacing each puncture with a boundary circle containing no marked points whilst keeping the rest of the surface, including the boundary marked points, unchanged.
Thus, $\Sigma$ has marked points on some, but not all, boundary components.
Furthermore, sometimes it will be important for us to place an extra marked point on each unmarked boundary of $\Sigma$ yet still be able to distinguish the original boundary circles with marked points from the ones that arose from punctures.
This motivates the following definition.

\begin{definition}
	\label{defMarkedBorderedSurface}
	A \dfn{surface with marked boundary} 
	is a pair $(\Sigma, P)$ where $\Sigma$ is a compact bordered surface whose boundary components are partitioned as 
	\begin{eqn}
		I = I_\rm{sim} \sqcup I_\rm{reg} \sqcup I_\rm{irr}
	\end{eqn}
	into \dfn{simple}, \dfn{regular}, and \dfn{irregular} boundary circles, and where $P \subset \del \Sigma$ is a finite set of boundary \dfn{marked points}, itself partitioned as 
	\begin{eqn}
		P = P_\rm{I} \sqcup P_\rm{II}
	\end{eqn}
	into respectively  \dfn{primary marked points} and \dfn{secondary marked points}.
	Furthermore, every irregular boundary circle contains at least one primary marked point and no regular ones; every regular boundary circle contains exactly one secondary marked point and no primary ones; and every simple boundary circle contains no marked points at all.
	An isomorphism $f : (\Sigma, P) \to (\Sigma', P')$ of decorated surfaces is an orientation-preserving homeomorphism $f : \Sigma \to \Sigma'$ with the property that $f(P_\rm{I}) = P'_\rm{I}$ and $f(P_\rm{II}) = P'_\rm{II}$. 
\end{definition}

Often only the relative position of the marked points plays any role.
So for almost everything we say, we may allow the subset $P \subset \del \Sigma$ to be defined only up to isotopy, which concretely means that marked points can slide continuously along the boundary without crossing each other.
Hence, secondary marked points, being the sole occupiers of a regular boundary circle, are free to move around unconstrained.

\begin{remark}
	Fock-Goncharov \cite[Def 1.1]{MR2233852} use the term ``marked surface'' which in our language is a surface with marked boundary in which all boundaries are marked.
	
	Fock-Rosly \cite[§2]{MR1730456} and Fock-Goncharov \cite[§2]{MR2349682} also use the terms ``ciliated surface'', ``cilia'', and ``holes'' which in our language mean respectively a 
	surface with marked boundary with no secondary points, $P_\rm{II}=\emptyset$, \textit{primary marked points}, and \textit{simple boundary circles}.
	
	Chekhov-Mazzocco \cite{MR3746633}, Chekhov-Mazzocco-Rubtsov \cite{MR3932256,MR3802126}, and Dal Martello-Mazzocco \cite{MR4756422} use the terms ``bordered cusped Riemann surface'' or ``Riemann surface with bordered'' or ``boundary cusps'' for a manifold $\Sigma_{g,s,n}$ whose underlying topological surface is in our language a surface with marked boundary with at least one primary marked point, $P_\rm{I}\neq\emptyset$ and no secondary ones $P_\rm{II}=\emptyset$.
	
	Boalch \cite{MR4285677} considers a smooth compact complex curve $\Sigma$ with a finite subset $\bm{a} \subset \Sigma$ (using his notation).
	He then considers its real-oriented blowup $\hat{\Sigma} \to \Sigma$ and decorates its boundary $\del \hat{\Sigma}$ with two types of marked points called ``singular directions'' $\AA \subset \del \hat{\Sigma}$ and ``Stokes directions'' $\SS \subset \del \hat{\Sigma}$.
	If we drop the complex structure on $\hat{\Sigma}$ and consider just the underlying topological surface, then the pair $(\hat{\Sigma}, \AA)$ is an example of our surface with marked boundary with no secondary marked points.
	
	Jordan-Le-Schrader-Shapiro \cite[Def 1.1]{jordan2021quantumdecoratedcharacterstacks} use the term ``decorated surface'' in a quite different way but which can be directly related to ours.
	Namely, their notion involves regions in $\Sigma$ separated by simple curves, called ``walls'', that are closed or have endpoints on the boundary.
	Their attention is largely restricted to a subclass called \textit{simple decorated surfaces} \cite[Rem 1.5]{jordan2021quantumdecoratedcharacterstacks} which are equivalent to our our surface with marked boundary where all boundaries are marked.
	%
	
\end{remark}

The \dfn{boundary loop} based at a marked point $p \in P$ is the loop determined by the boundary circle containing $p$, oriented according to the natural orientation on the boundary $\del \Sigma$.
We also consider the punctured boundary $\hat{\del} \Sigma \coleq \del \Sigma \smallsetminus P_\rm{I}$, punctured along the primary marked points only.
It is a union of circles and open intervals.
A \dfn{boundary segment} $\delta \subset \del \Sigma$ is the closure of a simply connected component of the punctured boundary $\hat{\del} \Sigma$.
It has a natural orientation matching the orientation of the boundary $\del \Sigma$.
Thus, it is a path $\delta : [0,1] \to \del \Sigma$ completely determined by its endpoints which are two (not necessarily distinct) primary marked points $p_1 = \delta (0)$ and $p_2 = \delta (1) \in P_\rm{I}$.
For this reason, we may sometimes denote this boundary segment using the interval notation $[p_1, p_2]$.
We denote the set of all boundary segments of $(\Sigma, P)$ by
\begin{eqn}
	\frak{A} 
	= \frak{A} (\Sigma, P) \coleq \set{ \text{$\delta$ boundary segments} }.
\end{eqn}

\begin{remark}
	Note that Goncharov-Shen introduce the term ``coloured decorated surface'' which in our language corresponds to a triple $(\Sigma, P, \frak{A}_0)$ consisting of a surface with marked boundary $(\Sigma, P)$ and a sub-collection $\frak{A}_0 \subset \frak{A}$ of \textit{boundary segments} (which they call ``coloured boundary intervals'').
\end{remark}

A surface with marked boundary $(\Sigma, P)$ is completely determined up to isomorphism by the genus $g$ of $\Sigma$; an ordered collection $\dfn{s} \coleq (l,r,d)$ of non-negative integers equal respectively to the number of unmarked, marked by secondary points, and marked by primary points boundaries; and an unordered collection $\bm{m} \coleq \set{ m_1, \ldots, m_d }$ of strictly positive integers equal to the number of primary marked points per boundary circle marked by primary points.
In total, $(\Sigma, P)$ has $s = |\bm{s}| \coleq l + r + d$ holes, $r$ secondary marked points, $m = |\bm{m}| \coleq m_1 + \ldots + m_d$ primary marked points.
Also, $|P| = m + r$. These considerations justify the following

\begin{definition}
	A surface with marked boundary $(\Sigma, P)$ is called   of \dfn{type} $(g,\bm{s},\bm{m})$, where $\dfn{s} \coleq (l,r,d)$ and $\bm{m} \coleq (m_1 , \ldots , m_d)$, if it has $l$ simple boundaries, $r$ regular boundaries,  $d$ irregular boundaries  and $m_i$ primary marked points on the $i$-th irregular boundary.
\end{definition}

\subsection{Discrete Fundamental Groupoid}
\label{251214205432}

Now, let $(\Sigma, P)$ be a surface with marked boundary such that $P\neq\emptyset$.

\begin{definition}
	\label{250912192631}
	The \dfn{discrete fundamental groupoid} of a surface with marked boundary $(\Sigma, P)$ is the restriction of the fundamental groupoid $\Pi_1 (\Sigma)$ to $P$:
	\begin{eqn}
		\pi_1 (\Sigma, P) \coleq \Pi_1 (\Sigma) \big|_P = \rm{s}^{-1} (P) \cap \rm{t}^{-1} (P) \subset \Pi_1 (\Sigma).
	\end{eqn}
\end{definition}

Note that the restricted fundamental groupoid $\Pi_1 (\Sigma) \big|_P \coleq \rm{s}^{-1} (P) \cap \rm{t}^{-1} (P)$ makes sense as a groupoid over any subset $P \subset \Sigma$, not necessarily finite.
However, our interest lies with surfaces  with marked boundary: the upshot is that, just as every surface has a naturally associated \textit{continuous} fundamental groupoid, every  surface  with marked boundary has a naturally associated \textit{discrete} fundamental groupoid.

We call $\pi_1 (\Sigma, P)$ \textit{discrete} because, in contrast to the \textit{continuous} fundamental groupoid $\Pi_1 (\Sigma)$, it has finitely many objects $P$ and any two objects are related by a countable collection of arrows.
More accurately, the subspace topology on $\pi_1 (\Sigma, P)$ inherited from $\Pi_1 (\Sigma)$ is the discrete topology.
Even more importantly, as we argue next, the discrete fundamental groupoid $\pi_1 (\Sigma, P)$ is a \textit{finitely-generated groupoid}, which means we can find a finite collection of paths $\gamma_i \in \pi_1 (\Sigma, P)$ such that any path $\gamma \in \pi_1 (\Sigma, P)$ can be written as a finite composition of the $\gamma_i$.

\begin{remark}
	The discrete fundamental groupoid $\pi_1 (\Sigma, P)$ has been considered by Li--Bland-\v{S}evera \cite[§4]{MR3424475} who denoted it by $\Pi_1 (\Sigma, \V)$, and by Chekhov-Mazzocco-Rubtsov \cite[§4]{MR3802126} who called it the ``fundamental groupoid of arcs'' and denoted it by $\frak{U}$.
	It also appears in Dal Martello-Mazzocco \cite[§3]{MR4756422} who denote it by $\pi_\frak{a} (\Sigma_{g,s,m})$.
\end{remark}

\begin{lemma}
	\label{251217105341}
	For  surface with marked boundary $(\Sigma,P)$, where $\Sigma$ has  genus $g$ and $s$ holes, the discrete fundamental groupoid $\pi_1 (\Sigma, P)$ is finitely generated by $2g + s + |P| - 1$ elements and one relation.
	Moreover, if $s \geq 1$, then $\pi_1 (\Sigma, P)$ is freely generated by ${2g + s + |P| - 2}$ elements.
	Any choice of such free generators determines (up to isotopy) a graph $\Gamma$ embedded into $\Sigma$ with vertices $P$ such that the discrete fundamental groupoid $\pi_1 (\Sigma, P)$ is isomorphic to the free groupoid $\Pi \Gamma \rightrightarrows P$ generated by $\Gamma$.
\end{lemma}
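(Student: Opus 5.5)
We reduce the statement to the classical presentation \eqref{251005162302} of the fundamental group at a single basepoint, combined with the standard structure of restricted fundamental groupoids. Fix a primary or secondary marked point $p_0 \in P$. A groupoid restricted to a finite set of objects that is connected is determined, up to isomorphism, by its vertex group and a choice of connecting arrows. Concretely, for each $p \in P \smallsetminus \set{p_0}$ we choose a path $\tau_p \in \pi_1 (\Sigma, P)$ from $p_0$ to $p$. These $|P|-1$ paths can be chosen as embedded arcs in $\Sigma$ that are pairwise disjoint except at $p_0$, and that avoid the loops used below except at $p_0$; this is possible because $\Sigma$ is a connected surface and $P \subset \del \Sigma$.

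Every $\gamma \in \pi_1 (\Sigma, P)$ from $p$ to $q$ can then be written uniquely as $\gamma = \tau_q \cdot \ell \cdot \tau_p^{-1}$ with $\ell = \tau_q^{-1} \gamma \tau_p \in \pi_1 (\Sigma, p_0)$. This gives the standard isomorphism of groupoids
\begin{eqn}
	\pi_1 (\Sigma, P) \cong \pi_1 (\Sigma, p_0) \ast \Pi T,
\end{eqn}
where $\Pi T$ is the free (simply connected) groupoid on the tree $T$ formed by the arcs $\tau_p$. The right-hand side is the free product of groupoids over the object set $P$. Hence $\pi_1 (\Sigma, P)$ is generated by the generators $\alpha_i, \beta_i, \gamma_j$ of $\pi_1 (\Sigma, p_0)$ together with the $|P|-1$ arcs $\tau_p$. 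The only relation is \eqref{250922123329}, since the tree contributes no relations. This gives $2g + s + |P| - 1$ generators and one relation. When $s \geq 1$, we use the relation to eliminate one $\gamma_j$, as in \autoref{251115110733}. The vertex group is then free on $2g+s-1$ generators, and $\pi_1 (\Sigma, P)$ is freely generated by $2g + s + |P| - 2$ elements. The main thing to verify here is the universal property of a free groupoid: a functor out of $\pi_1 (\Sigma, P)$ is the same as a group homomorphism out of $\pi_1 (\Sigma, p_0)$ together with an arbitrary assignment on the $\tau_p$. This follows from the unique decomposition above.

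For the graph statement, take $\Gamma$ to have vertex set $P$. Its edges are the arcs $\tau_p$ together with representatives of the free loop generators based at $p_0$. Choosing these representatives as simple loops meeting only at $p_0$ (the standard one-vertex spine of $\Sigma$ when $s \geq 1$) makes $\Gamma$ embedded. By construction, $\Pi \Gamma \to \pi_1 (\Sigma, P)$ sends free generators to free generators, so it is an isomorphism.

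The converse direction, that an arbitrary choice of free generators determines an embedded graph up to isotopy, is where I expect the main obstacle. An abstract free generating set need not be realisable by disjoint simple arcs. I would try to handle this by first noting that $\Sigma$ deformation retracts onto a graph containing $P$, namely a ribbon spine. I would then argue that free generating sets correspond to spanning-edge choices of such spines, with the embedding well-defined only up to isotopy and homotopy. If that fails in general, I would restrict the claim to generating sets realised by embedded arc systems. That restricted version is all that is used later, and it is what the construction above produces.
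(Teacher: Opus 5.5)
Your argument for the presentation is the paper's argument. You fix $p_0\in P$, take the standard $2g+s$ generators of $\pi_1(\Sigma,p_0)$ with their single relation, and adjoin $|P|-1$ paths from $p_0$ to the other marked points. Your unique factorisation $\gamma=\tau_q\,\ell\,\tau_p^{-1}$ (equivalently $\pi_1(\Sigma,P)\cong\pi_1(\Sigma,p_0)\ast\Pi T$) supplies the justification, left implicit in the paper, that the connecting paths introduce no further relations. The paper's proof says nothing about the graph clause. Your construction (simple loops at $p_0$ meeting only there, plus a fan of arcs) proves its existence version.

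Your worry about the converse is justified: the clause is false as literally stated, so the planned correspondence between arbitrary free bases and spines cannot succeed. Take a pair of pants with one irregular boundary circle carrying a single primary point $p_0$ and two simple boundary circles. Then $\pi_1(\Sigma,P)=\pi_1(\Sigma,p_0)$ is free on loops $a,b$ around the two simple boundaries, and $\{a,a^2b\}$ is also a free basis. An embedded graph with the single vertex $p_0$ has simple loops as edges. Such a loop pushes off the boundary to a simple closed curve, which in a pair of pants is null-homotopic or boundary-parallel. Its class in $H_1(\Sigma;\mathbb{Z})$ is therefore $0$, $\pm[a]$, $\pm[b]$ or $\pm([a]+[b])$, never $\pm(2[a]+[b])$.

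So go straight to your fallback. The true statement is that free generating sets realised by embedded graphs $\Gamma$ with vertex set $P$ exist, and that any such $\Gamma$ gives $\Pi\Gamma\cong\pi_1(\Sigma,P)$. This is exactly what your construction produces, and nothing later in the paper (e.g. \autoref{251215203809}) needs more.
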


\begin{proof}
	Select any point $p_0 \in P$.
	Recall that the fundamental group $\pi_1 (\Sigma, p_0)$ is finitely generated by $2g + s$ elements satisfying one relation, and that it is in fact isomorphic to the free group on $2g + s - 1$ generators whenever $s \geq 1$.
	Let us take any such generating set.
	Next, for every point $p \in P \smallsetminus p_0$, choose a path from $p_0$ to $p_1$.
	There are a total of $|P| - 1$ such paths.
	Then the resulting collection generates $\pi_1 (\Sigma, P)$ and satisfies one relation.
\end{proof}

\subsubsection{Finite Presentation}
\label{251218133944}

We now give a step-by-step prescription of choosing a convenient finite generating set for the discrete fundamental groupoid $\pi_1 (\Sigma, P )$ of any surface with marked boundary $(\Sigma, P)$.
This procedure specialises directly to the classical setting without marked points described in \autoref{251115110733} by simply choosing the base point of the fundamental group  $p_0$ to lie on a boundary of $\Sigma$.

Informally speaking, we first select the \textit{main} basepoint $p_0 \in P$.
Then we select one distinguished marked point from each marked boundary circle and choose paths to connect them to the main basepoint $p_0$.
Then the basis of generators is made up of these paths as well as all the boundary segments connecting the primary marked points, all the loops based at the main basepoint that go round the un-marked boundaries, and finally the loops that represent the $A$- and $B$-cycles.
More precisely, our recipe is as follows; see \autoref{251218221517}.

Let $(\Sigma,P)$ be a surface with marked boundary of type $({\bf s},{\bf m})$, then we make the following choices.

\begin{figure}[t]
	\centering
	\includegraphics[width=0.9\textwidth]{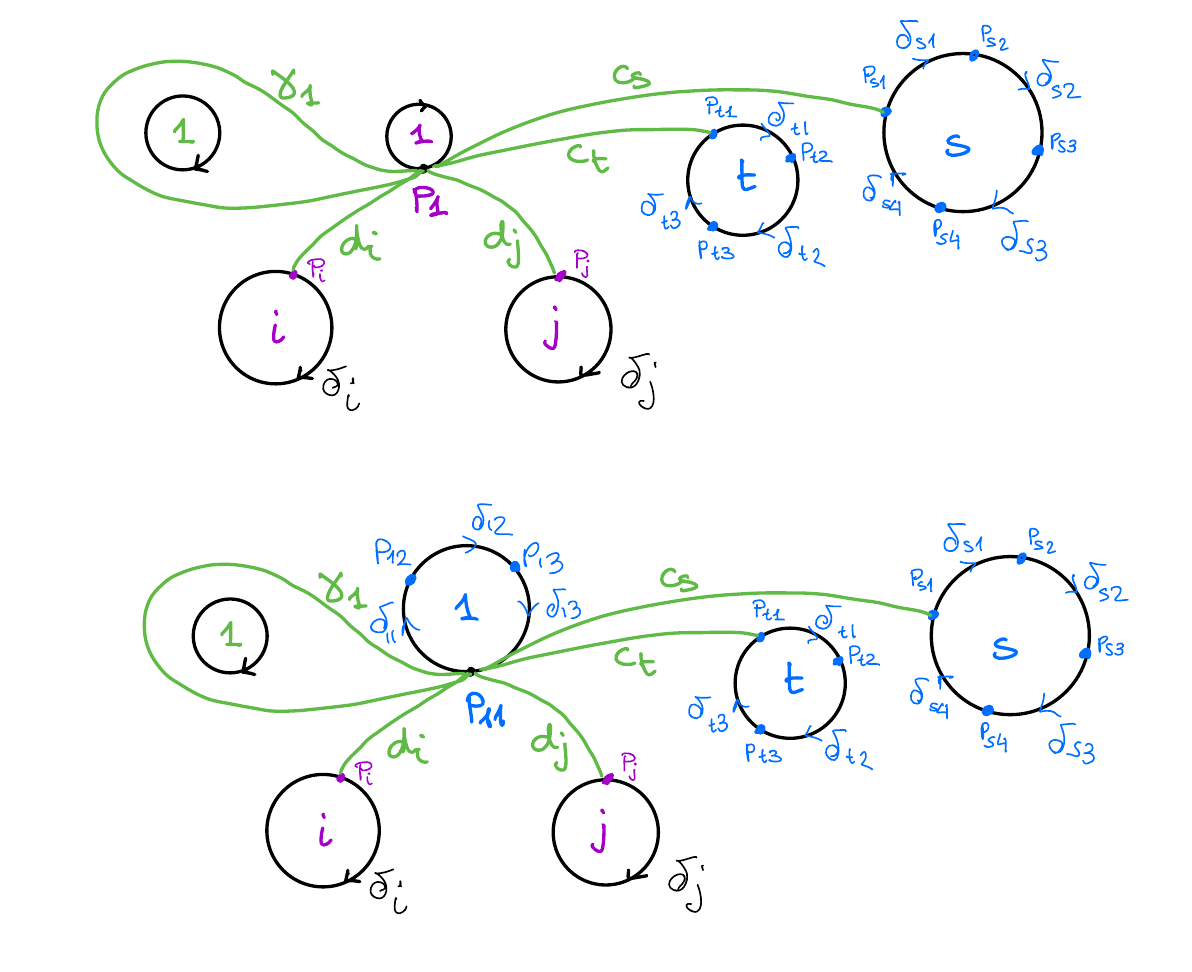}
	\caption{In the top picture the main point is a primary point. In the bottom one, the main point is a secondary point.}
	\label{251218221517}
\end{figure}

\begin{enumerate}
	\item Select any main basepoint $p_0 \in P$ and choose any collection of $2g$ oriented loops $\alpha_1, \ldots, \alpha_g, \beta_1, \ldots, \beta_g$ based at $p_0$ that determine a basis of $A$- and $B$-cycles of $\Sigma$.

	\item Enumerate all simple boundary circles as $1, \ldots, l$.
	For each $i = 1, \ldots, l$, let $\gamma_i$ be a positively-oriented loop based at $p_0$ going round only the $i$-th unmarked boundary circle; i.e., $\gamma_i$ belongs to the conjugacy class in $\pi_1 (\Sigma, p_0)$ determined by the $i$-th unmarked  boundary component.
	\item Enumerate all regular boundary circles  as $1, \ldots, r$, and label the corresponding secondary marked points as $p_1, \ldots, p_r$.
	Let $\delta_i$ be the positively-oriented boundary loop based at $p_{i}$, for each $i = 1, \ldots, r$.
	\item For each $i = 1, \ldots, r$, choose a path $d_i$ with source $p_0$ and target $p_{i}$.
	In addition, if $p_0 \in P_\rm{II}$, ensure that $p_1 = p_0$ and that $d_1 = 1$ is the constant path at $p_0$.
	\item Enumerate all irregular boundary circles as $1, \ldots, d$.
	For each $i = 1, \ldots, d$, select one distinguished marked point on the $i$-th irregular boundary circle and call it $p_{i,1}$.
	Let $m_i \geq 1$ be the number of irregular marked points on the $i$-th irregular boundary circle, so that $m_1 + \ldots + m_d = m$.
	Enumerate all the $m_i$ marked points belonging to this boundary circle as $p_{i,j}$ for $j = 1, \ldots, m_i$ in a consecutive fashion starting with $p_{i,1}$ and following the positive orientation of the boundary circle.
	Let $\delta_{i,j}$ be the positively-oriented boundary segment with source $p_{i,j}$ and target $p_{i,j+1}$, where $j$ is understood mod $m_i + 1$.
	\item For each $i = 1, \ldots, d$, choose a path $c_i$ with source $p_0$ and target $p_{i,1}$.
	In addition, if $p_0 \in P_\rm{I}$, ensure that $p_{1,1} = p_0$ and that $c_1 = 1$ is the constant path at $p_0$.
\end{enumerate}

In total, this procedure selects $2g + s +  r + m-1$ paths $\alpha_i, \beta_i, \gamma_i, \delta_i, \delta_{i,j}, c_i, d_i$ which we claim generate the discrete fundamental groupoid $\pi_1 (\Sigma, P)$.
To see this, note if we (right) conjugate each boundary loop $\delta_i$ by the connecting path $d_i$, we get a positively-oriented loop based at $p_0$ going round the $i$-th regular boundary circle.
If we concatenate the boundary segments $\delta_{i,j}$ into the boundary circle for each $i$, and then (right) conjugate by the connecting path $c_i$, we likewise obtain a positively-oriented loop based at $p_0$ going round the $i$-th irregular boundary circle.
More precisely, let us introduce the following notation:
\begin{eqn}
	\begin{aligned}
		\gamma_i^\rm{II} &\coleq d_i^{-1} \delta_i^\rm{II} d_i
		~,\qquad
		&\delta^\rm{II}_i &\coleq \delta_i
		~,\qquad
		&i &= 1, \ldots, r;
		\\
		\gamma_i^\rm{I} &\coleq c_i^{-1} \delta_i^\rm{I} c_i
		~,\qquad
		&\delta_i^\rm{I} &\coleq \delta_{i,m_i} \cdots \delta_{i,1}
		~,\qquad
		&i &= 1, \ldots, d.
	\end{aligned}
\end{eqn}
We find that the above procedure in particular yields $2g + s$ loops $\alpha_i, \beta_i, \gamma_i, \gamma^\rm{II}_i$, $\gamma^\rm{I}_i$ that generate the fundamental group $\pi_1 (\Sigma, p_0)$ and satisfy the relation
\begin{eqntag}
	\label{251025204931}
	\prod_{i=1}^{\substack{g\\\longleftarrow}} [\alpha_i, \beta_i]
	\cdot
	\prod_{i=1}^{\substack{l\\\longleftarrow}} \gamma_i
	\cdot
	\prod_{i=1}^{\substack{r\\\longleftarrow}} \gamma_i^\rm{II}
	\cdot
	\prod_{i=1}^{\substack{d\\\longleftarrow}} \gamma_i^\rm{I}
	= 1.
\end{eqntag}
More explicitly in terms of only the generators, this relation reads
\begin{eqntag}
	\label{251109164128}
	\prod_{i=1}^{\substack{g\\\longleftarrow}} [\alpha_i, \beta_i]
	\cdot
	\prod_{i=1}^{\substack{l\\\longleftarrow}} \gamma_i
	\cdot 
	\prod_{i=1}^{\substack{r\\\longleftarrow}} d_{i}^{-1} 
	\delta_{i} d_{i}
	\cdot
	\prod_{i=1}^{\substack{d\\\longleftarrow}} c_i^{-1} 
	\left( \prod_{j=1}^{\substack{m_i\\\longleftarrow}} \delta_{i,j} \right) c_i
	= 1.
\end{eqntag}

Observe that this relation can be solved uniquely for any of $\gamma_i$, $\delta_i$, $\delta_{i,j}$, but not for any of $\alpha_i$, $\beta_i$, $c_i$, $d_i$.
Therefore, we can destroy this relation by excluding any one of the paths $\gamma_i$, $\delta_i$, $\delta_{i,j}$ from the list of generators.
Moreover, we have one extra relation
because  one of the paths $c_i$ or $d_i$ is a loop based at $p_0$ which, without loss of generality, we may take from the outset to be the constant path at $p_0$ as suggested in the above procedure.
Indeed, suppose for definiteness that $c_1$ is the loop at $p_0$.
Then we can change the chosen generators to be such that $c_1 = 1$: we (left) conjugate every path $\alpha_i, \beta_i, \gamma_i$ by $c_1$, right multiply every path $c_i, d_i$ by $c_1^{-1}$, and leave all $\delta_i, \delta_{i,j}$ alone.
Notice that all the loops $\gamma_i, \gamma_i^\rm{II}, \gamma_i^\rm{I}$ and each commutator $[\alpha_i, \beta_i]$ get (left) conjugated by $c_1$.
This means the lefthand side in \eqref{251025204931} gets conjugated by $c_1$, leaving the relation unchanged.
We summarise this discussion as follows.

\begin{lemma}
	\label{251215203809}
	Let $(\Sigma, P)$ be any  surface with marked boundary
	of genus $g$ with $s \geq 1$ boundary circles, of which $l \geq 0$ are unmarked, $r \geq 0$ are regular and $d \geq 0$ are irregular. Let $m =|P_{\rm I}|$.
	Select any $p_0 \in P$.
	Then the above procedure determines a finite presentation of the discrete fundamental groupoid $\pi_1 (\Sigma, P)$ with $2g+s+r+m-1$ generators and one relation:
	\begin{eqntag}
		\label{251215204359}
		\pi_1 (\Sigma, P)
		\cong \inner{ \alpha_i, \beta_i, \gamma_i, \delta_i, \delta_{i,j}, c_i, d_i, \ell_i ~\Big|~ \text{ relation \eqref{251109164128}}}.
	\end{eqntag}
	Mind that $c_1 = 1$ if $p_0 \in P_\rm{I}$ and $d_1 = 1$ if $p_0 \in P_\rm{II}$.
	In particular, by excluding any one of the paths $\gamma_i, \delta_i, \delta_{i,j}$ from the list of generators in either case gives a finite presentation with $2g+s+r+m-2$ and no relations.
\end{lemma}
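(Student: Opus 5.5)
The plan is to reduce the statement to the classical presentation of the fundamental group together with a standard fact about connected groupoids. A connected groupoid $\cal{G} \rightrightarrows P$ is determined up to isomorphism by its vertex group at any object $p_0$ together with a choice of \emph{spanning tree} of arrows $p_0 \to p$, one for each $p \in P \smallsetminus \{p_0\}$. Concretely, every arrow $\gamma : p \to q$ factors uniquely as $\gamma = t_q \cdot (t_q^{-1} \gamma t_p) \cdot t_p^{-1}$ with the middle term in $\pi_1(\Sigma, p_0)$. Hence $\pi_1(\Sigma,P) \cong \pi_1(\Sigma,p_0) * \Pi(T)$, the free product of the vertex group with the free groupoid on the tree $T$. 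So a presentation of $\pi_1(\Sigma,P)$ is obtained from a presentation of $\pi_1(\Sigma,p_0)$ by adjoining the tree arrows as free generators and adding no new relations. Here $\Sigma$ is connected and $P \neq \emptyset$, so the groupoid is connected.

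First, I would extract a spanning tree from the procedure. It consists of the paths $d_i$ ($p_0 \to p_i$), the paths $c_i$ ($p_0 \to p_{i,1}$), and, on each irregular circle, the boundary segments $\delta_{i,1}, \dots, \delta_{i,m_i-1}$, which connect $p_{i,1}$ successively to $p_{i,m_i}$. Together these reach every marked point exactly once. The one exception is the constant path $c_1 = 1$ or $d_1 = 1$ at $p_0$, which is discarded; the discussion preceding the statement already shows that this normalisation can be achieved by conjugating by $c_1$ without changing the relation. Counting gives $r + d - 1 + \sum_i (m_i - 1) = |P| - 1$ tree edges, as expected.

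Second, I would identify the vertex group. The loops $\alpha_i, \beta_i, \gamma_i, \gamma_i^{\rm II}, \gamma_i^{\rm I}$ generate $\pi_1(\Sigma,p_0)$ with the single relation \eqref{251025204931}; this is the presentation \eqref{251005162302} recalled in \autoref{251115110733}. The point needing care is that these are the standard generators: $\gamma_i^{\rm II}$ and $\gamma_i^{\rm I}$ are positively oriented loops around the corresponding holes, and with suitably chosen (disjoint, consistently ordered) connecting paths the product relation holds. I expect this to be the main obstacle, since the ordering and orientation conventions must match those of \autoref{251115110733}. I would handle it by choosing all of $\alpha_i, \beta_i, \gamma_i, c_i, d_i$ as arcs of a standard cut system from $p_0$, i.e. cutting $\Sigma$ along them into a disc whose boundary word is exactly \eqref{251109164128}.

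Third, I would translate the presentation. The vertex generators $\gamma_i^{\rm II} = d_i^{-1}\delta_i d_i$ and $\gamma_i^{\rm I} = c_i^{-1}\delta_{i,m_i}\cdots\delta_{i,1}c_i$ can be exchanged, by Tietze moves, for the non-tree generators $\delta_i$ and $\delta_{i,m_i}$. This is possible because each of these appears exactly once in its expression, and the expression is solvable for it given the tree arrows. The relation \eqref{251025204931} then becomes \eqref{251109164128}. The resulting list consists of the $2g + l$ loops, the $r$ loops $\delta_i$, the $m$ segments $\delta_{i,j}$, and the $r + d - 1$ nontrivial connecting paths, for a total of $2g + l + r + m + r + d - 1 = 2g + s + r + m - 1$ generators, with one relation.

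Finally, if $s \geq 1$ the relation contains at least one of $\gamma_i, \delta_i, \delta_{i,j}$ exactly once, so it can be solved for that generator. Eliminating it by a Tietze move yields a free presentation on $2g+s+r+m-2$ generators, in agreement with \autoref{251217105341}.
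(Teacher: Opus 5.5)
Your proposal is correct and follows essentially the paper's route. Both arguments reduce to the one-relator presentation of $\pi_1(\Sigma,p_0)$, adjoin connecting paths to the remaining marked points, rewrite the conjugated boundary loops $\gamma_i^{\mathrm{II}}$, $\gamma_i^{\mathrm{I}}$ in terms of $\delta_i$ and $\delta_{i,j}$, and then eliminate one of $\gamma_i,\delta_i,\delta_{i,j}$. You make explicit two things the paper leaves implicit: the decomposition $\pi_1(\Sigma,P)\cong\pi_1(\Sigma,p_0)*\Pi(T)$ along the spanning tree $T$ formed by the $c_i$, $d_i$ and $\delta_{i,1},\ldots,\delta_{i,m_i-1}$, and the need to choose $\alpha_i,\beta_i,\gamma_i,c_i,d_i$ compatibly (via a cut system) so that the relation actually holds.
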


\section{Decorated Local Systems}\label{se:3}

In this section, we introduce various notions of local systems, which we collectively call \dfn{decorated local systems}, and we discuss the relationships between them.

We will sometimes write $(\cal{E}, \cal{F}, \star)$ for a decorated local system with $\star = \set{ \star_p : p \in P}$ where $\star_p$ stands for the intended decoration on the local filtration $\cal{F}_p$, including blank (or empty) decoration.
Correspondingly, we use a uniform notation $\catname{Loc}_G^\square$ for the groupoids and $\frak{Loc}_G^\square$ for the corresponding moduli spaces of decorated local systems where $\square$ indicates the type of decoration; e.g., $\rm{Fi}$ for filtered local systems, $\rm{Fr}$ for framed local systems and $\rm{PFr}$ for projectively framed local systems.

\subsection{Filtered Local Systems}
\label{251218095539}

The purpose of decorating a surface with marked points is to keep track of allowing to decorate local systems with additional local linear-algebraic data, the most fundamental of which are flags.

Recall that a \dfn{flag} $F$, by which we shall always mean a \textit{complete flag}, on an $n$-dimensional complex vector space $E$ is a nested collection of $n$ distinct subspaces
\begin{eqn}
	F = \big( 0 = F_0 \subset F_1 \subset \cdots \subset F_n = E \big)
	\qtext{where}
	\dim F_i = i.
\end{eqn}
We refer to a pair $(E,F)$ as a \dfn{filtered vector space}.
A \dfn{filtered map} $\phi : (E, F) \to (E', F')$ is a linear map $\phi : E \to E'$ that respects the flags in the sense that $\phi (F_i) \subset F'_i$.

Let $(\Sigma, P)$ be a decorated surface.
For every primary marked point $p \in P_\rm{I}$, let $U_p \subset \Sigma$ be any sectorial or arc neighbourhood of $p$; i.e., a contractible open subset of respectively $\Sigma$ or $\del \Sigma$ containing $p$.
For every secondary marked point $p \in P_\rm{II}$, let $U_p \subset \Sigma$ be either the boundary circle $C_p \subset \del \Sigma$ containing $p$ or any annular neighbourhood of $C_p$; i.e., an open subset of $\Sigma$ that contracts onto $C_p$.

If $\cal{E}$ is a local system on $\Sigma$, let us denote its restriction to a local system on the subset $U_p$ for each $p \in P$ by
\begin{eqn}
	\cal{E}_p \coleq \cal{E} |_{U_p}.
\end{eqn}
This notation is not to be confused with the stalk of $\cal{E}$ at $p$ which we denote by $E_p$.
If $p \in P_\rm{I}$, then $U_p$ is contractible onto $p$ and so there is a canonical isomorphism $\Gamma (U_p, \cal{E}_p) \cong E_p$.
However, if $p \in P_\rm{II}$, then $U_p$ is a non-contractible loop and so there is no such isomorphism in general; in fact, $\Gamma (U_p, \cal{E}_p)$ is typically the zero vector space.

\begin{definition}
	\label{251126214152}
	A \textbf{filtered local system} on a  surface with marked boundary $(\Sigma, P)$ is a pair $(\cal{E}, \cal{F})$ consisting of a local system $\cal{E}$ on $\Sigma$ endowed with a collection $\cal{F} = \set{ \cal{F}_p : p \in P}$ where $\cal{F}_p$ is a \dfn{filtration} near $p$.
	That is, for any sectorial neighbourhood $U_p$ of $p$,
	$\cal{F}_p$ is a nested collection of $n$ distinct local subsystems of the restricted local system $\cal{E}_p$ over $U_p$, called a \dfn{local filtration} near $p$:
	\begin{eqn}
		\cal{F}_p \coleq \big(0 = \cal{F}_p^0 \subset \cal{F}_p^1 \subset \cdots \subset \cal{F}_p^n = \cal{E}_p \big)
		\qtext{where}
		\rank (\cal{F}_p^i) = i.
	\end{eqn}
	A \dfn{filtered morphism} $\phi : (\cal{E},\cal{F}) \to (\cal{E}',\cal{F}')$ between filtered local systems on $(\Sigma, P)$ is a morphism of sheaves $\phi : \cal{E} \to \cal{E}'$ whose restriction $\phi_p : \cal{E}_p \to \cal{E}'_p$ to each subset $U_p$ respects the local filtrations in the sense that $\phi_p (\cal{F}_p^i) \subset \cal{F}'^i_p$ for every $i$.
	It is an isomorphism whenever $\phi : \cal{E} \to \cal{E}'$ is.
	We denote the groupoid and the corresponding moduli space of filtered local systems of rank $n$ on $(\Sigma, P)$ with filtered isomorphisms by
	\begin{eqn}
		\catname{Loc}_G^\rm{Fi} (\Sigma, P)
		\qtext{and}
		\frak{Loc}_G^\rm{Fi} (\Sigma, P)
		\coleq \catname{Loc}_G^\rm{Fi} (\Sigma, P) \big/{\sim}.
	\end{eqn}
\end{definition}

Notice that this definition specialises to the classical situation in the absence of marked points: $\catname{Loc}^\rm{Fi}_G (\Sigma, \emptyset) = \catname{Loc}_G (\Sigma)$.
Also, notice that any morphism of filtered local systems is in particular a morphism of local systems, so for any $P' \subsetneq P$ there is a forgetful functor that drops all local filtrations near the forgotten marked points $P \smallsetminus P'$:
\begin{eqn}
	\catname{Loc}_G^\rm{Fi} (\Sigma, P) \to \catname{Loc}_G^\rm{Fi} (\Sigma, P').
\end{eqn}
This functor is surjective on objects and faithful on morphisms but never full.

We  remark that in the literature, it is common to encounter local systems decorated by local filtrations only near the primary marked points.
Although the $\catname{Loc}_G^\rm{Fi} (\Sigma, P) \to \catname{Loc}_G^\rm{Fi} (\Sigma, P_\rm{I})$ fails to be an equivalence, the nature of this failure turns out to be finite up to isomorphism; cf. \autoref{251106151533}.

Next, we give a useful alternative characterisation of the groupoid of filtered local systems which concentrates all the filtration information to the stalks at marked points.

\begin{lemma}
	\label{251127102439}
	A filtered local system $(\cal{E}, \cal{F})$ on a surface with marked boundary $(\Sigma, P)$ is equivalently a pair $(\cal{E}, F)$ consisting of a local system $\cal{E}$ on $\Sigma$ endowed with a collection $F = \set{ F_p : p \in P}$ where each $F_p$ is a flag in the stalk $E_p$ of $\cal{E}$ at $p$ with the following property.
	For every secondary marked point $p \in P_\rm{II}$, the flag $F_p$ is invariant under the local monodromy $\op{mon}_\gamma$ of $\cal{E}$ along the boundary loop $\gamma \in \pi_1 (\Sigma, p)$ based at $p$; i.e, $\op{mon}_\gamma \in \Aut (E_p, F_p)$.
	
	Similarly, a morphism $\phi : (\cal{E},\cal{F}) \to (\cal{E}',\cal{F}')$ of filtered local systems on $(\Sigma, P)$ is equivalently a morphism $\phi : (\cal{E}, F) \to (\cal{E}', F')$ which is defined as a morphism of sheaves $\phi : \cal{E} \to \cal{E}'$ whose restriction to the stalks at every marked point $p \in P$ is a filtered linear map $\phi_p : (E_p, F_p) \to (E'_p, F'_p)$.
	
	In other words, the groupoid $\catname{Loc}_G^\rm{Fi} (\Sigma, P)$ of filtered local systems on a surface with marked boundary $(\Sigma, P)$ is isomorphic to the groupoid parameterising the pairs $(\cal{E},F)$ as above.
\end{lemma}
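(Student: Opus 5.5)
The plan is to construct mutually inverse assignments between local filtrations $\cal{F}_p$ near $p$ and flags $F_p$ in the stalk $E_p$, treating primary and secondary marked points separately. Then we check that filtered morphisms correspond under this dictionary. Throughout, we use that a local subsystem of a locally constant sheaf over a connected set is determined by its stalk at any one point. We also use that, for a local system on $U_p$, the stalk at $p$ carries the monodromy action of $\pi_1(U_p,p)$.

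\emph{Primary points.} Let $p \in P_\rm{I}$. Then $U_p$ is contractible, so $\cal{E}_p$ is constant with $\Gamma(U_p,\cal{E}_p)\cong E_p$ canonically. A local subsystem of rank $i$ is therefore the same as an $i$-dimensional subspace of $E_p$. Given $\cal{F}_p$, set $F_p^i \coleq (\cal{F}_p^i)_p$. Conversely, given $F_p$, let $\cal{F}_p^i$ be the constant subsheaf generated by $F_p^i$. Nestedness and ranks are clearly preserved, and the two constructions are mutually inverse.

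\emph{Secondary points.} Let $p \in P_\rm{II}$. Then $U_p$ retracts onto the boundary circle $C_p$, and $\pi_1(U_p,p)\cong\mathbb{Z}$ is generated by the boundary loop $\gamma$. Apply \autoref{250911161735} with $U_p$ in place of $\Sigma$: local systems on $U_p$ are equivalent to $\mathbb{Z}$-representations $(E_p,\op{mon}_\gamma)$ via $\op{Mon}_p$. Local subsystems of $\cal{E}_p$ then correspond to $\op{mon}_\gamma$-invariant subspaces of $E_p$. In one direction, the stalk at $p$ of a subsystem is preserved by its own monodromy, which is the restriction of $\op{mon}_\gamma$. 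In the other, an invariant subspace $V$ yields the subsheaf $(\tilde U_p\times V)/\mathbb{Z}\subset\cal{E}_p$, using the explicit inverse $\op{Mon}_p^{-1}$. Hence a local filtration near $p$ is exactly a flag $F_p$ with $\op{mon}_\gamma\in\Aut(E_p,F_p)$. The independence of the choice of $U_p$ (sectorial, arc, annular, or $C_p$ itself) follows because restriction between such neighbourhoods is an equivalence, as in \autoref{251111194051}.

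\emph{Morphisms.} Let $\phi:\cal{E}\to\cal{E}'$ be a sheaf morphism. For $p\in P_\rm{I}$, $\phi_p(\cal{F}^i_p)\subset\cal{F}'^i_p$ holds iff it holds on global sections over $U_p$, i.e. iff $\phi_p(F_p^i)\subset F'^i_p$ on stalks. For $p\in P_\rm{II}$, both $\phi_p(\cal{F}^i_p)$ and $\cal{F}'^i_p$ are local subsystems of a local system on a connected set. Containment between them can be checked at the single stalk at $p$, by local constancy and connectedness. Functoriality is immediate since composition is computed stalkwise. This gives the claimed isomorphism of groupoids, identity on underlying local systems.

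The main obstacle I expect is the secondary case: showing rigorously that a subsheaf is determined by an invariant subspace of one stalk. The approach is to transport along paths in $U_p$ using holonomy and diagram \eqref{250924114505}. A subspace propagates to a well-defined subsheaf exactly when it is invariant under all loops at $p$, i.e. under $\op{mon}_\gamma$.
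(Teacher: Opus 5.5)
Your proposal is correct and is essentially the paper's argument. In both, $F_p$ is the stalk of $\cal{F}_p$ at $p$, and conversely $\cal{F}_p$ is the holonomy transport of $F_p$ over $U_p$, which is well defined exactly when $F_p$ is invariant under the boundary monodromy at secondary points. Your $(\tilde{U}_p \times V)/\mathbb{Z}$ construction is this same transport.

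You are more thorough than the paper in one respect: you actually check the morphism correspondence, via the single-stalk containment argument on connected $U_p$. One small point: the correspondence between local subsystems and invariant subspaces comes from your explicit constructions, not from \autoref{250911161735}. That result is only an equivalence of groupoids of rank-$n$ objects with isomorphisms, so it says nothing about subobjects.
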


Namely, $F_p$ is the stalk of $\cal{F}$ at $p$; conversely, $\cal{F}$ is the parallel transport of $F_p$ to every point of $U_p$.
Consequently, \autoref{251126214152} is independent of the size of the neighbourhoods $U_p$
From now on, we use these characterisations of $\catname{Loc}_G^\rm{Fi} (\Sigma, P)$ interchangeably.

\begin{proof}
	For any $p \in P$, the local filtration $\cal{F}_p$ of a filtered local system $(\cal{E}, \cal{F})$ determines a flag in the stalk of $\cal{E}$ at every point of $U_p$; in particular, it determines a flag $F_p$ in the stalk $E_p$ of $\cal{E}$ at $p \in U_p$.
	Moreover, since $\cal{F}_p$ is a filtration by local subsystems of $\cal{E}_p$, the holonomy of $\cal{E}$ along paths contained in $U_p$ defines a filtered isomorphism between any two stalks and in particular a filtered isomorphism with the filtered stalk $(E_p, F_p)$.
	Notably, this means the flag $F_p$ at any secondary marked point $p \in P_\rm{II}$ is invariant under the local monodromy of $\cal{E}$ around the boundary loop based at $p$.
	We stress, however, that the flags $F_p$ at any primary marked point $p \in P_\rm{I}$ may not be invariant under the local monodromy: their purpose is only to constrain the possible morphisms of decorated local systems.
	
	Conversely, a flag $F_p$ in $E_p$ determines the local filtration $\cal{F}_p$ over $U_p$ by using the holonomy of $\cal{E}$ to transport $F_p$ to every stalk of $\cal{E}$ along $U_p$.
	When $p \in P_\rm{II}$ is a secondary marked point, this local filtration $\cal{F}_p$ is well-defined if and only if the flag $F_p$ is invariant under the local monodromy.
\end{proof}

\begin{remark}
	Our notion of \textit{filtered local systems} (\autoref{251126214152}) corresponds to what several authors call ``framed local systems''.
	This includes Fock-Goncharov \cite[Def 1.2]{MR2233852}, Goncharov \cite{MR3702383}, Allegretti \cite[Def 3.4]{MR3717948}, Goncharov-Shen \cite[§3]{250619181547}, Allegretti-Bridgeland \cite[§4]{MR4155179}, as well as Dal Martello-Mazzocco \cite[§3]{MR4756422} amongst others.
	It also corresponds to what Jordan-Le-Schrader-Shapiro \cite[Def 1.6]{jordan2021quantumdecoratedcharacterstacks} call ``decorated local systems''.
	
	In the case of a surface with marked boundary $(\Sigma, P)$with at least one marked point and no unmarked boundaries,
	our moduli space $\frak{Loc}_G^\rm{Fi} (\Sigma, P)$ corresponds to the moduli space $\cal{X}_{\G,\hat{\S}}$ in the notation of \cite[Def 1.2]{MR2233852}, to $\cal{X}_{\rm{G},\SS}$ in the notation of \cite{250619181547}, to $\cal{X} (\SS,\MM)$ in the notation of \cite[§4]{MR4155179}, and to $\rm{Ch} (\SS)$ in the notation of \cite[Def 1.8]{jordan2021quantumdecoratedcharacterstacks}.
	In contrast, we reserve the symbol ``$\frak{X}$'' for the decorated character stack/variety introduced below in \autoref{251127191014}, which we will show is isomorphic to $\frak{Loc}_G^\rm{Fi} (\Sigma, P)$ but which is defined independently.	
\end{remark}

\subsection{Framed Local Systems}

The local filtrations of filtered local systems are often further decorated by linear-algebraic data, the most important of which is a basis adapted to the local filtration.
First, let us recall some important definitions.

A \dfn{frame} on a filtered vector space $(E,F)$ of dimension $n$ is an isomorphism $\beta : E \iso \CC^n$ which takes the flag $F$ to the standard flag $\CC^\bullet$ meaning $\beta (F_i) = \CC^i$ for every $i$.
In short, it is a filtered isomorphism $\beta : (E,F) \iso (\CC^n, \CC^\bullet)$.
Equivalently, it is a choice of basis $\beta = (e_1, \ldots, e_n)$ of $E$ adapted to the flag $F$ in the sense that $F_i = \inner{ e_1, \ldots, e_i }$.
We call $(E,F,\beta)$ a \dfn{framed vector space}.

For example, the standard flag $\CC^\bullet$ on $\CC^n$ carries the \textit{standard frame}, denoted by $\idd$, given by the identity map $\id : \CC^n \to \CC^n$ or equivalently by the standard basis of $\CC^n$.

Actually, the information of the flag $F$ in a framed vector space is superfluous as it is fully determined by the frame $\beta$.
So we may equally well say that a framed vector space is a pair $(E,\beta)$ consisting of a vector space equipped with an isomorphism $\beta : E \iso \CC^n$.

A \dfn{unipotent isomorphism} $\phi : (E, F, \beta) \to (E', F', \beta')$ between two framed vector spaces is a linear isomorphism $\phi : E \iso E'$ whose matrix representation in the two frames is a unipotent upper-triangular matrix $\beta' \circ \phi \circ \beta^{-1} \in U$.
In contrast, a \dfn{strict isomorphism} $\phi : (E, F, \beta) \to (E', F', \beta')$ between two framed vector spaces is a linear isomorphism such that $\beta = \beta' \circ \phi$; i.e., the matrix representation of $\phi$ is the identity matrix.
A  strict isomorphism is necessarily unipotent, and a unipotent isomorphism is necessarily filtered.

\begin{definition}
	\label{251205224404}
	Let $(\cal{E}, \cal{F})$ be any filtered local system on a surface with marked boundary $(\Sigma, P)$.
	A \dfn{local frame} on $(\cal{E}, \cal{F})$ at $p \in P$ is a filtered trivialisation in the stalk at $p$; i.e., a filtered isomorphism $\beta_p : (\cal{E}_p, \cal{F}_p) \iso (\CC^n, \CC^\bullet)$.
	A \dfn{frame} on $(\cal{E}, \cal{F})$ over $P$ is a collection $\beta = \set{ \beta_p : p \in P}$ of local frames, one at each marked point.
	Any such triple $(\cal{E}, \cal{F}, \beta)$ is called a \dfn{framed local system} on $(\Sigma, P)$.
\end{definition}

Each local frame $\beta_p$ is equivalently a basis $\beta_p = (e_1, \ldots, e_n)$ of the vector space $E_p$ adapted to the flag $F_p$; i.e., $F_p^i = \inner{ e_1, \ldots, e_i }$.
It also extends by parallel transport to a trivialisation of the restricted filtered local system $\beta_p : (\cal{E}_p, \cal{F}_p) \iso (\underline{\CC}^n, \underline{\CC}^\bullet)$ over any sectorial or arc neighbourhood $U_p \subset \Sigma$ of $p$.
Namely, the basis $\beta_p$ of $E_p$ extends to a basis $\beta_p = (e_1, \ldots, e_n)$ of sections of the local system $\cal{E}$ over $U_p$; i.e., $e_i \in \Gamma (U_p, \cal{E})$.
This basis of sections is adapted to the local filtration $\cal{F}_p$ in the sense that $\Gamma (U_p, \cal{F}_p) = \inner{ e_1, \ldots, e_i }$.
But beware that, if $p$ is a secondary marked point, the local frame $\beta_p : (E_p, F_p) \iso (\CC^n, \CC^\bullet)$ does \textit{not} in general extend to a trivialisation of the filtered local system $(\cal{E}_p, \cal{F}_p)$ over the boundary circle based at $p$ or any annular neighbourhood of it.

\begin{definition}
	\label{251205225220}
	A \dfn{unipotent isomorphism} $\phi : (\cal{E},\cal{F},\beta) \to (\cal{E}',\cal{F}',\beta')$ between two framed local systems is an isomorphism of the underlying filtered local systems such that each restriction $\phi_p : (E_p,F_p) \to (E'_p,F'_p)$ is unipotent in the sense that its matrix representation with respect to the given frames is a unipotent upper-triangular matrix: $\beta'_p \circ \phi_p \circ \beta^{-1}_p \in U \subset G$.
	We denote the groupoid and the corresponding moduli space of framed local systems of rank $n$ on $(\Sigma, P)$ with unipotent isomorphisms by
	\begin{eqn}
		\catname{Loc}_G^\rm{Fr} (\Sigma, P)
		\qtext{and}
		\frak{Loc}_G^\rm{Fr} (\Sigma, P)
		\coleq \catname{Loc}_G^\rm{Fr} (\Sigma, P) \big/{\sim}.
	\end{eqn}
\end{definition}

In other words, a unipotent isomorphism $\phi$ takes each local frame $\beta_p$ to the local frame $\beta'_p$ up to a unipotent upper-triangular matrix.
Sometimes, the isomorphisms between framed local systems are restricted further to preserve the frames on the nose as follows.

\begin{definition}
	\label{251217232115}
	A \dfn{strict isomorphism} $\phi : (\cal{E},\cal{F},\beta) \to (\cal{E}',\cal{F}',\beta')$ between two framed local systems over a  surface with marked boundary $(\Sigma, P)$ is an isomorphism $\phi : \cal{E} \iso \cal{E}'$ of the underlying local systems which relates the frame in the sense that $\beta = \beta' \circ \phi$.
	We denote the groupoid and the corresponding moduli space of framed local systems of rank $n$ on $(\Sigma, P)$ with strict isomorphisms by
	\begin{eqn}
		\catname{Loc}_G^\rm{sFr} (\Sigma, P)
		\qtext{and}
		\frak{Loc}_G^\rm{sFr} (\Sigma, P)
		\coleq \catname{Loc}_G^\rm{sFr} (\Sigma, P) \big/{\sim}.
	\end{eqn}
	When referring to a framed local system $(\cal{E}, \cal{F}, \beta)$ as an object in this groupoid, we will call it a \dfn{strictly framed local system} on $(\Sigma, P)$.
\end{definition}

Any framed local system has an underlying filtered local system, and any isomorphism of framed local systems is in particular an isomorphism of filtered local systems.
Similarly, any strictly framed local system is in particular a framed local system, and any strict isomorphism is unipotent.
Thus, there is a chain of forgetful functors
\begin{eqn}
	\catname{Loc}_G^\rm{sFr} (\Sigma, P)
	\to \catname{Loc}_G^\rm{Fr} (\Sigma, P)
	\to \catname{Loc}_G^\rm{Fi} (\Sigma, P).
\end{eqn}
Any filtered local system can always be promoted to a framed local system by choosing local frames arbitrarily, whilst the objects of $\catname{Loc}_G^\rm{sFr}$ are literally the same as those of $\catname{Loc}_G^\rm{Fr}$.
Consequently, these functors are surjective on objects and faithful on morphisms but never full unless $P$ is empty: the groupoid $\catname{Loc}_G^\rm{sFr}$ has far fewer isomorphisms than $\catname{Loc}_G^\rm{Fr}$ which itself has fewer isomorphisms than $\catname{Loc}_G^\rm{Fi}$.
This is analogous to how the identity matrix is in particular a unipotent upper-triangular matrix which is in particular an upper-triangular matrix.
As a result, the first forgetful functor is actually an embedding.
On the other hand, although there are many possible ways to embed the objects of $\catname{Loc}_G^\rm{Fi}$ into the objects of $\catname{Loc}_G^\rm{Fr}$, it is not possible to embed the groupoid $\catname{Loc}_G^\rm{Fi}$ into $\catname{Loc}_G^\rm{Fr}$.

\begin{remark}
	Our terminology \textit{framed local systems} (\autoref{251205224404}) is more in keeping with the well-established terminology from differential geometry where traditionally a ``frame'' at a point is an ordered basis in the fibre of a given vector bundle.
	This also closely matches the use of this word by other authors including Boalch \cite{BoalchThesis,Boalch2001,MR4285677}, Hiroe-Yamakawa \cite{zbMATH06348915}, and Jordan-Le-Schrader-Shapiro \cite[Def 1.8]{jordan2021quantumdecoratedcharacterstacks}.
	Likewise, our terminology \textit{filtered local systems} closely matches the similar terminology used by Deligne \cite{MR0417174}, Malgrange \cite[§4]{MR0728430}, Simpson \cite[§3]{MR1040197}, Sabbah \cite{MR2368364,sabbah2012introduction}, Boalch \cite{MR4285677}, Huang-Sun \cite[Def 3.3]{MR4927439}, and many others.
	
	Flags endowed with a basis for the associated graded are sometimes called \textit{decorated flags} \cite[§3.1.2]{250619181547} or \textit{affine flags} \cite[p.41]{MR2233852}, although the latter term is also commonly used for flags in infinite-dimensional vector spaces.
	On the other hand, we use the word ``decorated'' as a collective adjective to describe flags with extra structure instead.
\end{remark}

\subsection{Projectively Framed Local Systems}

A \dfn{projective frame} $\bar{\beta}$ on a filtered vector space $(E,F)$ is a frame defined up to a nonzero scalar multiple.
In other words, it is a basis $\beta = (e_1, \ldots, e_n)$ of $E$ adapted to the flag $F$ but which is defined only up to overall rescaling; i.e., $\beta = (e_1, \ldots, e_n) \sim (t e_1, \ldots, t e_n) = t \beta$ for any $t \in \CC^\times$.
A \dfn{projectively unipotent map} $\phi : (E,F,\bar{\beta}) \to (E',F',\bar{\beta}')$ is a unipotent map up to a scalar factor: i.e., the matrix representation of $\phi$ with respect to any two representative frames $\beta, \beta'$ is unipotent up to a nonzero scalar factor: $\beta' \circ \phi \circ \beta^{-1} \in \CC^\times U$.

\begin{definition}
	\label{251206130050}
	Let $(\cal{E}, \cal{F})$ be a filtered local system on a surface with marked boundary $(\Sigma, P)$.
	A \dfn{local projective frame} $\hat{\beta}_p$ on $(\cal{E}, \cal{F})$ at $p \in P$ is a local frame considered up to scale; i.e., $\bar{\beta}_p$ is an equivalence class of local frames $\beta_p : (E_p, F_p) \iso (\CC^n, \CC^\bullet)$ where $\beta_p \sim t \beta_p$ for any $t \in \CC^\times$.
	A \dfn{projective frame} on $(\cal{E}, \cal{F})$ is a collection $\bar{\beta} = \set{ \bar{\beta}_p : p \in P}$ of local projective frames.
	Any such triple $(\cal{E}, \cal{F}, \bar{\beta})$ is called a \dfn{projectively framed local system}.
	
	A morphism $\phi : (\cal{E}, \cal{F}, \bar{\beta}) \to (\cal{E}', \cal{F}', \bar{\beta}')$ of two projectively framed local systems, called a \dfn{projectively unipotent morphism}, is a filtered morphism $\phi : (\cal{E}, \cal{F}) \to (\cal{E}', \cal{F}')$ whose matrix representation with respect to any representative frames $\beta, \beta'$ is projectively unipotent: $\beta' \circ \phi \circ \beta^{-1} \in \CC^\times U$.
	We denote the groupoid and the corresponding moduli space of projectively framed local systems of rank $n$ on $(\Sigma, P)$ with projectively unipotent morphisms by
	\begin{eqn}
		\catname{Loc}_G^\rm{PFr} (\Sigma, P)
		\qtext{and}
		\frak{Loc}_G^\rm{PFr} (\Sigma, P)
		\coleq \catname{Loc}_G^\rm{PFr} (\Sigma, P) \big/{\sim}.
	\end{eqn}
\end{definition}

\section{Decorated Groupoid Representations}\label{se:4}

In this section we reformulate decorated local systems as \textit{decorated} representations of the fundamental groupoid and the discrete fundamental groupoid.

\subsection{Filtered Groupoid Representations}

Let $(\Sigma, P)$ be a decorated surface.
As we did at the start of \autoref{251218095539}, for every primary marked point $p \in P_\rm{I}$, let $U_p \subset \Sigma$ be any sectorial or arc neighbourhood of $p$.
For every secondary marked point $p \in P_\rm{II}$, let $U_p \subset \Sigma$ be either the boundary circle $C_p \subset \del \Sigma$ containing $p$ or any annular neighbourhood of $C_p$.

If $\E$ is a vector bundle on $\Sigma$, let us denote its restriction to a vector bundle over the subset $U_p$ for each $p \in P$ by
\begin{eqn}
	\E_p \coleq \E |_{U_p}.
\end{eqn}
This notation is not to be confused with the fibre of $\E$ at $p$ which we denote by $E_p$.

Back in \autoref{251209141021} (specifically, \autoref{251209142102}), we recalled the notion of representations of the fundamental groupoid $\Pi_1 (\Sigma) \rightrightarrows \Sigma$.
We now enhance this notion by introducing local decorations near the marked points in the form of filtrations which are locally preserved by the representation.

\begin{definition}
	\label{251127151457}
	A \dfn{filtered representation} of the fundamental groupoid $\Pi_1 (\Sigma)$ of a surface with marked boundary $(\Sigma, P)$ is a triple $(\E, \F, \Phi)$ consisting of a representation $(\E, \Phi)$ of the fundamental groupoid $\Pi_1 (\Sigma)$ endowed with a collection $\F \coleq \set{ \F_p : p \in P}$ of \dfn{flat local filtrations}.
	That is, for any sectorial neighbourhood $U_p$ of $p$, $\F_p$ is a nested collection of $n$ distinct vector subbundles of the restricted vector bundle $\E_p$ over $U_p$,
	\begin{eqn}
		\F_p \coleq \big( 0 = \F_p^0 \subset \F_p^1 \subset \cdots \subset \F_p^n = \E_p \big)
		\qtext{where}
		\rank (\F_p^i) = i,
	\end{eqn}
	which is \dfn{flat} with respect to $\Phi$ in the following sense.
	For every path $\gamma : [0,1] \to U_p$, the groupoid homomorphism $\Phi$ induces a filtered isomorphism between the corresponding fibres of the vector bundle $\E_p$:
	\begin{eqn}
		\Phi_\gamma : (\E_p, \F_p) \big|_{\gamma (0)} \iso (\E_p, \F_p) \big|_{\gamma (1)}.
	\end{eqn}
\end{definition}

We stress that in this definition, the homomorphism $\Phi : \Pi_1 (\Sigma) \to \GL (\E)$ is \textit{not} assumed to relate in any way the local filtrations near different marked points.
These filtrations are merely local decorations on a groupoid representation in the ordinary sense.
In light of this, perhaps a more appropriate name would be ``locally filtered representations'' or ``filtration-decorated representations'', but we prefer the shorter one.
The primary purpose of these filtrations is instead to constrain the set of possible morphisms between the given representations as defined next.

\begin{definition}
	\label{251127153641}
	A \dfn{filtered morphism} $\phi : (\E, \F, \Phi) \to (\E', \F', \Phi')$ of two filtered representations of $\Pi_1 (\Sigma)$ on $(\Sigma, P)$ is a morphism $\phi : (\E, \Phi) \to (\E', \Phi')$ of the underlying representations which restricts over every $U_p$ to a map of filtered vector bundles $\phi_p : (\E_p, \F_p) \to (\E'_p, \F'_p)$.
	We denote the groupoid and the corresponding moduli space of filtered representations of rank $n$ on $(\Sigma, P)$ with filtered isomorphisms by
	\begin{eqn}
		\catname{Rep}_G^\rm{Fi} \big( \Pi_1 (\Sigma), P \big)
		\qtext{and}
		\frak{Rep}_G^\rm{Fi} \big( \Pi_1 (\Sigma), P \big)
		\coleq \catname{Rep}_G^\rm{Fi} \big( \Pi_1 (\Sigma), P \big) \big/{\sim}.
	\end{eqn}
\end{definition}

Next, we give an alternative characterisation of decorated groupoid representations, analogous to \autoref{251127102439} for filtered local systems.

\begin{lemma}
	\label{251127155249}
	A filtered representation $(\E, \F, \Phi)$ of the fundamental groupoid $\Pi_1 (\Sigma)$ of a surface with marked boundary $(\Sigma, P)$ is equivalently a triple $(\E, F, \Phi)$ consisting of a representation $(\E, \Phi)$ of $\Pi_1 (\Sigma)$ decorated by a collection $F \coleq \set{ F_p : p \in P}$ where $F_p$ is a flag in the fibre $E_p$ of $\E$ at $p$ with the following addition property.
	For every secondary marked point $p \in P_\rm{II}$, the flag $F_p$ is invariant under the isomorphism $\Phi_\gamma : E_p \to E_p$ where $\gamma \in \pi_1 (\Sigma, p)$ is the boundary loop.
\end{lemma}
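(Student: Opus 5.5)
The argument mirrors the proof of \autoref{251127102439}, with holonomy of the local system replaced by the transport maps $\Phi_\gamma$ of the representation. We build mutually inverse constructions $(\E,\F,\Phi)\mapsto(\E,F,\Phi)$ and $(\E,F,\Phi)\mapsto(\E,\F,\Phi)$ and check that the morphisms correspond.

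\emph{From flat filtrations to flags.} Given $(\E,\F,\Phi)$, set $F_p \coleq \F_p|_p$, the flag induced by the subbundles $\F^i_p$ in the fibre $E_p$; this is a complete flag since $\rank \F^i_p = i$. Now let $p\in P_\rm{II}$ and let $\gamma$ be the boundary loop based at $p$. This loop can be represented by a path $[0,1]\to C_p \subset U_p$ with both endpoints at $p$. Flatness then says that $\Phi_\gamma:(E_p,F_p)\to(E_p,F_p)$ is filtered, so $F_p$ is $\Phi_\gamma$-invariant.

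\emph{From flags to flat filtrations.} Conversely, given flags $F_p$ with the stated invariance, define for each $q\in U_p$ the flag $\F_p|_q \coleq \Phi_\gamma(F_p)$, where $\gamma$ is any path in $U_p$ from $p$ to $q$. This is well defined for the following reasons:
\begin{itemize}
\item If $p\in P_\rm{I}$, then $U_p$ is contractible, so all such $\gamma$ are homotopic relative to their endpoints and $\Phi$ factors through $\Pi_1(\Sigma)$.
\item If $p\in P_\rm{II}$, any two such paths differ, up to homotopy in $U_p$, by a power of the boundary loop, since $\pi_1(U_p,p)\cong\mathbb{Z}$ is generated by it. The invariance hypothesis then makes the transported flag independent of the choice.
\end{itemize}
Flatness of the resulting $\F_p$ follows from functoriality, $\Phi_{\gamma'\gamma}=\Phi_{\gamma'}\Phi_\gamma$.

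\emph{Subbundles and the correspondence on morphisms.} The main point needing care is that the $\F^i_p$ are honest continuous subbundles of $\E_p$. To see this, use that $\Phi$ is a continuous groupoid homomorphism. Over a small contractible open set $V\subset U_p$, choose a continuously varying family of paths from a fixed point $q_0\in V$, such as radial paths in a chart. The maps $\Phi$ along these paths give a continuous trivialisation of $\E|_V$ in which $\F_p|_V$ is constant, so it is locally trivial. The two constructions are clearly inverse to each other, since restricting the transported filtration to $p$ recovers $F_p$, and a flat filtration is determined by its fibre at $p$ through flatness. Finally, a morphism $\phi$ of representations intertwines $\Phi$ and $\Phi'$ as in \eqref{250911161645}. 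Hence $\phi_p(F^i_p)\subset F'^i_p$ implies $\phi_q(\F^i_p|_q)=\phi_q\Phi_\gamma(F^i_p)=\Phi'_\gamma\phi_p(F^i_p)\subset\F'^i_p|_q$, so filtered maps at the marked points are the same as filtered maps over each $U_p$. This identifies the two groupoids.
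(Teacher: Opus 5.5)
Your proof is correct and takes essentially the same route as the paper. The paper gives no separate argument and presents this lemma as the analogue of \autoref{251127102439}, whose proof is exactly your construction: transport the flag at $p$ along paths in $U_p$ (with $\op{hol}$ in place of $\Phi$), with invariance under the boundary loop being precisely what makes this transport well defined at secondary marked points. Your additional checks fill in details the paper leaves implicit: local triviality of the transported subbundles via continuity of $\Phi$, and the correspondence on morphisms, which the paper states separately as \autoref{251127155720}.
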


Again, it is worth stressing that the homomorphism $\Phi : \Pi_1 (\Sigma) \to \GL (\E)$ is \textit{not} assumed to relate in any way the local flags at different marked points.
The homomorphism $\Phi$ is, so to speak, `ignorant' of the flags $F_p$: if $\gamma$ is any path between $p,p' \in P$, we do \textit{not} require the corresponding isomorphism $\Phi_\gamma : E_p \iso E_{p'}$ to respect the flags $F_p, F_{p'}$.
The only exception to this are the flags at secondary marked points which must be invariant under the local monodromy.

\begin{lemma}
	\label{251127155720}
	A morphism of two filtered representations of $\Pi_1 (\Sigma)$ on $(\Sigma, P)$ is equivalently a morphism $\phi : (\E, \Phi, F) \to (\E', \Phi', F')$ consisting of a morphism of representations $\phi : (\E, \Phi) \to (\E', \Phi')$ which is filtered at every marked point; i.e., the restriction of $\phi$ to any $p \in P$ is a filtered map $\phi_p : (E_p, F_p) \to (E'_p, F'_p)$.
\end{lemma}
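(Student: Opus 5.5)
The plan is to prove both directions at once, mirroring the proof of \autoref{251127102439}, using the alternative description of objects in \autoref{251127155249}. By that lemma, a filtered representation $(\E,\F,\Phi)$ is the same datum as $(\E,F,\Phi)$ with $F_p$ the fibre of $\F_p$ at $p$. Conversely, $\F_p$ is recovered from $F_p$ as $\F_p|_x \coleq \Phi_\gamma(F_p)$ for any path $\gamma$ in $U_p$ from $p$ to $x$. So it suffices to show the following: a morphism of representations $\phi : (\E,\Phi)\to(\E',\Phi')$ restricts over every $U_p$ to a filtered bundle map $(\E_p,\F_p)\to(\E'_p,\F'_p)$ if and only if each $\phi_p : (E_p,F_p)\to(E'_p,F'_p)$ is filtered.

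The forward direction is immediate. A filtered bundle map over $U_p$ is in particular filtered on the fibre over $p \in U_p$, and that fibre is $(E_p,F_p)$.

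For the converse, fix $p\in P$, a point $x\in U_p$, and a path $\gamma$ in $U_p$ from $p$ to $x$. The intertwining relation \eqref{250911161645} gives $\phi_x = \Phi'_\gamma\circ\phi_p\circ\Phi_\gamma^{-1}$. By flatness (\autoref{251127151457}), $\Phi_\gamma^{-1}$ carries $\F_p|_x$ to $F_p$ and $\Phi'_\gamma$ carries $F'_p$ to $\F'_p|_x$. By hypothesis $\phi_p$ maps $F_p^i$ into $F'^i_p$. Composing these three maps shows $\phi_x(\F^i_p|_x)\subset \F'^i_p|_x$ for all $i$ and all $x$, so $\phi$ is filtered over $U_p$.

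The only point needing care is the secondary case $p\in P_\rm{II}$, where $U_p$ is an annulus and $\gamma$ is not unique. Different choices of $\gamma$ differ by powers of the boundary loop. The flags $F_p$ and $F'_p$ are invariant under $\Phi$ and $\Phi'$ of that loop (by \autoref{251127155249}), so $\F_p|_x$ is independent of the choice of $\gamma$, and the argument above applies with any choice. I expect this well-definedness check to be the main, though mild, obstacle. Continuity of the subbundles is not an issue because they are defined by parallel transport.

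Finally, the correspondences on objects and on morphisms are mutually inverse and respect composition. This yields the stated identification of morphisms and hence of the groupoid $\catname{Rep}_G^\rm{Fi}(\Pi_1(\Sigma),P)$ with the groupoid of triples $(\E,F,\Phi)$.
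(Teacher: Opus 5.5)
Your argument is correct, and it is exactly the parallel-transport-by-flatness argument the paper relies on. The paper states this lemma without proof, as the morphism-level counterpart of \autoref{251127102439} and \autoref{251127155249}, and your step $\phi_x = \Phi'_\gamma \circ \phi_p \circ \Phi_\gamma^{-1}$, combined with flatness along a path $\gamma$ in $U_p$, fills that gap completely.

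The well-definedness check at secondary points is not needed here. Since $\F_p$ and $\F'_p$ are given data and flatness holds along every path in $U_p$, any single choice of $\gamma$ suffices. Independence of $\gamma$ matters only for the object-level reconstruction of $\F_p$ from $F_p$.
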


The key fact about filtered representations of the fundamental groupoid is the following proposition regarding their close relationship with filtered local systems.

\begin{proposition}
	\label{250910194000}
	The holonomy representation functor extends to an equivalence of categories between filtered local systems and filtered fundamental groupoid representations:
	\begin{eqn}
		\op{Hol} : \catname{Loc}_G^\rm{Fi} (\Sigma, P) \iso \catname{Rep}_G^\rm{Fi} \big( \Pi_1 (\Sigma), P \big).
	\end{eqn}
	It sends $(\cal{E}, \cal{F}) \mapsto (\E, \F, \Phi) \coleq (\cal{E} \otimes \cal{O}_\Sigma,\cal{F} \otimes \cal{O}_\Sigma, \op{hol})$.
\end{proposition}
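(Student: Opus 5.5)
The plan is to bootstrap from the undecorated equivalence $\op{Hol} : \catname{Loc}_G (\Sigma) \iso \catname{Rep}_G \big( \Pi_1 (\Sigma) \big)$ of \autoref{250911161735}. I will show that the extra decorations on both sides match under $\op{Hol}$, and that the morphism conditions match as well. The cleanest route is through the stalk/fibre characterisations. By \autoref{251127102439}, filtered local systems are pairs $(\cal{E}, F)$ with flags $F_p \subset E_p$ such that $F_p$ is invariant under the boundary monodromy whenever $p \in P_\rm{II}$. By \autoref{251127155249} and \autoref{251127155720}, filtered representations are triples $(\E, F, \Phi)$ with the analogous invariance of $F_p$ under $\Phi_\gamma$ for the boundary loop $\gamma$. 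So it suffices to compare these two ``pointwise'' groupoids.

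First, on objects. Given $(\cal{E}, F)$, I set $(\E,\Phi) = \op{Hol}(\cal{E}) = (\cal{E}\otimes\cal{O}_\Sigma, \op{hol})$ and transport each $F_p$ via the canonical isomorphism $\E_p \cong \cal{E}_p$ of fibre and stalk. Since $\Phi_\gamma = \op{hol}_\gamma = \op{mon}_\gamma$ for the boundary loop at a secondary point, the invariance conditions coincide. I will also check that this agrees with the formula $\cal{F}\otimes\cal{O}_\Sigma$ in the statement. Over $U_p$ the local subsystems $\cal{F}^i_p$ give subbundles $\cal{F}^i_p\otimes\cal{O}$ whose fibres are the parallel transports of $F^i_p$. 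Holonomy along paths in $U_p$ preserves these subbundles precisely because the $\cal{F}^i_p$ are local subsystems, which is exactly the flatness condition of \autoref{251127151457}.

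Second, on morphisms. $\op{Hol}$ is already fully faithful between the underlying groupoids, and under the identification $E_p \cong \cal{E}_p$ the map $\phi_p$ on stalks equals the fibre map of $\op{Hol}(\phi)$ at $p$. Hence a sheaf isomorphism is filtered at every $p \in P$ if and only if its image is. This gives that the functor is well defined and fully faithful on the filtered groupoids.

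Third, essential surjectivity. Given $(\E, F, \Phi)$, I take $\cal{E}$ with $\op{Hol}(\cal{E}) \cong (\E,\Phi)$ from \autoref{250911161735} and pull the flags $F_p$ back along this isomorphism, which is a morphism of representations. Such a morphism intertwines $\Phi_\gamma$ with $\op{mon}_\gamma$, so the invariance condition at secondary points is preserved. The resulting $(\cal{E},F)$ is then a filtered local system mapping isomorphically to the given object.

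The main obstacle is bookkeeping rather than substance: making the identifications between the sectorial-neighbourhood descriptions and the pointwise descriptions compatible, especially at secondary points, where $U_p$ is annular and a flag at $p$ only extends because of monodromy invariance. I will handle this by working throughout with the pointwise lemmas and invoking them only at the start and the end.
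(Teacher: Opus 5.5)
Your proposal is correct and follows the paper's approach. Both build on the undecorated equivalence of \autoref{250911161735} and note that $\cal{F}_p \otimes \cal{O}_{U_p}$ is a flat filtration of $\E_p$, because the $\cal{F}_p^i$ are local subsystems and so are preserved by holonomy along paths in $U_p$. The paper's proof stops at this object-level check. Your treatment of morphisms and of essential surjectivity, which pulls the flags back along an isomorphism $\op{Hol}(\cal{E}) \cong (\E,\Phi)$ using the pointwise Lemmas \ref{251127102439}, \ref{251127155249} and \ref{251127155720}, supplies steps the paper leaves implicit.
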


\begin{proof}
	The holonomy representation functor $\op{Hol} : \catname{Loc}_G (\Sigma) \iso \catname{Rep}_G (\Sigma)$ was explicitly constructed in \autoref{250911161735}.
	Let $(\cal{E}, \cal{F})$ be a filtered local system.
	The holonomy functor yields a representation $(\E, \Phi) = (\cal{E} \otimes \cal{O}_\Sigma, \op{hol})$.
	For every $p \in P$, the local filtration $\cal{F}_p$ induces a local filtration $\F_p \coleq \cal{F}_p \otimes \cal{O}_{U_p}$ on the bundle $\E_p$ over $U_p$.
	This filtration is flat because $\Phi$ is the holonomy of $\cal{E}$ and $\cal{F}_p$ is invariant under the holonomy along paths in $U_p$.
	If we regard $\cal{F}_p$ as a sheaf on $\Sigma$ with support on $U_p$, then we can simply write $\F_p = \cal{F}_p \otimes \cal{O}_{\Sigma}$ as in the statement.
\end{proof}

As for filtered local systems, the above definitions specialise to the classical situation without marked points: $\catname{Rep}_G^\rm{Fi} \big( \Pi_1 (\Sigma), \emptyset \big) = \catname{Rep}_G \big( \Pi_1 (\Sigma) \big)$.

\subsection{Framed and Projectively Framed Groupoid Representations}

\begin{definition}
	\label{251209155014}
	Let $(\E, \F, \Phi)$ be a filtered representation of the fundamental groupoid $\Pi_1 (\Sigma)$ on a surface with marked boundary $(\Sigma, P)$.
	A \dfn{local frame} $\beta_p$ on $(\E, \F, \Phi)$ at $p \in P$ is a filtered flat trivialisation at $p$; i.e., a filtered isomorphism $\beta_p : (E_p, F_p) \iso (\CC^n, \CC^\bullet)$ of vector bundles over $U_p$ with the following property called \dfn{flatness} with respect to $\Phi$.
	For every path $\gamma : [0,1] \to U_p$, the groupoid homomorphism $\Phi$ satisfies
	\begin{eqn}
		\beta_p \big|_{\gamma (1)} \circ \Phi_\gamma = \beta_p \big|_{\gamma (0)}.
	\end{eqn}
	A \dfn{frame} on $(\E, \F, \Phi)$ is a collection $\beta = \set{ \beta_p : P \in P }$ of local frames, one at each marked point.
	Any such triple $(\E, \F, \Phi)$ is a called a \dfn{framed} \dfn{representation} of the fundamental groupoid $\Pi_1 (\Sigma)$ on the surface with marked boundary $(\Sigma, P)$.
\end{definition}

\begin{definition}
	\label{251209164307}
	A \dfn{unipotent isomorphism} $\phi : (\E, \F, \Phi, \beta) \to (\E', \F', \Phi', \beta')$ of two framed representations of the fundamental groupoid $\Pi_1 (\Sigma)$ on the surface with marked boundary $(\Sigma, P)$ is a filtered isomorphism such that each restriction $\phi_p : (E_p, F_p) \to (E'_p, F'_p)$ is a unipotent map with respect to the given frames at $p$.
	We denote the groupoid and the corresponding moduli space of framed representations of rank $n$ on $(\Sigma, P)$ by
	\begin{eqn}
		\catname{Rep}_G^\rm{Fr} \big( \Pi_1 (\Sigma), P \big)
		\qtext{and}
		\frak{Rep}_G^\rm{Fr} \big( \Pi_1 (\Sigma), P \big)
		\coleq \catname{Rep}_G^\rm{Fr} \big( \Pi_1 (\Sigma), P \big) \big/{\sim}.
	\end{eqn}
\end{definition}

Analogously, we give the following:

\begin{definition}
	\label{def:proj-rep}
	Let $(\E, \F, \Phi)$ be a filtered representation of the fundamental groupoid $\Pi_1 (\Sigma)$ on a surface with marked boundary $(\Sigma, P)$.
	A \dfn{local projective frame} $\overline\beta_p$ on $(\E, \F, \Phi)$ at $p \in P$ is an equivalence class of filtered isomorphisms $\beta_p : (E_p, F_p) \iso (\CC^n, \mathbb P \CC^\bullet)$ of vector bundles over $U_p$ which are flat with respect to $\Phi$, where $\beta_p\sim t\beta_p$ for any $t\in C^\times$.
	A \dfn{projective frame} on $( \E, \F, \Phi)$ is a collection $\overline\beta = \set{ \beta_p : P \in P }$ of local projective frames, one at each marked point.
	Any such triple $(\E, \F, \Phi,\overline\beta)$ is a called a \dfn{filtered projectively framed representation} of the fundamental groupoid $\Pi_1 (\Sigma)$ on the surface with marked boundary $(\Sigma, P)$.
\end{definition}

\begin{definition}
	\label{def:framed-morh}
	A \dfn{projectively unipotent isomorphism} $\phi : (\E, \F, \Phi, \overline{\beta}) \to (\E', \F', \Phi',  \overline{\beta}')$ of two projectively framed representations of the fundamental groupoid $\Pi_1 (\Sigma)$ on the surface with marked boundary $(\Sigma, P)$ is a filtered morphism such that each restriction $\phi_p : (E_p, F_p) \to (E'_p, F'_p)$ is a projectively unipotent map with respect to the given projective frames at $p$.
	We denote the groupoid and the corresponding moduli space of projectively framed representations of rank $n$ on $(\Sigma, P)$ by
	\begin{eqn}
		\catname{Rep}_G^\rm{PFr} \big( \Pi_1 (\Sigma), P \big)
		\qtext{and}
		\frak{Rep}_G^\rm{PFr} \big( \Pi_1 (\Sigma), P \big)
		\coleq \catname{Rep}_G^\rm{PFr} \big( \Pi_1 (\Sigma), P \big) \big/{\sim}.
	\end{eqn}
\end{definition}

The proof of \autoref{250910194000} can be easily adapted to the framed case by adding the frame information on the fibers, leading to the following:

\begin{proposition}
	\label{251216105015}
	The holonomy representation functor extends to an equivalence of categories between decorated local systems and decorated representations of the fundamental groupoid:
	\begin{eqn}
		\op{Hol} : \catname{Loc}_G^\square (\Sigma, P) \iso \catname{Rep}_G^\square \big( \Pi_1 (\Sigma), P \big),
	\end{eqn}
	for any $\square \in \set{ \rm{Fi}, \rm{Fr}, \rm{PFr} }$.
	It sends $(\cal{E}, \cal{F}, \star) \mapsto (\E, \F, \Phi, \star) \coleq (\cal{E} \otimes \cal{O}_\Sigma, \cal{F} \otimes \cal{O}_\Sigma, \op{hol}, \star)$, where $\star= \emptyset, \beta,\overline \beta$ for  $\square= \rm{Fi}, \rm{Fr}, \rm{PFr}$ respectively.
\end{proposition}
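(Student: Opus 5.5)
The plan is to bootstrap from \autoref{250910194000} (the filtered case) and from \autoref{250911161735}, exactly as the phrase ``adding the frame information on the fibres'' suggests. For $\square = \rm{Fi}$ there is nothing to prove. For $\square \in \set{\rm{Fr}, \rm{PFr}}$ I will define $\op{Hol}$ on objects by first applying the filtered holonomy functor to the underlying filtered local system $(\cal{E}, \cal{F})$, obtaining $(\E, \F, \Phi) = (\cal{E} \otimes \cal{O}_\Sigma, \cal{F} \otimes \cal{O}_\Sigma, \op{hol})$. I then transport the decoration $\star$: a local frame $\beta_p : (E_p, F_p) \iso (\CC^n, \CC^\bullet)$ on the stalk is the same thing as a filtered isomorphism on the fibre, since $E_p = \cal{E}_p \otimes \CC$ canonically. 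On morphisms, $\op{Hol}$ acts as the underlying functor, so $\phi \mapsto \phi \otimes \id$.

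First I check that the transported frame is a frame in the sense of \autoref{251209155014}. By \autoref{251127155249} and the lemma-type characterisation \autoref{251127102439}, the decoration lives entirely on fibres at marked points. Flatness of $\beta_p$ along paths in $U_p$ is then a \emph{definition} of its extension: we extend $\beta_p$ by parallel transport via $\Phi = \op{hol}$. For primary points $U_p$ is contractible, so this is unambiguous. For secondary points I intend to interpret flatness only along paths in a contractible sector, because, as remarked after \autoref{251205224404}, a frame need not be monodromy invariant. This is the one point I expect to need care: one must either restrict $U_p$ to a sectorial neighbourhood for the frame data, or note that the definition only uses germs at $p$. I would adopt the first reading so that the correspondence is bijective on decorations. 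The projective case is identical, since scaling by $t \in \CC^\times$ commutes with parallel transport, so classes $\bar\beta_p$ transport to classes.

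Next, I verify that morphisms correspond. A filtered morphism $\phi$ of local systems is unipotent (resp.\ projectively unipotent) iff $\beta'_p \circ \phi_p \circ \beta_p^{-1} \in U$ (resp.\ $\CC^\times U$) for every $p \in P$. The same matrix condition defines unipotent (resp.\ projectively unipotent) morphisms of representations in \autoref{251209164307} and \autoref{def:framed-morh}, since $\op{Hol}(\phi)$ restricts to $\phi_p$ on fibres at marked points. Hence $\op{Hol}$ restricts to a fully faithful functor: faithfulness is inherited from \autoref{250910194000}, and fullness holds because any preimage of a unipotent morphism satisfies the same matrix condition.

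Finally, essential surjectivity. Given $(\E, \F, \Phi, \star)$, \autoref{250910194000} provides a filtered local system $(\cal{E}, \cal{F})$ and a filtered isomorphism $\psi : \op{Hol}(\cal{E}, \cal{F}) \iso (\E, \F, \Phi)$. I pull the frames back by setting $\beta^{\cal{E}}_p \coleq \beta_p \circ \psi_p$, and similarly for projective classes. Then $\psi$ is a strict, hence unipotent, isomorphism of decorated objects, which completes the equivalence for every $\square$.
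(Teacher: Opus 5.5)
Your argument is correct and is essentially the paper's own: the paper only remarks that the proof of \autoref{250910194000} adapts by adding the frame data on the fibres. You carry that adaptation out in full, by transporting frames to fibres, matching the unipotent and projectively unipotent matrix conditions on morphisms, and pulling frames back along the filtered isomorphism $\psi$ to get essential surjectivity. Reading the flatness condition of \autoref{251209155014} only on a sectorial neighbourhood at secondary marked points is in fact necessary: imposed over the whole boundary circle it would force trivial local monodromy. The paper leaves this point implicit.
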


\subsection{Discrete and Decorated Groupoid Representations}
\label{251216083423}

Now, let us recall the discrete fundamental groupoid $\pi_1 (\Sigma, P) \rightrightarrows P$ introduced in \autoref{251214205432}, defined over any  surface with marked boundary $(\Sigma,P)$.
The manifestation of its discrete nature from the point of view of representation theory is as follows.
Any representation of $\pi_1 (\Sigma, P)$ involves in particular a complex vector bundle $E \to P$ which becomes nothing but a finite collection of vector spaces $E = \set{E_p : p \in P}$.
Correspondingly, a groupoid homomorphism $\varphi : \pi_1 (\Sigma, P) \to \GL (E)$ becomes a finitely-generated countable collection of isomorphisms between these vector spaces which is completely determined by its values on the finite set of generators of $\pi_1 (\Sigma, P)$.
Furthermore, since the discrete fundamental groupoid $\pi_1 (\Sigma, P)$ is a full subgroupoid of $\Pi_1 (\Sigma)$, any representation of $\Pi_1 (\Sigma)$ restricts to a representation of $\pi_1 (\Sigma, P)$.
The advantage of this point of view is that it provides a `discrete' model for the representation category of $\Pi_1 (\Sigma)$ in the sense of the following lemma that generalises the final assertion of \autoref{250911161735}.

\begin{lemma}
	\label{250914172733}
	For any  surface with marked boundary $(\Sigma,P)$, the restriction functor
	\begin{eqntag}
		\label{251214210200}
		\catname{Rep}_G \big( \Pi_1 (\Sigma) \big) \iso \catname{Rep}_G \big( \pi_1 (\Sigma, P) \big)
	\end{eqntag}
	is an equivalence of categories.
\end{lemma}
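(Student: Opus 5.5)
The plan is to reduce to the one-point case, \autoref{250911161735}, by comparing $\pi_1 (\Sigma, P)$ with the fundamental group at a single marked point $p_0 \in P$. First I fix $p_0 \in P$. For every $p \in P$ I choose a path $c_p \in \pi_1 (\Sigma, P)$ from $p_0$ to $p$, taking $c_{p_0} = 1$; this is possible because $\Sigma$ is connected. Since $\pi_1 (\Sigma, p_0)$ is a full subgroupoid of both $\pi_1 (\Sigma, P)$ and $\Pi_1 (\Sigma)$, restriction gives a commutative triangle of functors
\begin{eqn}
	\catname{Rep}_G \big( \Pi_1 (\Sigma) \big) \too \catname{Rep}_G \big( \pi_1 (\Sigma, P) \big) \too \catname{Rep}_G \big( \pi_1 (\Sigma, p_0) \big).
\end{eqn}
By \autoref{250911161735}, the composite is an equivalence. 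By two-out-of-three for equivalences, it therefore suffices to show that the second functor, restriction from $\pi_1 (\Sigma, P)$ to $\pi_1 (\Sigma, p_0)$, is an equivalence.

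For that second functor I construct an explicit quasi-inverse $\cal{L}$. Given a representation $(E, \varphi)$ of $\pi_1 (\Sigma, p_0)$, I set $E_p \coleq E$ for every $p \in P$. For a path $\gamma$ from $p$ to $q$ in $\pi_1 (\Sigma, P)$, I define
\begin{eqn}
	\Phi_\gamma \coleq \varphi \big( c_q^{-1} \gamma c_p \big),
\end{eqn}
noting that $c_q^{-1} \gamma c_p$ is a loop based at $p_0$. Functoriality then follows by telescoping: $c_r^{-1} \gamma' c_q c_q^{-1} \gamma c_p = c_r^{-1} \gamma' \gamma c_p$. On morphisms, $\cal{L}$ sends $g$ to the constant family $g$ at every point, and the intertwining property holds because $g$ intertwines $\varphi$.

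It remains to check the two compositions. First, $\R \circ \cal{L} = \id$ on the nose, since $c_{p_0} = 1$. Second, for a representation $(E, \Phi)$ of $\pi_1 (\Sigma, P)$, the maps
\begin{eqn}
	\eta_p \coleq \Phi_{c_p}^{-1} : E_p \iso E_{p_0}
\end{eqn}
give an isomorphism $(E, \Phi) \iso \cal{L} \R (E, \Phi)$. Indeed, $\eta_q \Phi_\gamma = \Phi_{c_q}^{-1} \Phi_\gamma = \Phi_{c_q^{-1} \gamma c_p} \Phi_{c_p}^{-1}$. Naturality in morphisms $\phi$ is the same one-line computation using $\phi_q \Phi_{c_q} = \Phi'_{c_q} \phi_{p_0}$.

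There is no serious obstacle; this is the standard fact that a connected groupoid is equivalent to any of its vertex groups, and the restriction to it is an equivalence. The only point needing care is bookkeeping: the composition convention (right-to-left, as in the paper's relation \eqref{251109164128}) and the direction of the connecting paths, so that $c_q^{-1} \gamma c_p$ really is a loop at $p_0$. Alternatively, one could avoid two-out-of-three and directly define $\catname{Rep}_G \big( \pi_1 (\Sigma, P) \big) \to \catname{Rep}_G \big( \Pi_1 (\Sigma) \big)$ as $\R^{-1} \circ (\text{restriction to } p_0)$ using the explicit inverse $\op{Hol} \circ \op{Mon}_{p_0}^{-1}$. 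However, the argument above is cleaner.
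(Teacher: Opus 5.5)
Your proof is correct, but it works the paper's triangle \eqref{251010135020} from the opposite leg.

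\textbf{The paper's route.} It proves directly that $\R$, the restriction from $\Pi_1(\Sigma)$ to $\pi_1(\Sigma,P)$, is an equivalence, and only afterwards deduces that $\R''$ is one. Faithfulness comes from injectivity of $\R' = \R''\circ\R$ on morphisms. Essential surjectivity comes from lifting $(E,\varphi)|_p$ to the local system $(\tilde\Sigma\times E_p)/\pi_1(\Sigma,p)$ and comparing fibres through the canonical isomorphisms $\nu_q$ and $\eta_q$ of \eqref{251010161315} and \eqref{251010161331}.

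\textbf{Your route.} You prove the purely algebraic leg $\R''$ directly: a connected groupoid is equivalent to any of its vertex groups, using chosen connecting paths $c_p$. You then get $\R$ by two-out-of-three. This is shorter and needs nothing beyond path-connectedness of $\Sigma$. It also gives full faithfulness of $\R$ for free, whereas the paper argues fullness and faithfulness separately.

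\textbf{What the paper's route buys.} Its comparison is choice-free. The map $\nu_q$ identifies $E_q$ with the stalk using all homotopy classes of paths to $p$ at once, rather than a single chosen $c_q$. These canonical identifications are reused in the proof of \autoref{251007202059} to transport the flags at primary marked points to the lifted representation. Your quasi-inverse $\cal{L}$ depends on the $c_p$. That is harmless for the equivalence statement, since quasi-inverses are unique up to natural isomorphism, but you would have to track the choice if you wanted to reuse it there.
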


Explicitly, this restriction functor sends
\begin{eqn}
	(\E, \Phi) \mapsto (E, \varphi) \coleq \Big( \E |_P, \Phi \big|_{\pi_1 (\Sigma, P)} \Big).
\end{eqn}
An inverse functor is obtained choosing any $p \in P$ and restricting a representation $(E,\varphi)$ of $\pi_1 (\Sigma, P)$ to a representation of $\pi_1 (\Sigma, p)$, then lifting it to the holonomy representation of the local system $\cal{E} \coleq \big(\tilde{\Sigma} \times E_{p} \big) / \pi_1 (\Sigma, p)$ where $\tilde{\Sigma} \to \Sigma$ is the universal cover based at $p$.
In particular, \autoref{250914172733} implies that, for any $p \in P$, the restriction functor
\begin{eqn}
	\catname{Rep}_G \big( \pi_1 (\Sigma, P) \big) \iso \catname{Rep}_G \big( \pi_1 (\Sigma, p) \big)
\end{eqn}
is also an equivalence of categories.

\begin{proof}[Proof of \autoref*{250914172733}]
	All the necessary ingredients for the proof of this lemma were developed in the course of the proof of \autoref{250911161735} (see Appendix \ref{sec:A}).
	In particular, select any point $p \in P$ and consider the diagram
	\begin{eqntag}
		\label{251010135020}
		\begin{tikzcd}
			\catname{Rep}_G \big( \Pi_1 (\Sigma) \big)
			\ar[r, "\R"]
			\ar[dr, "{\R'}"]
			&	\catname{Rep}_G \big( \pi_1 (\Sigma, P) \big)
			\ar[d, "{\R''}"]
			\\
			\catname{Loc}_G (\Sigma)
			\ar[u, "\op{Hol}"]
			\ar[r, "\op{Mon}_{p}"']
			&	\catname{Rep}_G \big( \pi_1 (\Sigma, p) \big),
		\end{tikzcd}
	\end{eqntag}
	where $\R, \R', \R''$ are the restriction functors.
	By \autoref{250911161735}, $\R'$ is an equivalence, and we need to show that so is $\R$.
	This will also imply that $\R''$ is an equivalence.
	
	It is clear that $\R$ is full, but to see that it is faithful we appeal to the diagram \eqref{251010135020}.
	Suppose $(\E, \Phi)$ and $(\E', \Phi')$ is a pair of representations of $\Pi_1 (\Sigma)$, and $(E,\varphi) \coleq \R (\E, \Phi)$ and $(E', \varphi') \coleq \R (\E', \Phi')$ are their restrictions to representations of $\pi_1 (\Sigma, P)$.
	Restrict them further to representations $(E_p, \varphi_p) \coleq \R'' (E,\varphi)$ and $(E'_p, \varphi'_p) \coleq \R'' (E',\varphi')$ of $\pi_1 (\Sigma, p)$.
	The map $\R' = \R'' \circ \R : \sfop{Iso} \big( (\E, \Phi), (\E', \Phi') \big) \to \sfop{Iso} \big( (E_p, \varphi_p), (E'_p, \varphi'_p) \big)$ is bijective and in particular injective by \autoref{250911161735}, so it follows that the map $\R : \sfop{Iso} \big( (\E, \Phi), (\E', \Phi') \big) \to \sfop{Iso} \big( (E, \varphi), (E', \varphi') \big)$ must be injective, too.
	
	Next, we argue that $\R$ is essentially surjective.
	Suppose $(E, \varphi)$ is a representation of $\pi_1 (\Sigma, P)$.
	Restrict it to a representation $(E_p, \varphi_p) \coleq \R'' (E, \varphi)$ of $\pi_1 (\Sigma, p)$, and lift that to the holonomy representation $(\E, \Phi) \coleq \op{Hol} (\cal{E})$ of $\Pi_1 (\Sigma)$ of the local system $\cal{E} \coleq \big(\tilde{\Sigma} \times E_p \big) / \pi_1 (\Sigma, p)$.
	Thus, $\R' (\E, \Phi) \eqcol (E'_p, \varphi'_p) \cong (E_p, \varphi_p)$.
	We claim that the restricted representation $(\E|_P, \Phi|_P) \coleq \R (\E, \Phi)$ of $\pi_1 (\Sigma, P)$ is isomorphic to the original representation $(E, \varphi)$.
	
	The first thing to observe is that the fibre $E_q$ of the bundle $E \to P$ over any $q \in P$ is isomorphic to the fibre $E_p$ via the representation $\varphi_\gamma : E_q \iso E_p$ in as many ways as there are homotopy classes of paths from $q$ to $p$.
	As a result, a vector $e$ in $E_q$ determines an infinite collection $\underline{e} = \set{e_\gamma \coleq \phi_\gamma (e)}_\gamma$ of vectors in $E_p$, one for each homotopy class $\gamma \in \pi_1 (\Sigma, P)$ with source $q$ and target $p$.
	Equivalently, the collection $\underline{e}$ is enumerated by points $\tilde{q}$ in the fibre $\pi^{-1} (q)$ of the universal cover $\pi : \tilde{\Sigma} \to \Sigma$ based at $p$.
	Any two homotopy classes of paths from $q$ to $p$ form a loop at $p$, and the components of the infinite collection $\underline{e}$ are related by the action of the fundamental group $\pi_1 (\Sigma, p)$ through the representation $\varphi_p : \pi_1 (\Sigma, p) \to GL(E)$.
	In other words, upon examining \eqref{251009191043}, we have a canonical isomorphism between $E_q$ and the local system's stalk $\cal{E}_q$:
	\begin{eqntag}
		\label{251010161315}
		\nu_q : E_q \iso \cal{E}_q = \big(\pi^{-1} (q) \times E_p \big) \big/ \pi_1 (\Sigma, p),
	\end{eqntag}
	and this isomorphism intertwines the representation $\varphi$ and the holonomy of $\cal{E}$.
	At the same time, recall from \eqref{251010134146} that the fibre $\E |_q$ of the vector bundle $\E$ is also canonically isomorphic to the stalk $\cal{E}_q$:
	\begin{eqntag}
		\label{251010161331}
		\eta_q : \E |_q \iso \cal{E}_q = \big(\pi^{-1} (q) \times E_p \big) \big/ \pi_1 (\Sigma, p),
	\end{eqntag}
	and this isomorphism also intertwines the holonomy of $\cal{E}$ with the representation $\Phi$ and hence its restriction $\varphi$ of $\Phi$.
	We get an isomorphism $\phi_q \coleq \nu_q^{-1} \circ \eta_q : \E |_q \iso E_q$ for every $q \in P$, and therefore a bundle isomorphism $\phi : \E |_P \iso E$ which intertwines the representations $\varphi'$ and $\varphi$.
\end{proof}

We can conclude from \autoref{250914172733} that just the mere addition of marked points to the surface $\Sigma$ without any decorations does not yield any new phenomena from the point of view of representation theory.
However, the important observation is that the restriction functor of \autoref{250914172733} retains any decoration that a representation of $\Pi_1 (\Sigma)$ may have along $P$, so it acts like a `discretisation' of decorated representations of $\Pi_1 (\Sigma)$.
To phrase this precisely, we introduce the following definitions.

\begin{definition}
	\label{250912195848}
	Let $(\Sigma, P)$ be any  surface with marked boundary.
	A \dfn{filtered representation} of the discrete fundamental groupoid $\pi_1 (\Sigma, P)$ is a triple $(E, F, \varphi)$ consisting of a complex vector bundle $E = \set{ E_p : p \in P } \to P$, a filtration $F = \set{ F_p : p \in P }$, and a groupoid homomorphism $\varphi : \pi_1 (\Sigma, P) \to \GL (E)$ satisfying the following additional property.
	For every secondary marked point $p \in P_\rm{II}$, the flag $F_p$ is invariant under the isomorphism $\varphi_\gamma : E_p \iso E_p$ where $\gamma \in \pi_1 (\Sigma, p)$ is the boundary loop based at $p$.
\end{definition}

Since $E$ and $E'$ are just finite collections of vector spaces, a bundle map $\phi$ is nothing but a finite collection of linear maps
\begin{eqn}
	\phi = \set{ \phi_p : (E_p, F_p) \to (E'_p, F'_p) ~\big|~ p \in P}.
\end{eqn}

\begin{definition}
	\label{250914115030}
	A morphism $\phi : (E, F, \varphi) \to (E', F', \varphi')$ of two filtered representations of $\pi_1 (\Sigma, P)$ is a filtered bundle map $\phi : (E,F) \to (E',F')$ that intertwines the representations $\varphi, \varphi'$.
	We denote the groupoids and the corresponding moduli spaces of filtered representations of $\pi_1 (\Sigma, P)$ of rank $n$ by
	\begin{eqn}
		\catname{Rep}_G^\rm{Fi} \big( \pi_1 (\Sigma, P) \big)
		\qtext{and}
		\frak{Rep}_G^\rm{Fi} \big( \pi_1 (\Sigma, P) \big)
		\coleq \catname{Rep}_G^\rm{Fi} \big( \pi_1 (\Sigma, P) \big) \big/{\sim}.
	\end{eqn}
\end{definition}

Analogously, in the framed and projectively framed case we give the following definitions.

\begin{definition}
	\label{250912195848-framed}
	Let $(\Sigma, P)$ be any with marked boundary.
	A \dfn{framed representation} of the discrete fundamental groupoid $\pi_1 (\Sigma, P)$ is a quadruple $(E, F, \varphi,\beta)$ consisting of a filtered representation $(E,F, \varphi)$ together with a collection $\beta = \set{ \beta_p : p \in P }$  of local frames at each marked point.
\end{definition}

\begin{definition}
	\label{250914115030-framed}
	A unipotent morphism $\phi : (E, F, \varphi,\beta) \to (E', F', \varphi',\beta')$ of two framed representations of $\pi_1 (\Sigma, P)$ is a 
	filtered isomorphism such that each restriction $\phi_p : (E_p, F_p) \to (E'_p, F'_p)$ is a unipotent map with respect to the given frames at $p$.
	We denote the groupoids and the corresponding moduli spaces of framed representations of $\pi_1 (\Sigma, P)$ of rank $n$ by
	\begin{eqn}
		\catname{Rep}_G^\rm{Fr} \big( \pi_1 (\Sigma, P) \big)
		\qtext{and}
		\frak{Rep}_G^\rm{Fr} \big( \pi_1 (\Sigma, P) \big)
		\coleq \catname{Rep}_G^\rm{Fr} \big( \pi_1 (\Sigma, P) \big) \big/{\sim}.
	\end{eqn}
\end{definition}

\begin{definition}
	\label{proj-framed-rep-obj}
	Let $(\Sigma, P)$ be any surface  with marked boundary.
	A \dfn{projectively framed representation} of the discrete fundamental groupoid $\pi_1 (\Sigma, P)$ is a quadruple $(E, F, \varphi,\overline\beta)$ consisting of a filtered representation $(E,F, \varphi)$ together with a collection $\overline\beta = \set{ \overline\beta_p : p \in P }$  of projective local frames at each marked point.
\end{definition}

\begin{definition}
	\label{proj-framed-rep-morph}
	A projectively unipotent morphism $\phi : (E, F, \varphi,\overline\beta) \to (E', F', \varphi',\overline\beta')$ of two framed representations of $\pi_1 (\Sigma, P)$ is a 
	filtered isomorphism such that each restriction $\phi_p : (E_p, F_p) \to (E'_p, F'_p)$ is a projectively unipotent map with respect to the given projective frames at $p$.
	We denote the groupoids and the corresponding moduli spaces of framed representations of $\pi_1 (\Sigma, P)$ of rank $n$ by
	\begin{eqn}
		\catname{Rep}_{ G}^\rm{PFr} \big( \pi_1 (\Sigma, P) \big)
		\qtext{and}
		\frak{Rep}_{G}^\rm{PFr} \big( \pi_1 (\Sigma, P) \big)
		\coleq \catname{Rep}_{ G}^\rm{PFr} \big( \pi_1 (\Sigma, P) \big) \big/{\sim}.
	\end{eqn}
\end{definition}

In respect of these definitions, the restriction functor of \autoref{250914172733} extends to an equivalence of categories for the corresponding filtered representations.
We summarise this as follows.

\begin{lemma}
	\label{251107085612}
	Let $(\Sigma, P)$ be any surface with marked boundary with nonempty $P$.
	Then the restriction functor
	\begin{eqn}
		\catname{Rep}_G^\square \big( \Pi_1 (\Sigma), P \big)
		\iso
		\catname{Rep}_G^\square \big( \pi_1 (\Sigma, P) \big)
	\end{eqn}
	is an equivalence of categories for any $\square \in \set{ \rm{Fi}, \rm{Fr}, \rm{PFr} }$.
\end{lemma}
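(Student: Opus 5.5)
The plan is to bootstrap from the undecorated equivalence of \autoref{250914172733}, using that all decorations live only at the marked points $P$, where the restriction functor does nothing to fibres. Write $\R : \catname{Rep}_G^\square \big( \Pi_1 (\Sigma), P \big) \to \catname{Rep}_G^\square \big( \pi_1 (\Sigma, P) \big)$ for the decorated restriction functor. By \autoref{251127155249} and \autoref{251127155720}, a filtered representation $(\E, \F, \Phi)$ of $\Pi_1 (\Sigma)$ may be replaced by $(\E, F, \Phi)$, with flags $F_p$ in the fibres $E_p$ and $F_p$ invariant under $\Phi_\gamma$ for the boundary loop whenever $p \in P_\rm{II}$. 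Likewise, a flat local (projective) frame on $U_p$ is determined by its value at $p$, since $U_p$ is contractible for $p \in P_\rm{I}$, and for $p \in P_\rm{II}$ only the value at $p$ enters the morphism condition. I would then define $\R$ by sending $(\E, F, \Phi, \star)$ to $(\E|_P, F, \Phi|_{\pi_1(\Sigma,P)}, \star)$, where $\star \in \set{ \emptyset, \beta, \overline\beta }$. The invariance condition at secondary points is literally the same on both sides, because the boundary loop at $p$ lies in $\pi_1 (\Sigma, P)$.

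Faithfulness comes directly from \autoref{250914172733}. A decorated morphism is an undecorated one satisfying an extra condition at each $p \in P$, so $\R$ is injective on morphisms. For fullness, take a morphism $\psi$ of the restricted decorated representations. It is in particular an undecorated morphism, so by \autoref{250914172733} it equals $\R(\phi)$ for a unique morphism $\phi$ of representations of $\Pi_1 (\Sigma)$. Since $\phi_p = \psi_p$ for all $p \in P$, the map $\phi$ is filtered, unipotent or projectively unipotent at every marked point exactly when $\psi$ is. Hence $\phi$ is a decorated morphism.

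For essential surjectivity, let $(E, F, \varphi, \star)$ be a decorated representation of $\pi_1 (\Sigma, P)$. By \autoref{250914172733}, there is a representation $(\E, \Phi)$ of $\Pi_1 (\Sigma)$ and an isomorphism $\phi : (\E|_P, \Phi|_P) \iso (E, \varphi)$, which the proof there constructs explicitly as $\nu_q^{-1} \circ \eta_q$. I would transport the decorations back by setting $F'_q \coleq \phi_q^{-1}(F_q)$ and $\beta'_q \coleq \beta_q \circ \phi_q$, with the projective version handled the same way. Because $\phi$ intertwines $\Phi|_P$ with $\varphi$, invariance of $F_q$ under $\varphi_\gamma$ for $q \in P_\rm{II}$ gives invariance of $F'_q$ under $\Phi_\gamma$. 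We then extend $F'_q$ and $\beta'_q$ to flat local filtrations and flat frames over $U_q$ by parallel transport via $\Phi$, as in \autoref{251127102439} and \autoref{251127155249}. By construction $\phi$ is a strict, and hence unipotent and filtered, isomorphism from $\R(\E, F', \Phi, \beta')$ to $(E, F, \varphi, \beta)$.

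The main obstacle I expect is bookkeeping at secondary marked points. There $U_p$ is an annulus, so a flat frame does not in general extend globally over $U_p$. The definition of local frames must therefore be read pointwise at $p$, or along sectorial pieces, consistently with the remark after \autoref{251205224404}; once this is fixed, the framed cases follow verbatim from the filtered one. Everything else is formal.
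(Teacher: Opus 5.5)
Your proposal is correct and is essentially the paper's argument made explicit. The paper only asserts that the undecorated equivalence of \autoref{250914172733} extends, because restriction does not touch the fibres over $P$ where all decorations live, and it transports flags along the canonical isomorphisms $\nu_q^{-1}\circ\eta_q$ exactly as you do (compare the proof of \autoref{251007202059}). Your caveat at secondary points is more than bookkeeping; it is necessary. Read literally, a frame on the annulus $U_p$ that is flat in the sense of \autoref{251209155014} forces $\Phi_{\delta_p}=\id$, which would destroy essential surjectivity in the framed cases, so the frame must indeed be taken at $p$ (or on a sectorial piece), as in \autoref{251205224404} and the discrete definitions.
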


Together with \autoref{251216105015} this lemma immediately implies the following result.

\begin{proposition}
	\label{250912193234}
	For any  surface with marked boundary $(\Sigma, P)$ with nonempty $P$, the restricted holonomy representation functor gives an equivalence of  
	\begin{eqn}
		\op{Hol} : \catname{Loc}_G^\square  (\Sigma, P) \iso \catname{Rep}_G^\square  \big( \pi_1 (\Sigma, P) \big),
	\end{eqn}
	for any $\square \in \set{ \rm{Fi}, \rm{Fr}, \rm{PFr} }$.
\end{proposition}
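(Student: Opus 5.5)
The plan is to obtain the equivalence as a composite of two equivalences already established:
\begin{eqn}
	\catname{Loc}_G^\square (\Sigma, P)
	\overset{\op{Hol}}{\too} \catname{Rep}_G^\square \big( \Pi_1 (\Sigma), P \big)
	\overset{\R}{\too} \catname{Rep}_G^\square \big( \pi_1 (\Sigma, P) \big).
\end{eqn}
The first functor is an equivalence by \autoref{251216105015}, and the second is the restriction functor of \autoref{251107085612}. A composite of equivalences is an equivalence, so the statement follows once we check two things: that the composite is the ``restricted holonomy representation functor'', and that it has the expected form on objects and morphisms.

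On objects, $(\cal{E}, \cal{F}, \star)$ is sent to $(E, F, \varphi, \star)$. Here $E_p$ is the stalk of $\cal{E}$ at $p$, and $F_p$ is the stalk of the local filtration $\cal{F}_p$, as in \autoref{251127102439}. The homomorphism $\varphi$ is $\op{hol}$ restricted to paths between marked points, and the frames $\star$ are unchanged. On morphisms, a sheaf morphism $\phi$ goes to its collection of stalk maps $\{\phi_p\}_{p \in P}$.

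I would verify directly that this lands in the correct category. For $p \in P_\rm{II}$, the flag $F_p$ is invariant under the boundary monodromy, again by \autoref{251127102439}. A filtered, unipotent or projectively unipotent morphism restricts to stalk maps of the same kind, since each of these conditions is imposed only at the marked points.

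Since \autoref{251107085612} is stated but its proof is not written out, the main thing I would actually argue is that lemma in the decorated setting. The approach is to upgrade \autoref{250914172733}. Faithfulness follows from the undecorated case, because the decorated morphism sets are subsets of the undecorated ones. Fullness is essentially tautological: every morphism in the discrete category is a filtered, unipotent or projectively unipotent bundle map over $P$. Its lift to $\Pi_1(\Sigma)$ is unique by the undecorated equivalence. That lift satisfies the decoration conditions because, by \autoref{251127155720}, those conditions only concern the fibres at $P$, where the lift coincides with the original map.

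Essential surjectivity is the step I expect to need the most care. Given $(E, F, \varphi, \star)$, I would take the lift $(\E, \Phi)$ constructed in the proof of \autoref{250914172733}, together with its isomorphism $\phi : \E|_P \iso E$. I then pull back the flags and frames along $\phi_q$ to obtain flags and frames in $\E|_q$ for each $q$. Next I extend each one to a flat local filtration or flat local frame over $U_q$ by parallel transport with $\Phi$. For primary points $U_q$ is contractible, so the extension is well defined. For secondary points $U_q$ is an annulus, and the extension is well defined exactly because $F_q$ is invariant under the boundary monodromy.

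The obstacle I expect is the framed case at secondary points. A frame need not be invariant under the boundary monodromy, so a flat frame over an annular $U_q$ may not exist. I would handle this by interpreting local frames as living on the stalk, or on a contractible sectorial neighbourhood of $q$, as in the remark after \autoref{251205224404}. With that convention, $\phi$ becomes a decorated isomorphism by construction, which completes the argument.
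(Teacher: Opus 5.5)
Your proposal is correct and follows the paper's route exactly: the paper obtains this proposition by composing the holonomy equivalence of \autoref{251216105015} with the restriction equivalence of \autoref{251107085612}. The paper states \autoref{251107085612} without proof, and your sketch of it fills that in soundly. So does your observation that at secondary points a local frame must be read on the stalk (or on a sectorial neighbourhood) rather than as a flat trivialisation over an annular $U_q$; this subtlety is one the paper's definitions leave implicit.
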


\begin{remark}
	By dropping the decorations, and restricting $P$ to a single point, this result reduces to the second part of \autoref{250911161735}.
\end{remark}

We can regard these equivalences as providing in a sense a \textit{discrete model} for the groupoid and moduli space of decorated local systems.

\section{Forgetting secondary points}\label{se:5nuova}

In this section, we study the forgetful functors that allow to drop all secondary marked points. In particular, we prove that all vertical arrows in the diagram
\begin{equation}\label{diagram}
    \begin{tikzcd}
		\mathfrak{Loc}_G^\rm{Fi} (\Sigma, P) 
        \arrow[r, phantom, "\cong"]
		\ar[d]
		& \mathfrak{Rep}_G^\rm{Fi} \big( \Pi_1 (\Sigma), P \big)
        \arrow[r, phantom, "\cong"]
		\ar[d]
        & \mathfrak{Rep}_G^\rm{Fi} \big( \pi_1 (\Sigma, P) \big)
		\ar[d]
		\\
        \mathfrak{Loc}_G^\rm{Fi} (\Sigma, P_{\rm I}) 
        \arrow[r, phantom, "\cong"]
		& \mathfrak{Rep}_G^\rm{Fi} \big( \Pi_1 (\Sigma), P_{\rm I} \big)
        \arrow[r, phantom, "\cong"]
        & \mathfrak{Rep}_G^\rm{Fi} \big( \pi_1 (\Sigma, P_{\rm I}) \big)
	\end{tikzcd}
\end{equation}
are ramified coverings of degree $(n!)^{r}$ where $r$ is the number of secondary marked points; i.e., $r = |P_\rm{II}|$. We also describe the fibres of such coverings. In order to do so, we need to study the stratification of flags and dive into Jordan types.

\subsection{Stratification by Relative Position of Adjacent Flags}\label{se:3.2}

Filtered local systems on a surface with marked boundary have a well-defined collection of invariants obtained by calculating the relative positions of all adjacent pairs of filtrations.
The meaning of `adjacent' here is unambiguously provided by the boundary segments of the irregular boundary circles.

Recall that, given a pair of (complete) flags $F, F'$ on an $n$-dimensional vector space, their \dfn{relative position} is defined as the permutation matrix $\sigma \coleq \op{pos} (F, F') \in W$ whose entries are given by
\begin{eqn}
	\op{pos} (F, F')_{ij} \coleq \dim \left( \frac{ F_i \cap F'_j }{ F_i \cap F'_{j-1} + F_{i-1} \cap F'_j } \right).
\end{eqn}

Before proceeding, we mention some basic properties of the relative position matrix.
If $F$ and $F'$ are two flags of lengths $m$ and $m'$ in an $n$-dimensional vector space, then $\sigma = \op{pos} (F,F')$ is an $m {\times} m'$-matrix whose entries sum up to $n$.
If $F$ and $F'$ are complete flags, then $\sigma$ is a permutation matrix.
If $\sigma$ is the skew-identity matrix, then we say that $F$ and $F'$ are \dfn{transverse} (or in \dfn{general position}) and write $F \pitchfork F'$.
Isomorphisms preserve the relative position of two flags: $\op{pos} \big(\phi (F), \phi(F')\big) = \op{pos} (F, F')$ for any linear isomorphism $\phi : E \iso E'$.
Switching the order of the flags transposes the relative position matrix: $\op{pos} (F, F') = \op{pos} (F', F)^{\top}$.

Let $(\Sigma, P)$ be any surface with marked boundary with a least one primary marked point.
Pick a positively-oriented boundary segment $\delta \in \frak{A} = \frak{A} (\Sigma, P)$ with source $\delta (0) = p_1$ and target $\delta (1) = p_2 \in P_\rm{I}$.
If $(\cal{E}, \cal{F})$ is a filtered local system on $(\Sigma, P)$, let $(E_{p_i},F_{p_i})$ be the stalk of $(\cal{E}, \cal{F})$ at $p_i$.
Then we can compare the two (not necessarily distinct) flags $F_{p_1}$ and $F_{p_2}$ by parallel transporting them to any common point $p$ in $\delta$.
Concretely, let $E_p$ be the stalk of $\cal{E}$ at $p \in \delta$, and let $\op{hol}_i : E_{p_i} \iso E_p$ be the holonomy of $\cal{E}$ along a path from $p_i$ to $p$ contained in $\delta$.

\begin{definition}
	\label{251211174256}
	The \dfn{relative position} of the filtrations $\cal{F}_{p_1}, \cal{F}_{p_2}$ on $\cal{E}$ over $\delta$, or equivalently of the flags $F_{p_1}, F_{p_2}$ over $\delta$, is defined as the relative position of their parallel transport to any common point $p \in \delta$:
	\begin{eqn}
		\op{pos} (\cal{F}_{p_1}, \cal{F}_{p_2}) =
		\op{pos} (F_{p_1}, F_{p_2}) \coleq
		\op{pos} \Big( \op{hol}_1 F_{p_1}, \op{hol}_2 F_{p_2} \Big) \in W.
	\end{eqn}
\end{definition}

The matrix $\op{pos} (\cal{F}_{p_1}, \cal{F}_{p_2})$ is independent of $p$ and hence well-defined because the relative position of two flags in a vector space is invariant under isomorphisms.
For exactly the same reason, it is also preserved by isomorphisms of filtered local systems and therefore defines an invariant.
Namely, it is a collection of permutation matrices $\sigma_\delta$ labelled by the boundary segments $\delta \in \frak{A} = \frak{A} (\Sigma, P)$; i.e., an element
\begin{eqn}
	\bm{\sigma} \coleq (\sigma_\delta)  \in W_{\frak{A}} 
	\coleq \prod_{\delta \in \frak{A}} W.
\end{eqn}

\begin{definition}
	\label{251124131709}
	We say that a filtered local system $(\cal{E}, \cal{F})$ on $(\Sigma, P)$ has \dfn{adjacent flags in relative position} $\bm{\sigma} \in W_\frak{A}$ if $\op{pos} \big( \cal{F}_{\delta (0)}, \cal{F}_{\delta (1)}) = \sigma_\delta$ for every $\delta \in \frak{A}$.
	In this case, we write $\op{pos} (\cal{E}, \cal{F}) \coleq \bm{\sigma}$.
	Such decorated local systems form a full subgroupoid of $\catname{Loc}_G^{\rm{Fi}} (\Sigma, P)$.
	We denote it and its moduli space by
	\begin{eqn}
		\catname{Loc}_G^{\rm{Fi}} (\Sigma, P ; \bm{\sigma})
		\qtext{and}
		\mathfrak{Loc}_G^{\rm{Fi}} (\Sigma, P ; \bm{\sigma})
		\coleq
		\catname{Loc}_G^{\rm{Fi}} (\Sigma, P ; \bm{\sigma})
		/ {\sim}.
	\end{eqn}
\end{definition}

The fact that these are full subgroupoids implies immediately that the groupoid of filtered local systems decomposes into the disjoint union of all these subgroupoids given by fixing relative positions of adjacent flags.

\begin{proposition}
	\label{251124134750}
	The groupoid of filtered local systems on $(\Sigma, P)$ decomposes into a disjoint union of full subgroupoids enumerated by the finite set $W_\frak{A}$ of all possible relative positions of pairs of flags at adjacent primary marked points:
	\begin{eqn}
		\catname{Loc}_G^{\rm{Fi}} (\Sigma, P)
		= \Coprod_{\bm{\sigma} \in W_\frak{A}} \catname{Loc}_G^{\rm{Fi}} (\Sigma, P ; \bm{\sigma}).
	\end{eqn}
	Consequently, the moduli space of filtered local systems admits a similar decomposition:
	\begin{eqn}
		\mathfrak{Loc}_G^\rm{Fi} (\Sigma, P) 
		= \Coprod_{\bm{\sigma} \in W_\frak{A}} \mathfrak{Loc}_G^{\rm{Fi}} (\Sigma, P ; \bm{\sigma}).
	\end{eqn}
\end{proposition}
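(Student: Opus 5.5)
The plan is to show that the relative-position map $\op{pos}$ is a well-defined invariant of isomorphism classes, that it takes a single value on every object, and that the fibres $\catname{Loc}_G^{\rm{Fi}} (\Sigma, P ; \bm{\sigma})$ are full subgroupoids closed under isomorphism. The decomposition of the groupoid and of its moduli set then follows formally.

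First, every object $(\cal{E}, \cal{F})$ determines a unique element $\op{pos} (\cal{E}, \cal{F}) \in W_\frak{A}$. For each boundary segment $\delta \in \frak{A}$ with endpoints $p_1, p_2 \in P_\rm{I}$, the flags $F_{p_1}, F_{p_2}$ are complete, so their relative position at a common point $p \in \delta$ is a permutation matrix. As remarked after \autoref{251211174256}, it does not depend on $p$. The reason is that moving $p$ along the contractible segment $\delta$ changes both transported flags by the same holonomy isomorphism, and relative position is invariant under linear isomorphisms. Hence each object lies in exactly one of the sets $\catname{Loc}_G^{\rm{Fi}} (\Sigma, P ; \bm{\sigma})$, so the object sets are disjoint and cover $\catname{Loc}_G^{\rm{Fi}} (\Sigma, P)$.

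Second, every morphism joins objects with the same invariant. Let $\phi : (\cal{E}, \cal{F}) \to (\cal{E}', \cal{F}')$ be a filtered isomorphism. By \autoref{251127102439}, each $\phi_{p_i}$ sends $F_{p_i}$ into $F'_{p_i}$. Since $\phi_{p_i}$ is an isomorphism and the flags are complete, dimension counting upgrades the inclusions to equalities: $\phi_{p_i} (F_{p_i}) = F'_{p_i}$. By naturality of holonomy \eqref{250924114505}, $\phi_p \circ \op{hol}_i = \op{hol}'_i \circ \phi_{p_i}$. Therefore $\phi_p$ carries the transported pair $(\op{hol}_1 F_{p_1}, \op{hol}_2 F_{p_2})$ onto $(\op{hol}'_1 F'_{p_1}, \op{hol}'_2 F'_{p_2})$. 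Applying $\op{pos}(\phi_p(F), \phi_p(F')) = \op{pos}(F, F')$ gives equal invariants, so no morphism connects different strata.

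Consequently the groupoid is the coproduct of the full subgroupoids, because the hom-sets between objects in different strata are empty. Taking isomorphism classes commutes with coproducts of groupoids, which yields the decomposition of $\mathfrak{Loc}_G^\rm{Fi} (\Sigma, P)$. The only step needing care is the upgrade from $\phi_p(F_i) \subset F'_i$ to equality, and that is routine.
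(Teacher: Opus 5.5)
Your proof is correct and takes the same route as the paper, which justifies the proposition in a single sentence. The paper's argument is that relative position is independent of the chosen point on $\delta$ and preserved by filtered isomorphisms, both because it is invariant under linear isomorphisms, so the strata are full subgroupoids with no morphisms between distinct strata. You spell out what the paper leaves implicit, namely the upgrade to $\phi_{p_i}(F_{p_i}) = F'_{p_i}$ by dimension count and the naturality of holonomy; one minor caveat is that when an irregular circle carries a single primary point, $\delta$ is the whole circle and is contractible only as a parametrised path, which is all your argument actually uses.
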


\subsection{Invariant Flags and Shuffled Jordan Type}
\label{251106151533}

Let $J \subset G$ denote the subset of all invertible Jordan matrices.
If $\varphi \in \Aut (E)$ is an automorphism of an $n$-dimensional vector space $E$, let $J (\varphi) \subset J$ denote the subset of all Jordan matrices that have the same Jordan type as the Jordan type $\op{Jt} (\varphi)$ of $\varphi$:
\begin{eqn}
	J (\varphi) \coleq \set{ \J \in J ~\big|~ \op{Jt} (\J) = \op{Jt} (\varphi) }.
\end{eqn}
Recall that $J (\varphi)$ is a finite set whose cardinality equals the number of all possible permutations of the distinct eigenvalues of $\varphi$ and of the Jordan blocks for the same eigenvalue.
In particular, if $\varphi$ has all-distinct eigenvalues, then $| J (\varphi) | = n!$.

Here, we denote by $W\subset G$ the subgroup of permutations.

\begin{definition}
	\label{251216084311}
	An invertible \dfn{shuffled Jordan matrix} is any upper-triangular matrix of the form $\Z \coleq \tau \J \tau^{-1}$ where $\J \in J$ is an invertible Jordan matrix and $\tau \in W$ is a permutation matrix.
	Let $J_W \subset G$ denote the subset of all shuffled Jordan matrices, and let $J_W (\varphi) \subset J_W$ be the finite subset of all shuffled Jordan matrices that have the same Jordan type as a given automorphism $\varphi \in \Aut (E)$:
	\begin{eqns}
		J_W
		&\coleq \set{ \Z = \tau \J \tau^{-1} \in B ~\Big|~ \J \in J \text{ and } \tau \in W },
		\\
		J_W (\varphi) 
		&\coleq \set{ \Z = \tau \J \tau^{-1} \in B ~\Big|~ \J \in J (\varphi) \text{ and } \tau \in W }.
	\end{eqns}
\end{definition}
Equivalently, $J_W$ is the set of all pairs $(\J, \tau) \in J \times W$ such that $\Z = \tau \J \tau^{-1}$ is upper-triangular, considered up to equivalence: $(\J, \tau) \sim (\J', \tau')$ if and only if $\Z = \Z'$ which means $\J = (\tau^{-1} \tau') \J' (\tau^{-1} \tau')^{-1}$.

Note that, if a Jordan matrix $\J \in J$ is diagonal, then of course the conjugate $\Z = \tau \J \tau^{-1}$ by any permutation $\tau \in W$ remains diagonal.
In general, however, $\Z = \tau \J \tau^{-1}$ remains upper-triangular for only a subset $W (\J) \subset W$ of permutations which we call \dfn{shuffles} of $\J$.
Also, note that a shuffled Jordan matrix $\Z \in J_W$ may not be a Jordan matrix.
See \autoref{251119161726} for more details and examples.

\subsection{Forgetful functors}

We start by proving the right vertical arrow in diagram \eqref{diagram}.

Let $P_\rm{II} = \set{p_1, \ldots, p_r}$.
For each $i = 1, \ldots, r$, denote by $\delta_i \in \pi_1 (\Sigma, p_i)$ the boundary loop based at $p_i$. 

\begin{proposition}
	\label{251007202059}
	For any  surface with marked boundary $(\Sigma, P)$, the surjective map
	\begin{eqn}
		\mathfrak{Rep}_G^\rm{Fi} \big( \pi_1 (\Sigma, P) \big)
		\to
		\mathfrak{Rep}_G^\rm{Fi} \big( \pi_1 (\Sigma, P_\rm{I}) \big)
	\end{eqn}
	induced by the forgetful restriction functor is a ramified covering of degree $(n!)^{r}$ where $r$ is the number of secondary marked points; i.e., $r = |P_\rm{II}|$.
	The cardinality of the fibre can be read off from the Jordan type of the local monodromy at each secondary marked point.
	Namely, the fibre above an isomorphism class $[(E,F,\varphi)] \in \mathfrak{Rep}_G^\rm{Fi} \big( \pi_1 (\Sigma, P_\rm{I} ) \big)$ is canonically in bijection with the set $J_W (\varphi_{\delta_1}) \times \cdots \times J_W (\varphi_{\delta_r})$ where each $\delta_i$ is the boundary loop at the secondary marked point $p_i \in P_\rm{II}$.
	In particular, the complement of the branch locus is the open dense subset consisting of all filtered representations $(E,F,\varphi)$ such that each automorphism $\varphi_{\delta_i} \in \Aut (E_{p_i})$ has all-distinct eigenvalues.
\end{proposition}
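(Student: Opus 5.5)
The plan is to reduce everything to a pointwise count of invariant flags at each secondary marked point. By \autoref{250914172733} and \autoref{251107085612}, an object of $\catname{Rep}_G^\rm{Fi}\big(\pi_1(\Sigma,P)\big)$ is, up to isomorphism, determined by its restriction to $P_\rm{I}$ together with, for each $p_i \in P_\rm{II}$, the vector space $E_{p_i}$, the transport isomorphisms identifying it with the fibres at $P_\rm{I}$, and a flag $F_{p_i}$ invariant under $\varphi_{\delta_i}$. Concretely, fix $(E,F,\varphi)$ on $P_\rm{I}$ and, for each $i$, a path $\gamma_i$ from some $q\in P_\rm{I}$ to $p_i$. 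Extending the representation to $P$ is unique up to isomorphism fixing the $P_\rm{I}$-data: set $E_{p_i}\coleq E_q$ via $\gamma_i$. So the fibre of the forgetful map over $[(E,F,\varphi)]$ is the set of tuples $(F_{p_1},\dots,F_{p_r})$ of $\varphi_{\delta_i}$-invariant flags, modulo the residual automorphisms. These are the automorphisms of the extended representation restricting to filtered automorphisms of $(E,F,\varphi)$ over $P_\rm{I}$.

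The first key step is to identify this residual action correctly. An automorphism $\phi$ of the extended representation is determined by its values on $P_\rm{I}$, since the fibres at $P_\rm{II}$ are obtained by transport. On $E_{p_i}$ it acts as $\phi_{p_i}=\varphi_{\gamma_i}\phi_q\varphi_{\gamma_i}^{-1}$, which commutes with $\varphi_{\delta_i}$. So $\operatorname{Aut}(E,F,\varphi)$ acts on tuples of invariant flags through the centralisers $Z(\varphi_{\delta_i})$, and the fibre is
\begin{eqn}
	\Big(\prod_i \op{Fl}^{\varphi_{\delta_i}}(E_{p_i})\Big)\Big/\operatorname{Aut}(E,F,\varphi).
\end{eqn}

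The second step is a local linear-algebra classification: for a single automorphism $A$ of $E$, build a canonical bijection between $A$-invariant flags modulo $Z(A)$ and $J_W(A)$. Given an invariant flag $F$, choose an adapted basis. Then $A$ becomes upper triangular, and one shows that a suitable choice of adapted basis, unique up to $Z(A)\cap\Aut(E,F)$, puts $A$ into a shuffled Jordan form $\tau\J\tau^{-1}\in B$. The diagonal records the order in which the eigenvalues enter the flag, and the shuffle records how the Jordan chains interleave. Conversely, a shuffled Jordan matrix $\Z=\tau\J\tau^{-1}\in B$ together with an isomorphism $A\cong \J$ produces the invariant flag given by the image of $\CC^\bullet$. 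Two flags give the same $\Z$ if and only if they differ by an element of $Z(A)$, since both bases conjugate $A$ to $\Z$. When $A$ has distinct eigenvalues, invariant flags are exactly orderings of the eigenlines, $Z(A)$ is the torus, which acts trivially on them, and $|J_W(A)|=n!$. Genericity of distinct eigenvalues then gives the unramified locus and degree $(n!)^r$.

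The main obstacle I expect is making the residual action genuinely act factor-by-factor through the full centralisers. In general $\operatorname{Aut}(E,F,\varphi)$ is smaller than $\prod_i Z(\varphi_{\delta_i})$, so the quotient need not literally be a product of $Z$-orbit sets. I would first try to show that the $Z(A)$-orbits on invariant flags are already detected by the discrete invariant $\Z\in J_W(A)$, which cannot be moved by any smaller group. In other words, the invariant $\Z$ is well defined on flags, and $Z(A)$-orbits coincide with its fibres. If the orbits of the smaller group were finer, the statement would need the fibre interpreted as the set of values of the invariant. I would therefore argue that the "canonical bijection" is the map sending a fibre point to $(\Z_1,\dots,\Z_r)$, proving surjectivity by the explicit construction above and injectivity at least on the generic locus, where it is immediate.
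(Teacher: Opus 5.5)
You correctly reduce the fibre to $\big(\prod_i \mathrm{Fl}^{\varphi_{\delta_i}}(E_{p_i})\big)/\Aut(E,F,\varphi)$. Here an automorphism is fixed by its value at one point and acts on $E_{p_i}$ through an element of the centraliser of $\varphi_{\delta_i}$. This is more accurate than the paper's proof, which asserts that two lifts are isomorphic if and only if, \emph{separately} for each secondary point $p$, some $\tilde\psi_p$ commuting with $\tilde\varphi_p$ carries one flag to the other. The obstacle you flag is exactly where that ``if'' fails. It cannot be repaired, because the statement is false off the generic locus.

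Take $n=2$ and a pair of pants with:
\begin{itemize}
\item one irregular circle carrying a single primary point $q$;
\item one regular circle with secondary point $p$;
\item one simple circle.
\end{itemize}
Then $\pi_1(\Sigma,q)$ is free on the simple loop $\gamma$ and the regular loop transported to $q$. Let $\varphi_\gamma=\diag(1,2)$, let the regular loop act by $\idd$, and put $F_q=\langle e_1+e_2\rangle$. An automorphism commutes with $\diag(1,2)$, so it is diagonal at $q$; it must also fix $\langle e_1+e_2\rangle$, so $\Aut(E,F,\varphi)=\CC^\times$. Meanwhile every line in $E_p$ is invariant under $\varphi_\delta=\idd$. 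Hence the fibre is a whole $\mathbb{P}^1$, whereas $J_W(\idd)=\{\idd\}$. The fibre is neither finite nor in bijection with $J_W$, so the injectivity you hoped to extend beyond the generic locus does not hold.

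Your second step has an independent gap: not every $A$-invariant flag admits an adapted basis in which $A$ is a shuffled Jordan matrix. Let $n=4$ and $A=\idd+x$ with $xe_2=e_1$, $xe_4=e_2+e_3$, $xe_1=xe_3=0$; this has Jordan type $(3,1)$. Take the standard flag.
\begin{itemize}
\item Since $\ker x=\langle e_1,e_3\rangle$ meets $\CC^j\setminus\CC^{j-1}$ only for $j=1,3$, the two kernel vectors of such a basis $(f_j)$ would occupy positions $1$ and $3$.
\item This forces the chain $f_4\mapsto f_2\mapsto f_1$.
\item But $xf_4\notin\CC^2$ whenever $f_4\notin\CC^3$, a contradiction.
\end{itemize}
So $\CC^\bullet$ is not $Z(A)$-equivalent to any flag produced from $J_W(A)$. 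Consequently your fallback map from the fibre to $\prod_i J_W(\varphi_{\delta_i})$ is not defined on the whole fibre. Since the fibre always surjects onto $\prod_i\mathrm{Fl}^{\varphi_{\delta_i}}/Z(\varphi_{\delta_i})$, this also gives a second, purely local counterexample to the stated bijection. This is also where the appendix argument for Proposition~\ref{251017181852} breaks: the chains started at $e_4$ and at $e_2$ both end on $\CC e_1$.

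Your own uniqueness argument in fact proves the correct local statement: taking the matrix of $A$ in an adapted basis gives $\mathrm{Fl}^A(E)/Z(A)\cong\{X\in B: X\text{ conjugate to }A\}/B$. The classes of $J_W(A)$ sit inside this set but do not exhaust it. For suitable unipotent $A$ the set is infinite once $n\ge 6$, since $B$ has infinitely many orbits on strictly upper-triangular matrices (a known result I am citing, not one proved here).

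What your argument does prove completely is the generic part. If every $\varphi_{\delta_i}$ has distinct eigenvalues, the invariant flags are the $n!$ orderings of eigenlines. Each is fixed by the torus $Z(\varphi_{\delta_i})$, hence by $\Aut(E,F,\varphi)$. So the fibre is exactly $\prod_i J_W(\varphi_{\delta_i})$, of size $(n!)^r$.
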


\begin{proof}
	We consider the case $P_\rm{I} \neq \emptyset$; the proof in the other case is almost completely identical.
	We will explain that any filtered representation of $\pi_1 (\Sigma, P_\rm{I})$ can be lifted to a filtered representation of $\pi_1 (\Sigma, P)$ in finitely many ways up to isomorphism.
	Given a filtered representation $(E, \varphi, F)$ of $\pi_1 (\Sigma, P_\rm{I})$, the underlying undecorated representation $(E, \varphi)$ lifts to an undecorated representation $(\tilde{\E}, \tilde{\Phi})$ of $\Pi_1 (\Sigma)$ by \autoref{250914172733}.
	Then we restrict $(\tilde{\E}, \tilde{\Phi})$ to an undecorated representation $(\tilde{E}, \tilde{\varphi})$ of $\pi_1 (\Sigma, P)$.
	By \eqref{251010161315} and \eqref{251010161331}, the fibre $\tilde{E}_p$ of $\tilde{E}$ at any $p \in P_\rm{I}$ is canonically isomorphic to the fibre $E_p$ of $E$ at $p$.
	Using these canonical isomorphisms, we transport the flags $F$ in all fibres of $E \to P_\rm{I}$ to the corresponding fibres of $\tilde{E} |_{P_\rm{I}} \to P_\rm{I}$, and we denote these flags by the same letter $F$.
	Thus, we have constructed a \textit{partially} filtered representation $(\tilde{E}, \tilde{\varphi}, F)$ of $\pi_1 (\Sigma, P)$, namely a representation that is filtered only on the fibers above $P_{\rm I}$.
	
	Next, for every $p \in P \smallsetminus P_\rm{I}$, let $\gamma \in \pi_1 (\Sigma, p)$ be the boundary loop based at $p$.
	Consider the vector space $\tilde{E}_p$ with the automorphism $\tilde{\varphi}_p \coleq \tilde{\varphi}_\gamma$.
	Endow $\tilde{E}_p$ with any $\tilde{\varphi}_p$-invariant flag $\tilde{F}_p$.
	The result is a filtered representation $(\tilde{E}, \tilde{\varphi}, \tilde{F})$ of $\pi_1 (\Sigma, P)$ whose forgetful restriction to a filtered representation of $\pi_1 (\Sigma, P_\rm{I})$ is canonically isomorphic to $(E, \varphi, F)$.
	
	If for any $p \in P \smallsetminus P_\rm{I}$ we make another such choice $\tilde{F}'_p$ in $\tilde{E}_p$, then we obtain a different filtered representation $(\tilde{E}, \tilde{\varphi}, \tilde{F}')$ of $\pi_1 (\Sigma, P)$ whose forgetful restriction is canonically isomorphic to $(E, \varphi, F)$.
	Two such filtered representations are equivalent if and only if there exists, for every $p \in P \smallsetminus P_\rm{I}$, an automorphism $\tilde{\psi}_p \in \Aut (\tilde{E}_p)$ which moves $\tilde{F}_p$ to $\tilde{F}'_p$ and commutes with $\tilde{\varphi}_p$ in the sense that $\tilde{\varphi}_p \tilde{\psi}_p = \tilde{\psi}_p \tilde{\varphi}_p$.
	By \autoref{251018183337}, for every $p \in P \smallsetminus P_\rm{I}$, there are only finitely many such equivalences classes of $\tilde{F}_p$.
	
	Finally, it is a Zariski open condition that each $\varphi_{\gamma_i}$ has all-distinct eigenvalues.
\end{proof}

Thanks to \autoref{250912193234}, the following result does not require proof:

\begin{proposition}
	\label{250914113734}
	The surjective map
	\begin{eqn}
		\mathfrak{Rep}_G^\rm{Fi} \big( \Pi_1 (\Sigma), P \big) \too \mathfrak{Rep}_G^\rm{Fi} \big( \Pi_1 (\Sigma), P_\rm{I} \big)
	\end{eqn}
	induced by the forgetful functor is a ramified covering of degree $(n!)^{r}$ where $r$ is the number of secondary marked points; i.e., $r = |P_\rm{II}|$.
	The cardinality of the fibre can be read off from the Jordan type of the local monodromy at each secondary marked point.
	Namely, the fibre above an isomorphism class $[(\E,\F,\Phi)] \in \mathfrak{Rep}_G^\rm{Fi} \big( \Pi_1 (\Sigma), P_\rm{I} \big)$ is canonically in bijection with the set $J_W (\Phi_{\delta_1}) \times \cdots \times J_W (\Phi_{\delta_r})$ where each $\delta_i$ is the boundary loop at the secondary marked point $p_i \in P_\rm{II}$
	In particular, the complement of the branch locus is the open dense subset consisting of all filtered representations $(\E,\F,\Phi)$ such that each automorphism $\Phi_{\delta_i} \in \Aut (E_{p_i})$ has all-distinct eigenvalues.
\end{proposition}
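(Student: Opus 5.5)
The plan is to deduce this from \autoref{251007202059} by transporting it across the equivalences already established, rather than redoing the fibre analysis. The key observation is that restriction from $\Pi_1 (\Sigma)$ to $\pi_1 (\Sigma, P)$ commutes with forgetting the secondary decorations. Concretely, I will consider the square whose horizontal arrows are the restriction functors
\begin{eqn}
	\catname{Rep}_G^\rm{Fi} \big( \Pi_1 (\Sigma), P \big) \to \catname{Rep}_G^\rm{Fi} \big( \pi_1 (\Sigma, P) \big)
	\qquad
	\catname{Rep}_G^\rm{Fi} \big( \Pi_1 (\Sigma), P_\rm{I} \big) \to \catname{Rep}_G^\rm{Fi} \big( \pi_1 (\Sigma, P_\rm{I}) \big).
\end{eqn}
Its vertical arrows are the forgetful functors. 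The square commutes strictly: forgetting the flags over $P_\rm{II}$ and then restricting gives the same object as restricting and then forgetting, since restriction simply keeps the fibres and flags over the marked points.

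By \autoref{251107085612}, both horizontal functors are equivalences. They therefore induce bijections on moduli sets, and these bijections are compatible with the vertical maps. The forgetful map on $\Pi_1$-representations is thus identified with the map of \autoref{251007202059}. Surjectivity, the degree $(n!)^r$, and the description of fibres then all transfer directly.

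The only point needing care is the statement that the fibre is canonically $J_W (\Phi_{\delta_1}) \times \cdots \times J_W (\Phi_{\delta_r})$. For this I must check that under restriction the local monodromy $\Phi_{\delta_i}$ becomes $\varphi_{\delta_i} = \Phi|_{\pi_1(\Sigma,P)}(\delta_i)$, as an automorphism of the same fibre $E_{p_i}$. This holds by definition, because $\delta_i \in \pi_1 (\Sigma, p_i) \subset \pi_1 (\Sigma, P)$. Its Jordan type is also an isomorphism invariant, so the sets $J_W$ agree on both sides.

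Finally, the branch locus condition (all-distinct eigenvalues of each $\Phi_{\delta_i}$) corresponds exactly to the condition in \autoref{251007202059}. Openness and density transfer because the identification is compatible with the algebraic structure coming from the discrete model. The main potential obstacle is purely bookkeeping: making sure the commuting square is at the level of moduli sets, where strict commutativity of functors suffices.
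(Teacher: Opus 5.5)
Your reduction is exactly the paper's argument. The paper says this proposition ``does not require proof'' given \autoref{250912193234}, i.e.\ it transports \autoref{251007202059} across the restriction equivalence of \autoref{251107085612}, and your strictly commuting square of restriction and forgetful functors is what makes that transport legitimate. One small point: $\delta_i$ is not an arrow of $\pi_1(\Sigma,P_{\mathrm{I}})$, so $\varphi_{\delta_i}$ in \autoref{251007202059} has to be read through the lift to $\pi_1(\Sigma,P)$. Jordan type is conjugation invariant, so your identification with $\Phi_{\delta_i}$ is fine.

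The real problem is that the fibre description you transport is only correct over the distinct-eigenvalue locus. So the step ``the description of fibres then transfers directly'' imports a false claim.

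The proof of \autoref{251007202059} treats two lifts $(\tilde E,\tilde\varphi,\tilde F)$ and $(\tilde E,\tilde\varphi,\tilde F')$ as isomorphic as soon as, separately at each $p_i$, some element of the centraliser of $\tilde\varphi_{\delta_i}$ moves $\tilde F_{p_i}$ to $\tilde F'_{p_i}$. But an isomorphism in $\catname{Rep}^{\mathrm{Fi}}_G(\pi_1(\Sigma,P))$ is a single automorphism of the whole representation, and it must also preserve the flags over $P_{\mathrm{I}}$. The fibre over $[(\E,\F,\Phi)]$ is therefore
\[
\Big( \prod_{i=1}^{r} \mathsf{Fi}_{\Phi_{\delta_i}}(E_{p_i}) \Big) \Big/ \Aut(\E,\F,\Phi),
\]
with the automorphism group acting diagonally through its values at the $p_i$.

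When every $\Phi_{\delta_i}$ has distinct eigenvalues, each invariant flag is a sum of eigenlines and is fixed by anything commuting with $\Phi_{\delta_i}$. So the fibre has $(n!)^r$ points and the degree statement survives.

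It fails otherwise. If $\Phi$ is irreducible, its only automorphisms are scalars. If moreover some $\Phi_{\delta_i}$ is scalar, that factor contributes the entire flag variety of $E_{p_i}$, whereas $J_W$ of a scalar matrix is a single point. Concretely, take $n=2$, a torus with one regular boundary, and $P=P_{\mathrm{II}}=\{p_1\}$. The representation $\alpha\mapsto\mathrm{diag}(1,-1)$, $\beta\mapsto\left(\begin{smallmatrix}0&1\\1&0\end{smallmatrix}\right)$ is irreducible with boundary monodromy $-\idd$, and the fibre above it is $\mathbb{P}^1$. In particular the map is not even quasi-finite there.

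So your argument, like the paper's, only establishes the generic part of the statement: fibres of size $(n!)^r$ over the distinct-eigenvalue locus.
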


Finally, thanks to \autoref{250910194000}, the following result follows:

\begin{proposition}
	\label{251119124807}
    The surjective map between the moduli spaces of filtered local systems
	\begin{eqntag}
		\label{251119125445}
		\mathfrak{Loc}_G^\rm{Fi} (\Sigma, P) \too \mathfrak{Loc}_G^\rm{Fi} (\Sigma, P_\rm{I})
	\end{eqntag}
	 is a ramified covering of degree $(n!)^{r}$ where $r$ is the number of secondary marked points; i.e., $r = |P_\rm{II}|$.
	The cardinality of the fibre can be read off from the Jordan type of the local system's local monodromy at each secondary marked point.
Namely, the fibre above an isomorphism class $[(\cal{E},\cal{F})] \in \mathfrak{Loc}_G^\rm{Fi} (\Sigma, P_\rm{I})$ is canonically in bijection with the set of $r$-tuples of shuffled Jordan matrices of the same Jordan type as the local monodromies of $\cal{E}$ around the boundary loops $\delta_1, \ldots, \delta_r$; i.e., the set
	\begin{eqn}
		J_W (\varphi_1) \times \cdots \times J_W (\varphi_r)
		\qtext{where}
		\varphi_i \coleq \op{mon}_{\delta_i} (\cal{E}) \in \Aut (E_{p_i}).
	\end{eqn}
	In particular, the complement of the branch locus is the open dense subset consisting of all filtered local systems whose local monodromy at each secondary marked point has all-distinct eigenvalues.
\end{proposition}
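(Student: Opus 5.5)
The statement follows by transporting \autoref{251007202059} along the equivalences already established, exactly as \autoref{250914113734} was obtained. The plan is to build a commutative square of functors
\begin{eqn}
	\begin{tikzcd}
		\catname{Loc}_G^\rm{Fi} (\Sigma, P)
		\ar[r, "\op{Hol}"]
		\ar[d]
		& \catname{Rep}_G^\rm{Fi} \big( \pi_1 (\Sigma, P) \big)
		\ar[d]
		\\
		\catname{Loc}_G^\rm{Fi} (\Sigma, P_\rm{I})
		\ar[r, "\op{Hol}"']
		& \catname{Rep}_G^\rm{Fi} \big( \pi_1 (\Sigma, P_\rm{I}) \big),
	\end{tikzcd}
\end{eqn}
where the horizontal arrows are the equivalences of \autoref{250912193234}: the holonomy functor of \autoref{250910194000} composed with the restriction of \autoref{251107085612}. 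The vertical arrows are the forgetful functors that drop the flags at the secondary points; on the right-hand side this also restricts the groupoid $\pi_1 (\Sigma, P)$ to the full subgroupoid $\pi_1 (\Sigma, P_\rm{I})$.

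Commutativity holds on the nose. Holonomy followed by restriction to $P$ and then to $P_\rm{I}$, with the flags at $P_\rm{II}$ discarded, gives the same result as first discarding the filtrations $\cal{F}_p$ for $p \in P_\rm{II}$ and then taking holonomy restricted to $P_\rm{I}$. This is because the flag at each $p \in P_\rm{I}$ is just the stalk of $\cal{F}_p$ in both routes, by \autoref{251127102439}. Passing to isomorphism classes, the horizontal maps become bijections of moduli sets, so the left-hand map is identified with the right-hand map of \autoref{251007202059}. Hence it is surjective, and its fibres are in bijection with the fibres of the discrete map.

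The one point that needs attention is matching the fibre description. The fibre over $[(E,F,\varphi)]$ in \autoref{251007202059} is $\prod_i J_W (\varphi_{\delta_i})$, where $\varphi_{\delta_i}$ is the image of the boundary loop at $p_i$. Under $\op{Hol}$ this is exactly $\op{mon}_{\delta_i} (\cal{E}) \in \Aut (E_{p_i})$, because the representation is the holonomy representation restricted to loops at $p_i$. The Jordan type, and hence $J_W$, depends only on the conjugacy class of this monodromy. It is therefore independent of the representative in the isomorphism class, and of the identification of $E_{p_i}$ with the stalk, which is canonical by \eqref{251010161315} and \eqref{251010161331}.

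The same identification carries over the branch locus. The condition that every $\op{mon}_{\delta_i}$ has distinct eigenvalues corresponds to the condition that every $\varphi_{\delta_i}$ does. Where it holds, each $|J_W| = n!$: in that case every shuffled conjugate of the diagonal Jordan matrix is diagonal, so each of the $n!$ orderings of the eigenvalues contributes. This gives generic degree $(n!)^r$. Density and openness are Zariski conditions already established in the discrete setting and pulled back along the bijection. Any topological or stack structure on $\frak{Loc}_G^\rm{Fi}$ is defined through this same identification, so no further argument is needed.
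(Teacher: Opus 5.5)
Your reduction is the same as the paper's. The paper deduces this statement from \autoref{251007202059} by transporting along $\mathrm{Hol}$ and restriction (through \autoref{250914113734} and \autoref{250910194000}), and your strictly commuting square does exactly this, correctly. The gap is in the result you transport. Its fibre description is not correct off the generic locus.

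\textbf{Where the input fails.} The proof of \autoref{251007202059} asserts that two lifts are isomorphic as soon as, separately at each $p\in P_{\mathrm{II}}$, some $\psi_p$ commuting with the local monodromy $\varphi_p$ moves one invariant flag to the other. That is false. Since $\Sigma$ is connected, an isomorphism of representations of $\pi_1(\Sigma,P)$ (equivalently, of local systems) is determined by its value at one point, via $\phi_q=\varphi'_\gamma\phi_p\varphi_\gamma^{-1}$. It is therefore a single element of $\Aut(\mathcal{E},\mathcal{F})$, which must commute with the \emph{whole} monodromy, preserve the flags at $P_{\mathrm{I}}$, and act on all stalks simultaneously. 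Working directly with local systems, the fibre over $[(\mathcal{E},\mathcal{F})]$ is
\[
\Big(\prod_{i=1}^{r}\mathsf{Fi}_{\varphi_i}(E_{p_i})\Big)\Big/\Aut(\mathcal{E},\mathcal{F}).
\]
This set maps onto $\prod_i \mathsf{Fi}_{\varphi_i}(E_{p_i})/Z(\varphi_i)$, where $Z(\varphi_i)$ is the centraliser; this is the set the paper identifies with $\prod_i J_W(\varphi_i)$. In general it is strictly larger.

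\textbf{Counterexamples.} Two examples show the statement fails.
\begin{itemize}
\item Take an annulus with one primary point $p_0$ and one secondary point $p_1$, and the trivial local system. Then $\varphi_1=\mathrm{id}$, so $J_W(\mathrm{id})$ is a single point. But $\Aut(\mathcal{E},\mathcal{F})$ is the Borel subgroup fixing $F_{p_0}$, and its orbits on the full flag variety at $p_1$ are the $n!$ Bruhat cells. So the fibre has $n!$ points.
\item Take genus $0$ with two simple, one regular and one irregular boundary, so that $\pi_1$ is free on three generators. Let $\mathcal{E}$ be irreducible with trivial monodromy around the regular boundary. Then $\Aut(\mathcal{E},\mathcal{F})=\mathbb{C}^\times$ acts trivially on flags, and the fibre is the entire flag variety. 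The map is not even quasi-finite there.
\end{itemize}

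\textbf{What survives.} The generic statement holds. Suppose each $\varphi_i$ has distinct eigenvalues. Then every element of $\Aut(\mathcal{E},\mathcal{F})$ acts on $E_{p_i}$ by an operator commuting with $\varphi_i$. Such an operator preserves each eigenline, so it fixes every $\varphi_i$-invariant flag (\autoref{251018210133}). Hence the fibre is exactly $\prod_i\mathsf{Fi}_{\varphi_i}(E_{p_i})\cong\prod_i J_W(\varphi_i)$, of size $(n!)^r$.

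Your count $|J_W(\varphi_i)|=n!$ alone does not give this. You also need the triviality of the automorphism action, which is the step your argument (and the paper's) skips.

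Off the distinct-eigenvalue locus, the claimed bijection with $\prod_i J_W(\varphi_i)$ and the finiteness of fibres both fail. So neither your argument nor the paper's establishes those parts of the statement.
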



\section{Decorated Character Varieties}\label{se:5}

The advantage of considering representations of the discrete fundamental groupoid instead of the continuous fundamental groupoid is that this discrete model allows us to give an explicit description of the moduli space of decorated local systems.
This discrete model is the direct analogue of the character variety in the classical situation without marked points.

\subsection{Mixed Conjugation Action Groupoid}
\label{251217183124}

Let $(\Sigma,P)$ be any  surface with marked boundary and consider the groupoid $\catname{Rep}_G \big( \pi_1 (\Sigma, P) \big)$ of representations of the discrete fundamental groupoid.

Suppose $(E, \varphi)$ is such a representation.
Since the vector bundle $E \to P$ is merely a finite collection of vector spaces $E = \set{ E_p : p \in P }$, it is always possible to choose a global trivialisation $\chi : E \iso \underline{\CC}^n$ where $\underline{\CC}^n \to P$ is the trivial vector bundle over $P$.
It is enough to simply fix an isomorphism $\chi_p : E_p \iso \CC^n$ for every $p \in P$.
Then the groupoid homomorphism $\varphi : \pi_1 (\Sigma, P) \to \GL (E)$ determines a groupoid homomorphism $\rho : \pi_1 (\Sigma, P) \to G$, its \textit{matrix representation} with respect to $\chi$:
\begin{eqntag}
	\label{251217120635}
	\rho \coleq \rm{t}^\ast \chi \circ \varphi \circ \rm{s}^\ast \chi^{-1}
	\qtext{i.e.}
	\rho_\gamma 
	= \rm{t}^\ast \chi_\gamma \circ \varphi_\gamma \circ \rm{s}^\ast \chi^{-1}_\gamma 
	= \chi_{\rm{t} (\gamma)} \circ \varphi_\gamma \circ \chi^{-1}_{\rm{s} (\gamma)},
\end{eqntag}
for any $\gamma \in \pi_1 (\Sigma, P)$.

Similarly, suppose $\phi : (E, \varphi) \to (E', \varphi')$ is an isomorphism of representations and we choose trivialisations $\chi, \chi'$ of $E, E'$.
Then the linear isomorphism $\phi : E \iso E'$ determines an automorphism $g : \underline{\CC}^n \to \underline{\CC}^n$ defined by the commutative diagram
\begin{eqntag}
	\label{251217122636}
	\begin{tikzcd}
		E
		\ar[d, "\phi"']
		\ar[r, "\chi"]
		&		\underline{\CC}^n
		\ar[d, "g"]
		\\
		E'
		\ar[r, "\chi'"']
		&		\underline{\CC}^n
	\end{tikzcd}
	\qqtext{i.e.}
	g \coleq \chi' \circ \phi \circ \chi^{-1}.
\end{eqntag}
Since $\phi$ intertwines the representations $\varphi, \varphi'$ in the sense that $\varphi'_\gamma = \phi_{\rm{t} (\gamma)} \circ \varphi_\gamma \circ \phi^{-1}_{\rm{s} (\gamma)}$ for every $\gamma \in \pi_1 (\Sigma, P)$, it follows that this automorphism $g = \set{ g_p : p \in P} \in \Aut (\underline{\CC}^n)$ intertwines their matrix representations $\rho, \rho' \in \Hom \big( \pi_1 (\Sigma, P), G \big)$ in the sense that $\rho'_\gamma = g_{\rm{t} (\gamma)} \rho_\gamma g_{\rm{s} (\gamma)}^{-1}$.
Of course, an automorphism $g$ of the trivial vector bundle $\underline{\CC}^n \to P$ is nothing but an element of the affine algebraic group
\begin{eqntag}
	\label{251217122508}
	G_P \coleq \prod_{p \in P} G = \Aut (\underline{\CC}^n).
\end{eqntag}
We are therefore naturally led to the \dfn{mixed conjugation} action groupoid
\begin{eqntag}
	\label{251217132726}
	G_P \ltimes \Hom \big( \pi_1 (\Sigma, P), G \big)
	\qqtext{with}
	g . \rho \coleq \rm{t}^\ast g \cdot \rho \cdot \rm{s}^\ast g^{-1}.
\end{eqntag}
More explicitly, for any $\gamma \in \pi_1 (\Sigma, P)$, this group action is 
\begin{eqn}
	(g . \rho)_\gamma 
	= \rm{t}^\ast g_\gamma \cdot \rho_\gamma \cdot \rm{s}^\ast g^{-1}_\gamma
	= g_{\rm{t} (\gamma)} \rho_\gamma g_{\rm{s} (\gamma)}^{-1}.
\end{eqn}

The discussion above shows that a choice of trivialisation $\chi : E \iso \underline{\CC}^n$ for every rank-$n$ vector bundle $E \to P$ determines a functor 
\begin{eqntag}
	\label{251217134437}
	\catname{Rep}_G \big( \pi_1 (\Sigma, P) \big)
	\to G_P \ltimes \Hom \big( \pi_1 (\Sigma, P), G \big)
\end{eqntag}
sending representations $(E,\varphi)$ and morphisms $\phi$ to their matrix representations $\rho$ and $g$ given by \eqref{251217120635} and \eqref{251217122636}.

Making a different choice of trivialisations yields different matrix representations and therefore a different functor.
However, any two trivialisations $\chi, \tilde{\chi} : E \iso \underline{\CC}^n$ of the same bundle differ by an automorphism $g \coleq \tilde{\chi} \circ \chi^{-1} \in \Aut (\underline{\CC}^n) = G_P$ so that $\tilde{\chi} = g \chi$.
As a result, the two matrix representations $\rho, \tilde{\rho}$ of $\varphi : \pi_1 (\Sigma, P) \to \GL (E)$ are related by mixed conjugation by $g$ because $\tilde{\rho} = \rm{t}^\ast g \cdot \rho \cdot \rm{s}^\ast g^{-1}$.
Consequently, all such functors \eqref{251217134437} are naturally isomorphic, and in fact they form an equivalence of categories as stated in the following:

\begin{lemma}
	\label{251216143310}
	For any surface with marked boundary $(\Sigma, P)$, there is a canonical equivalence of categories
	\begin{eqn}
		\catname{Rep}_G \big( \pi_1 (\Sigma, P) \big) 
		\cong G_P \ltimes \Hom \big(\pi_1 (\Sigma, P), G \big).
	\end{eqn}
	Consequently, for any $p \in P$, there is a canonical bijection
	\begin{eqn}
		\text{$\Hom \big(\pi_1 (\Sigma, P), G \big) / G_P$}
		\cong \text{$\Hom \big(\pi_1 (\Sigma, p), G \big) / G$}
		= \text{$\mathfrak{X}_G (\Sigma, p)$}.
	\end{eqn}
\end{lemma}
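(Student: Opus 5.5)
The plan is to mimic the proof of \autoref{251009141029}, replacing the single basepoint by the finite set $P$ and conjugation by mixed conjugation. The first task is to pin down the functor in the direction that needs no choices. This is the inclusion functor
\begin{eqn}
	G_P \ltimes \Hom \big(\pi_1 (\Sigma, P), G \big) \too \catname{Rep}_G \big( \pi_1 (\Sigma, P) \big),
\end{eqn}
which sends $\rho$ to the representation $(\underline{\CC}^n, \rho)$ on the trivial bundle over $P$. It sends an arrow $(g,\rho)$ to the bundle automorphism $g = \set{g_p : p\in P}$ of $\underline{\CC}^n$. The identity $(g.\rho)_\gamma = g_{\rm{t}(\gamma)}\rho_\gamma g_{\rm{s}(\gamma)}^{-1}$ says exactly that $g$ intertwines $\rho$ and $g.\rho$ in the sense of \eqref{250911161645}. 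Functoriality is immediate from the composition law of the action groupoid. Calling this functor canonical is justified by the fact that it is defined without choices; the functor \eqref{251217134437} built from trivialisations is then a quasi-inverse, and by the preceding discussion it is well defined up to natural isomorphism.

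Next I will check that the inclusion is fully faithful and essentially surjective. For full faithfulness, a morphism $(\underline{\CC}^n,\rho)\to(\underline{\CC}^n,\rho')$ in $\catname{Rep}_G$ is a collection $\phi_p\in G$ with $\rho'_\gamma = \phi_{\rm{t}(\gamma)}\rho_\gamma\phi_{\rm{s}(\gamma)}^{-1}$. This is precisely an element $g=\phi\in G_P$ with $g.\rho=\rho'$, so the map on hom-sets is a bijection. For essential surjectivity, given $(E,\varphi)$ I choose any trivialisation $\chi:E\iso\underline{\CC}^n$; this is possible because $P$ is finite and each $E_p$ is $n$-dimensional. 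I then take $\rho$ as in \eqref{251217120635}. By \eqref{251217122636} with $\phi=\chi$, the map $\chi$ is itself an isomorphism $(E,\varphi)\iso(\underline{\CC}^n,\rho)$. Together these show the inclusion is an equivalence.

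For the bijection of moduli sets, passing to isomorphism classes gives $\Hom(\pi_1(\Sigma,P),G)/G_P \cong \frak{Rep}_G(\pi_1(\Sigma,P))$. By \autoref{250914172733}, the restriction functor $\R''$ to $\pi_1(\Sigma,p)$ is an equivalence for any $p\in P$. Composing with \autoref{251009141029} gives $\frak{Rep}_G(\pi_1(\Sigma,P))\cong \frak{Rep}_G(\pi_1(\Sigma,p))\cong \Hom(\pi_1(\Sigma,p),G)/G$. Concretely, the composite map is $[\rho]\mapsto[\rho|_{\pi_1(\Sigma,p)}]$, and it is well defined because mixed conjugation by $g$ restricts to ordinary conjugation by $g_p$.

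I expect the main obstacle to be only bookkeeping: in the explicit description, checking surjectivity and injectivity of $[\rho]\mapsto[\rho|_p]$ directly rather than via the equivalences. If I do it by hand, I will use the generators of \autoref{251215203809}. For surjectivity, extend a homomorphism on $\pi_1(\Sigma,p)$ by sending the connecting paths $c_i,d_i$ to $\idd$. For injectivity, given $\rho,\rho'$ with $\rho'|_p = h\rho|_p h^{-1}$, choose for each $q\in P$ a path $\eta_q$ from $p$ to $q$ and set $g_q \coleq \rho'_{\eta_q} h \rho_{\eta_q}^{-1}$. Then check that $g.\rho=\rho'$ on every $\gamma$ by writing $\gamma=\eta_{\rm{t}(\gamma)}\,\ell\,\eta_{\rm{s}(\gamma)}^{-1}$ with $\ell\in\pi_1(\Sigma,p)$.
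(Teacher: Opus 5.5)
Your proposal is correct and follows the paper's route: the choice-free inclusion functor $\rho \mapsto (\underline{\CC}^n, \rho)$, with a chosen trivialisation $\chi$ itself giving the isomorphism $(E,\varphi) \iso (\underline{\CC}^n, \rho)$. Two small corrections: that intertwining property is really the definition \eqref{251217120635}, not \eqref{251217122636}; and the only difference from the paper is that you check full faithfulness and essential surjectivity directly, whereas the paper writes down the quasi-inverse \eqref{251217134437} and verifies both composites.

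Your derivation of the moduli bijection from \autoref{250914172733} and \autoref{251009141029} is what the paper's ``consequently'' leaves implicit, and your hand check with paths $\eta_q$ is correct. For the surjectivity part, though, it is cleaner to set $\rho_\gamma \coleq \sigma\big(\eta_{\rm{t}(\gamma)}^{-1}\,\gamma\,\eta_{\rm{s}(\gamma)}\big)$ with $\eta_p$ constant. Sending only $c_i, d_i$ to $\idd$ does not yet fix the values on the individual $\delta_{i,j}$.
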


\begin{proof}
	This equivalence of categories is given by the inclusion functor from right to left.
	Namely, it sends any homomorphism $\rho \in \Hom \big( \pi_1 (\Sigma, P), G)$ to the representation $(E,\varphi) \coleq (\underline{\CC}^n, \rho)$ on the trivial vector bundle and regards any group element $g \in G_P = \Aut (\underline{\CC}^n)$ as a bundle automorphism.
	Any inverse functor is given by matrix representation \eqref{251217134437} and requires a choice of trivialisation for every vector bundle $E \to P$.
	We just have to check that these functors are indeed inverse to one another.
	
	Going from right to left to right is obvious because the trivial vector bundle $\underline{\CC}^n \to P$ has a canonical trivialisation, so the functors compose to send $\rho \mapsto (\underline{\CC}^n, \rho) \mapsto \rho$ and similarly on morphisms.
	So the right-left-right composition of these functors is equal to the identity functor.
	
	For going from left to right to left, suppose $(E, \varphi)$ is a representation of $\pi_1 (\Sigma, P)$ and let $\rho$ be its matrix representation with respect to the chosen trivialisation $\chi$.
	Then the functors compose to send $(E, \varphi) \mapsto \rho \mapsto (\underline{\CC}^n, \rho)$.
	But then $\chi$ itself defines the desired isomorphism $(E, \varphi) \iso (\underline{\CC}^n, \rho)$.
	Similarly, given any morphism $\phi : (E, \varphi) \to (E', \varphi')$, the functors compose to send $\phi \mapsto g = \chi' \circ \phi \circ \chi^{-1} \mapsto g$.
	This means $g \in \Aut (\underline{\CC}^n)$ and $\phi$ form a commutative square
	\begin{eqn}
		\begin{tikzcd}
			(E, \varphi)
			\ar[d, "\phi"']
			\ar[r, "\chi"]
			&		(\underline{\CC}^n, \rho)
			\ar[d, "g"]
			\\
			(E', \varphi')
			\ar[r, "\chi'"']
			&		(\underline{\CC}^n, \rho').
		\end{tikzcd}
	\end{eqn}
	Thus, the left-to-right-to-left composition of the functors defined above is naturally equivalent to the identity functor.
\end{proof}

\subsection{Decorated Representation Variety}

Whereas the set of groupoid homomorphisms $\Hom \big( \Pi_1 (\Sigma), G \big)$ is an infinite-dimensional complex manifold (it is a mapping space), its `discrete' counterpart $\Hom \big( \pi_1 (\Sigma, P), G \big)$ is a finite-dimensional algebraic space.
More precisely, we have the following fact that follows immediately from \autoref{251217105341}.

\begin{lemma}
	\label{251215212039}
	For any  surface with marked boundary $(\Sigma, P)$ of genus $g$ with $s \geq 1$ holes, the set of groupoid homomorphisms $\Hom \big( \pi_1 (\Sigma, P), G \big)$ is a smooth affine algebraic variety of dimension $n^2 (2g + s + |P| - 2)$ with a non-canonical isomorphism
	\begin{eqn}
		\Hom \big( \pi_1 (\Sigma, P), G \big) \cong G^{2g + s + |P| - 2}.
	\end{eqn}
	Any such isomorphism is determined by a choice of free presentation of the discrete fundamental groupoid $\pi_1 (\Sigma, P)$.
\end{lemma}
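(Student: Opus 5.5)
The plan is to reduce everything to Lemma \ref{251217105341} (equivalently Lemma \ref{251215212039}'s source, the explicit presentation of Lemma \ref{251215203809}). Since $s \geq 1$, we fix a free presentation of $\pi_1 (\Sigma, P)$: take the $2g+s+r+m-1$ generators produced by the procedure of \autoref{251218133944} and delete one of the paths $\gamma_i, \delta_i, \delta_{i,j}$ for which the relation \eqref{251109164128} can be solved. This gives $N \coleq 2g+s+|P|-2$ free generators $e_1, \ldots, e_N$ (recall $|P| = m + r$), i.e.\ an isomorphism of $\pi_1 (\Sigma, P)$ with the free groupoid $\Pi\Gamma \rightrightarrows P$ on an embedded graph $\Gamma$ with vertex set $P$ and edges $e_k$.

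Next use the universal property of free groupoids: a groupoid homomorphism $\rho : \Pi\Gamma \to G$ (with $G$ viewed as a groupoid over a point) is uniquely determined by the arbitrary values $\rho_{e_1}, \ldots, \rho_{e_N} \in G$, and conversely every tuple extends uniquely, since a morphism in $\Pi\Gamma$ is a reduced word in the $e_k^{\pm 1}$ and one sets $\rho$ of a word to be the corresponding product. Hence evaluation on generators gives a bijection
\begin{eqn}
	\Hom \big( \pi_1 (\Sigma, P), G \big) \iso G^{N}, \qquad \rho \mapsto (\rho_{e_1}, \ldots, \rho_{e_N}).
\end{eqn}
We then endow the left side with the algebraic structure transported along this bijection; $G^N$ is a smooth affine variety (principal open in $\CC^{n^2 N}$ given by $\det \neq 0$ in each factor) of dimension $n^2 N$, which is the claimed dimension.

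The main point to check, and the only mild obstacle, is that this structure does not depend on the chosen free presentation, so the variety structure is intrinsic while the isomorphism is not. For two free generating sets, each generator of one is a word in the other, so the change-of-coordinates map $G^N \to G^N$ is given by products of matrices and their inverses, hence polynomial in entries and $\det^{-1}$, i.e.\ a morphism; its inverse is likewise a morphism, so it is an isomorphism of varieties. Equivalently, $\Hom(\pi_1(\Sigma,P),G)$ is the closed subvariety of $G^{2g+s+|P|-1}$ cut out by the single relation \eqref{251109164128}, which is the graph of a morphism in the eliminated variable. Non-canonicity is clear since different presentations give different (though algebraically related) coordinates, and this completes the plan.
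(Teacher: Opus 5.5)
Your proposal is correct and takes essentially the same route as the paper, which deduces the lemma directly from the free presentation of Lemma~\ref{251217105341} (made explicit in Lemma~\ref{251215203809}): evaluation on the $2g+s+|P|-2$ free generators gives the bijection with $G^{2g+s+|P|-2}$. Your check that the transported algebraic structure does not depend on the chosen presentation is the same argument the paper gives later, in the proof of Proposition~\ref{251216175808}.
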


Now we introduce the relevant subset of groupoid homomorphisms.

\begin{definition}
	\label{251218003508}
	Let $(\Sigma, P)$ be a surface with marked boundary.
	Then the \dfn{decorated representation variety} of $(\Sigma, P)$ is the subset $R_G (\Sigma, P)$ of $\Hom \big( \pi_1 (\Sigma, P), G \big)$ consisting of all groupoid homomorphisms $\rho : \pi_1 (\Sigma, P) \to G$ which are upper-triangular on the boundary loop at each secondary marked point:
	\begin{eqn}
		R_G (\Sigma, P)
		\coleq \set{ \rho ~\Big|~ \rho_{\delta_p} \in B \quad \forall p \in P_\rm{II}}
		\subset \Hom \big( \pi_1 (\Sigma, P), G \big),
	\end{eqn}
	where $\delta_p \in \pi_1 (\Sigma, p)$ denotes the boundary loop based at the marked point $p \in P_\rm{II}$.
\end{definition}

Evidently, in the absence of secondary marked points ($P_\rm{II} = \emptyset$), the defining constraint in this definition becomes vacuous, so we get a simplification:
\begin{eqn}
	R_G (\Sigma, P_{\rm I}) = \Hom \big( \pi_1 (\Sigma, P_{\rm I} ), G \big).
\end{eqn}

\begin{proposition}
	\label{251216175808}
	Let $(\Sigma, P)$ be any surface with marked boundary of genus $g$ with $s \geq 1$ holes, $r$ secondary marked points, and $m$ primary marked points.
	Then the decorated representation variety $R_G (\Sigma, P)$ is a smooth, closed, affine algebraic subvariety of $\Hom \big( \pi_1 (\Sigma, P), G \big)$, and there is a non-canonical isomorphism
	\begin{eqn}
		R_G (\Sigma, P) \cong G^{2g + s + m - 2} \times B^r.
	\end{eqn}
	Consequently, $R_G (\Sigma, P)$ has dimension $n^2 (2g + s + m - 2) + \tfrac{1}{2} r (n^2 + n)$.
	Any such isomorphism is realised by choosing a finite and free presentation of the discrete fundamental groupoid $\pi_1 (\Sigma,P )$.
\end{proposition}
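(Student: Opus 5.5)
We use the finite presentation of \autoref{251215203809} with a carefully chosen relation to eliminate. Fix $p_0 \in P$ and run the procedure of \autoref{251218133944} to obtain generators $\alpha_i, \beta_i, \gamma_i, \delta_i, \delta_{i,j}, c_i, d_i$ satisfying \eqref{251109164128}. There are $2g+s+r+m-1$ of them, with $c_1 = 1$ or $d_1 = 1$ already absorbed. Since $s \geq 1$, the single relation can be solved uniquely for any one of $\gamma_i, \delta_i, \delta_{i,j}$. We choose not to eliminate any of the regular boundary loops $\delta_i$, because these carry the constraint. If $l \geq 1$ or $d \geq 1$, we solve for some $\gamma_i$ or some $\delta_{i,j}$.

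If $l = d = 0$, every boundary is regular, so we must eliminate some $\delta_{i_0}$. In that case the constraint $\rho_{\delta_{i_0}} \in B$ becomes a condition on a word in the remaining generators, and we handle it by a change of variables. We would first try to avoid this case through the choice of presentation. If that fails, we use that the relation expresses $\delta_{i_0} = d_{i_0} w\, d_{i_0}^{-1}$ for some word $w$ not involving $d_{i_0}$ (when $i_0 \neq 1$, so that $d_{i_0}$ is a genuine free generator). We then substitute $d_{i_0}$ by a new free generator chosen so that the map $(d_{i_0}, \text{rest}) \mapsto (\rho_{\delta_{i_0}}, \text{rest})$ becomes a product decomposition. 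This is the main obstacle. Our first attempt is to treat it by using $\rho_{\delta_{i_0}}$ itself together with a complementary coordinate, for example by regarding $\delta_{i_0}$ as a free generator and eliminating an $\alpha$- or $\beta$-commutator instead when $g \geq 1$. If $g = 0$ and $s = r$, we instead use a path $d_i$ ($i \neq 1$) and observe that the relation fixes the conjugacy class only.

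With the remaining free generators, $\pi_1(\Sigma, P)$ is freely generated, so $\Hom(\pi_1(\Sigma,P), G) \cong G^{N}$ with $N = 2g+s+r+m-2$, as in \autoref{251215212039} (note $|P| = r+m$). The map is evaluation on the generators, and it is an algebraic isomorphism. Under this isomorphism the $r$ loops $\delta_1, \ldots, \delta_r$ are among the free coordinates. The defining condition of \autoref{251218003508} asks that the coordinate $\rho_{\delta_p}$ lie in $B$ for each $p \in P_\rm{II}$, and the other coordinates are unconstrained. Hence $R_G(\Sigma,P)$ corresponds to $G^{2g+s+m-2} \times B^r$.

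Since $B \subset G$ is a closed, smooth, affine subgroup of dimension $\tfrac12(n^2+n)$, the subset $R_G(\Sigma,P)$ is a smooth closed affine subvariety. Its dimension is $n^2(2g+s+m-2) + \tfrac12 r(n^2+n)$. Closedness is in fact intrinsic and does not depend on the presentation, because $R_G(\Sigma,P)$ is the preimage of $B^r$ under the algebraic evaluation maps $\rho \mapsto \rho_{\delta_p}$. Non-canonicity comes from the choice of $p_0$, of the paths, and of the eliminated generator.
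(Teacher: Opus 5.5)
For $l+d\geq 1$ your argument is the paper's. You eliminate one $\gamma_i$ or $\delta_{i,j}$ via the relation \eqref{251109164128}, so the $r$ boundary loops $\delta_i$ stay among the $2g+s+r+m-2$ free generators and $R_G(\Sigma,P)\cong G^{2g+s+m-2}\times B^r$. This is \autoref{251107114012} combined with \autoref{251217181238}, and that lemma gives the isomorphism only under the hypothesis $l+d\geq 1$.

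\textbf{The gap.} The gap is the case $l=d=0$, where every boundary circle is regular. You flag it as the main obstacle but do not resolve it, and the manoeuvres you suggest do not work.
\begin{itemize}
\item The relation cannot be solved for any $\alpha_i$, $\beta_i$, $c_i$ or $d_i$, so ``eliminating a commutator instead'' is not available.
\item More fundamentally, no free presentation of $\pi_1(\Sigma,P)$ can contain all $r$ boundary loops among its free generators. Collapsing a maximal tree of the graph $\Gamma$ of \autoref{251217105341} would turn such a basis into a free basis of $\pi_1(\Sigma,p_0)$ containing conjugates of the $\delta_i$. Their classes would then be linearly independent in $H_1(\Sigma;\mathbb{Z})$, yet the classes of all boundary circles sum to zero there.
\end{itemize}

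\textbf{The statement fails in this case.} No argument can close it:
\begin{itemize}
\item For the disc with one regular boundary circle ($g=0$, $r=1$), the exponent $2g+s+m-2$ equals $-1$, while $R_G(\Sigma,P)$ is a point.
\item For the torus with one regular boundary circle ($g=1$, $r=1$), the relation gives $\delta_1=[\alpha_1,\beta_1]^{-1}$. Hence $R_G(\Sigma,P)=\{(X,Y)\in G^2 : [X,Y]\in B\}$. This set contains $G\times\{\idd\}$ and $\{\idd\}\times G$, so its Zariski tangent space at $(\idd,\idd)$ is all of $\mathfrak g\oplus\mathfrak g$, of dimension $2n^2$. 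For $n\geq 2$ this exceeds $\dim(G\times B)=\tfrac32 n^2+\tfrac12 n$. So $R_G(\Sigma,P)$ is singular there and cannot be isomorphic to the smooth variety $G^{2g+s+m-2}\times B^r=G\times B$.
\end{itemize}

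The correct statement needs $l+d\geq 1$, i.e.\ at least one simple or irregular boundary circle; the paper's own proof carries this hypothesis only implicitly, through \autoref{251107114012}. Under that hypothesis your proof is complete and coincides with the paper's. Your remark that closedness is intrinsic, as the preimage of $B^r$ under the evaluation maps $\rho\mapsto\rho_{\delta_p}$, holds in all cases.
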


We will prove this proposition by exploiting the fact that decorated representation varieties can be described completely explicitly using the convenient finite presentations of the discrete fundamental groupoid $\pi_1 (\Sigma, P)$ constructed in \autoref{251214205432}.

Let $(\Sigma, P)$ be any surface with marked boundary of genus $g$, with $\bm{s} = (l,r,d)$ holes and $\bm{m} = (m_1, \ldots, m_d)$ primary marked points.
Pick any basepoint $p_0 \in P$,  and choose a finite presentation of $\pi_1 (\Sigma, P)$ as described in \autoref{251215203809}:
\begin{eqntag}
	\label{251217165301}
	\pi_1 (\Sigma, P)
	\cong \inner{ \alpha_i, \beta_i, \gamma_i, \delta_i, \delta_{i,j}, c_i, d_i ~\Big|~ \text{ relation \eqref{251109164128}}}.
\end{eqntag}
Recall also that $c_1 = 1$ if $p_0 \in P_\rm{I}$ and $d_1 = 1$ if $p_0 \in P_\rm{II}$.
Then any homomorphism $\rho : \pi_1 (\Sigma, P) \to G$ is completely determined by its values on these generators, which we denote as follows:
\begin{eqntag}
	\label{251217165737}
	\rho :
	~
	\alpha_i \mapsto \A_i~,
	~~
	\beta_i \mapsto \B_i~,
	~~
	\gamma_i \mapsto \M_i~,
	~~
	\delta_i \mapsto \N_i~,
	~~
	\delta_{i,j} \mapsto \T_{ij}~,
	~~
	c_i \mapsto \C_i,~
	~~
	d_i \mapsto \D_i
	~.
\end{eqntag}
All these matrices are elements of $G$, and $\rho$ belongs to the decorated representation variety $R_G (\Sigma, P)$ if and only if each $\N_i \in B$.
Furthermore, we automatically have $\C_1 = \idd$ if $p_0 \in P_\rm{I}$ and $\D_1 = \idd$ if $p_0 \in P_\rm{II}$.
Let us organise this collection of matrices into the following notation:
\begin{eqntag}
	\label{251217170301}
	\begin{gathered}
		\bm{\A} \coleq \set{ \A_1, \ldots, \A_g } \subset G~,
		\qquad
		\bm{\B} \coleq \set{ \B_1, \ldots, \B_g } \subset G~,
		\\
		\bm{\M} \coleq \set{ \M_1, \ldots, \M_l } \subset G~,
		\qquad
		\bm{\N} \coleq \set{ \N_1, \ldots, \N_r} \subset B~,
		\\
		\bm{\C} \coleq \set{ \C_1 \ldots, \C_{d}} \subset G~,
		\qquad
		\bm{\D} \coleq \set{ \D_1 \ldots, \D_{r}} \subset G
		\\
		\bm{\T} \coleq \set{ \T_{ij} ~\big|~ 1 \leq i \leq d, ~ 1 \leq j \leq m_i } \subset G.
	\end{gathered}
\end{eqntag}
These collections of matrices are allowed to be empty; e.g., $\bm{\A},\bm{\B} = \emptyset$ if $g = 0$.
These matrices satisfy the following relation implied by the relation \eqref{251109164128} on the corresponding generators of $\pi_1 (\Sigma, P)$:
\begin{eqntag}
	\label{251107125603}
	\prod_{i=1}^{\substack{g\\\longleftarrow}} [\A_i, \B_i]
	\cdot
	\prod_{i=1}^{\substack{l\\\longleftarrow}} \M_i
	\cdot 
	\prod_{i={1}}^{\substack{{r}\\\longleftarrow}} \D_i^{-1} 
	\N_{i} \D_i
	\cdot
	\prod_{i=1}^{\substack{d\\\longleftarrow}} \C_i^{-1} 
	\left( \prod_{j=1}^{\substack{m_i\\\longleftarrow}} \T_{ij} \right) \C_i
	= \idd.
\end{eqntag}
It follows that the decorated representation variety $R_G (\Sigma, P)$ is in bijection with the set that parameterises the collection of all such matrices satisfying this relation.
To state this more accurately, we first introduce the following definition.

\begin{definition}
	\label{251109184820}
	Fix any nonnegative integer $g$, any vector $\bm{s} \coleq (l,r,d)$ of nonnegative integers, and any vector $\bm{m} \coleq (m_1, \ldots, m_d)$ of strictly positive integers.
	Put $s \coleq l + d + r$ and $m \coleq m_1 + \ldots + m_d$.
	Then we denote the set of all matrices \eqref{251217170301} satisfying the relation \eqref{251107125603} by
	\begin{eqn}
		\hat{R}_G (g,\bm{s},\bm{m}) \coleq \set{ (\bm{\A}, \bm{\B}, \bm{\M}, \bm{\T}, \bm{\C}, \bm{\D}; \bm{\N}) \in G^{2g + s + m} {\times} B^r
			~\Big|~ \textup{ relation \eqref{251107125603} } \big.}.
	\end{eqn}
	We also single out the following subset
	\begin{eqn}
		R_G (g, \bm{s}, \bm{m}) \coleq
		\set{ (\bm{\A}, \bm{\B}, \bm{\M}, \bm{\T}, \bm{\C}, \bm{\D}; \bm{\N})
			~\Big|~ \substack{ \textup{ $\C_1 = \idd$ if $d \geq 1 \phantom{, r\geq 1}$ } \\ \textup{ $\D_1 = \idd$ if $d = 0, r \geq 1$ }} \big.} 
		\subset \hat{R}_G (g, \bm{s}, \bm{m}).
	\end{eqn}
\end{definition}

Observe that the relation \eqref{251107125603} can be solved for any $\M_i, \N_i, \T_{ij}$, but not for any of $\A_i, \B_i, \C_i$.
Thus, eliminating any one of $\M_i, \N_i, \T_{ij}$ from the list destroys the relation.
But beware that using this identity to express any $\N_i$ in terms of all the other matrices in the list does not guarantee that $\N_i$ is upper-triangular.
On the other hand, the matrix $\M_i$ or $\T_{ij}$ (corresponding to one of the loops $\gamma_i$ or boundary segments $\delta_{i,j}$, but not a boundary loop $\delta_i$) is completely determined by the relation \eqref{251107125603}.
It follows that, if $s \geq 1$, then the sets $R_G (g,\bm{s},\bm{m}) \subset \hat{R}_G (g,\bm{s},\bm{m})$ are smooth, closed, affine algebraic subvarieties of $G^{2g+s+m+r}$ isomorphic to $G^{2g+s+m-2} \times B^r$ and $G^{2g+s+m-1} \times B^r$, respectively.
Finally, recall that $\dim G = n^2$ and $\dim B = \tfrac{1}{2} (n^2+n)$.
Therefore, we obtain the following result.

\begin{lemma}
	\label{251107114012}
	The sets $\hat{R}_G (g, \bm{s}, \bm{m})$ and $R_G (g, \bm{s}, \bm{m})$ defined above are smooth, closed, affine algebraic subvarieties of $G^{2g+s+m} \times B^r$ of dimensions $n^2 (2g + s + m - 1) + \tfrac{1}{2} r (n^2 + n)$ and $n^2 (2g + s + m - 2) + \tfrac{1}{2} r (n^2 + n)$, respectively.
	Moreover, if $l + d \geq 1$, then eliminating any one of the matrices $\M_i$ or $\T_{ij}$ (i.e., projecting this component away) determines isomorphisms
	\begin{eqn}
		\hat{R}_G (g, \bm{s}, \bm{m}) \iso G^{2g + s + m - 1} {\times} B^r
		\qtext{and}
		R_G (g, \bm{s}, \bm{m}) \iso G^{2g + s + m - 2} {\times} B^r.
	\end{eqn}
\end{lemma}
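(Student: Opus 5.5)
The proof reduces to exhibiting explicit coordinates. Write $\Gamma \coleq G^{2g+s+m} \times B^r$ for the ambient product, with coordinates $(\bm{\A}, \bm{\B}, \bm{\M}, \bm{\T}, \bm{\C}, \bm{\D}; \bm{\N})$; it is a smooth affine variety because $G$ is affine and $B \subset G$ is a closed subgroup. The set $\hat{R}_G (g,\bm{s},\bm{m})$ is the fibre over $\idd$ of the word map $w : \Gamma \to G$ given by the left-hand side of \eqref{251107125603}. It is therefore closed, being the preimage of a point under a morphism. The subset $R_G (g,\bm{s},\bm{m})$ is cut out of it by the further closed condition $\C_1 = \idd$ (respectively $\D_1 = \idd$), so it is closed as well.

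For smoothness and the dimension count, I will assume $l + d \geq 1$, so that some $\M_i$ or some $\T_{ij}$ occurs. I treat a fixed $\T_{ij}$; the case of an $\M_i$ is identical. The relation has the form $X \, \T_{ij} \, Y = \idd$, where $X$ and $Y$ are words in the remaining matrices, and the left and right factors are read off from \eqref{251107125603}. Solving gives $\T_{ij} = X^{-1} Y^{-1}$, which is a regular function of the other coordinates because inversion in $G$ is regular.

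From this I will build two mutually inverse morphisms. The first is the projection $\pi : \hat{R}_G \to G^{2g+s+m-1} \times B^r$ that forgets $\T_{ij}$. The second is the section $\sigma$ that reinserts $\T_{ij} \coleq X^{-1} Y^{-1}$. By construction $\sigma$ lands in $\hat{R}_G$, and $\pi \circ \sigma = \id$. The relation shows that $\sigma \circ \pi = \id$. Hence $\hat{R}_G \cong G^{2g+s+m-1} \times B^r$, which is smooth of dimension $n^2(2g+s+m-1) + \tfrac12 r(n^2+n)$, using $\dim G = n^2$ and $\dim B = \tfrac12(n^2+n)$.

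For $R_G$, the coordinate $\C_1$ (or $\D_1$) is fixed to $\idd$ and does not appear in the eliminated slot, so the same isomorphism restricts to $R_G \cong G^{2g+s+m-2} \times B^r$. This lowers the dimension by $n^2$.

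The one point needing care is the choice of which matrix to eliminate. One must not solve for an $\N_i$, since the solved value need not lie in $B$; this is why the hypothesis $l + d \geq 1$ is used for the isomorphism statements. The closedness and the dimension claims in the first sentence should still be checked when $l = d = 0$ and $r \geq 1$. In that case the relation reads $\prod [\A_i,\B_i] \cdot \prod \D_i^{-1} \N_i \D_i = \idd$.

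I would treat this case by solving for a $\D$-conjugate rather than an $\N$. The idea is to exhibit $\hat{R}_G$ as the fibre of a map that is smooth onto $G$. For instance, when $g \geq 1$ the map $(\A_1, \B_1) \mapsto [\A_1,\B_1]$ composed with the rest is submersive, or one can use the free action moving $\D_1$. If neither route works cleanly, I would fall back on the statement as intended, namely with the dimension asserted in the generic case covered by the explicit isomorphism. This degenerate case is the main obstacle.
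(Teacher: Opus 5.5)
For $l+d\geq 1$ your argument is the same as the paper's. The paper also solves \eqref{251107125603} for one $\M_i$ or $\T_{ij}$, and never for an $\N_i$, since that need not stay in $B$. Forgetting that entry is then an isomorphism onto $G^{2g+s+m-1}\times B^r$, and the normalisation $\C_1=\idd$ or $\D_1=\idd$ removes one more factor of $G$. This last step tacitly uses $d+r\geq 1$, i.e.\ $P\neq\emptyset$. The paper states the conclusion under ``$s\geq 1$'', but its argument, like yours, only covers the case where some $\M_i$ or $\T_{ij}$ is present.

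Your plan for the remaining case $l=d=0$, $r\geq 1$ does not work, and it cannot be rescued: the first sentence of the lemma fails in this regime once $n\geq 2$.

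\textbf{Why your routes fail.} Let $p_0$ be the point all of whose entries equal $\idd$; it lies in $R_G\subset\hat{R}_G$. At $p_0$ every commutator $[\A_i,\B_i]$ has zero differential. Every factor $\D_i^{-1}\N_i\D_i$ has differential $d\N_i$ alone, because $\N_i=\idd$ is central. So the differential of the word map at $p_0$ has image $\operatorname{Lie}(B)\neq\mathfrak{gl}_n$, for every $g$ and $r$. Neither the commutator route nor moving $\D_1$ makes the map submersive there.

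\textbf{A counterexample.} Take a disc whose boundary circle is regular: $g=0$, $\bm{s}=(0,1,0)$, $m=0$. The relation reads $\D_1^{-1}\N_1\D_1=\idd$. Hence $\hat{R}_G=G\times\set{\idd}$ has dimension $n^2$ rather than the claimed $\tfrac12(n^2+n)$. Moreover $R_G$ is a single point, while the formula predicts $\tfrac12(n-n^2)<0$.

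So the right resolution is your fallback, made explicit: add the hypothesis $l+d\geq 1$ to the first sentence of the lemma too. Under that hypothesis your proof is complete.
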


Finally, the following lemma summarises the discussion that led to \autoref{251109184820}.

\begin{lemma}
	\label{251217181238}
	Let $(\Sigma, P)$ be any surface with marked boundary of genus $g$, with $\bm{s} = (l,r,d)$ holes and $\bm{m} = (m_1, \ldots, m_d)$ primary marked points.
	Pick any basepoint $p_0 \in P$.
	Choose a finite presentation of the discrete fundamental groupoid $\pi_1 (\Sigma, P)$ of the form \eqref{251217165301}.
	Then the correspondence \eqref{251217165737} defines a bijection
	\begin{equation}
		\label{251217183742}
		R_G (\Sigma, P) \iso R_G (g, \bm{s}, \bm{m}).
	\end{equation}
\end{lemma}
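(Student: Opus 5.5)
The plan is to exploit the universal property of the finitely presented groupoid from \autoref{251215203809}. The choice of $p_0$ and of the presentation \eqref{251217165301} determines a map $\operatorname{ev}\colon \Hom\big(\pi_1(\Sigma,P),G\big) \to G^{2g+s+m}\times G^{r}$. It sends $\rho$ to the tuple of its values \eqref{251217165737} on the generators $\alpha_i,\beta_i,\gamma_i,\delta_i,\delta_{i,j},c_i,d_i$. I will show three things: this map is injective; its image is exactly the set of tuples satisfying \eqref{251107125603} together with the normalisation $\C_1=\idd$ (resp.\ $\D_1=\idd$); and the constraint defining $R_G(\Sigma,P)$ corresponds exactly to the condition $\bm{\N}\subset B$.

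Injectivity comes from the fact that the listed paths generate $\pi_1(\Sigma,P)$. Every arrow is a finite composite of generators and their inverses, so a groupoid homomorphism is determined by its values on them. I will also note that $\rho(1_{p_0})=\idd$, which forces $\C_1=\idd$ when $p_0\in P_\rm{I}$ (since $c_1=1$), and $\D_1=\idd$ when $p_0\in P_\rm{II}$ (since $d_1=1$). These cases match the dichotomy $d\ge1$ versus $d=0,r\ge1$ in \autoref{251109184820}, because step 6 of the procedure is arranged so that $p_0$ may be taken primary whenever $d\geq 1$. Applying $\rho$ to the relation \eqref{251109164128} gives \eqref{251107125603}, so $\operatorname{ev}$ lands in $\hat R_G(g,\bm{s},\bm{m})$ and in fact in $R_G(g,\bm{s},\bm{m})$ once the $\bm{\N}$ are known to lie in $B$.

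Surjectivity is where the real content lies, and I expect it to be the main obstacle. The issue is that a groupoid presented by generators and a single relation need not a priori have its homomorphisms classified by tuples satisfying that relation, unless the presentation is known to be a genuine presentation. I will use \autoref{251215203809} exactly as stated, namely that \eqref{251215204359} is a presentation of the groupoid. Then, by the universal property of a presented groupoid (the free groupoid on the graph of generators modulo the normal subgroupoid generated by the relation), any assignment of matrices satisfying \eqref{251107125603} extends uniquely to a homomorphism $\pi_1(\Sigma,P)\to G$. If one prefers to avoid this, a second route is available. Drop one of the $\M_i$ or $\T_{ij}$ to obtain the free presentation of \autoref{251215203809}, define $\rho$ freely on the remaining generators, and check that the value forced on the dropped generator by the relation agrees with the given matrix. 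This uses that \eqref{251107125603} can be solved uniquely for that matrix. The case $l+d=0$, where only $\N_i$'s are available to drop, is handled the same way by dropping some $\N_i$, since no upper-triangularity is needed at this stage.

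Finally, $\delta_i$ is precisely the boundary loop $\delta_{p_i}$ at the secondary point $p_i$ (step 3 of the procedure). Hence the condition $\rho_{\delta_p}\in B$ for all $p\in P_\rm{II}$ in \autoref{251218003508} translates exactly to $\N_i\in B$ for all $i$. It follows that $\operatorname{ev}$ restricts to the bijection \eqref{251217183742}.
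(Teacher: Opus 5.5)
Your argument is correct and is essentially the paper's own. The paper gives no separate proof; it relies on the discussion preceding \autoref{251109184820}: a homomorphism is determined by its values on the generators, these values satisfy \eqref{251107125603}, and $\rho\in R_G(\Sigma,P)$ iff every $\N_i\in B$. You make the surjectivity step explicit via the universal property of the presentation \eqref{251215204359}, which the paper leaves implicit.

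One small correction: step 6 of the procedure does not make $p_0$ primary when $d\geq 1$. If $p_0\in P_\rm{II}$ and $d\geq 1$, the image is cut out by $\D_1=\idd$ rather than $\C_1=\idd$. So the lemma tacitly requires $p_0\in P_\rm{I}$ whenever $d\geq 1$, or else the normalisation in \autoref{251109184820} must be read according to the type of $p_0$. This is a defect of the statement, not of your reasoning.
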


\begin{proof}[Proof of \autoref{251216175808}.]
	This follows from \autoref{251107114012} and \autoref{251217181238}.
	We endow $\hat{R}_G (\Sigma, P)$ with the unique algebraic structure for which the bijective map \eqref{251217183742} is algebraic.
	Then the subset $R_G (\Sigma, P)$ inherits the same algebraic structure.
	One only has to check that if we change the finite presentation of the discrete fundamental groupoid, this algebraic structure does not change.
	But this is clear because any other set of generators of the discrete fundamental groupoid are expressible in terms of the old generators as finite concatenations.
	This means any two bijections obtained this way are related by an algebraic automorphism of $\hat{R}_G (g, \bm{s}, \bm{m})$.
\end{proof}

\subsection{Filtered, Framed and Projectively Framed Character Varieties}
\label{251127191014}

We are now ready to define decorated character varieties.
To motivate their definition, let us briefly return to the discussion we started in \autoref{251217183124} and examine the groupoid of \textit{filtered} representations $\catname{Rep}_G^\rm{Fi} \big( \pi_1 (\Sigma, P) \big)$.

Suppose $(E,F,\varphi)$ is a filtered representation of $\pi_1 (\Sigma, P)$.
It is always possible to choose an isomorphism $\chi_p : E_p \iso \CC^n$ for every $p \in P$ which sends the flag $F_p$ to the standard flag $\CC^\bullet$; i.e., a filtered isomorphism $\chi_p : (E_p, F_p) \iso (\CC^n, \CC^\bullet)$.
This defines a global trivialisation $\chi \coleq \set{ \chi_p : p \in P } : E \iso \underline{\CC}^n$ which therefore sends the flag in each fibre to the standard flag; i.e., a filtered isomorphism $\chi : (E,F) \iso (\underline{\CC}^n, \underline{\CC}^\bullet)$.
Any other such filtered isomorphism $\tilde{\chi} : (E,F) \iso (\underline{\CC}^n, \underline{\CC}^\bullet)$ differs from $\chi$ by an automorphism that preserves the standard flag in each fibre of $\underline{\CC}^n$; i.e., by $g = \tilde{\chi} \circ \chi^{-1} \in \Aut (\underline{\CC}^n, \underline{\CC}^\bullet)$.
So each component of $g = \set{ g_p : p \in P}$ is just an upper-triangular matrix, $g_p \in B$.

Similarly, suppose $\phi : (E, F, \varphi) \to (E', F', \varphi')$ is a filtered isomorphism of representations and we choose trivialisations $\chi, \chi'$ of the filtered bundles $(E,F), (E',F')$.
By definition, $\phi$ sends each flag $F_p$ to $F'_p$ in each fibre.
But this means that its matrix representation must preserve the standard flag; i.e., $g = \chi' \circ \phi \circ \chi^{-1} \in \Aut (\underline{\CC}^n, \underline{\CC}^\bullet)$.
So, again, we find that $g_p \in B$ for every $p \in P$.

Motivated by this, we introduce the following three affine algebraic subgroups of $G_P = \Aut (\underline{\CC}^n)$:
\begin{equation}
	\label{251217191045}
	\begin{aligned}
		B_{P} &\coleq \prod_{p \in P} B = \Aut (\underline{\CC}^n, \underline{\CC}^\bullet),\\
		U_P &\coleq \prod_{p \in P} U = \sfop{UAut} (\underline{\CC}^n, \underline{\CC}^\bullet),\\
		U^\times_P &\coleq (\mathbb C^\times)^{|P|} \times U_P,
	\end{aligned}
\end{equation}
where $ \sfop{UAut} (\underline{\CC}^n, \underline{\CC}^\bullet)$ is the set of unipotent automorphisms of $\CC^n$ that preserve the standard flag.

These groups act on the decorated representation variety $R_G (\Sigma, P)$ by mixed conjugation.
So we get three action groupoids on affine algebraic varieties:
\begin{align*}
	&B_P \ltimes R_G (\Sigma, P), \\
	&U_P \ltimes R_G (\Sigma, P), \qqtext{with} g . \rho \coleq \rm{t}^\ast g \cdot \rho \cdot \rm{s}^\ast g^{-1}.\\
	&U^\times_P \ltimes R_G (\Sigma, P)
\end{align*}
Their moduli spaces are the primary subject matter of this paper.

\begin{definition}
	\label{251217191739}
	The \dfn{filtered character stack}, the \dfn{framed character stack} and the \dfn{projectively framed character stack} of any surface with marked boundary $(\Sigma, P)$ with nonempty $P$ are respectively defined as the following affine algebraic quotient stacks:
	\begin{align*}
		&\frak{X}^\rm{Fi}_G (\Sigma, P) \coleq R_G (\Sigma, P) / B_P,\\ 
		&\frak{X}^\rm{Fr}_G (\Sigma, P) \coleq R_G (\Sigma, P) / U_P,\\
		&\frak{X}^\rm{PFr}_G (\Sigma, P) \coleq R_G (\Sigma, P) / U_P^\times.
	\end{align*}
	Collectively, we refer to all these moduli spaces as \dfn{decorated character stacks}.
\end{definition}

The following lemma is a straightforward consequence of the definition of $\mathbb P GL_n$ and of the action of $U_P^\times$, however it is important because it allows to use the Fock-Goncharov-Shen coordinates and their complexification to describe $ \frak{X}^\rm{PFr}_G (\Sigma, P)$:

\begin{lemma}\label{lem:FGS}
	Let 
	\begin{eqn}
		R_{\mathbb P G} (\Sigma, P)
		\coleq \set{ \rho ~\Big|~ \rho_{\delta_p} \in \mathbb P B \quad \forall p \in P_\rm{II}}
		\subset \Hom \big( \pi_1 (\Sigma, P), \mathbb P GL_n \big),
	\end{eqn}
	then we have the following canonical bijection:
	$$ 
	\frak{X}^\rm{PFr}_G (\Sigma, P) \cong R_{\mathbb P G} (\Sigma, P) / \mathbb P U_P.
	$$
\end{lemma}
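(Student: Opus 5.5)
The plan is to realise the bijection through the natural map induced by the quotient homomorphism $\pi : G = \GL_n(\CC) \to \mathbb P GL_n$, and to check surjectivity and injectivity on orbits separately. Composition with $\pi$ sends any groupoid homomorphism $\rho : \pi_1(\Sigma,P) \to G$ to $\pi \circ \rho : \pi_1(\Sigma,P) \to \mathbb P GL_n$. If $\rho_{\delta_p} \in B$ for every $p \in P_\rm{II}$, then $\pi(\rho_{\delta_p}) \in \mathbb P B$. So this gives a map $\Pi : R_G(\Sigma,P) \to R_{\mathbb P G}(\Sigma,P)$.

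First I check equivariance. The torus factor $(\CC^\times)^{|P|}$ of $U^\times_P$ acts by $\rho_\gamma \mapsto t_{\rm{t}(\gamma)} t_{\rm{s}(\gamma)}^{-1}\rho_\gamma$, which is invisible after applying $\pi$. The factor $U_P$ maps onto $\mathbb P U_P \cong U_P$, since $U \cap \CC^\times \idd = \idd$. Hence $\Pi$ intertwines the $U^\times_P$-action with the $\mathbb P U_P$-action through the surjection $U^\times_P \to \mathbb P U_P$. It therefore descends to a map $\bar\Pi : R_G(\Sigma,P)/U^\times_P \to R_{\mathbb P G}(\Sigma,P)/\mathbb P U_P$.

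Surjectivity uses the free presentation of \autoref{251217105341}, or the explicit one in \autoref{251215203809} with one of $\gamma_i,\delta_i,\delta_{i,j}$ removed. Because $\pi_1(\Sigma,P)$ is a free groupoid when $s \geq 1$, any $\bar\rho$ is determined by arbitrary values on the generators. I lift each generator's image to $G$ arbitrarily, except that for each boundary loop $\delta_p$ with $p \in P_\rm{II}$ I choose the lift inside $B$; this is possible because $\mathbb P B = B/\CC^\times$. If the eliminated generator was some $\delta_i$, I instead eliminate some $\M_i$ or $\T_{ij}$ when $l+d \geq 1$. In the case $l=d=0$ I rescale: the relation forces the eliminated $\N_i$ to be projectively upper triangular, and a lift differing from an element of $B$ by a scalar is itself in $B$.

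Injectivity is the main obstacle, since one must show that scalar ambiguities are absorbed by the torus. Suppose $\pi\circ\rho' = \bar u.(\pi\circ\rho)$ with $\bar u \in \mathbb P U_P$. Lift $\bar u$ to $u \in U_P$ and replace $\rho$ by $u.\rho$, so that $\pi\circ\rho' = \pi\circ\rho$. Then $\rho'_\gamma = \lambda(\gamma)\rho_\gamma$ for a function $\lambda : \pi_1(\Sigma,P) \to \CC^\times$, and $\lambda$ is multiplicative, i.e.\ a groupoid homomorphism into $\CC^\times$. It remains to write $\lambda(\gamma) = t_{\rm{t}(\gamma)} t_{\rm{s}(\gamma)}^{-1}$, which requires $\lambda$ to be trivial on loops. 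This is the genuine difficulty: a nontrivial character of $\pi_1(\Sigma,p_0)$ cannot be absorbed by $(\CC^\times)^{|P|}$.

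I would try two ways to handle this. The first is to read the statement with $\mathbb P GL_n$ replaced by the quotient compatible with scalars at vertices, so that loop characters are accounted for. The second is to interpret the lemma as intended by the authors, namely that $U_P^\times$ acts by scalars, with a mixed conjugation by scalars $t_p$ at each vertex together with an overall rescaling along the edges. In that second reading, I would enlarge the torus to include $\Hom(\pi_1,\CC^\times)$ and verify that it is canonically absorbed. Once $\lambda$ is a coboundary, $\rho' = t.\rho$ with $t \in (\CC^\times)^{|P|}$, which gives injectivity. Canonicity holds because no choices enter $\bar\Pi$ itself.
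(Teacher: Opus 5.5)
Your map $\bar\Pi$, the equivariance check and the surjectivity argument are all fine. The special treatment of the boundary loops is not even needed: since $\CC^\times\idd\subset B$, every lift to $G$ of an element of $\mathbb P B$ already lies in $B$.

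The gap is injectivity. The obstruction you found cannot be worked around; it is fatal.

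\begin{itemize}
\item Under the paper's definition, $U^\times_P=(\CC^\times)^{|P|}\times U_P$ acts by mixed conjugation. For a loop $\gamma$ based at $p$ this gives $(g.\rho)_\gamma=g_p\rho_\gamma g_p^{-1}$. The scalar part therefore cancels, and $\det\rho_\gamma$ is a $U^\times_P$-invariant.
\item If $\Sigma$ is not a disk, $\pi_1(\Sigma,p_0)$ is free of rank $2g+s-1\geq 1$. Pick a free generator $\gamma$ and a groupoid homomorphism $\lambda:\pi_1(\Sigma,P)\to\CC^\times$ with $\lambda(\gamma)=2$. You can get one by extending a character of $\pi_1(\Sigma,p_0)$ by $1$ on connecting paths.
\item Then $\lambda\rho\in R_G(\Sigma,P)$, because $B$ is stable under scalars, and $\pi\circ(\lambda\rho)=\pi\circ\rho$.
\item Yet $\det(\lambda\rho)_\gamma=2^n\det\rho_\gamma$, so $\rho$ and $\lambda\rho$ lie in different $U^\times_P$-orbits.
\end{itemize}

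So $\bar\Pi$ is not injective unless $g=0$, $s=1$, and the lemma as stated is false in general. The paper's one-line justification (``a straightforward consequence of the definition of $\mathbb P GL_n$ and of the action of $U^\times_P$'') passes over exactly this point.

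For the same reason, neither of your proposed repairs proves the stated lemma. The first changes the target group. The second is not a reading of the paper's $U^\times_P$: rescaling by a character that is nontrivial on loops is not mixed conjugation by any element of $(\CC^\times)^{|P|}\times U_P$. Adjoining $\Hom(\pi_1(\Sigma,P),\CC^\times)$ to the acting group therefore changes the left-hand quotient, rather than being ``canonically absorbed'' by it.

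What your argument does establish is a corrected statement:
\begin{itemize}
\item $\Hom(\pi_1(\Sigma,P),\CC^\times)$ acts on $R_G(\Sigma,P)$ by $\rho\mapsto\lambda\rho$, and this action commutes with $U^\times_P$.
\item Its coboundaries $\lambda(\gamma)=t_{\mathrm{t}(\gamma)}t_{\mathrm{s}(\gamma)}^{-1}$ are exactly what the torus absorbs.
\item Your lifting step (lift $\bar u$ to $U_P$, then compare) shows that the fibres of $\bar\Pi$ are precisely the orbits of the quotient group $\Hom(\pi_1(\Sigma,p_0),\CC^\times)\cong H^1(\Sigma;\CC^\times)\cong(\CC^\times)^{2g+s-1}$.
\end{itemize}

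Hence $\mathfrak X^{\mathrm{PFr}}_G(\Sigma,P)/H^1(\Sigma;\CC^\times)\cong R_{\mathbb P G}(\Sigma,P)/\mathbb P U_P$. This is a bijection $\mathfrak X^{\mathrm{PFr}}_G(\Sigma,P)\cong R_{\mathbb P G}(\Sigma,P)/\mathbb P U_P$ only when $\Sigma$ is a disk, where every $\lambda$ is a coboundary. You should present the determinant counterexample and this corrected statement, rather than hedging between reinterpretations.
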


\begin{proof}
	This is a straightforward consequence of the definition of $\mathbb P GL_n$ and of the action of $U_P^\times$.
\end{proof}

\begin{remark}
	\label{250918103523}
	When $P=P_{\rm I}$, namely there are no
	secondary marked points, we can write the decorated character varieties of $(\Sigma, P_{\rm I})$ more explicitly:
	\begin{align*}
		\frak{X}^\rm{Fi}_G (\Sigma, P_{\rm I}) &= \Hom \big( \pi_1 (\Sigma, P_{\rm I}), G \big) \big/ B_{P_{\rm I}},\\
		\frak{X}^\rm{Fr}_G (\Sigma, P_{\rm I}) &= \Hom \big( \pi_1 (\Sigma, P_{\rm I}), G \big) \big/ U_{P_{\rm I}},\\
		\frak{X}^\rm{PFr}_G (\Sigma, P_{\rm I}) &= \Hom \big( \pi_1 (\Sigma, P_{\rm I}), G \big) \big/ U^\times_{P_{\rm I}}.
	\end{align*}
	The framed character variety of the form 
	$$
	\frak{X}^\rm{Fr}_{SL_n(\mathbb C)} (\Sigma, P_{\rm I}) = \Hom \big( \pi_1 (\Sigma, P_{\rm I}), SL_n(\mathbb C) \big) / U_{P_{\rm I}}
	$$ 
	was introduced by Chekhov-Mazzocco-Rubtsov in \cite[§4]{MR3802126} (who called it the \textit{decorated character variety}) based on the work of Li--Bland-Ševera in \cite[§6]{MR3424475}.
	Thanks to the fact that 
	$$ 
	\Hom \big( \pi_1 (\Sigma, P_{\rm I}), SL_n(\mathbb C) \big) / U_{P_{\rm I}}\to \Hom \big( \pi_1 (\Sigma, P_{\rm I}), \mathbb P G \big) / \mathbb P U_{P_{\rm I}} =  \frak{X}^\rm{PFr}_G (\Sigma, P_{\rm I}),
	$$
	is a $n:1$ covering,
	Lemma \ref{lem:FGS} combined with the forgetful functor of Section \ref{se:5nuova}, implies that  the complexified Fock-Goncharov-Shen variables coordinatize the Chekhov-Mazzocco-Rubtsov {decorated character variety}.
\end{remark}

\begin{lemma}
	\label{251217192359}
	For any surface with marked boundary $(\Sigma,P)$, there are canonical equivalences of categories between the groupoid of decorated representations of the discrete fundamental groupoid $\pi_1 (\Sigma, P)$ and a suitable mixed conjugation action groupoid of the decorated representation variety of $(\Sigma, P)$:
	\begin{align*}
		&\catname{Rep}_G^\rm{Fi} \big( \pi_1 (\Sigma, P) \big) 
		\cong B_P \ltimes R_G (\Sigma, P), \\
		&\catname{Rep}_G^\rm{Fr} \big( \pi_1 (\Sigma, P) \big) 
		\cong U_P \ltimes R_G (\Sigma, P), \\
		&\catname{Rep}_G^\rm{PFr} \big( \pi_1 (\Sigma, P) \big) 
		\cong U^{\times}_P \ltimes R_G (\Sigma, P)
	\end{align*}
\end{lemma}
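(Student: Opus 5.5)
I will model the proof on \autoref{251216143310}, building an inclusion functor from the action groupoid into the representation groupoid and checking that it is an equivalence. For the filtered case, send $\rho \in R_G (\Sigma, P)$ to the filtered representation $(\underline{\CC}^n, \underline{\CC}^\bullet, \rho)$: the trivial bundle over $P$ with the standard flag in every fibre. This is a valid object of $\catname{Rep}_G^\rm{Fi} \big( \pi_1 (\Sigma, P) \big)$ precisely because $\rho_{\delta_p} \in B$ for every $p \in P_\rm{II}$, so the standard flag is invariant under the boundary loop at each secondary point. This is where the defining condition of $R_G (\Sigma, P)$ enters, and it is the reason for \autoref{251218003508}. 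A morphism $g \in B_P$ from $\rho$ to $g.\rho$ is sent to itself, viewed as a bundle automorphism of $\underline{\CC}^n$. It intertwines $\rho$ and $g.\rho$ by the definition of mixed conjugation, and it is filtered because each $g_p \in B$ preserves $\CC^\bullet$.

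Full faithfulness follows directly. A morphism $(\underline{\CC}^n, \underline{\CC}^\bullet, \rho) \to (\underline{\CC}^n, \underline{\CC}^\bullet, \rho')$ is a collection $g_p \in G$ that preserves the standard flag, so $g_p \in B$, and that satisfies $\rho' = \rm{t}^\ast g \cdot \rho \cdot \rm{s}^\ast g^{-1}$. These are exactly the arrows of $B_P \ltimes R_G (\Sigma, P)$ from $\rho$ to $\rho'$.

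For essential surjectivity, take a filtered representation $(E, F, \varphi)$ and choose filtered isomorphisms $\chi_p : (E_p, F_p) \iso (\CC^n, \CC^\bullet)$, as described at the start of \autoref{251127191014}. Define $\rho$ by \eqref{251217120635}. For $p \in P_\rm{II}$, $\varphi_{\delta_p}$ preserves $F_p$, so $\rho_{\delta_p} = \chi_p \varphi_{\delta_p} \chi_p^{-1}$ preserves $\CC^\bullet$ and lies in $B$. Hence $\rho \in R_G (\Sigma, P)$, and $\chi$ is a filtered isomorphism $(E,F,\varphi) \iso (\underline{\CC}^n, \underline{\CC}^\bullet, \rho)$. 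Choosing such a $\chi$ for every object gives an explicit quasi-inverse, exactly as in the proof of \autoref{251216143310}. Different choices differ by elements of $B_P$, so the resulting functors are naturally isomorphic; this is the sense in which the equivalence is canonical.

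The framed and projectively framed cases follow the same pattern. The only change is that the trivialisation is no longer chosen: take $\chi_p \coleq \beta_p$, the given frame, or any representative of $\bar\beta_p$ in the projective case. In the framed case the inclusion functor sends $\rho$ to $(\underline{\CC}^n, \underline{\CC}^\bullet, \rho, \idd)$ with the standard frames. Morphisms between such objects are bundle maps whose matrices are unipotent, i.e.\ lie in $U_P$, or lie in $\CC^\times U$ at each point in the projective case. I read the latter as the group $U^\times_P$ of \eqref{251217191045}, interpreting it as $\prod_{p}\CC^\times U$ with the scalars at different points independent. The one point I expect to need care is checking that this reading of $U^\times_P$ matches the projectively unipotent morphisms, and that the projective quasi-inverse is well defined. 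The latter holds because rescaling a representative frame by $t \in \CC^\times$ changes $\rho$ by the action of a scalar element of $U^\times_P$. Essential surjectivity then holds verbatim: $\beta$ (or the chosen representative) gives an isomorphism onto the standard object. Beyond this the argument is routine bookkeeping.
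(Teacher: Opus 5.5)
Your proposal is correct and follows the paper's proof: the paper also uses the inclusion functor $\rho \mapsto (\underline{\CC}^n, \underline{\CC}^\bullet, \rho)$, equipped with the standard frames or their projective classes in the decorated cases, and takes as quasi-inverse the matrix representation with respect to a chosen filtered or framed trivialisation $\chi$, with $\chi$ itself supplying the isomorphism to the standard object. Two details you check explicitly are left implicit in the paper, and both are right: that $\rho_{\delta_p}\in B$ for $p\in P_\rm{II}$, and that $U^\times_P$ must be read as $\prod_{p\in P}\CC^\times U$ with independent scalars at each marked point.
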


\begin{proof}
	Explicitly, these equivalences are given by the inclusion functors from right to left. Namely, 
	the equivalence $B_P \ltimes R_G (\Sigma, P) \iso \catname{Rep}_G^\rm{Fi} \big( \pi_1 (\Sigma, P) \big)$
	sends any homomorphism $\rho\in  R_G (\Sigma, P)$ to the representation $(E, F, \Phi) \coleq (\underline{\CC}^n, \underline{\CC}^\bullet, \rho)$.
	Analogously, the equivalence $U_P \ltimes R_G (\Sigma, P) \iso \catname{Rep}_G^\rm{Fr} \big( \pi_1 (\Sigma, P) \big)$
	sends any homomorphism $\rho\in  R_G (\Sigma, P)$ to the representation
	$(E, F, \Phi, \beta) \coleq (\underline{\CC}^n, \underline{\CC}^\bullet, \rho, \idd)$, and similarly the equivalence $U^\times_P \ltimes R_G (\Sigma, P) \iso \catname{Rep}_G^\rm{Fr} \big( \pi_1 (\Sigma, P) \big)$
	sends any homomorphism $\rho\in  R_G (\Sigma, P)$ to the representation
	$(E, F, \Phi, \bar{\beta}) \coleq (\underline{\CC}^n, \underline{\CC}^\bullet, \rho, [\idd])$.
	In each case, an element $g = \set{ g_p } \in B_P$, $U_P$ or $U^\times_P$ is viewed as a vector bundle automorphism $g \in \Aut (\underline{\CC}^n, \underline{\CC}^\bullet)$, $\sfop{UAut} (\underline{\CC}^n, \underline{\CC}^\bullet)$ or $\sfop{U^\times Aut} (\underline{\CC}^n, \underline{\CC}^\bullet)$.
	
	Any inverse functor is given by matrix
	representation which requires a choice of trivialisation
	$\chi : (E, F, \beta) \iso (\underline{\CC}^n, \underline{\CC}^\bullet, \beta_\rm{std})$ for every decorated vector bundle over $P$.
	Once chosen, it is given for any object $(E,\varphi,F,\beta)$ and any morphism $\phi$ by
	\begin{eqn}
		(E,\Phi,F,\beta) \mapsto \rho \coleq \rm{t}^\ast \chi \circ \varphi \circ \rm{s}^\ast \chi^{-1}
		\qqtext{and}
		\phi \mapsto g \coleq \chi' \circ \phi \circ \chi^{-1}.
	\end{eqn}
	
	Just like in the proof of \autoref{251216143310}, we only have to check that the functor suggested above defines an inverse to the inclusion functor.
	One direction is obvious: 
	$$\rho \mapsto (\underline{\CC}^n, \rho, \underline{\CC}^\bullet, \beta_\rm{std}) \mapsto \rho$$ 
	and similarly on morphisms.
	For the other direction, suppose $(E, \varphi, F, \beta)$ is a decorated representation of $\pi_1 (\Sigma, P)$ and let $\rho$ be its matrix representation with respect to the chosen trivialisation $\chi$.
	But then $\chi$ itself defines the desired isomorphism $\chi : (E, \varphi, F, \beta) \iso (\underline{\CC}^n, \rho, \underline{\CC}^\bullet, \beta_\rm{std})$.
	Lastly, an isomorphism of decorated representations $\phi : (E, \varphi, F, \beta) \to (E', \varphi', F', \beta')$ is in particular an isomorphism of representations $\phi : (E,\varphi) \to (E', \varphi')$.
	So its matrix representation defines an element $g = \set{g_p : p \in P} \in G_P$ by $g_p \coleq \chi'_p \circ \phi_p \circ \chi_p^{-1} \in G$ which necessarily belongs to $B_P$, $U_P$ or $U^\times_P$ because $\phi$ respects decorations.
\end{proof}

Combined with \autoref{250912193234}, this lemma implies the following theorem which is one of our main results.

\begin{theorem}
	\label{250913101140}
	For any surface with marked boundary $(\Sigma,P)$, there are canonical equivalences of categories between the groupoids of decorated local systems, decorated representations of the fundamental groupoid $\Pi_1 (\Sigma)$, decorated representations of the discrete fundamental groupoid $\pi_1 (\Sigma, P)$, and suitable mixed conjugation action groupoids over the decorated representation variety $R_G (\Sigma, P)$:
	\begin{eqn}
		\catname{Loc}_G^\square (\Sigma, P) 
		\cong \catname{Rep}_G^\square \big( \Pi_1 (\Sigma), P \big)
		\cong \catname{Rep}_G^\square \big( \pi_1 (\Sigma, P) \big)
		\cong K_P^\square \ltimes R_G (\Sigma, P),
	\end{eqn}
	for any $\square \in \set{\rm{Fi}, \rm{Fr}, \rm{PFr}}$ where $K^\rm{Fi}_P \coleq B_P$, $K^\rm{Fr}_P \coleq U_P$ and $K^\rm{PFr}_P \coleq U^\times_P$.
\end{theorem}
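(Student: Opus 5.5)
The plan is to obtain the theorem by composing three equivalences already established, so the work lies in checking that each has been proved for all three decorations $\square \in \set{\rm{Fi}, \rm{Fr}, \rm{PFr}}$ and that the composites are canonical. The first equivalence $\catname{Loc}_G^\square (\Sigma, P) \cong \catname{Rep}_G^\square \big( \Pi_1 (\Sigma), P \big)$ is \autoref{251216105015}: the holonomy functor $\op{Hol}$ of \autoref{250911161735}, extended to decorations. The second, $\catname{Rep}_G^\square \big( \Pi_1 (\Sigma), P \big) \cong \catname{Rep}_G^\square \big( \pi_1 (\Sigma, P) \big)$, is the restriction functor of \autoref{251107085612}; composing gives \autoref{250912193234}. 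The third, $\catname{Rep}_G^\square \big( \pi_1 (\Sigma, P) \big) \cong K_P^\square \ltimes R_G (\Sigma, P)$, is \autoref{251217192359}, where the equivalence is the inclusion functor $\rho \mapsto (\underline{\CC}^n, \underline{\CC}^\bullet, \rho, \star_\rm{std})$. Equivalences compose, so the chain follows. Canonicity holds because every functor in the chain (holonomy, restriction, inclusion) is defined without choices. Choices of trivialisation enter only in inverse functors, and different choices give naturally isomorphic inverses, as explained before \autoref{251216143310}.

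The proofs of \autoref{251216105015} and \autoref{251107085612} were only sketched as "easily adapted", so I would verify their decorated versions. For \autoref{251107085612}, I would use \autoref{250914172733} for the undecorated part, since its restriction functor is full, faithful and essentially surjective. Decorations live only in the fibres over $P$, so restriction is bijective on decorations: by \autoref{251127155249}, a flat local filtration near $p$ is the same as a flag $F_p$ in $E_p$, invariant under $\Phi_{\delta_p}$ for $p \in P_\rm{II}$. This invariance condition is stated purely in terms of the restricted homomorphism, since $\delta_p \in \pi_1 (\Sigma, P)$. Likewise, a flat local frame is determined by its value at $p$, because $U_p$ is contractible for $p \in P_\rm{I}$ and the frame is only required at the stalk for $p \in P_\rm{II}$.

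Full faithfulness on decorated morphisms then follows by \autoref{251127155720}. Being filtered, unipotent or projectively unipotent is a condition on $\phi_p$ alone, so a morphism of the underlying representations is decorated if and only if its restriction is. Essential surjectivity follows by transporting the flags and frames of a decorated representation of $\pi_1 (\Sigma, P)$ along the isomorphism $\phi_q = \nu_q^{-1} \circ \eta_q$ built in the proof of \autoref{250914172733}. That isomorphism intertwines the representations, so it preserves the monodromy-invariance of flags at secondary points. The same reasoning, using \autoref{251127102439}, upgrades $\op{Hol}$ in \autoref{251216105015}.

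I expect the main obstacle to be the projectively framed case of the last step. There one must check that a choice of representative frames $\chi_p$ gives an inverse functor whose morphisms land in $U^\times_P$, and that this is independent of representatives. Rescaling $\chi_p$ by $t \in \CC^\times$ changes $g_p$ by a scalar, which stays in $\CC^\times U$, so everything is consistent. I would also record explicitly that $U_P^\times$ as defined in \eqref{251217191045} equals $\prod_p \CC^\times U$, the group of projectively unipotent automorphisms.
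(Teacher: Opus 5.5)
Your proposal is correct and is essentially the paper's own argument. The paper proves the theorem in one line, by combining \autoref{250912193234} (which is \autoref{251216105015} followed by \autoref{251107085612}) with \autoref{251217192359}. That is exactly the chain of canonical functors (holonomy, restriction, inclusion) you compose, and your checks of the decorated holonomy and restriction equivalences fill in details the paper only asserts. Treating the frame at a secondary point as data on the fibre at $p$ alone is the right reading: the literal flatness condition of \autoref{251209155014} over an annular $U_p$ would force the boundary monodromy $\Phi_{\delta_p}$ to be trivial.
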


\begin{corollary}
	\label{251007125128}
	For any surface with marked boundary $(\Sigma, P)$ with nonempty $P$, there are canonical bijections between the moduli spaces of decorated local systems, decorated representations of the fundamental groupoid $\Pi_1 (\Sigma)$, decorated representations of the discrete fundamental groupoid $\pi_1 (\Sigma, P)$, and the decorated character stacks of $(\Sigma, P)$:
	\begin{eqn}
		\frak{Loc}_G^\square (\Sigma, P) 
		\cong \frak{Rep}_G^\square \big( \Pi_1 (\Sigma), P \big)
		\cong \frak{Rep}_G^\square \big( \pi_1 (\Sigma, P) \big)
		\cong \frak{X}_G^\square (\Sigma, P),
	\end{eqn}
	for any $\square \in \set{\rm{Fi}, \rm{Fr}, \rm{PFr}}$.
	Consequently, all these moduli spaces have a natural structure of an affine algebraic quotient stack.
\end{corollary}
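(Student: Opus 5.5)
The plan is to derive the corollary directly from \autoref{250913101140}, using the general fact that an equivalence of groupoids induces a bijection on sets of isomorphism classes. If $\F : \cal{G} \to \cal{G}'$ is an equivalence, the induced map $[x] \mapsto [\F(x)]$ is well defined because functors preserve isomorphisms. It is surjective because $\F$ is essentially surjective. It is injective because $\F$ is full: an isomorphism $\F(x) \iso \F(y)$ lifts to a morphism $x \to y$, and this morphism is invertible since $\cal{G}$ is a groupoid. Applying this to each of the three equivalences of \autoref{250913101140}, for each $\square \in \set{\rm{Fi}, \rm{Fr}, \rm{PFr}}$, yields the three bijections. For the last one I will also recall that the moduli set of an action groupoid $K \ltimes X$ is the orbit set $X/K$. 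Hence the moduli set of $K^\square_P \ltimes R_G(\Sigma,P)$ is by definition the underlying set of $\frak{X}^\square_G(\Sigma,P)$ from \autoref{251217191739}.

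Next I will address canonicity, which I expect to be the only point requiring care. The functors $\op{Hol}$ and the restriction functor of \autoref{251107085612} are canonical. By contrast, the inverse functor to the inclusion $K^\square_P \ltimes R_G(\Sigma,P) \to \catname{Rep}^\square_G(\pi_1(\Sigma,P))$ of \autoref{251217192359} depends on choices of decorated trivialisations $\chi$. The strategy is to use the canonical direction, namely the inclusion functor from right to left, to define the bijection on moduli sets. Its inverse on isomorphism classes is then automatically independent of choices. Indeed, two trivialisations differ by an element of $K^\square_P$, so they give matrix representations lying in the same orbit, as already observed before \autoref{251216143310}. I will also note that the equivalences hold only for nonempty $P$, which is assumed.

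Finally, for the claim about algebraic structure I will argue as follows. By \autoref{251216175808}, $R_G(\Sigma,P)$ is an affine algebraic variety. Moreover, its algebraic structure is independent of the chosen presentation of $\pi_1(\Sigma,P)$. The groups $B_P$, $U_P$ and $U^\times_P$ are affine algebraic groups acting algebraically by mixed conjugation, since the action is given by matrix multiplication on generators. The quotient $\frak{X}^\square_G(\Sigma,P)$ is therefore an affine algebraic quotient stack. Transporting this structure along the canonical bijections endows the other three moduli spaces with the same structure, and this completes the argument.
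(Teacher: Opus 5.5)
Your proposal is correct and follows the paper's route. The corollary is read off from \autoref{250913101140} by passing the three equivalences (holonomy, restriction, and matrix representation via decorated trivialisations) to isomorphism classes. The algebraic structure then comes from \autoref{251216175808} together with the algebraic mixed-conjugation action of $B_P$, $U_P$, $U^\times_P$. Your additional justification that equivalences induce bijections on moduli sets, and that the last bijection is choice-independent when defined through the inclusion functor, just makes explicit what the paper leaves implicit.
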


Collecting the facts from previous sections, we recall that the first of these bijections is given by the holonomy representation functor, the second one is given by the restriction functor, and the third one is given by choosing trivialisations of all vector bundles over $P$.

Recall the ramified covering of moduli spaces of filtered local systems induced by the forgetful functor that drops the flags at secondary marked points, as described in \autoref{se:5nuova}.
The analogue for filtered character varieties reads as follows.

\begin{corollary}
	\label{251217194858}
	Let $(\Sigma, P)$ be any decorated surface.
	Then the surjective map
	\begin{eqn}
		\mathfrak{X}_G^\rm{Fi} (\Sigma, P) \too \mathfrak{X}_G^\rm{Fi} (\Sigma, P_\rm{I})
	\end{eqn}
	induced by the forgetful restriction functor is a ramified covering of degree $(n!)^{r}$ where $r$ is the number of secondary marked points; i.e., $r = |P_\rm{II}|$.
	For any groupoid homomorphism $\rho : \pi_1 (\Sigma, P_\rm{I}) \to G$, the fibre above its equivalence class $[\rho] \in \mathfrak{X}_G^\rm{Fi} (\Sigma, P_\rm{I})$ is canonically in bijection with the set $J_W (\rho_{\delta_1}) \times \cdots \times J_W (\rho_{\delta_r})$ of $r$-tuples of shuffled Jordan matrices of the same Jordan type as the value of $\rho$ on the boundary loops $\delta_1, \ldots, \delta_r$ based at the secondary marked points $P_\rm{II} = \set{p_1, \ldots, p_r}$.
	In particular, if $P$ consists of secondary marked points only ($P = P_\rm{II}$), then the filtered character variety is a ramified covering over the usual character variety:
	\begin{eqn}
		\mathfrak{X}_G^\rm{Fi} (\Sigma, P) \too \mathfrak{X}_G (\Sigma).
	\end{eqn}
\end{corollary}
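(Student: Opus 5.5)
The plan is to transport \autoref{251007202059} along the equivalences of \autoref{250913101140}, and to check that the forgetful functor commutes with these equivalences up to natural isomorphism. By \autoref{251217192359} (with $\square = \rm{Fi}$) we have equivalences $\catname{Rep}_G^\rm{Fi}(\pi_1(\Sigma,P)) \cong B_P \ltimes R_G(\Sigma,P)$ and $\catname{Rep}_G^\rm{Fi}(\pi_1(\Sigma,P_\rm{I})) \cong B_{P_\rm{I}} \ltimes R_G(\Sigma,P_\rm{I})$.

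On the character-variety side the forgetful map is concrete. It sends $\rho \in R_G(\Sigma,P)$ to its restriction $\rho|_{\pi_1(\Sigma,P_\rm{I})}$, which lies in $\Hom(\pi_1(\Sigma,P_\rm{I}),G) = R_G(\Sigma,P_\rm{I})$. On groups it is the projection $B_P \to B_{P_\rm{I}}$, and restriction is equivariant for mixed conjugation. I will verify that the square formed by this restriction, the forgetful functor on filtered discrete representations, and the inclusion functors of \autoref{251217192359} commutes strictly: the inclusion sends $\rho \mapsto (\underline{\CC}^n,\underline{\CC}^\bullet,\rho)$, and forgetting flags at $P_\rm{II}$ and restricting to $P_\rm{I}$ yields exactly the inclusion of $\rho|_{P_\rm{I}}$. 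Passing to moduli sets, the map $\mathfrak{X}_G^\rm{Fi}(\Sigma,P) \to \mathfrak{X}_G^\rm{Fi}(\Sigma,P_\rm{I})$ is then identified with the map of \autoref{251007202059}. Surjectivity, the degree $(n!)^r$, the description of the fibres, and the branch locus all transfer. The identification $J_W(\varphi_{\delta_i}) = J_W(\rho_{\delta_i})$ holds because $J_W$ depends only on Jordan type, which is invariant under the conjugation $\rho_{\delta_i} = \chi \varphi_{\delta_i}\chi^{-1}$.

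One subtlety is that $\delta_i$ is a loop at $p_i \notin P_\rm{I}$, so ``$\rho_{\delta_i}$'' for $\rho$ on $\pi_1(\Sigma,P_\rm{I})$ must be interpreted. I would take $\rho_{d_i^{-1}\delta_i d_i}$, conjugated by any path from $P_\rm{I}$ to $p_i$, or equivalently the value on the lift to $\pi_1(\Sigma,P)$ given by \autoref{250914172733}. Its conjugacy class, and hence its Jordan type, is independent of these choices, so the fibre description is well-defined.

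For the final claim, take $P = P_\rm{II}$, so $P_\rm{I} = \emptyset$. I will use the version of \autoref{251007202059} with $P_\rm{I} = \emptyset$ (the proof says this case is analogous), together with $\catname{Rep}^\rm{Fi}_G(\pi_1(\Sigma,\emptyset)) = \catname{Rep}_G(\Pi_1(\Sigma)) \cong \catname{Loc}_G(\Sigma)$ and \autoref{250912175812}. These identify the target moduli set with $\mathfrak{X}_G(\Sigma)$. Concretely, the map forgets flags, giving $R_G(\Sigma,P)/B_P \to \Hom(\pi_1(\Sigma,P),G)/G_P \cong \mathfrak{X}_G(\Sigma,p)$ via \autoref{251216143310}.

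The main obstacle is this degenerate $P_\rm{I}=\emptyset$ case, since $\pi_1(\Sigma,\emptyset)$ is not literally a groupoid with objects. I would handle it by composing with \autoref{251216143310} as described, rather than invoking \autoref{251217192359} for empty $P$.
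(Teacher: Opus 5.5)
Your reduction is the route the paper intends: the corollary is stated there without separate proof, as the image of \autoref{251007202059} under \autoref{251217192359} and \autoref{250913101140}. The pieces you check are fine. The square with the inclusion functors commutes on the nose, reading $\rho_{\delta_i}$ as $\rho_{d_i^{-1}\delta_i d_i}$ is sensible, and $P_{\rm{I}}=\emptyset$ can be handled through \autoref{251216143310}.

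The genuine gap is in what you transport. The proof of \autoref{251007202059} declares two lifts $(\tilde E,\tilde\varphi,\tilde F)$ and $(\tilde E,\tilde\varphi,\tilde F')$ isomorphic as soon as, separately at each $p_i$, some automorphism commuting with the local monodromy $\tilde\varphi_{\delta_i}$ moves $\tilde F_{p_i}$ to $\tilde F'_{p_i}$. That is not what an isomorphism in $\catname{Rep}^{\rm{Fi}}_G(\pi_1(\Sigma,P))$ is. It is an automorphism of the whole representation of the connected groupoid $\pi_1(\Sigma,P)$ that also preserves the flags at $P_{\rm{I}}$. Its component at $p_i$ must commute with all of $\tilde\varphi(\pi_1(\Sigma,p_i))$, and the components at different points are tied together by $\tilde\varphi$ along paths. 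So the fibre over $[\rho]$ is $\prod_i\mathsf{Fi}_{\rho_{\delta_i}}(\CC^n)$ modulo this global automorphism group, which can be much smaller than the product of the centralisers of the $\rho_{\delta_i}$. The fibre merely surjects onto $\prod_i\mathsf{Fi}_{\rho_{\delta_i}}(\CC^n)/{\sim}$, the set matched with shuffled Jordan data in \autoref{251017181852}.

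Here is a concrete failure. Let $\Sigma$ be a four-holed sphere with one regular boundary carrying $p_1$ and three simple ones, so $P=P_{\rm{II}}=\set{p_1}$ and $\pi_1(\Sigma,p_1)$ is free on $\delta_1,\gamma_1,\gamma_2$. Take $\rho_{\delta_1}=\idd$ and let $\rho_{\gamma_1},\rho_{\gamma_2}$ generate an irreducible subgroup of $G$. The fibre of $R_G(\Sigma,P)/B\to\mathfrak{X}_G(\Sigma)$ over $[\rho]$ is the set of $B$-orbits on $G\cdot\rho\cong G/\CC^\times$, i.e.\ the whole flag variety $B\backslash G$, whereas $J_W(\idd)=\set{\idd}$. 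Already for a pair of pants with $n=2$, $\rho_{\delta_1}=\idd$ and $\rho_{\gamma_1}=\diag(1,2)$, the fibre is $B\backslash G/T$, which has three points instead of one. So the map is not finite, and the claimed canonical bijection fails in the paper's argument and in yours alike. The $P_{\rm{I}}=\emptyset$ bookkeeping you flag as the main obstacle is not where the difficulty lies.

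What the argument does establish is surjectivity, plus the statement over the open locus where every $\rho_{\delta_i}$ has distinct eigenvalues. There, any automorphism commuting with $\rho_{\delta_i}$ preserves each eigenline and hence fixes all $n!$ invariant flags. So on that locus the fibre really is $\prod_i J_W(\rho_{\delta_i})$, of size $(n!)^r$. The fibre description has to be restricted to that locus, with ``degree $(n!)^r$'' understood as generic degree.
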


\subsection{Finite-Dimensional Presentation}

In this subsection, we give an explicit finite-dimensional presentation of decorated character varieties and hence the moduli spaces of decorated local systems and representations encountered in our paper.
The key is \autoref{251107114012} and \autoref{251217181238} which give an explicit finite-dimensional presentation of the decorated representation variety.

Thus, recall the algebraic variety $R_G (g, \bm{s}, \bm{m})$ from \autoref{251109184820}.
We define a left group action on it by the group $G^{m+r}$ as follows.
Write any element $x \in G^{m+r}$ as $x = (x',x'') \in G^{m} {\times} G^r$ where 
\begin{eqn}
	x' = \big(x_{ij} : 1 \leq i \leq d, ~ 1 \leq j \leq m_i \big) \in G^m
	~,\quad
	x'' = (x_{1}, \ldots, x_{r}) \in G^r.
\end{eqn}
Then the action of $x$ on any element $(\bm{\A}, \bm{\B}, \bm{\M}, \bm{\T}, \bm{\C}, \bm{\D}; \bm{\N})$ of $\hat{R}_G (g, \bm{s}, \bm{m})$ is defined by mixed conjugation as follows:
\begin{eqntag}
	\begin{gathered}
		x.\A_i = x_0 \A_i x_0^{-1}
		~,\quad
		x.\B_i = x_0 \B_i x_0^{-1}
		~,
		\\
		x.\M_i = x_0 \M_i x_0^{-1}
		~,\quad
		x.\N_i = x_i \N_i x_i^{-1}
		~,
		\\
		x.\C_i = x_{i1} \C_i x_0^{-1}
		~,\quad
		x.\D_i = x_i \D_i x_0^{-1}
		~,
		\\
		x.\T_{ij} = x_{i,j+1} \T_{ij} x_{ij}^{-1}
		~,
	\end{gathered}
\end{eqntag}
where $j$ is understood mod $m_i + 1$.
The subgroups $U^{m+r}  \subset B^{m+r} \subset G^{m+r+1}$ inherit the same action, yielding two action groupoids
\begin{eqn}
	B^{m+r} \ltimes R_G (g, \bm{s}, \bm{m})
	\qtext{and}
	U^{m+r} \ltimes R_G (g, \bm{s}, \bm{m}).
\end{eqn}

Denote by $R_G (g, \bm{s}, \bm{m})$ the set \begin{eqn}
	R_G (g, \bm{s}, \bm{m}) \coleq
	\set{ (\bm{\A}, \bm{\B}, \bm{\M}, \bm{\T}, \bm{\C}, \bm{\D}; \bm{\N})
		~\Big|~ \substack{ \textup{ $\C_1 = \idd$ if $d \geq 1 \phantom{, r\geq 1}$ } \\ \textup{ $\D_1 = \idd$ if $d = 0, r \geq 1$ }} \big.} 
	\subset \hat{R}_G (g, \bm{s}, \bm{m}).
\end{eqn}
where $(\bm{\A}, \bm{\B}, \bm{\M}, \bm{\T}, \bm{\C}, \bm{\D}; \bm{\N})$ means that we eliminate one generator $M_{ij}$ or $T_i$ by using relation \ref{251107125603}. 

\begin{proposition}
	\label{251217210527}
	Let $(\Sigma, P)$ be any surface with marked boundary with nonempty $P$ of genus $g$ with $\bm{s} = (l,r,d)$ holes and $\bm{m} = (m_1, \ldots, m_d)$ primary marked points.
	Let $s \coleq l + r + d$ and $m \coleq m_1 + \ldots + m_d$.
	Choose any finite presentation of the discrete fundamental groupoid $\pi_1 (\Sigma, P)$ of the form \eqref{251217165301} and distinguish one of the generators $\gamma_i$ or $\delta_{i,j}$.
	Then there are canonical equivalences of categories
	\begin{eqn}
		\catname{Loc}_G^\square (\Sigma, P)
		\cong
		K^{m+r}_\square \ltimes R_G (g, \bm{s}, \bm{m})
		\cong
		K^{m+r}_\square \ltimes ( G^{2g + s + m - 2} {\times} B^r )
	\end{eqn}
	and therefore canonical bijections
	\begin{eqn}
		\mathfrak{Loc}_G^\square (\Sigma, P)
		\cong
		\mathfrak{X}_G^\square (\Sigma, P)
		\cong
		R_G(g, \bm{s}, \bm{m}) / K^{m+r}_\square
		\cong
		( G^{2g + s + m - 2} {\times} B^r ) / K^{m+r}_\square,
	\end{eqn}
	for any $\square \in \set{ \rm{Fi}, \rm{Fr}, \rm{PFr} }$ where $K_\rm{Fi} \coleq B$, $K_\rm{Fr} \coleq U$ and $K_\rm{PFr} \coleq U^\times$.
\end{proposition}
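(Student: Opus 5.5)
The plan is to compose the equivalence of \autoref{250913101140} with a transport of structure along the explicit coordinates of \autoref{251217181238}. By \autoref{250913101140} we already have canonical equivalences $\catname{Loc}_G^\square (\Sigma, P) \cong K_P^\square \ltimes R_G (\Sigma, P)$. What remains is to identify this action groupoid with $K^{m+r}_\square \ltimes R_G (g, \bm{s}, \bm{m})$.

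The first step is to identify the groups. The chosen presentation enumerates $P$ as the primary points $p_{i,j}$ together with the secondary points $p_1, \ldots, p_r$, so $|P| = m + r$. This gives an isomorphism $K_P^\square = \prod_{p \in P} K_\square \cong K_\square^{m+r}$, written $x = (x', x'')$. Here $x_0$ denotes the component at the main basepoint $p_0$, which is $p_{1,1}$ or $p_1$.

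The second step is to check that the bijection $R_G (\Sigma, P) \iso R_G (g, \bm{s}, \bm{m})$ of \autoref{251217181238}, namely $\rho \mapsto$ (values on generators), is equivariant. This is a generator-by-generator computation using $(g.\rho)_\gamma = g_{\rm{t}(\gamma)} \rho_\gamma g_{\rm{s}(\gamma)}^{-1}$ together with the known sources and targets.
\begin{itemize}
\item The loops $\alpha_i, \beta_i, \gamma_i$ are based at $p_0$, so they are conjugated by $x_0$.
\item $\delta_i$ is a loop at $p_i$, so it is conjugated by $x_i$; this preserves the constraint $\N_i \in B$ because $x_i \in K_\square \subset B$.
\item $c_i$ runs from $p_0$ to $p_{i,1}$, giving $x_{i1} \C_i x_0^{-1}$; similarly $d_i$ gives $x_i \D_i x_0^{-1}$.
\item $\delta_{i,j}$ runs from $p_{i,j}$ to $p_{i,j+1}$, giving $x_{i,j+1} \T_{ij} x_{ij}^{-1}$.
\end{itemize}
These are exactly the displayed formulas, so the map is an isomorphism of action groupoids. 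One subtlety must be checked: the normalisation $\C_1 = \idd$ (or $\D_1 = \idd$) is preserved, since $x_{11} = x_0$ (resp. $x_1 = x_0$). Because the normalised generator is the constant path, this holds automatically: a homomorphism sends the identity to $\idd$.

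The third step eliminates the distinguished generator $\gamma_i$ or $\delta_{i,j}$, using \autoref{251107114012} to get $R_G (g, \bm{s}, \bm{m}) \cong G^{2g+s+m-2} \times B^r$. The group action is transported along this isomorphism. The action on the eliminated component is recovered from relation \eqref{251107125603}, whose left-hand side transforms by conjugation by $x_0$, so the relation is preserved. Passing to isomorphism classes then yields the stated bijections, with the identification $\mathfrak{X}^\square_G (\Sigma, P)$ taken from \autoref{251007125128}.

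The main obstacle is the case where $l + d = 0$, i.e. only regular boundaries, because then the only solvable generators are the $\N_i$. Eliminating some $\N_i$ would not preserve upper-triangularity, so the product decomposition $G^{2g+s+m-2} \times B^r$ is not literally available by projection. There I would either restrict the statement to $l + d \geq 1$, as in \autoref{251107114012}, or treat the last factor as the closed subvariety cut out by the condition $\N_i \in B$.

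A secondary point concerns $\square = \rm{PFr}$. There the identification $U^\times_P \cong (U^\times)^{m+r}$ must be read with $U^\times \coleq \CC^\times U$. This is just the componentwise definition \eqref{251217191045}, so the same computation applies verbatim.
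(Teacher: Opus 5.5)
Your proposal is correct and is essentially the paper's own (implicit) argument: the paper defines the $K^{m+r}_\square$-action by exactly the mixed-conjugation formulas you verify generator by generator, reading $x_0$ as the component at $p_0$. The statement then follows from \autoref{250913101140}, the equivariant coordinate bijection of \autoref{251217181238}, and the elimination isomorphism of \autoref{251107114012}. Your worry about the case $l+d=0$ is already excluded by the hypothesis, since being able to distinguish a generator $\gamma_i$ or $\delta_{i,j}$ forces $l+d\geq 1$.
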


Thus, in particular, if $P$ contains no secondary marked points ($r = 0$), then
\begin{align*}
	\frak{X}_G^{\rm{Fi}} (\Sigma, P) &\cong G^{2g+s+m-2} / B^m, \\
	\frak{X}_G^{\rm{Fr}} (\Sigma, P) &\cong G^{2g+s+m-2} / U^m, \\
	\frak{X}_G^{\rm{Fr}} (\Sigma, P) &\cong G^{2g+s+m-2} / \big( U^\times \big)^m.
\end{align*}

\begin{corollary}
	\label{251109195847}
	Suppose $(\Sigma, P)$ is a surface with marked boundary with nonempty $P$ of genus $g$ with $s$ holes, $m$ primary marked points, and $r$ secondary marked points.
	Then the moduli spaces of decorated local systems $\frak{Loc}_G^\square (\Sigma, P)$ are algebraic stacks of dimension
	\begin{eqns}
		\dim \frak{Loc}_G^\rm{Fi} (\Sigma, P)
		&= (2g+s-2)n^2 + \tfrac{1}{2} m (n^2 - n),
		\\
		\dim \frak{Loc}_G^\rm{Fr} (\Sigma, P)
		&= (2g+s-2)n^2 + \tfrac{1}{2} m (n^2 + n) - rn, \\
		\dim \frak{Loc}_G^\rm{PFr} (\Sigma, P)
		&= (2g+s-2)n^2 + \tfrac{1}{2} m (n^2 + n - 2) - r(n-1).
	\end{eqns}
\end{corollary}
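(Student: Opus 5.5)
The plan is to compute everything on the explicit finite-dimensional model of \autoref{251217210527}. By \autoref{251007125128}, each $\frak{Loc}_G^\square (\Sigma, P)$ is identified with the quotient stack $R_G (g, \bm{s}, \bm{m}) / K^{m+r}_\square$. For an algebraic quotient stack $X/K$ of a smooth variety by an affine algebraic group, the stack dimension is $\dim X - \dim K$, so I will use this as the basic formula.

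The first step is to read off the smooth variety. By \autoref{251107114012} (after eliminating one $\M_i$ or $\T_{ij}$ via relation \eqref{251107125603}), we have $R_G (g, \bm{s}, \bm{m}) \cong G^{2g+s+m-2} \times B^r$. Its dimension is therefore $n^2(2g+s+m-2) + \tfrac12 r(n^2+n)$, using $\dim G = n^2$ and $\dim B = \tfrac12(n^2+n)$.

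The second step is to subtract the group dimensions. For this I use $\dim B = \tfrac12(n^2+n)$, $\dim U = \tfrac12(n^2-n)$ and $\dim U^\times = \tfrac12(n^2-n)+1$, each multiplied by $m+r$. In the filtered case the $r$-terms cancel exactly, since $\dim B^r$ in the variety is offset by $\dim B^r$ in the group. The $m$-terms give $mn^2 - \tfrac12 m(n^2+n) = \tfrac12 m(n^2-n)$. This is precisely the stated formula for $\frak{Loc}^\rm{Fi}_G$, so the filtered case is a direct computation.

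For the framed and projectively framed cases, the same bookkeeping gives the $m$-terms $\tfrac12 m(n^2+n)$ and $\tfrac12 m(n^2+n-2)$ directly. The remaining issue is the contribution of the $r$ secondary points. Each contributes a factor $B$, namely the boundary monodromy $\N_i$, to the variety and a factor $U$ (respectively $U^\times$) to the group.

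This $r$-term is where I expect the main obstacle. A naive count, $\dim B - \dim U = n$, produces $+rn$ rather than the stated $-rn$, and similarly $+r(n-1)$ rather than $-r(n-1)$ in the projectively framed case. To obtain the stated signs I would not use the bare formula $\dim X - \dim K$ blindly at the secondary points. Instead I would analyse the local factor $B / U$ under the mixed conjugation $\N_i \mapsto x_i \N_i x_i^{-1}$, which is the only place $x_i$ enters apart from $\D_i$. Specifically, I would show that the diagonal of $\N_i$ is a conjugation invariant, and that the correct count at each secondary point involves the $n$-dimensional torus part of these local data (respectively $n-1$ after projectivisation) entering with the opposite sign.

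Concretely, I would first try to compute, at a generic point where $\N_i$ has distinct eigenvalues, the dimension of the stabiliser of the $U$-action together with the dimension of the space of invariants contributed by the diagonal of $\N_i$. The stated formula should then be recovered as $\dim R - \dim K + (\text{correction at secondary points})$. I would check this correction carefully in the simplest case $g=0$, $r=1$, $n=2$ before asserting it in general. If the correction cannot be justified there, the discrepancy lies exactly in these $r$-terms, and that is the point I would isolate in the write-up.
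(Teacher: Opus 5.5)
Your count is exactly the paper's proof. The paper writes $\dim\mathfrak{Loc}^\square_G(\Sigma,P) = (2g+s+m-2)\dim G + r\dim B - (m+r)\dim K_\square$, and your filtered case and your $m$-terms for the other two cases agree with it.

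The step that fails is the last one. There is no ``correction at the secondary points'' that turns $+rn$ into $-rn$, because the signs in the statement are wrong. For a quotient stack $[X/K]$ with $X$ smooth of pure dimension, $\dim [X/K] = \dim X - \dim K$ exactly, whatever the stabilisers and invariants are. So the $r$-contribution is $r(\dim B - \dim K_\square) = +r\nu_\square$, with $\nu_{\mathrm{Fr}} = n$ and $\nu_{\mathrm{PFr}} = n-1$. The paper's own proof reaches $r(\dim B - \dim K_\square)$ in its second line and then records it as $-r\nu_\square$. That is a sign slip, not a hidden geometric effect. You should not try to justify the stated formulas; prove the corrected ones, with $+rn$ and $+r(n-1)$.

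Your proposed sanity check confirms this once it is set up properly. With $g=0$, $r=1$ and $l=d=0$ the surface is a disc, the relation forces $\N_1=\idd$, and the isomorphism $R_G(g,\bm{s},\bm{m})\cong G^{2g+s+m-2}\times B^r$ of \autoref{251107114012} does not apply, since it needs $l+d\geq 1$. So take the annulus with $l=1$, $r=1$. Then $\pi_1(\Sigma,P)$ is free on the boundary loop, $R_G(\Sigma,P)\cong B$, and $\mathfrak{Loc}^{\mathrm{Fr}}_G(\Sigma,P)$ is the stack of $B$ modulo conjugation by $U$. Let $T$ be the diagonal torus and restrict to the open subset of $N\in B$ with distinct diagonal entries. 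The map $(u,t)\mapsto utu^{-1}$ is a $U$-equivariant isomorphism onto this open subset from $U\times T_{\mathrm{reg}}$, with $U$ acting by left multiplication on the first factor. Hence this open substack is the variety $T_{\mathrm{reg}}$, of dimension $n$, whereas the statement predicts $-n$. With $U^\times$ the scalars act trivially, and one gets $n-1$ rather than $-(n-1)$. The diagonal of $\N_i$ that you single out is exactly these $n$ (resp.\ $n-1$) moduli, and they enter with a plus sign. Your write-up should therefore consist of the direct count plus an explicit note that the $r$-terms in the statement must read $+rn$ and $+r(n-1)$.
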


\begin{proof}
	The dimension is calculated as follows:
	\begin{eqns}
		\dim \frak{Loc}_G^\square (\Sigma, P) 
		&= (2g + s + m -2) \dim G + r \dim B - (m+r) \dim K_\square
		\\		&= (2g + s -2) \dim G + m (\dim G - \dim K_\square) + r (\dim B - \dim K_\square)
		\\		&= (2g+s-2)n^2 + \tfrac{1}{2} m (n^2 - n + 2 \nu_\square) - r \nu_\square,
	\end{eqns}
	where $\nu_\rm{Fi} = 0$, $\nu_\rm{Fr} = n$ and $\nu_\rm{PFr} = n-1$.
\end{proof}

Thus, recall the algebraic variety $\hat{R}_G (g, \bm{s}, \bm{m})$ from \autoref{251109184820}.
We define a left group action on it by the group $G^{m+r+1}$ as follows.
Write any element $x \in G^{m+r+1}$ as $x = (x',x'',x_0) \in G^{m} {\times} G^r {\times} G$ where 
\begin{eqn}
	x' = \big(x_{ij} : 1 \leq i \leq d, ~ 1 \leq j \leq m_i \big) \in G^m
	~,\quad
	x'' = (x_{1}, \ldots, x_{r}) \in G^r
	~,\quad
	x_0 \in G.
\end{eqn}
Then the action of $x$ on any element $(\bm{\A}, \bm{\B}, \bm{\M}, \bm{\T}, \bm{\C}, \bm{\D}; \bm{\N})$ of $\hat{R}_G (g, \bm{s}, \bm{m})$ is defined by mixed conjugation as follows:
\begin{eqntag}
	\begin{gathered}
		x.\A_i = x_0 \A_i x_0^{-1}
		~,\quad
		x.\B_i = x_0 \B_i x_0^{-1}
		~,
		\\
		x.\M_i = x_0 \M_i x_0^{-1}
		~,\quad
		x.\N_i = x_i \N_i x_i^{-1}
		~,
		\\
		x.\C_i = x_{i1} \C_i x_0^{-1}
		~,\quad
		x.\D_i = x_i \D_i x_0^{-1}
		~,
		\\
		x.\T_{ij} = x_{i,j+1} \T_{ij} x_{ij}^{-1}
		~,
	\end{gathered}
\end{eqntag}
where $j$ is understood mod $m_i + 1$.
The subgroups $U^{m+r} {\times} G  \subset B^{m+r} {\times} G \subset G^{m+r+1}$ inherit the same action, yielding two action groupoids
\begin{eqn}
	( B^{m+r} {\times} G ) \ltimes \hat{R}_G (g, \bm{s}, \bm{m})
	\qtext{and}
	( U^{m+r} {\times} G ) \ltimes \hat{R}_G (g, \bm{s}, \bm{m}).
\end{eqn}

\appendix

\section{Proof of \autoref{250911161735}}\label{sec:A}

The proof is broken down into several parts.

\textsc{Extension of $\op{Hol}$ and $\op{Mon}_p$ to morphisms.}
The correspondence $\op{Hol}$ defined above on objects extends to a functor because any morphism of sheaves $\phi : \cal{E} \to \cal{E}'$ extends to an $\cal{O}_\Sigma$-linear morphism of sheaves $\phi : \cal{E} \otimes \cal{O}_\Sigma \to \cal{E}' \otimes \cal{O}_\Sigma$ (and hence a continuous vector bundle map $\phi : \E \to \E'$) by acting on the $\cal{O}_\Sigma$ component by the identity.
Since the fibres of $\E$ are canonically isomorphic to the corresponding stalks of $\cal{E}$, the fact that the induced bundle map $\phi : \E \to \E'$ intertwines the holonomy representations (and therefore defines a morphism of representations) is the content of the commutative diagram \eqref{250924114505}.
Similarly, the correspondence $\op{Mon}_p$ defined on objects above extends to a functor because any map of sheaves $\phi : \cal{E} \to \cal{E}'$ restricts to a linear map on stalks $\phi_p : \cal{E}_p \to \cal{E}'_p$ which intertwines the monodromy representations thanks again to the commutative diagram \eqref{250924114505}.

We need to check that these functors are fully faithful and essentially surjective.

\textsc{Functors $\op{Hol}$ and $\op{Mon}_p$ are fully faithful.}
The fully faithful nature of $\op{Hol}$ follows from the fact that the fibres of $\E$ are canonically isomorphic to the corresponding stalks of $\cal{E}$, and maps of sheaves are equal if they are equal stalk-by-stalk.
For faithfulness: if $\phi_1, \phi_2 : \cal{E} \to \cal{E}'$ are two maps of sheaves which the holonomy functor sends to the continuous bundle maps $\tilde{\phi}_1, \tilde{\phi}_2 : \E \to \E'$ such that $\tilde{\phi}_1 = \tilde{\phi}_2$, then $\tilde{\phi}_1, \tilde{\phi}_2$ are in particular equal fibre-by-fibre which means $\phi_1, \phi_2$ are equal stalk-by-stalk.
For fullness: any continuous bundle map $\phi : \E = \cal{E} \otimes \cal{O}_\Sigma \to \E' = \cal{E}' \otimes \cal{O}_\Sigma$ determines linear maps $\phi_p : \E_p \to \E'_p$ fibre-by-fibre and hence linear maps $\tilde{\phi}_p : \cal{E}_p \to \cal{E}'_p$ stalk-by-stalk.
If $\phi$ intertwines the corresponding holonomy representations, then these maps glue into a map of sheaves $\tilde{\phi} : \cal{E} \to \cal{E}'$.
Similarly, $\op{Mon}_p$ is fully faithful because the vector space $\cal{E}_p$ is the stalk of $\cal{E}$ at $p$.

\textsc{Functor $\op{Mon}_p$ is essentially surjective.}
For the essential surjectivity, consider first the monodromy representation functor $\op{Mon}_p$.
Given a representation $(E, \varphi)$ of $\pi_1 (\Sigma, p)$, a local system $\cal{E}$ with monodromy representation isomorphic to $(E, \varphi)$ may be constructed as follows.
Begin by noting that the set $\GL (\CC^n, E)$ of all linear isomorphisms $\CC^n \iso E$ has a natural free and transitive right $G$-action by pre-composition: $f.g = f \circ g$ for any $f \in GL(\CC^n, E)$ and $g \in G = GL(\CC^n)$.
It also has a natural free and transitive left $\GL(E)$-action by post-composition: $h.f = h \circ f$ for any $h \in GL(E)$.
Then the representation $\varphi : \pi_1 (\Sigma, p) \to GL(E)$ induces a left action of the fundamental group $\pi_1 (\Sigma, p)$ on $\GL (\CC^n, E)$ by post-composition: $\gamma . f = \varphi_\gamma \circ f$ for any $\gamma \in \pi_1 (\Sigma, p)$.

At the same time, $\pi_1 (\Sigma, p)$ acts on the universal cover $\pi : \tilde{\Sigma} \to \Sigma$ based at $p$ by deck transformations.
This action restricts to a free and transitive action on the fibre $\pi^{-1} (q) \subset \tilde{\Sigma}$ of any point $q \in \Sigma$, and therefore the quotient
\begin{eqn}
	\hat{\cal{E}}_q 
	\coleq \big( \pi^{-1} (q) \times GL(\CC^n, E) \big) \big/ \pi_1 (\Sigma,p)
\end{eqn}
by the diagonal action of $\pi_1 (\Sigma, p)$ is a set with a free and transitive right $G$-action; i.e., $\hat{\cal{E}}_q$ is a $G$-torsor.
By the same token, $\pi_1 (\Sigma, p)$ acts on the preimage $\pi^{-1} (U) \subset \tilde{\Sigma}$ of any open subset $U \subset \Sigma$ though not necessarily freely and transitively, so the quotient of $\pi^{-1} (U) \times GL(\CC^n, E)$ by the diagonal action of $\pi_1 (\Sigma, p)$ is also a set with a right $G$-action though not necessarily a $G$-torsor.
Nonetheless, if we let $\hat{\cal{E}}$ be the sheaf associated with the presheaf
\begin{eqntag}
	\label{251008145350}
	U \mapsto \big( \pi^{-1} (U) \times GL(\CC^n, E) \big) \big/ \pi_1 (\Sigma,p),
\end{eqntag}
then $\hat{\cal{E}}$ is a locally constant sheaf of sets equipped with a right $G$-action, and its stalk at any $q \in \Sigma$ is nothing but the $G$-torsor $\hat{\cal{E}}_q$.
In other words, $\hat{\cal{E}}$ is a $G$-local system and it gives rise to the linear local system $\cal{E} \coleq \hat{\cal{E}} \times_G \underline{\CC}^n_\Sigma$ as explained in \autoref{251008162022}.
For the benefit of the reader, we remark that, by abuse of notation, the sheaves $\hat{\cal{E}}$ and $\cal{E}$ are sometimes denoted in the literature by 
\begin{eqn}
	\hat{\cal{E}} \eqcol \big(\tilde{\Sigma} \times GL (\CC^n, E) \big) / \pi_1 (\Sigma, p)
	\qtext{and}
	\cal{E} \eqcol \big(\tilde{\Sigma} \times E \big) / \pi_1 (\Sigma, p).
\end{eqn}
The reason this notation makes sense is that the universal cover $\tilde{\Sigma}$ can itself be thought of as a sheaf of sets on $\Sigma$ (in fact, a $\pi_1 (\Sigma, p)$-local system on $\Sigma$).

For future use, the stalks of $\cal{E}$ can be described more explicitly.
Using the canonical isomorphism $GL (\CC^n,E) \times_G \CC^n \iso E$ given by $(f,v) \mapsto f(v)$, we have that the stalk of $\cal{E}$ at any point $q \in \Sigma$ is the quotient by the diagonal action of $\pi_1 (\Sigma, p)$ of $\pi^{-1} (q) \times E$ which is the disjoint infinite union of copies of the vector space $E$ enumerated by the preimages of $q$ on the universal cover $\tilde{\Sigma}$:
\begin{eqn}
	\cal{E}_q 
	= \big( \pi^{-1} (q) \times E \big) \big/ \pi_1 (\Sigma, p).
\end{eqn}
In other words, a vector $e \in \cal{E}_q$ is the same thing as a collection $e = \big \{ e_{\tilde{q}} \in E : \tilde{q} \in \pi^{-1} (q) \big\}$ of infinitely many vectors in $E$, one for each preimage of $q$ in the universal cover $\tilde{\Sigma}$, whose components are related by the action of the fundamental group $\pi_1 (\Sigma, p)$ through the representation $\varphi : \pi_1 (\Sigma, p) \to GL(E)$:
\begin{eqntag}
	\label{251009191043}
	\cal{E}_q 
	= \set{ e \in \pi^{-1} (q) \times E  ~\Big|~ e_{\gamma . \tilde{q}} = \varphi_\gamma (e_{\tilde{q}}) \quad \text{for all $\tilde{q} \in \tilde{U}$ and $\gamma \in \pi_1 (\Sigma, p)$ }},
\end{eqntag}
where $\gamma . \tilde{q}$ denotes the action of the deck transformation determined by $\gamma$.
In particular, since $\pi_1 (\Sigma, p)$ acts freely and transitively on $\pi^{-1} (q)$ for any $q \in \Sigma$, the stalk $\cal{E}_q$ is isomorphic to the vector space $E$ but not canonically.
To fix such an isomorphism is to choose a point $\tilde{q}$ in the fibre $\pi^{-1} (q)$, which is the same thing as choosing a path $\gamma$ on $\Sigma$ that starts at $p$ and ends at $q$.
This path determines the desired isomorphism which is nothing but the holonomy isomorphism $\op{hol}_\gamma : E \iso \cal{E}_q$ of $\cal{E}$ along $\gamma$.
On the other hand, since the chosen universal cover $\tilde{\Sigma}$ is based at $p$, the fibre $\pi^{-1} (p)$ has a distinguished point corresponding to the constant path at $p$.
Therefore, the stalk $\cal{E}_p$ is canonically isomorphic to $E$.

We claim now that the monodromy representation of $\cal{E}$ is isomorphic to $(E, \varphi)$.
Indeed, since the stalk of $\cal{E}$ at $p$ is canonically isomorphic to $E$, the monodromy of $\cal{E}$ around a loop $\gamma \in \pi_1 (\Sigma, p)$ is given by $\gamma . e = \varphi_\gamma (e)$ for any $e \in E$.
This is because under the isomorphism $E \cong GL (\CC^n,E) \times_G \CC^n$, every vector $e \in E$ is of the form $e = f(v)$ for some $(f,v) \in GL (\CC^n,E) \times_G \CC^n$.
And the holonomy of the local system $\hat{\cal{E}}$ along $\gamma$ is computed as $\gamma . f = \varphi_\gamma \circ f$, which means $\gamma. (f,v) = (\gamma.f,v) = \big( \varphi_\gamma \circ f, v \big)$.
Consequently, the functor $\op{Mon}_p$ is essentially surjective, and so the proof that $\op{Mon}_p$ is an equivalence of categories is complete.

\textsc{Functor $\op{Hol}$ is essential surjective.}
Now we turn our attention to the essential surjectivity of the holonomy representation functor $\op{Hol}$.
Given a representation $(\E, \Phi)$ of the fundamental groupoid $\Pi_1 (\Sigma)$, let $(E,\varphi)$ be its restriction to a representation of $\pi_1 (\Sigma, p)$, and then let $\cal{E}$ be the linear local system with $\op{Mon}_p (\cal{E}) \cong (E, \varphi)$ as constructed above.
We claim that the holonomy representation $(\E',\Phi') \coleq \op{Hol} (\cal{E}) = (\cal{E} \otimes \cal{O}_\Sigma, \op{hol})$ is isomorphic to the original representation $(\E, \Phi)$.

The first fact we need is that the representation $\Phi$ determines canonical local trivialisations of the vector bundle $\E$.
More precisely, for any point $q \in \Sigma$, let $\E_q$ be the fibre of $\E$ at $q$, and choose any simply connected neighbourhood $U \subset \Sigma$ of $q$.
Then the homomorphism $\Phi$ induces a vector bundle isomorphism $\psi_q : \E |_U \iso \E_q \otimes \cal{O}_U$ where $\E_q \otimes \cal{O}_U$ is the sheaf of continuous sections of the trivial vector bundle $\E_q \times U \to U$.
Indeed, for every point $q' \in U$, there is a path from $q$ to $q'$ which is entirely contained in $U$.
Such paths are unique up to homotopy with fixed endpoints, so for every $q' \in U$ there is a unique element $\gamma \in \Pi_1 (U)$ which determines an isomorphism $\Phi_\gamma : \E_q \iso \E_{q'}$.
Since $\Phi : \Pi_1 (U) \to GL (\E|_U)$ is a continuous map, as $q'$ varies continuously in $U$, the unique element $\gamma$ varies continuously in $\Pi_1 (U)$, thereby giving a continuous identification of all fibres of $\E$ over $U$ with the fibre $\E_q$.
Furthermore, for any $q' \in U$, the change of trivialisation from $\psi_q$ to $\psi_{q'} : \E |_U \iso \E_{q'} \otimes \cal{O}_U$ is nothing but the isomorphism $\Phi_\gamma$ itself, extended to act on the $\cal{O}_U$-component by the identity:
\begin{eqntag}
	\label{251009194236}
	\psi_{q'} \circ \psi_q^{-1} = \Phi_\gamma : \E_q \otimes \cal{O}_U \iso \E_{q'} \otimes \cal{O}_U.
\end{eqntag}

Next, we observe that, for essentially the same reason, the vector bundle $\E$ is canonically the trivial bundle with fibre $E = \E_p$ once pulled back to the universal cover.
More precisely, let $\pi : \tilde{\Sigma} \to \Sigma$ be the universal cover based at $p$.
Then $\Phi$ induces a bundle isomorphism $\psi : \pi^\ast \E \iso E \otimes \cal{O}_{\tilde{\Sigma}}$ where $E \otimes \cal{O}_{\tilde{\Sigma}}$ is the sheaf of continuous sections of the trivial vector bundle $\tilde{\Sigma} \times E \to \tilde{\Sigma}$.
Indeed, any point $\tilde{q} \in \tilde{\Sigma}$ represents a homotopy class $\gamma \in \Pi_1 (\Sigma)$ of paths with source $p$ and target $q = \pi (\tilde{q})$.
Meanwhile, the fibre of $\pi^\ast \E$ at $\tilde{q}$ is by definition equal to the fibre $\E_q$, but now we have a canonical isomorphism $\Phi_\gamma : \E_p \iso \E_q$.
Since $\tilde{\Sigma}$ is simply connected, this defines a global trivialisation of $\pi^\ast \E$.

The third and final observation is that local sections of $\E$ are precisely the $\varphi$-equivariant local sections of the trivial bundle $\tilde{\Sigma} \times E \to \tilde{\Sigma}$.
More precisely, suppose $\tilde{q}_1, \tilde{q}_2 \in \tilde{\Sigma}$ are points in the same fibre over $q \in \Sigma$.
They represent paths $\gamma_1, \gamma_2 \in \Pi_1 (\Sigma)$ with source $p$ and target $q$ which together form a loop $\gamma_2^{-1} \circ \gamma_1 \in \pi_1 (\Sigma, p)$.
Now, under the global trivialisation $\psi$, two vectors $e_1 \in E \cong \pi^\ast \E_{\tilde{q}_1}$ and $e_2 \in E \cong \pi^\ast \E_{\tilde{q}_2}$ determine the same vector in the fibre $\E_q$ if and only if $\Phi_{\gamma_1} (e_1) = \Phi_{\gamma_2} (e_2)$ i.e. $e_2 = \varphi_\gamma (e_1)$.
Since $\pi^\ast \E |_{\pi^{-1} (q)} \cong \pi^{-1} (q) \times E$, a vector $e$ in $\E_q$ may be regarded equivalently as a collection $e = \big\{ e_{\tilde{q}} \in E : \tilde{q} \in \pi^{-1} (q) \big\}$ of infinitely many vectors in $E$ enumerated by the fibre $\pi^{-1} (q)$ whose components are related by the same action of the fundamental group $\pi_1 (\Sigma, p)$ as in \eqref{251009191043} through the restriction $\varphi : \pi_1 (\Sigma, p) \to GL(E)$ of the given representation $\Phi$.
In other words, for every $q$, there is a canonical isomorphism between the fibre $\E_q$ and the stalk $\cal{E}_q$:
\begin{eqntag}
	\label{251010134146}
	\eta_q : \E_q \iso \cal{E}_q = \big( \pi^{-1} (q) \times E \big) \big/ \pi_1 (\Sigma, p).
\end{eqntag}
We also note that under this isomorphism, the representation $\Phi$ becomes the holonomy of the local system $\cal{E}$; i.e., for every $\gamma \in \Pi_1 (\Sigma)$ with source $q$ and target $q'$, there is a commutative diagram:
\begin{eqntag}
	\label{251009200039}
	\begin{tikzcd}
		&	\E_q
		\ar[r, "\sim" {yshift=-1pt}, "\eta_q"']
		\ar[d, "\rotatebox{90}{$\sim$}", "\Phi_\gamma"']
		& 	\cal{E}_q
		\ar[d, "\rotatebox{90}{$\sim$}"' {xshift=1pt}, "\op{hol}_\gamma"]
		\\
		&	\E_{q'}
		\ar[r, "\sim" {yshift=-1pt}, "\eta_{q'}"']
		&	\cal{E}_{q'}.
	\end{tikzcd}
\end{eqntag}

Combining the above three observations, we conclude that, for every $q \in \Sigma$ and every simply connected neighbourhood $U \subset \Sigma$ of $q$, there is a canonical isomorphism
\begin{eqntag}
	\label{251009111901}
	\E |_U
	\underset{\psi_q}{\iso} \E_q \otimes \cal{O}_U
	\underset{\eta_q}{\iso} \cal{E}_q \otimes \cal{O}_U.
\end{eqntag}
At the same time, since $U$ is simply connected, the restriction of the local system $\cal{E}$ to $U$ is a constant sheaf isomorphic to the constant sheaf with value $\cal{E}_q$.
In particular, it gives a canonical trivialisation of the bundle $\E'$ over $U$:
\begin{eqn}
	\nu_q : \cal{E}_q \otimes \cal{O}_U \iso (\cal{E} \otimes \cal{O}_\Sigma)|_U = \E' |_U.
\end{eqn}
Altogether, we have constructed an isomorphism $\mu_q \coleq \nu_q \circ \eta_q \circ \psi_q : \E |_U \iso \E' |_U$.

We can now choose any open covering of $\Sigma$ by simply connected subsets and glue these local isomorphisms $\mu_q : \E |_U \iso \E' |_U$ into a global isomorphism $\mu : \E \iso \E'$, provided they agree on double overlaps.
To see this, it is enough to choose two points $q, q' \in U$ in an arbitrary simply connected subset $U \subset \Sigma$ and demonstrate that the two isomorphisms $\mu_{q}, \mu_{q'} : \E |_U \iso \E' |_U$ agree.
Let $\gamma \in \Pi_1 (U)$ be the unique-up-to-homotopy path from $q$ to $q'$.
Recall from \eqref{251009194236} that the trivialisations $\psi_q$ and $\psi_{q'}$ of $\E|_U$ are related by the representation $\Phi_\gamma$.
Similarly, the trivialisations $\nu_q$ and $\nu_{q'}$ of $\E'|_U$ are related by the holonomy $\op{hol}_\gamma$ of the local system $\cal{E}$ along $\gamma$.
Thus, unravelling their definition, the isomorphisms $\mu_q$ and $\mu_{q'}$ agree by virtue of the commutativity of the following diagram:
\begin{eqn}
	\begin{tikzcd}
		\E |_U
		\ar[r, "\sim" {yshift=-1pt}, "\psi_q"']
		\ar[d, equal]
		&	\E_q \otimes \cal{O}_U
		\ar[r, "\sim" {yshift=-1pt}, "\eta_q"']
		\ar[d, "\rotatebox{90}{$\sim$}"' {xshift=1pt}, "\Phi_\gamma"]
		& 	\cal{E}_q \otimes \cal{O}_U
		\ar[r, "\sim" {yshift=-1pt}, "\nu_q"']
		\ar[d, "\rotatebox{90}{$\sim$}"' {xshift=1pt}, "\op{hol}_\gamma"]
		&	\E' |_U
		\ar[d, equal]
		\\
		\E |_U
		\ar[r, "\sim" {yshift=-1pt}, "\psi_{q'}"']
		&	\E_{q'} \otimes \cal{O}_U
		\ar[r, "\sim" {yshift=-1pt}, "\eta_{q'}"']
		&	\cal{E}_{q'} \otimes \cal{O}_U
		\ar[r, "\sim" {yshift=-1pt}, "\nu_{q'}"']
		&	\E' |_U.
	\end{tikzcd}
\end{eqn}
This diagram also demonstrates the fact that the isomorphism $\mu : \E \iso \E'$ intertwines the representations $\Phi$ and $\Phi' = \op{hol}$.
This is because they are both groupoid homomorphisms and any path on $\Sigma$ is the concatenation of finitely many `short' paths contained entirely in simply connected subsets where the above diagram is valid.
This completes the proof that $\op{Hol}$ is an equivalence of categories.

\textsc{Functor $\R$ is an equivalence.}
Finally, we show that the restriction functor $\R$ is also an equivalence of categories.
Recall that the monodromy representation functor $\op{Mon}_p$ is the composition of the holonomy representation functor $\op{Hol}$ and the restriction functor.
The essential surjectivity of the restriction functor follows from that of $\op{Mon}_p$ and $\op{Hol}$.
For if $(E,\varphi)$ is any representation of $\pi_1 (\Sigma, p)$, let $\cal{E}$ be a local system such that $\op{Mon}_p (\cal{E}) \cong (E, \varphi)$, and take $(\E, \Phi) \coleq \op{Hol} (\cal{E})$.
Then the restriction of $(\E, \Phi)$ to $\pi_1 (\Sigma, p)$ is literally the monodromy representation of $\cal{E}$, i.e. $\R (\E, \Phi) = \op{Mon}_p (\cal{E})$, which we already know is isomorphic to $(E, \varphi)$.

The restriction functor is obviously full but it is less clear why it is faithful.
That is to say, if $(\E, \Phi), (\E', \Phi')$ is a pair of representations of $\Pi_1 (\Sigma)$, and $(E,\varphi), (E', \varphi')$ are their restrictions to representations of $\pi_1 (\Sigma,p)$, why does the restriction functor induce a bijection between $\R : \sfop{Iso} \big( (\E, \Phi), (\E', \Phi') \big) \iso \sfop{Iso} \big( (E,\varphi), (E', \varphi') \big)$?
This can be argued as follows.
Using the essential surjectivity of $\op{Hol}$ and $\op{Mon}_p$, let $\cal{E}$ be a local system such that $\op{Hol} (\cal{E}) \eqcol (\tilde{\E}, \tilde{\Phi}) \cong (\E, \Phi)$ and $\op{Mon}_p (\cal{E}) \eqcol (\tilde{E}, \tilde{\varphi}) \cong (E, \varphi)$.
Same for $\cal{E}'$.
By virtue of these isomorphisms, there are bijections $\sfop{Iso} \big(({\E},{\Phi}), ({\E}',{\Phi}')\big) \cong \sfop{Iso} \big((\tilde{\E},\tilde{\Phi}), (\tilde{\E}',\tilde{\Phi}')\big)$ and $\sfop{Iso} \big(({E},{\varphi}), ({E}',{\varphi}')\big) \cong \sfop{Iso} \big((\tilde{E},\tilde{\varphi}), (\tilde{E}',\tilde{\varphi}')\big)$.
At the same time, we also have bijections $\op{Hol} : \sfop{Iso} (\cal{E}, \cal{E}') \iso \big((\tilde{\E},\tilde{\Phi}), (\tilde{\E}',\tilde{\Phi}')\big)$ and $\op{Mon}_p : \sfop{Iso} (\cal{E}, \cal{E}') \iso \big((\tilde{E},\tilde{\varphi}), (\tilde{E}',\tilde{\varphi}')\big)$.

\section{Invariant Flags and Shuffled Jordan Types}\label{sec:D}
\label{251020181701}

Suppose $(E, \varphi)$ is an $n$-dimensional complex vector space equipped with an endomorphism $\varphi \in \End (E)$.
Such a pair is sometimes called a \textit{linear system}.
In this subsection, we ask the following question:
What are the possible ways to equip the vector space $E$ with a complete $\varphi$-invariant flag $F$ up to an automorphism that commutes with $\varphi$?
In other words, we want to understand the moduli set
\begin{eqn}
	\sf{Fi}_\varphi (E) \big/ {\sim}
	\qtext{where}
	\sf{Fi}_\varphi (E) \coleq \set{ F \in \sf{Fi} (E) ~\Big|~ \varphi (F_i) \subset F_i \quad \text{for all $i$ } }
\end{eqn}
and $F \sim F'$ if there is a filtered isomorphism $\psi : (E,F) \iso (E,F')$ such that $\varphi \psi = \psi \varphi$.
We remark that for $E = \CC^n$ and $\varphi$ nilpotent, the set $\sf{Fi}_\varphi (\CC^n)$ is called the \textit{Springer fibre} over $\varphi$; see, e.g., \cite{MR3666060}.
We will prove the following fact which, as far we can tell, has not yet appeared in the literature.

\begin{proposition}
	\label{251018183337}
	For any $(E, \varphi)$, the moduli set $\sf{Fi}_\varphi (E) / {\sim}$ is finite and its size can be read off from the Jordan type of $\varphi$.
	In particular, if $\varphi$ has all-distinct eigenvalues, then $\sf{Fi}_\varphi (E) / {\sim}$ has $n!$ points which are in one-to-one correspondence with total orders on the set of $n$ eigenvalues of $\varphi$.
\end{proposition}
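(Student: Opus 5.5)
Fix a basis in which $\varphi$ is in Jordan normal form. I would then identify orbits of the centralizer $Z(\varphi)\coleq\set{\psi\in\GL(E) : \psi\varphi=\varphi\psi}$ on $\sf{Fi}_\varphi(E)$ with combinatorial data, namely the shuffled Jordan matrices $J_W(\varphi)$ of \autoref{251216084311}. Given an invariant flag $F$, choose any adapted basis $(e_1,\ldots,e_n)$, i.e.\ a filtered isomorphism $\beta:(E,F)\iso(\CC^n,\CC^\bullet)$. Then $\beta\varphi\beta^{-1}\in B$, and the basis can be changed by elements of $B$. The invariant attached to $F$ should be a normal form of the upper-triangular matrix $\beta\varphi\beta^{-1}$ under $B$-conjugation. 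Two flags $F,F'$ are equivalent exactly when there exist $\beta,\beta'$ with $\beta\varphi\beta^{-1}=\beta'\varphi\beta'^{-1}$: indeed $\psi=\beta'^{-1}\beta$ is then filtered from $F$ to $F'$ and commutes with $\varphi$, and conversely. Hence
\begin{eqn}
	\sf{Fi}_\varphi(E)/{\sim}\;\cong\;\set{X\in B : X \text{ conjugate to } \varphi}\big/B,
\end{eqn}
the set of $B$-orbits of upper-triangular matrices in the $\GL_n$-conjugacy class $\cal{O}_\varphi$ of $\varphi$.

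The key step is to show that every $B$-orbit in $\cal{O}_\varphi\cap B$ contains exactly one shuffled Jordan matrix. For \emph{existence} I would argue by induction on $n$. The line $F_1$ is an eigenline for some eigenvalue $\lambda$, and $\varphi$ induces an endomorphism on $E/F_1$ with an invariant flag. By induction, reduce the lower-right $(n-1)\times(n-1)$ block to a shuffled Jordan form by an element of $B$. Then use the remaining freedom in the first row of $B$ to kill the entries of the first row that can be killed. The entries in columns whose diagonal entry differs from $\lambda$ are removable by solving a Sylvester-type equation. Entries in columns with diagonal $\lambda$ are governed by the Jordan structure, and can be normalised to $0$ or $1$ in the pattern of some $\tau J\tau^{-1}$. \emph{Uniqueness} should follow from $B$-invariants: the ordered diagonal, together with the ranks of $(X-\lambda)^k$ restricted to the spans $\CC^i$ and on the quotients $\CC^n/\CC^i$. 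These invariants determine $\tau$ and $J$ up to the equivalence $(J,\tau)\sim(J',\tau')$ described after \autoref{251216084311}.

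Once this bijection $\sf{Fi}_\varphi(E)/{\sim}\cong J_W(\varphi)$ is in place, finiteness is immediate, and the cardinality depends only on $\op{Jt}(\varphi)$. For distinct eigenvalues, $\varphi$ is diagonalisable with $Z(\varphi)$ the diagonal torus. An invariant flag is then determined by an ordering of the eigenlines, since each $F_i$ must be a sum of eigenlines. Two such flags are equivalent iff the orderings agree, because torus elements fix each eigenline. This gives the $n!$ total orders on the eigenvalues, matching $|J_W(\varphi)|=n!$ since every $\tau\,\mathrm{diag}\,\tau^{-1}$ is diagonal. I expect the main obstacle to be the normal-form step in the presence of repeated eigenvalues with several Jordan blocks: showing that the reduction of the first row cannot be pushed further, and that the resulting pattern really is of the form $\tau J\tau^{-1}$. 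If the direct normal form becomes messy, I would fall back on proving finiteness alone. The centralizer $Z(\varphi)$ has finitely many orbits on the Springer-type fibre, by decomposing $E$ into generalised eigenspaces and reducing to the nilpotent case. There I would count orbits via the finitely many possible sequences of Jordan types of $\varphi|_{F_i}$.
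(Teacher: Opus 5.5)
Your reformulation $\mathsf{Fi}_\varphi(E)/{\sim}\;\cong\;(\mathcal{O}_\varphi\cap B)/B$ is correct. Your all-distinct-eigenvalue argument is also correct; it is the paper's \autoref{251018210133}.

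\textbf{The key step is false, not merely hard.} It is not true that every $B$-orbit in $\mathcal{O}_\varphi\cap B$ contains a shuffled Jordan matrix. Take $\nu$ with Jordan chains $a_3\mapsto a_2\mapsto a_1\mapsto 0$ and $b\mapsto 0$, and put $\varphi=\id+\nu$, of type $(3,1)$. The flag $F=\big(\inner{a_1}\subset\inner{a_1,a_2+b}\subset\inner{a_1,a_2,b}\subset E\big)$ is $\varphi$-invariant. In the adapted basis $(a_1,a_2+b,a_2,a_3)$,
\[
\varphi=\mtx{1&1&1&0\\0&1&0&0\\0&0&1&1\\0&0&0&1}.
\]
This $F$ is adapted to no reordered Jordan basis. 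Since $\nu|_{F_2}\neq 0$, two Jordan-basis vectors spanning $F_2$ would have to be the bottom of the length-$3$ chain. That would force $F_2\subset\op{im}\nu=\inner{a_1,a_2}$, which fails because $a_2+b\notin\op{im}\nu$.

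Equivalently, the $B$-invariants ``rank of $\varphi-\idd$ on $\CC^2$, on $\CC^3$, on $\CC^4/\CC^2$'' equal $(1,1,1)$ here. None of the four shuffled Jordan matrices of this type realises this triple.

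This is exactly where your induction stalls. Once the lower $3\times 3$ block is normalised, the first row $(1,1,0)$ can only be changed in its last entry (by the row space of that block). The centraliser of the block rescales entries $2$ and $3$ independently, so the row cannot be reduced to a single $1$. So here $\mathsf{Fi}_\varphi(E)/{\sim}$ has five points, not $|J_W(\varphi)|=4$; the paper itself lists exactly four shuffled Jordan types for this Jordan type.

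\textbf{The fallback fails too, and the finiteness claim is false for $n\geq 6$.} Take chains $a_4\mapsto a_3\mapsto a_2\mapsto a_1\mapsto 0$ and $b_2\mapsto b_1\mapsto 0$, of type $(4,2)$. Set $F_1=\inner{a_1}$, $F_2=\inner{a_1,a_2+b_1}$, $F_3=\inner{a_1,a_2,b_1}$, $F_4^{[x:y]}=F_3+\inner{xa_3+yb_2}$ and $F_5=\inner{a_1,a_2,a_3,b_1,b_2}$.
\begin{itemize}
\item A centraliser element is determined by $a_4\mapsto\sum_i p_ia_i+q_1b_1+q_2b_2$ and $b_2\mapsto r_1a_1+r_2a_2+s_1b_1+s_2b_2$.
\item Every such element preserves $F_1$ and $F_3$. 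It preserves $F_2$ if and only if $p_4=s_2$.
\item In that case it acts on $F_5/F_3=\inner{\bar a_3,\bar b_2}$ by the scalar $p_4$.
\item Hence the complete flags $F_1\subset F_2\subset F_3\subset F_4^{[x:y]}\subset F_5\subset E$, for $[x:y]\in\mathbb{P}^1$, are pairwise inequivalent.
\end{itemize}
For $x\neq 0$ these flags all have the same sequence of Jordan types of $\nu|_{F_i}$. So that invariant cannot separate orbits, and your counting idea cannot work. This matches Kashin's theorem that $B$ has infinitely many orbits on strictly upper-triangular $n\times n$ matrices once $n\geq 6$.

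\textbf{The paper's proof has the same gap.} It goes through \autoref{251017181852}, which makes the same claim in basis language: every invariant flag is adapted to a shuffle of a Jordan basis. It breaks at the same point. Its assertion that successively generated Jordan chains are automatically linearly independent is unjustified, and the flags above are adapted to no reordered Jordan basis. Of the statement, only the all-distinct-eigenvalue part survives, and your argument for that part is fine.
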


In the case of all-distinct eigenvalues, this proposition follows from \autoref{251018210133} below.
The general assertion will follow as a consequence of \autoref{251017181852} below which uses the complete invariant of $\varphi$-invariant flags on $E$ called \textit{shuffled Jordan type}, introduced in \autoref{251017143241} below.
A shuffled Jordan type is essentially a total order on the boxes of the Young tableau of $\varphi$.
Thus, any equivalence class in $\sf{Fi}_\varphi (E)$ can be described as the set of all complete $\varphi$-invariant flags with a fixed shuffled Jordan type.
Said differently, the moduli set $\sf{Fi}_\varphi (E) / {\sim}$ is in bijection with the set of all possible shuffled Jordan types on $(E,\varphi)$, and the size of this set can be deduced from the Jordan type of $\varphi$.
Let us begin by examining a few illustrative examples.

\begin{example}
	\label{251019172215}
	If $E = \CC^2$ and $\varphi = \diag (1,2)$, then the only two possible $\varphi$-invariant flags on $E$ are the standard flag $\CC^\bullet = \big( \inner{e_1} \subset \CC^2 \big)$ and its opposite $\bar{\CC^{\bullet}} = \big( \inner{e_2} \subset \CC^2 \big)$.
	Any automorphism of $\CC^2$ that commutes with $\varphi$ must be a diagonal matrix, but a diagonal matrix cannot move the standard flag to the opposite flag.
	Therefore, the moduli set $\sf{Fi}_\varphi (\CC^2) / {\sim}$ for $\varphi = \diag (1,2)$ has only two elements because the set $\sf{Fi}_\varphi (\CC^2)$ itself is finite and consists of just two inequivalent flags $\CC^\bullet$ and $\bar{\CC^\bullet}$.
	This is not generally the case: if $\varphi = \diag (1,1)$, then any flag $F$ is $\varphi$-invariant, so $\sf{Fi}_\varphi (\CC^2) = \sf{Fi} (\CC^2)$ which is far from a finite set.
	However, all these flags are equivalent because any automorphism of $\CC^2$ commutes with $\varphi$, so $\sf{Fi}_\varphi (\CC^2) / {\sim}$ is a single point.
\end{example}

\begin{example}
	\label{251019183631}
	Next, consider the automorphism of $\CC^3$ given by a single Jordan block:
	\begin{eqn}
		\varphi = \mtx{ 1 & 1 & 0 \\ 0 & 1 & 1 \\ 0 & 0 & 1 }.
	\end{eqn}
	Then the only $\varphi$-invariant flag is the standard flag $\CC^\bullet$, meaning $\sf{Fi}_\varphi (\CC^3)$ is a single point.
	Indeed, if $F$ is a $\varphi$-invariant flag, its first step $F_1$ is a one-dimensional $\varphi$-invariant subspace that must therefore equal the unique one-dimensional $\varphi$-invariant subspace, namely $\inner{e_1}$.
	Towards a contradiction, suppose $F_2$ is not the standard flag, so that $e_2 \not\in F_2$ and we must have $ae_2 + e_3 \in F_2$ for some $a \in \CC$.
	Since $F_2$ is a $\varphi$-invariant subspace, it follows that $\varphi (ae_2 + e_3) = a e_1 + (a+1) e_2 + e_3$ also belongs to $F_2$.
	But this means that both $ae_2 + e_3$ and $(a+1) e_2 + e_3$ must also belong to $F_2$, so it follows that $e_2 \in F_2$, a contradiction.
\end{example}

\begin{example}
	\label{251019183645}
	Finally, consider the automorphism of $\CC^3$ with triple eigenvalue $1$ having two Jordan blocks of size $1$ and $2$:
	\begin{eqn}
		\varphi = \mtx{ 1 & 0 & 0 \\ 0 & 1 & 1 \\ 0 & 0 & 1 }.
	\end{eqn}
	Then the set $\sf{Fi}_\varphi (\CC^3)$ has infinitely-many flags which we organise into three separate groups including a continuous one-parameter family of flags $\set{F^c}_{c \in \CC}$ and two other flags $F'$ and $F''$ defined as follows:
	\begin{eqns}
		F^c &= \big( \inner{e_1 + c e_2} \subset \inner{e_1, e_2} \subset \CC^3 \big),
		\\
		F' &= \big( \inner{e_2} \subset \inner{e_1, e_2} \subset \CC^3 \big),
		\\
		F'' &= \big( \inner{e_2} \subset \inner{e_2, e_3} \subset \CC^3 \big).
	\end{eqns}
	Indeed, $\varphi$ has two eigenvectors $e_1, e_2$ so $F_1$ of any $\varphi$-invariant flag $F$ must be generated by some linear combination $a e_1 + b e_2$.
	If $F_2$ contains the generalised eigenvector $e_3$, then by the same argument as in \autoref{251019183631} it follows that $F_1$ must be generated by $e_2$, yielding $F_2 = \inner{e_2, e_3}$.
	Otherwise, $F_2 = \inner{e_1, e_2}$.
	
	Any invertible matrix $\psi \in GL_3$ that satisfies $\psi \varphi = \varphi \psi$ must be of the following form:
	\begin{eqn}
		\psi = \mtx{ x & 0 & y \\ z & u & v \\ 0 & 0 & u }
	\end{eqn}
	for some $x,y,z,u,v \in \CC$ with $x,u \neq 0$.
	First, we observe that all flags $F^c$ are equivalent to each other because the matrix $\psi$ with $y,v = 0$, $u,x = 1$, and $z = c$
	commutes with $\varphi$ and moves the flag $F^0 = \big( \inner{e_1} \subset \inner{e_1, e_2} \subset \CC^3 \big)$ to the flag $F^c$.
	On the other hand, no $\psi$ that commutes with $\varphi$ can give an equivalence between any two of $F^0, F', F''$.
	Consequently, the moduli set $\sf{Fi}_\varphi (\CC^3) / {\sim}$ consists of three points.
\end{example}

\subsection{Review of Linear Algebra and Jordan Types}
\label{251119120056}

To state and prove \autoref{251017181852}, we first recall some elementary facts from linear algebra.
Suppose $F$ is any complete $\varphi$-invariant flag on $E$.
If $\varphi$ has all-distinct eigenvalues (so that $E$ decomposes as a direct sum of one-dimensional eigenspaces), then in particular $F_1$ must be an eigenspace.
But then $F_2$ must contain an eigenvector of $\varphi$ not in $F_1$, so it decomposes into a direct sum of $F_1$ and another eigenspace.
And so on.
We summarise this as follows.

\begin{lemma}
	\label{251018210133}
	For any $n$-dimensional linear system $(E, \varphi)$ with all-distinct eigenvalues, any $\varphi$-invariant flag $F$ is necessarily of the form
	\begin{eqn}
		F_i = E_{\lambda_1} \oplus \cdots \oplus E_{\lambda_i}
	\end{eqn}
	where $\lambda_1, \ldots, \lambda_n$ are the eigenvalues of $\varphi$ and $E_{\lambda_i}$ is the $\lambda_i$-eigenspace.
	Thus, in this case, $\sf{Fi}_\varphi (E) / {\sim} = \sf{Fi}_\varphi (E)$ is in bijection with the finite set of size $n!$ of all possible orders $\tau = (\lambda_1, \ldots, \lambda_n)$ of the set of eigenvalues of $\varphi$.
\end{lemma}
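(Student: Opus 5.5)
We prove the statement by induction on the step $i$ of the flag, using the eigenspace decomposition $E = E_{\mu_1} \oplus \cdots \oplus E_{\mu_n}$ into one-dimensional $\varphi$-eigenspaces, which exists because the eigenvalues $\mu_1, \ldots, \mu_n$ of $\varphi$ are pairwise distinct. The first ingredient is a standard fact: any $\varphi$-invariant subspace $V \subset E$ is the direct sum of the eigenspaces it meets, namely $V = \bigoplus_k (V \cap E_{\mu_k})$. To see this, take $v = \sum_k v_k \in V$ with $v_k \in E_{\mu_k}$. Applying the Lagrange interpolation polynomials in $\varphi$ (equivalently, a Vandermonde argument) shows that each $v_k$ lies in $\CC[\varphi] v \subset V$. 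Since every $E_{\mu_k}$ is one-dimensional, $V$ is then the direct sum of exactly $\dim V$ of these eigenspaces.

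Apply this to a complete $\varphi$-invariant flag $F$. Each $F_i$ is the sum of a set $S_i$ of $i$ eigenspaces, and the nesting $F_{i-1} \subset F_i$ gives $S_{i-1} \subset S_i$. So $S_i \smallsetminus S_{i-1}$ is a single eigenvalue, which we call $\lambda_i$. This yields $F_i = E_{\lambda_1} \oplus \cdots \oplus E_{\lambda_i}$, where $(\lambda_1, \ldots, \lambda_n)$ is a total order on the eigenvalues. Conversely, every total order $\tau$ defines such a flag, and that flag is clearly $\varphi$-invariant. Different orders give different flags, because $\lambda_i$ can be recovered from $F_i / F_{i-1}$ as the eigenvalue of the induced map $\varphi$ on that quotient. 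Hence $\sf{Fi}_\varphi (E)$ is in bijection with the $n!$ orders.

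It remains to show that the equivalence $\sim$ is trivial. An automorphism $\psi$ commuting with $\varphi$ preserves each eigenspace $E_{\mu_k}$, so it maps each $F_i$, being a sum of eigenspaces, to itself. Thus $\psi(F) = F$ for every invariant flag $F$, and two distinct invariant flags are never equivalent. This gives $\sf{Fi}_\varphi (E)/{\sim} = \sf{Fi}_\varphi (E)$. The only step needing any care is the decomposition of invariant subspaces via interpolation polynomials; everything else is immediate bookkeeping.
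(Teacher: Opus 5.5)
Your proof is correct and follows essentially the same route as the paper, which argues informally that $F_1$ must be an eigenspace, that $F_2$ must then contain a further eigenvector, and so on, so that each $F_i$ is built up one eigenspace at a time. Your interpolation lemma (every $\varphi$-invariant subspace is the sum of the eigenspaces it contains) supplies the justification the paper leaves implicit, and you also check explicitly two things the paper only asserts: that distinct orders give distinct flags, and that $\sim$ is trivial because an automorphism commuting with $\varphi$ preserves every eigenspace.
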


More generally, for any eigenvalue $\lambda$ with multiplicity $m$, we consider the generalised $\lambda$-eigenspace
\begin{eqn}
	E_\lambda \coleq \ker \big( (\varphi - \lambda \id)^m \big) \subset E
	\qtext{and let}
	\nu_\lambda \coleq (\varphi - \lambda \id) \big|_{E_\lambda} : E_\lambda \to E_\lambda.
\end{eqn}
Recall that $E_\lambda$ is an $m$-dimensional vector space equipped with a canonical $m$-step filtration by kernels of powers of the nilpotent part $\nu_\lambda$:
\begin{eqn}
	0 \subset \ker ( \nu_\lambda ) \subset \ker ( \nu_\lambda^2 ) \subset \cdots \subset \ker ( \nu_\lambda^m ) = E_\lambda.
\end{eqn}
This is not a flag in general because this sequence typically stabilises to $E_\lambda$ sooner than step $m$.
However, we can always reduce this filtration in a unique way to a partial flag of some length $l \leq m$ by deleting all duplicate subspaces:
\begin{eqntag}
	\label{251018193025}
	E_{\lambda}^\bullet \coleq \Big(
	0 \subset E_{\lambda}^1 \subset \cdots \subset E_{\lambda}^l = E_\lambda
	\Big)
	\qtext{where}
	E_{\lambda}^k = \ker ( \nu_\lambda^{k} ).
\end{eqntag}

Now, suppose $e_0 \in E_\lambda^k \smallsetminus E_\lambda^{k-1}$ is a generalised eigenvector of $\varphi$ of some rank $k \geq 1$.
Recall that this means $\nu_\lambda^k (e_0) = 0$ but $e_j \coleq \nu^j_\lambda (e_0) \neq 0$ for all $j = 0, \ldots, k-1$.
These vectors form a \textit{Jordan chain} $\set{e_{k-1}, \ldots, e_1, e_0} \subset E_\lambda$ of length $k$; i.e., an ordered set of linearly independent vectors, spanning a $k$-dimensional $\varphi$-invariant subspace $E_{\lambda, 0} \coleq \inner{ e_{k-1}, \ldots, e_1, e_0 } \subset E_\lambda$.
This Jordan chain is called \textit{maximal} if $e_0 \not\in \sf{im} (\nu_\lambda)$; i.e., there does not exist $e'_0 \in E_\lambda$ such that $e_0 = \nu_\lambda (e'_0)$.
The subspace $E_{\lambda, 0}$ spanned by a maximal Jordan chain is sometimes called a \textit{Jordan block}.

Any two Jordan blocks are either equal or disjoint, so every generalised eigenspace decomposes into a direct sum of Jordan blocks $E_\lambda = \bigoplus_{b} E_{\lambda,b}$.
But beware that this decomposition is not unique, for if $e_0$ and $e'_0$ are two linearly independent generalised eigenvectors of rank $k$, then $e_0 + e'_0$ and $e_0 - e'_0$ are also generalised eigenvectors of rank $k$.
The vectors $e_0$ and $e'_0$ generate two linearly independent Jordan chains spanning a pair of Jordan blocks $E_{\lambda, b}$ and $E_{\lambda, b'}$.
So do the vectors $e_0 + e'_0$ and $e_0 - e'_0$ thus giving another pair of disjoint Jordan blocks $E'_{\lambda, b}$ and $E'_{\lambda, b'}$.
But $E_{\lambda, b} \oplus E_{\lambda, b'} = E'_{\lambda, b} \oplus E'_{\lambda, b'}$.

The length $l$ in \eqref{251018193025} is the size of the largest Jordan block in $E_\lambda$, whilst the dimension of each $E^k_\lambda$ is the number of Jordan blocks of size $\geq k$.
Thus, the dimension of $E$ is partitioned by the dimensions of its generalised eigenspaces, and the dimension of each generalised eigenspace is further partitioned by the sizes of the Jordan blocks in that generalised eigenspace.
Together, these partitions are known as the \textit{Jordan type} of $\varphi$.
More specifically, let $\lambda_1, \ldots, \lambda_d$ be the distinct eigenvalues of $\varphi$ with respective multiplicities $m_1, \ldots, m_d$.
For each $i = 1, \ldots, d$, let $n_i$ be the total number of Jordan blocks in the generalised $\lambda_i$-eigenspace and let $r_{i1} \geq r_{i2} \geq \cdots \geq r_{in_i}$ be their lengths.
They define a partition $\bm{m}_i \coleq ( m_i = r_{i1} + \ldots + r_{in_i} )$.
Then the \textit{Jordan type} of $\varphi$ is the unordered collection of these partitions labelled by the distinct eigenvalues of $\varphi$:
\begin{eqn}
	\op{Jt} (\varphi) \coleq \set{ (\lambda_1, \bm{m}_{1}), \ldots, (\lambda_d, \bm{m}_{d}) }.
\end{eqn}

Suppose $E_{\lambda,0}$ and $E'_{\lambda,0}$ are two Jordan blocks of the same dimension in the same generalised eigenspace, spanned by Jordan chains $\{ e_{k-1}, \ldots, e_1, e_0 \}$ and $\{ e'_{k-1}, \ldots, e'_1, e'_0 \}$, respectively.
Let $\psi \in \Aut (E)$ be the automorphism that sends $e_i \mapsto e'_i$ and vice versa for all $i = 0, \ldots, k-1$ and leaves all other Jordan blocks untouched.
Then $\psi$ commutes with $\varphi$ because $\psi \varphi (e_i) = \psi (\lambda e_i + e_{i+1}) = \lambda e'_i + e'_{i+1} = \varphi (e'_i) = \varphi \psi (e_i)$ for all $i = 0, \ldots, k-1$ where we have set $e_k = e'_k \coleq 0$ for convenience.

Every Jordan block has its own canonical complete flag given by any Jordan chain that spans it:
\begin{eqn}
	E_{\lambda, 0}^\bullet \coleq \Big( 0 \subset E_{\lambda, 0}^1 \subset \cdots \subset E_{\lambda, 0}^k = E_{\lambda,0} \Big)
	\qtext{where}
	E_{\lambda, 0}^j \coleq \inner{e_{k-1}, \ldots, e_{k-j}}.
\end{eqn}
Meanwhile, the complete flag $F$ also restricts to a complete flag on any Jordan block $E_{\lambda, 0}$ given by intersecting the subspaces of $F$ with $E_{\lambda, 0}$ and deleting all duplicate subspaces.
The point is that the relation $\nu_\lambda e_j = e_{j+1}$ implies $\varphi e_j = \lambda e_j + e_{j+1}$, and this means if $e_j \in F_i$ for some $i$, then we must have $e_{k-1}, \ldots, e_{j+1}, e_j \subset F_i$ because $F_i$ is a $\varphi$-invariant subspace.
In fact, by linear independence, all the vectors in the Jordan chain generated by $e_0$ necessarily land in lower and lower levels of the flag $F$; i.e., if $e_j \in F_i \smallsetminus F_{i-1}$, then we must have $e_{j+1} \in F_{i'} \smallsetminus F_{i'-1}$ for some $i' < i$, and $e_{j+2} \in F_{i''} \smallsetminus F_{i''-1}$ for some $i'' < i'$, and so on.
These observations can be summarised as follows.

\begin{lemma}
	\label{251018210001}
	For any linear system $(E, \varphi)$, the restriction of any $\varphi$-invariant flag $F$ to any Jordan block $E_{\lambda, 0}$ must coincide with the canonical flag $E_{\lambda, 0}^\bullet$.
	Moreover, any automorphism $\psi \in \Aut (E)$ that swaps two Jordan chains of the same length within the same generalised eigenspace must necessarily commute with $\varphi$.
	More precisely, suppose $E_{\lambda,0}$ and $E'_{\lambda,0}$ are any two Jordan blocks of the same dimension spanned by Jordan chains $\{ e_{k-1}, \ldots, e_1, e_0 \}$ and $\{ e'_{k-1}, \ldots, e'_1, e'_0 \}$, respectively.
	Let $\psi \in \Aut (E)$ be the automorphism that sends $e_i \mapsto e'_i$ and vice versa for all $i = 0, \ldots, k-1$ and is the identity on all other Jordan blocks.
	Then $\psi \varphi = \varphi \psi$.
\end{lemma}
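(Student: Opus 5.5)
The plan is to prove the two assertions of \autoref{251018210001} separately. Both follow from the Jordan chain relation $\varphi e_j = \lambda e_j + e_{j+1}$, with the convention $e_k = 0$.

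For the first assertion, fix a $\varphi$-invariant flag $F$ and a Jordan block $E_{\lambda,0}$ spanned by the chain $\{e_{k-1},\ldots,e_0\}$. The restricted flag consists of the distinct subspaces $F_i \cap E_{\lambda,0}$. Each of these is $\varphi$-invariant, hence $\nu_\lambda$-invariant, since $E_{\lambda,0}$ is invariant. The key step is that $\nu_\lambda|_{E_{\lambda,0}}$ is a single nilpotent Jordan block. Its only invariant subspaces are therefore the kernels $\ker(\nu_\lambda^j|_{E_{\lambda,0}}) = \langle e_{k-1},\ldots,e_{k-j}\rangle = E^j_{\lambda,0}$.

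To prove this, take an invariant subspace $V$ of dimension $j$. The operator $\nu_\lambda|_V$ is nilpotent on a $j$-dimensional space, so $\nu_\lambda^j$ vanishes on $V$. Hence $V\subset \ker \nu_\lambda^j|_{E_{\lambda,0}}$, and this kernel has dimension exactly $j$, so the two are equal. This is the argument of \autoref{251019183631}, done in general.

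Consequently, the subspaces $F_i\cap E_{\lambda,0}$ form an increasing chain of members of the canonical flag. Their dimensions go from $0$ at $i=0$ to $k$ at $i=n$. Each step increases the dimension by at most one, because $\dim(F_i\cap V) \le \dim(F_{i-1}\cap V)+1$. So every dimension is attained, and after removing duplicates the restricted flag is exactly $E^\bullet_{\lambda,0}$.

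For the second assertion, $\psi$ is defined on a basis of $E$ adapted to the Jordan decomposition. This basis consists of the two chains plus chains for all other blocks. $\psi$ swaps $e_i \leftrightarrow e'_i$ and is the identity elsewhere, so it is an involution and in particular an automorphism. It suffices to check $\psi\varphi = \varphi\psi$ on the basis.

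On vectors of the other blocks both sides agree, since those blocks are $\varphi$-invariant and $\psi$ fixes them pointwise. On $e_i$ the computation $\psi\varphi(e_i) = \lambda e'_i + e'_{i+1} = \varphi(e'_i) = \varphi\psi(e_i)$ works because the chains have the same length $k$. This makes $e_k = e'_k = 0$ simultaneously, so the last vectors $e_{k-1}, e'_{k-1}$ match. The computation on $e'_i$ is symmetric.

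The only step that needs care is the description of the invariant subspaces of a single nilpotent Jordan block. The dimension-count argument via $\nu_\lambda^j|_V = 0$ handles it cleanly. Everything else is bookkeeping.
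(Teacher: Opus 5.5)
Your proof is correct, and on the second assertion it matches the paper exactly: the same basis-by-basis check $\psi\varphi(e_i)=\lambda e'_i+e'_{i+1}=\varphi\psi(e_i)$, with $e_k=e'_k=0$.

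For the first assertion your key step is different.

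\begin{itemize}
\item \emph{The paper's route.} It works only with the chain vectors. It notes that $\varphi$-invariance of $F_i$ together with $\varphi e_j=\lambda e_j+e_{j+1}$ forces $e_j\in F_i\Rightarrow e_{j+1},\ldots,e_{k-1}\in F_i$. Hence the chain descends through strictly lower levels of $F$.
\item \emph{Your route.} You classify all $\nu_\lambda$-invariant subspaces of a single nilpotent Jordan block. A $j$-dimensional invariant $V$ satisfies $\nu_\lambda^j|_V=0$, so $V=\ker\nu_\lambda^j|_{E_{\lambda,0}}=E^j_{\lambda,0}$ by a dimension count. You then use that $\dim(F_i\cap E_{\lambda,0})$ grows by at most one at each step.
\end{itemize}

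Your version is the more complete one. The paper's sketch never explicitly rules out $F_i\cap E_{\lambda,0}$ containing non-chain vectors such as $e_j+c\,e_{j'}$. It also does not check that every dimension $0,\ldots,k$ is attained. Both points are built into your argument.

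The paper's chain-tracking viewpoint has its own payoff: it is what gets reused in the proof of \autoref{251017181852}. There a basis adapted to $F$ is built by following Jordan chains down through the levels of the flag.
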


\subsection{Shuffled Jordan Types}

To design the complete invariant parameterising the isomorphism classes of $\sf{Fi}_\varphi (E)$, we review the Young tableau representing the Jordan type of an endomorphism.
Jordan blocks in a generalised eigenspace $E_\lambda$ are often organised into a \textit{Young tableau} (using the English convention) such as
\ytableausetup{smalltableaux,aligntableaux=center}
\begin{eqn}
	\begin{ytableau}
		\none & \none & \none \\
		{} & {} & {} \\
		{} & {} \\
		{} & {} \\
		{}
	\end{ytableau}
	\qquad \quad \text{or} \qquad \quad
	\begin{ytableau}
		\none[\scriptsize \lambda] & \none[\scriptsize \lambda] & \none[\scriptsize \lambda] \\
		{11} & {12} & {13} \\
		{21} & {22} \\
		{31} & {33} \\
		{41}
	\end{ytableau}
\end{eqn}
where each row represents a Jordan block whose size equals the number of boxes in that row.
Each column represents a subspace $E^k_\lambda$ and the number of boxes in that column equals the dimension of $E^k_\lambda$.
Every box is labelled by $ij$ where $i$ is the row number and $j$ is the column number.
There is one tableau for every distinct eigenvalue of $\varphi$ so every box is also labelled by an eigenvalue $\lambda$.
Let us denote the Young tableau of $\varphi$ by $\op{Yt} (\varphi)$.
Thus, $\op{Yt} (\varphi)$ is a collection of boxes labelled by $(\lambda, ij)$.

For example, if $\varphi$ has two distinct eigenvalues $\lambda_1, \lambda_2$ with multiplicities $8$ and $3$ respectively, four Jordan blocks of sizes $3,2,2,1$ in $E_{\lambda_1}$, and two Jordan blocks of sizes $2,1$ in $E_{\lambda_2}$, then the Young tableau $\op{Yt} (\varphi) = \op{Yt}_{1} (\varphi) \sqcup \op{Yt}_{2} (\varphi)$ of $\varphi$ has two subtableaux labelled by the eigenvalues $\lambda_1$ and $\lambda_2$ which are as follows.
\begin{center}
	\begin{ytableau}
		\none[\scriptstyle {\lambda_1}] & \none[\scriptstyle {\lambda_1}] & \none[\scriptstyle {\lambda_1}]\\
		{11} & {12} & {13} \\
		{21} & {22} \\
		{31} & {33} \\
		{41}
	\end{ytableau}
	\qquad \quad
	\begin{ytableau}
		\none[\scriptstyle {\lambda_2}] & \none[\scriptstyle {\lambda_2}] \\
		{11} & {12} \\
		{21}	\\
		\none	\\
		\none
	\end{ytableau}
\end{center}
When the total number of rows in the tableau is not very large, it is convenient to omit the row-number from the label $ij$ and use distinct colours for all the rows.
For example, the above Young tableau can be alternatively displayed as follows.
\begin{center}
	\begin{ytableau}
		\none[\scriptstyle {\lambda_1}] & \none[\scriptstyle {\lambda_1}] & \none[\scriptstyle {\lambda_1}]\\
		*(orange) 		{1} 
		&	*(orange) 		{2} 
		&	*(orange) 		{3}
		\\
		*(darkblue!75)	{1} 
		&	*(darkblue!75)	{2} 
		\\
		*(darkgreen)	{1} 
		&	*(darkgreen) 	{2} 
		\\
		*(red!75)		{1}
	\end{ytableau}
	\qquad \quad
	\begin{ytableau}
		\none[\scriptstyle {\lambda_2}] & \none[\scriptstyle {\lambda_2}] \\
		*(yellow)		{1} 
		&	*(yellow)		{2}
		\\
		*(violet!75)		{1}
		\\
		\none	\\
		\none
	\end{ytableau}
\end{center}

Now we are ready to define the complete invariant parameterising the isomorphism classes of $\sf{Fi}_\varphi (E)$.
We being with a less formal but more intuitive definition.
First, to \dfn{shuffle} the rows of a Young tableau is to combine all boxes into a single row in a way that preserves the order of the boxes that came from the same row.
When all eigenvalues are distinct, this is the same thing as ordering the eigenvalues.
More generally, a shuffle can arrange the indistinct eigenvalues out of order as long as the order coming from each row is maintained.
In addition, we consider two shuffles to be \dfn{equivalent} if they only differ by swapping pairs of rows of the same length in the same subtableau, and we call an equivalence class a \dfn{shuffled Jordan type}.

For example, some of the many possible ways to shuffle the six rows of the above Young tableau with eleven boxes are as follows.
\begin{center}
	\begin{ytableau}
		\none[\scriptstyle {\lambda_1}]
		&	\none[\scriptstyle {\lambda_1}]
		&	\none[\scriptstyle {\lambda_1}]
		&	\none[\scriptstyle {\lambda_1}]
		&	\none[\scriptstyle {\lambda_1}]
		&	\none[\scriptstyle {\lambda_1}]
		&	\none[\scriptstyle {\lambda_1}]
		&	\none[\scriptstyle {\lambda_1}]
		&	\none[\scriptstyle {\lambda_2}]
		&	\none[\scriptstyle {\lambda_2}]
		&	\none[\scriptstyle {\lambda_2}]
		\\
		*(orange) 		1
		& 	*(orange) 		2
		& 	*(orange) 		3
		&	*(darkblue!75)	1
		&	*(darkblue!75)	2
		& 	*(darkgreen)	1
		& 	*(darkgreen)	2
		& 	*(red!75)		1
		&	*(yellow)		1
		&	*(yellow)		2
		& 	*(violet!75)	1
	\end{ytableau}
	\qquad \quad
	\begin{ytableau}
		\none[\scriptstyle {\lambda_1}]
		&	\none[\scriptstyle {\lambda_1}]
		&	\none[\scriptstyle {\lambda_1}]
		&	\none[\scriptstyle {\lambda_2}]
		&	\none[\scriptstyle {\lambda_1}]
		&	\none[\scriptstyle {\lambda_1}]
		&	\none[\scriptstyle {\lambda_1}]
		&	\none[\scriptstyle {\lambda_1}]
		&	\none[\scriptstyle {\lambda_1}]
		&	\none[\scriptstyle {\lambda_2}]
		&	\none[\scriptstyle {\lambda_2}]
		\\
		*(darkblue!75)	1
		& 	*(orange) 		1
		& 	*(darkblue!75)	2
		& 	*(violet!75)	1
		& 	*(orange)		2
		& 	*(orange) 		3
		& 	*(darkgreen)	1
		& 	*(red!75)		1
		& 	*(darkgreen)	2
		&	*(yellow)		1
		&	*(yellow)		2
	\end{ytableau}
	\\
	\begin{ytableau}
		\none[\scriptstyle {\lambda_1}]
		&	\none[\scriptstyle {\lambda_1}]
		&	\none[\scriptstyle {\lambda_1}]
		&	\none[\scriptstyle {\lambda_1}]
		&	\none[\scriptstyle {\lambda_1}]
		&	\none[\scriptstyle {\lambda_1}]
		&	\none[\scriptstyle {\lambda_1}]
		&	\none[\scriptstyle {\lambda_1}]
		&	\none[\scriptstyle {\lambda_2}]
		&	\none[\scriptstyle {\lambda_2}]
		&	\none[\scriptstyle {\lambda_2}]
		\\
		*(orange) 		1
		& 	*(orange) 		2
		& 	*(orange) 		3
		& 	*(darkgreen)	1
		& 	*(darkgreen)	2
		&	*(darkblue!75)	1
		&	*(darkblue!75)	2
		& 	*(red!75)		1
		&	*(yellow)		1
		&	*(yellow)		2
		& 	*(violet!75)	1
	\end{ytableau}
	\qquad \quad
	\begin{ytableau}
		\none[\scriptstyle {\lambda_2}]
		&	\none[\scriptstyle {\lambda_1}]
		&	\none[\scriptstyle {\lambda_1}]
		&	\none[\scriptstyle {\lambda_2}]
		&	\none[\scriptstyle {\lambda_1}]
		&	\none[\scriptstyle {\lambda_1}]
		&	\none[\scriptstyle {\lambda_1}]
		&	\none[\scriptstyle {\lambda_2}]
		&	\none[\scriptstyle {\lambda_1}]
		&	\none[\scriptstyle {\lambda_1}]
		&	\none[\scriptstyle {\lambda_2}]
		\\
		*(yellow)		1
		&	*(red!75)		1
		& 	*(orange) 		1
		& 	*(darkgreen)	1
		& 	*(orange)		2
		& 	*(orange) 		3
		& 	*(darkgreen)	2
		& 	*(violet!75)	1
		&	*(darkblue!75)	1
		& 	*(darkblue!75)	2
		&	*(yellow)		2
	\end{ytableau}
	\\
	\begin{ytableau}
		\none[\scriptstyle {\lambda_1}]
		&	\none[\scriptstyle {\lambda_1}]
		&	\none[\scriptstyle {\lambda_1}]
		&	\none[\scriptstyle {\lambda_2}]
		&	\none[\scriptstyle {\lambda_2}]
		&	\none[\scriptstyle {\lambda_1}]
		&	\none[\scriptstyle {\lambda_1}]
		&	\none[\scriptstyle {\lambda_1}]
		&	\none[\scriptstyle {\lambda_1}]
		&	\none[\scriptstyle {\lambda_1}]
		&	\none[\scriptstyle {\lambda_2}]
		\\
		*(orange) 		1
		& 	*(orange) 		2
		& 	*(orange) 		3
		& 	*(yellow)		1
		& 	*(yellow)		2
		&	*(darkgreen)	1
		&	*(darkgreen)	2
		& 	*(red!75)		1
		&	*(darkblue!75)	1
		&	*(darkblue!75)	2
		& 	*(violet!75)	1
	\end{ytableau}
	\qquad \quad
	\begin{ytableau}
		\none[\scriptstyle {\lambda_1}]
		&	\none[\scriptstyle {\lambda_1}]
		&	\none[\scriptstyle {\lambda_1}]
		&	\none[\scriptstyle {\lambda_2}]
		&	\none[\scriptstyle {\lambda_1}]
		&	\none[\scriptstyle {\lambda_1}]
		&	\none[\scriptstyle {\lambda_1}]
		&	\none[\scriptstyle {\lambda_1}]
		&	\none[\scriptstyle {\lambda_1}]
		&	\none[\scriptstyle {\lambda_2}]
		&	\none[\scriptstyle {\lambda_2}]
		\\
		*(darkgreen)	1
		& 	*(orange) 		1
		& 	*(darkgreen)	2
		& 	*(violet!75)	1
		& 	*(orange)		2
		& 	*(orange) 		3
		& 	*(darkblue!75)	1
		& 	*(red!75)		1
		& 	*(darkblue!75)	2
		&	*(yellow)		1
		&	*(yellow)		2
	\end{ytableau}
\end{center}
Notice that the eigenvalue labels $\lambda_1, \lambda_2$ and the colours may occur in any order, but the order of the boxes within each colour is respected.
In contrast, any combination of boxes that contains, say, \begin{ytableau} \none[\scriptstyle \cdots] & *(orange) 2 & \none[\scriptstyle \cdots] & *(orange) 1 & \none[\scriptstyle \cdots] \end{ytableau} is disallowed.
The top-left and middle-left shuffles are equivalent because they can be obtained from one another by swapping the second and third rows in the $\lambda_1$ subtableau.
Similarly, the top-right and bottom-right shuffles are equivalent.
In contrast, the top-left and bottom-left shuffles are not equivalent because we disallow swapping rows from the $\lambda_1$-subtableau with rows from the $\lambda_2$-subtableau.
Obviously, there are too many possible shuffled Jordan types in this example to list them all (in fact, $415\,800$; see \autoref{251017162851} below).
For a much smaller example, consider $\varphi$ in $\CC^4$ with just one eigenvalue and two Jordan chains of lengths $1$ and $3$.
A quick count reveals that there are only four shuffled Jordan types which can be listed in full.
\begin{center}
	\begin{ytableau}
		\none[\scriptstyle {\lambda}] & \none[\scriptstyle {\lambda}] & \none[\scriptstyle {\lambda}] \\
		*(orange)			{1} 
		&	*(orange)			{2}
		&	*(orange)			{3}
		\\
		*(darkblue!75)		{1}
	\end{ytableau}
	\qquad \qquad
	\begin{ytableau}
		\none[\scriptstyle {\lambda}]
		&	\none[\scriptstyle {\lambda}]
		&	\none[\scriptstyle {\lambda}]
		&	\none[\scriptstyle {\lambda}]
		\\
		*(orange) 		1
		& 	*(orange) 		2
		& 	*(orange) 		3
		&	*(darkblue!75)	1
		\\
		\none
	\end{ytableau}
	\quad
	\begin{ytableau}
		\none[\scriptstyle {\lambda}]
		&	\none[\scriptstyle {\lambda}]
		&	\none[\scriptstyle {\lambda}]
		&	\none[\scriptstyle {\lambda}]
		\\
		*(orange) 		1
		& 	*(orange) 		2
		&	*(darkblue!75)	1
		& 	*(orange) 		3
		\\
		\none
	\end{ytableau}
	\quad
	\begin{ytableau}
		\none[\scriptstyle {\lambda}]
		&	\none[\scriptstyle {\lambda}]
		&	\none[\scriptstyle {\lambda}]
		&	\none[\scriptstyle {\lambda}]
		\\
		*(orange) 		1
		&	*(darkblue!75)	1
		& 	*(orange) 		2
		& 	*(orange) 		3
		\\
		\none
	\end{ytableau}
	\quad
	\begin{ytableau}
		\none[\scriptstyle {\lambda}]
		&	\none[\scriptstyle {\lambda}]
		&	\none[\scriptstyle {\lambda}]
		&	\none[\scriptstyle {\lambda}]
		\\
		*(darkblue!75)	1
		& 	*(orange) 		1
		& 	*(orange) 		2
		& 	*(orange) 		3
		\\
		\none
	\end{ytableau}
\end{center}
Even more concretely, in \autoref{251019183631} we encountered an automorphism $\varphi$ in $\CC^3$ with just one Jordan block of size $3$.
In this case, the Young tableau consists of just one row \begin{ytableau} *(orange) 1 & *(orange) 2 & *(orange) 3 \end{ytableau} and no other possible shuffles, and this is reflected in the fact that $\CC^3$ has a unique $\varphi$-invariant flag.
On the other hand, in \autoref{251019183645} we encountered an automorphism $\varphi$ in $\CC^3$ with two Jordan blocks of size $1$ and $2$, and we also saw that $\sf{Fi}_\varphi (\CC^3) / {\sim}$ consists of three elements corresponding to the three inequivalent groups of $\varphi$-invariant flags $F^c, F', F''$.
In this case, the Young tableau has two rows of lengths $2$ and $1$, and so there are only three possible shuffled Jordan types as follows.
\begin{center}
	\begin{ytableau}
		\none[\scriptstyle {1}] & \none[\scriptstyle {1}] \\
		*(orange)			{1} 
		&	*(orange)			{2}
		\\
		*(darkblue!75)		{1}
	\end{ytableau}
	\qquad \qquad
	\begin{ytableau}
		\none[\scriptstyle {1}]
		&	\none[\scriptstyle {1}]
		&	\none[\scriptstyle {1}]
		\\
		*(darkblue!75)	1
		& 	*(orange) 		1
		& 	*(orange) 		2
		\\
		\none
	\end{ytableau}
	\quad
	\begin{ytableau}
		\none[\scriptstyle {1}]
		&	\none[\scriptstyle {1}]
		&	\none[\scriptstyle {1}]
		\\
		*(orange) 		1
		&	*(darkblue!75)	1
		& 	*(orange) 		2
		\\
		\none
	\end{ytableau}
	\quad
	\begin{ytableau}
		\none[\scriptstyle {1}]
		&	\none[\scriptstyle {1}]
		&	\none[\scriptstyle {1}]
		\\
		*(orange) 		1
		& 	*(orange) 		2
		&	*(darkblue!75)	1
		\\
		\none
	\end{ytableau}
\end{center}

Notice that, up to relabelling using a different set of eigenvalues, the set of all shuffled Jordan types depends only on the Jordan type of $\varphi$.
That is to say, if $\varphi$ and $\varphi'$ have different eigenvalues but the same Jordan type, there is a one-to-one correspondence between the shuffles for $\varphi$ and the shuffles for $\varphi'$.

More formally, given $(E,\varphi)$, for every distinct eigenvalue $\lambda$, let $R_{\lambda,i}$ denote the ordered collection of boxes in row $i$ of the $\lambda$-subtableau $\op{Yt}_\lambda (\varphi) \subset \op{Yt} (\varphi)$, let $r_{\lambda,i}$ be the number of boxes in $R_{\lambda,i}$, and let $R$ be the total union of all the boxes:
\begin{eqn}
	R_{\lambda,i}
	\coleq \Big( \ytableausetup{smalltableaux,aligntableaux=bottom} 
	\raisebox{-3pt}{\begin{ytableau} \none[\scriptstyle \lambda] \\ {ij} \end{ytableau}} \Big)_{j = 1}^{r}
	= \Big( \raisebox{-3pt}{\begin{ytableau} \none[\scriptstyle \lambda] \\ {i1} \end{ytableau}} \,, 
	\raisebox{-3pt}{\begin{ytableau} \none[\scriptstyle \lambda] \\ {i2} \end{ytableau}} \,,
	\ldots,
	\raisebox{-3pt}{\begin{ytableau} \none[\scriptstyle \lambda] \\ {ir} \end{ytableau}}
	\Big)		
	\qtext{where}
	r = r_{\lambda,i}
	\qqtext{and}
	R \coleq \bigcup_{\lambda,i} R_{\lambda,i}.
\end{eqn}

\begin{definition}
	\label{251017143241}
	A \dfn{shuffle} of the Young tableau $\op{Yt} (\varphi)$ is a choice of total order on the set $R$ whose restriction to each $R_{\lambda, i} \subset R$ coincides with the natural order.
	We call two shuffles \dfn{equivalent} if one can be obtained from the other by a permutation of two sets $R_{\lambda, i}$ and $R_{\lambda, j}$ of the same length, or by a sequence of such permutations.
	A \dfn{shuffled Jordan type} on $(E, \varphi)$ is an equivalence class $\tau$ of shuffles.
	Note that if $\varphi$ has all-distinct eigenvalues, a shuffled Jordan type is the same thing as a total order on the eigenvalues.
\end{definition}

We denote the set of shuffles and shuffled Jordan types on $(E,\varphi)$ by
\begin{eqns}
	\sfop{Sh} \big( \op{Yt} (\varphi) \big) &\coleq \set{ \text{shuffles on $(E,\varphi)$} \Big.},
	\\
	\sfop{ShJt} (\varphi) &\coleq \set{ \text{shuffled Jordan types on $(E, \varphi)$} \Big.}
	= \sfop{Sh} \big( \op{Yt} (\varphi) \big) \big/ {\sim}.
\end{eqns}

To build a shuffled Jordan type for $\varphi$, start by drawing $n = \dim E$ empty boxes in a row.
\begin{center}
	\begin{ytableau}
		\none[\scriptstyle {\lambda_1}] & \none[\scriptstyle {\lambda_1}] & \none[\scriptstyle {\lambda_1}]\\
		*(orange) 		{1} 
		&	*(orange) 		{2} 
		&	*(orange) 		{3}
		\\
		*(darkblue!75)	{1} 
		&	*(darkblue!75)	{2} 
		\\
		*(darkgreen)	{1} 
		&	*(darkgreen) 	{2} 
		\\
		*(red!75)		{1}
	\end{ytableau}
	\qquad \quad
	\begin{ytableau}
		\none[\scriptstyle {\lambda_2}] & \none[\scriptstyle {\lambda_2}] \\
		*(yellow)		{1} 
		&	*(yellow)		{2}
		\\
		*(violet!75)		{1}
		\\
		\none	\\
		\none
	\end{ytableau}
	\hspace{2cm}
	\begin{ytableau}
		\none[]
		&	\none[]
		&	\none[]
		&	\none[]
		&	\none[]
		&	\none[]
		&	\none[]
		&	\none[]
		&	\none[]
		&	\none[]
		&	\none[]
		\\
		& 	
		& 	
		& 	
		& 	
		& 	
		& 	
		& 	
		& 	
		&	
		&	
		\\
		\none[]
		\\
		\none[]
		\\
		\none[]
	\end{ytableau}
\end{center}
Pick a row from the Young tableau of $(E,\varphi)$, let $r_1$ be the number of boxes in it, and fill in any $r_1$ of the $n$ empty boxes.
\begin{center}
	\begin{ytableau}
		\none[\scriptstyle {\lambda_1}] & \none[\scriptstyle {\lambda_1}] & \none[\scriptstyle {\lambda_1}]\\
		*(orange) 		{1} 
		&	*(orange) 		{2} 
		&	*(orange) 		{3}
		\\
		*(darkblue!75)	{1} 
		&	*(darkblue!75)	{2} 
		\\
		*(darkgreen)	{1} 
		&	*(darkgreen) 	{2} 
		\\
		*(red!75)		{1}
	\end{ytableau}
	\qquad \quad
	\begin{ytableau}
		\none[\scriptstyle {\lambda_2}] & \none[\scriptstyle {\lambda_2}] \\
		*(yellow)		{1} 
		&	*(yellow)		{2}
		\\
		*(violet!75)		{1}
		\\
		\none	\\
		\none
	\end{ytableau}
	\hspace{2cm}
	\begin{ytableau}
		\none[]
		&	\none[\scriptstyle {\lambda_1}]
		&	\none[]
		&	\none[]
		&	\none[\scriptstyle {\lambda_1}]
		&	\none[]
		&	\none[\scriptstyle {\lambda_1}]
		&	\none[]
		&	\none[]
		&	\none[]
		&	\none[]
		\\
		& 	*(orange) 		{1}
		& 	
		& 	
		& 	*(orange) 		{2}
		& 	
		& 	*(orange) 		{3}
		& 	
		& 	
		&	
		&	
		\\
		\none[]
		\\
		\none[]
		\\
		\none[]
	\end{ytableau}
\end{center}
Then pick a different row from the Young tableau, let $r_2$ be the number of boxes in it, and fill in any $r_2$ of the remaining $n - r_1$ empty boxes.
\begin{center}
	\begin{ytableau}
		\none[\scriptstyle {\lambda_1}] & \none[\scriptstyle {\lambda_1}] & \none[\scriptstyle {\lambda_1}]\\
		*(orange) 		{1} 
		&	*(orange) 		{2} 
		&	*(orange) 		{3}
		\\
		*(darkblue!75)	{1} 
		&	*(darkblue!75)	{2} 
		\\
		*(darkgreen)	{1} 
		&	*(darkgreen) 	{2} 
		\\
		*(red!75)		{1}
	\end{ytableau}
	\qquad \quad
	\begin{ytableau}
		\none[\scriptstyle {\lambda_2}] & \none[\scriptstyle {\lambda_2}] \\
		*(yellow)		{1} 
		&	*(yellow)		{2}
		\\
		*(violet!75)		{1}
		\\
		\none	\\
		\none
	\end{ytableau}
	\hspace{2cm}
	\begin{ytableau}
		\none[]
		&	\none[\scriptstyle {\lambda_1}]
		&	\none[]
		&	\none[\scriptstyle {\lambda_2}]
		&	\none[\scriptstyle {\lambda_1}]
		&	\none[]
		&	\none[\scriptstyle {\lambda_1}]
		&	\none[\scriptstyle {\lambda_2}]
		&	\none[]
		&	\none[]
		&	\none[]
		\\
		& 	*(orange) 		{1}
		& 	
		& 	*(yellow)		{1}
		& 	*(orange) 		{2}
		& 	
		& 	*(orange) 		{3}
		& 	*(yellow)		{2}
		& 	
		&	
		&	
		\\
		\none[]
		\\
		\none[]
		\\
		\none[]
	\end{ytableau}
\end{center}
And so on until we run out of rows in the Young tableau.
\begin{center}
	\begin{ytableau}
		\none[\scriptstyle {\lambda_1}] & \none[\scriptstyle {\lambda_1}] & \none[\scriptstyle {\lambda_1}]\\
		*(orange) 		{1} 
		&	*(orange) 		{2} 
		&	*(orange) 		{3}
		\\
		*(darkblue!75)	{1} 
		&	*(darkblue!75)	{2} 
		\\
		*(darkgreen)	{1} 
		&	*(darkgreen) 	{2} 
		\\
		*(red!75)		{1}
	\end{ytableau}
	\qquad \quad
	\begin{ytableau}
		\none[\scriptstyle {\lambda_2}] & \none[\scriptstyle {\lambda_2}] \\
		*(yellow)		{1} 
		&	*(yellow)		{2}
		\\
		*(violet!75)	{1}
		\\
		\none	\\
		\none
	\end{ytableau}
	\hspace{2cm}
	\begin{ytableau}
		\none[\scriptstyle {\lambda_1}]
		&	\none[\scriptstyle {\lambda_1}]
		&	\none[\scriptstyle {\lambda_1}]
		&	\none[\scriptstyle {\lambda_2}]
		&	\none[\scriptstyle {\lambda_1}]
		&	\none[\scriptstyle {\lambda_1}]
		&	\none[\scriptstyle {\lambda_1}]
		&	\none[\scriptstyle {\lambda_2}]
		&	\none[\scriptstyle {\lambda_1}]
		&	\none[\scriptstyle {\lambda_2}]
		&	\none[\scriptstyle {\lambda_1}]
		\\
		*(darkblue!75)	{1}
		& 	*(orange) 		{1}
		& 	*(darkgreen)	{1} 
		& 	*(yellow)		{1}
		& 	*(orange) 		{2}
		& 	*(darkblue!75)	{2}
		& 	*(orange) 		{3} 
		& 	*(yellow)		{2}
		& 	*(red!75)		{1}
		&	*(violet!75)	{1}
		&	*(darkgreen)	{2} 
		\\
		\none[]
		\\
		\none[]
		\\
		\none[]
	\end{ytableau}
\end{center}
We see that the total number of shuffles is nothing but a multinomial coefficient
\begin{eqn}
	\big| \sfop{Sh} \big( \op{Yt} (\varphi) \big) \big| =
	{n \choose r_1, r_2, \ldots, r_m}
	=
	\frac{n!}{r_1! r_2! \cdots r_m!},
\end{eqn}
where $m$ is the total number of rows in the Young tableau.
To take equivalence of shuffles into account, we must divide the above multinomial coefficient by the total number of allowed permutations of rows within each subtableau.
We summarise this as follows.

\begin{lemma}
	\label{251017162851}
	Let $(E, \varphi)$ be an $n$-dimensional linear system, and suppose that the Young tableau of $\varphi$ has $m$ rows with lengths $r_1, \ldots, r_m$.
	For every distinct eigenvalue $\lambda$ and every positive integer $r$, let $c_{\lambda, r}$ be the number of rows of length $r$ in the $\lambda$-subtableau.
	Let $\C \coleq \prod c_{\lambda, r}!$ where the product is taken over all distinct eigenvalues $\lambda$ and all integers $r \geq 1$.
	Then the total number of possible shuffled Jordan types on $(E,\varphi)$ is
	\begin{eqn}
		\big| \sfop{Sh} \big( \op{Yt} (\varphi) \big) /{\sim} \big| =
		\frac{1}{\C} {n \choose r_1, r_2, \ldots, r_m}.
	\end{eqn}
\end{lemma}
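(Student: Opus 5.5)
This is a counting argument in two stages: first count all shuffles, then quotient by the equivalence relation of \autoref{251017143241}. For the first stage, I would make the construction described just before the statement precise by giving a bijection between $\sfop{Sh}\big(\op{Yt}(\varphi)\big)$ and ordered set partitions of $\{1,\ldots,n\}$ into blocks $S_1,\ldots,S_m$ with $|S_k| = r_k$. Given a total order on $R$ that restricts to the natural order on each $R_{\lambda,i}$, identify $R$ with $\{1,\ldots,n\}$ via the order, and let $S_k$ be the set of positions occupied by the $k$-th row. Conversely, given such blocks, fill the positions of $S_k$ with the boxes of row $k$ in their natural order. This produces a total order on $R$ with the required restrictions, and the two constructions are clearly mutually inverse. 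Hence $|\sfop{Sh}| = \binom{n}{r_1,\ldots,r_m}$.

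For the second stage, I would define an action of the group $H \coleq \prod_{\lambda,r} \mathfrak{S}_{c_{\lambda,r}}$, of order $\C$, on $\sfop{Sh}$. Here $\mathfrak{S}_{c_{\lambda,r}}$ permutes the rows of length $r$ in the $\lambda$-subtableau, and a permutation acts on a shuffle by relabelling: the $j$-th box of row $i$ is replaced by the $j$-th box of row $\sigma(i)$. This is well defined exactly because the permuted rows have equal length, and it preserves the condition on restrictions to each row. By \autoref{251017143241}, equivalence of shuffles is generated by transpositions of equal-length rows in the same subtableau, so the equivalence classes are precisely the $H$-orbits.

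It then remains to show that $H$ acts freely, since orbit–stabiliser then gives $|\sfop{Sh}/{\sim}| = \binom{n}{r_1,\ldots,r_m}/\C$. Suppose $h \neq 1$, so $h$ moves some row $i$ to a different row $i' = \sigma(i)$ of the same length. In the corresponding ordered set partition, $h$ sends the block $S_i$ to position $i'$. If $h$ fixed the shuffle, we would need $S_{i'} = S_i$, which is impossible because distinct blocks are disjoint and nonempty.

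The only place needing care is checking that the relabelling action is genuinely a group action and that its orbits match the equivalence generated in \autoref{251017143241}, since that definition is phrased via sequences of swaps. This reduces to the fact that transpositions generate each symmetric factor of $H$. Freeness is immediate, because rows have length at least one.
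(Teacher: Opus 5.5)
Your proof is correct and follows the same route as the paper, which counts shuffles via the box-filling construction to get the multinomial coefficient and then divides by the number $\C$ of allowed row permutations. What you add is the check that the row-permutation group acts freely on shuffles, because distinct rows occupy disjoint nonempty sets of positions; this is exactly what justifies the division, and the paper leaves it implicit.
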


In the above example with six rows and eleven boxes, the total number of shuffles is $11!/(3! 2! 2! 1! 2! 1!) = 831\,600$.
There is just one pair of rows of length $2$ in the first subtableau, so $c_{\lambda_1,2} = 2$ and all other numbers $c_{\lambda, r}$ are either $1$ or $0$.
So $\C = 2!$ and therefore the total number of shuffled Jordan types is $415\,800$.

\subsection{Shuffled Jordan types from Jordan Basis}

The main use of a Young tableau in linear algebra is of course to construct a Jordan basis; i.e., a basis of generalised eigenvectors with respect to which $\varphi$ becomes a Jordan normal form.
Generalised eigenvectors in a Jordan basis are labelled by boxes of the Young tableau, and so any shuffle determines a total order on a Jordan basis.
Conversely, any permutation of a Jordan basis determines an shuffle so long as it does not scramble amongst themselves the vectors within each Jordan chain.
This leads to the following observation.

\begin{lemma}
	\label{251017230210}
	Let $(E,\varphi)$ be a linear system.
	Given a Jordan basis $\beta$ for $\varphi$, the set of permutations of $\beta$ that preserve the order within each Jordan chain are in one-to-one correspondence with the set of shuffles of the Young tableau of $\varphi$.
\end{lemma}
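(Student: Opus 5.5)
Fix a Jordan basis $\beta$ for $\varphi$, labelled by the boxes of $\op{Yt}(\varphi)$. A box labelled $(\lambda, ij)$ corresponds to the generalised eigenvector in the $i$-th Jordan chain of the $\lambda$-eigenspace. The convention is that box $j=1$ is the genuine eigenvector $e_{k-1}$ and box $j$ is $e_{k-j}$, which matches the canonical flag $E^\bullet_{\lambda,0}$ of each Jordan block. So $\beta$ is the same thing as a bijection $b : R \to \beta$.

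Any permutation $\sigma$ of $\beta$ gives a total order on $\beta$ and hence, by pulling back along $b$, a total order on $R$. Call this total order $b^\ast\sigma$. Conversely, a total order on $R$ pushes forward along $b$ to a total order on $\beta$, i.e.\ a permutation of $\beta$. These two constructions are mutually inverse bijections between total orders on $R$ and permutations of $\beta$. What remains is to check that the two restrictions correspond to each other.

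Under $b$, the ordered set $R_{\lambda,i}$ is carried to the $i$-th Jordan chain in its natural order $e_{k-1}, \ldots, e_0$. Consequently, $b^\ast\sigma$ restricts to the natural order on every $R_{\lambda,i}$ exactly when $\sigma$ preserves the order within every Jordan chain. This is precisely the defining condition of a shuffle in \autoref{251017143241}. The bijection therefore restricts to the claimed one-to-one correspondence.

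The only point needing care is the orientation convention: which end of a Jordan chain matches box $j=1$. I would fix it as above, since that is the convention for which the ordered basis spans a $\varphi$-invariant flag, as \autoref{251018210001} requires. Either convention still yields a bijection, though, so this is not a real obstacle. As a sanity check, the number of chain-order-preserving permutations is $n!/(r_1!\cdots r_m!)$, which agrees with the multinomial count of shuffles just before \autoref{251017162851}.
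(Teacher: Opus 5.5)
Your proposal is correct and follows essentially the same route as the paper, which justifies this lemma only by the preceding remark that the vectors of a Jordan basis are labelled by the boxes of $\op{Yt}(\varphi)$, so that a shuffle is the same thing as a reordering of $\beta$ that does not scramble any Jordan chain. Your explicit bijection $b : R \to \beta$, your convention that box $j$ corresponds to $e_{k-j}$, and your implicit use of the given ordering of $\beta$ to identify permutations with total orders make that remark precise.
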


More concretely, given a linear system $(E, \varphi)$, choose any Jordan basis $\beta$ for $\varphi$.
Write it as a union of Jordan bases $\beta_i$ for each generalised eigenspace $E_i$, and furthermore write each $\beta_i$ as a union of maximal Jordan chains $\beta_{ij}$ in $E_i$:
\begin{eqn}
	\beta = \beta_1 \cup \cdots \cup \beta_d
	\qqtext{where}
	\beta_i = \beta_{i1} \cup \cdots \cup \beta_{in_i}.
\end{eqn}
Here, $d$ is the number of distinct eigenvalues of $\varphi$ and $n_i$ is the total number of Jordan blocks in the generalised $\lambda_i$-eigenspace.
Thus, $\beta$ determines a decomposition of $E$ into Jordan blocks:
\begin{eqn}
	E = \bigoplus_{i=1}^d E_i = \bigoplus_{i=1}^d \bigoplus_{j=1}^{n_i} E_{ij}
	\qtext{where}
	E_{ij} \coleq \inner{ \beta_{ij} }.
\end{eqn}

Let $\Sym (\beta) \cong W \subset G$ be the symmetric group on the Jordan basis $\beta$, canonically isomorphic to the subgroup of permutation matrices in $G$; i.e., $\Sym (\beta)$ is the set of all permutations of the basis vectors in $\beta$.
Any permutation $\tau \in \Sym (\beta)$ gives a new basis $\tau (\beta)$ of $E$ but which may not be a Jordan basis.
Similarly, $\Sym (\beta_{ij})$ is the symmetric group on each maximal Jordan chain $\beta_{ij}$: any permutation of $\beta_{ij}$ is still a basis of the Jordan block $E_{ij}$ but it is no longer a Jordan chain if the permutation is nontrivial.
Any permutation $\tau \in \Sym (\beta)$ induces a permutation $\tau_{ij} \in \Sym (\beta_{ij})$ on each Jordan chain $\beta_{ij}$ which records how $\tau$ reorders the elements of the set $\beta_{ij}$ amongst themselves.
Let $\sfop{Sh} (\beta) \subset \Sym (\beta)$ denote the subset of permutations of $\beta$ that preserve the order within each Jordan chain:
\begin{eqn}
	\sfop{Sh} (\beta) \coleq \set{ \tau \in \Sym (\beta) 
		~\Big|~ \tau_{ij} = 1 \in \Sym (\beta_{ij}) \quad \text{for all $i,j$} }.
\end{eqn}
Amongst all such permutations, let $\sf{sh}_0 (\beta) \subset \sfop{Sh} (\beta)$ be the subset of permutations which only swap a pair of Jordan chains in the same generalised eigenspace and leave the rest of the Jordan basis unchanged.
Let $\sfop{Sh}_0 (\beta) \subset \sfop{Sh} (\beta)$ be the subset consisting of all finite compositions of elements of $\sf{sh}_0 (\beta)$.
In symbols,
\begin{eqns}
	\sf{sh}_0 (\beta) &\coleq \set{ \tau \in \sfop{Sh} (\beta) ~\Big|~ \text{$\tau (\beta_{ij}) = \beta_{ij'}$ and $\tau = 1$ on $\beta \smallsetminus (\beta_{ij} {\cup} \beta_{ij'})$ for some $i,j,j'$}},
	\\
	\sfop{Sh}_0 (\beta) &\coleq \set{ \tau \in \sfop{Sh} (\beta) ~\Big|~ \tau = \tau_k \circ \cdots \circ \tau_1 \text{ for some } \tau_1, \ldots, \tau_k \in \sf{sh}_0 (\beta) }.
\end{eqns}
Note that if $\tau \in \sf{sh}_0 (\beta)$ and $\tau (\beta_{ij}) = \beta_{ij'}$, then $\beta_{ij}$ and $\beta_{ij'}$ necessarily have the same length and $\tau$ is an order-preserving bijection.
Finally, we use $\sfop{Sh}_0 (\beta)$ to introduce an equivalence relation on the set $\sfop{Sh} (\beta)$ by declaring two permutations of $\beta$ to be equivalent if they differ by an element of $\sfop{Sh}_0 (\beta)$; i.e., for all $\tau,\tau' \in \sfop{Sh}(\beta)$,
\begin{eqn}
	\tau \sim \tau'
	\quad\coliff\quad
	\tau' \circ \tau^{-1} \in \sfop{Sh}_0 (\beta).
\end{eqn}
Then we obtain the following sharper version of \autoref{251017230210}.

\begin{lemma}
	\label{251117154608}
	Let $(E,\varphi)$ be a linear system, and choose any Jordan basis $\beta$ for $\varphi$.
	Then the sets $\sfop{Sh} (\varphi)$ and $C (\varphi)$ of shuffles and shuffled Jordan types on $(E,\varphi)$ are in one-to-one correspondence with the set $\sfop{Sh} (\beta)$ of all permutations of $\beta$ that preserve the order within each Jordan chain and the set $\sfop{Sh} (\varphi) / \sfop{Sh}_0 (\varphi)$, respectively:
	\begin{eqn}
		\sfop{Sh} (\varphi) \cong \sfop{Sh} (\beta)
		\qtext{and}
		C (\varphi) \cong \sfop{Sh} (\beta) / \sfop{Sh}_0 (\beta).
	\end{eqn}
\end{lemma}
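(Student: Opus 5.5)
The first bijection will come from \autoref{251017143241} together with \autoref{251017230210}. By \autoref{251017230210}, a shuffle is a total order on the boxes of $\op{Yt}(\varphi)$ that restricts to the natural order on each row $R_{\lambda,i}$. Once a Jordan basis $\beta$ is fixed, the boxes are labelled canonically by the vectors of $\beta$: box $(\lambda, ij)$ corresponds to the $j$-th vector of the Jordan chain $\beta_{\lambda i}$, ordered from eigenvector to top of the chain. A total order on $R$ is then the same as a bijection $\{1,\dots,n\}\to\beta$, i.e.\ an element $\tau\in\Sym(\beta)$ listing the basis vectors in the chosen order. The row-compatibility condition on the shuffle becomes exactly $\tau_{ij}=1\in\Sym(\beta_{ij})$ for every chain. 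This gives a bijection $\sfop{Sh}(\varphi)\cong\sfop{Sh}(\beta)$, and I will check injectivity and surjectivity directly from the two definitions.

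For the quotient, I will show that the equivalence relation on shuffles is carried onto the relation $\tau\sim\tau'\iff\tau'\circ\tau^{-1}\in\sfop{Sh}_0(\beta)$. An elementary move on shuffles exchanges two rows $R_{\lambda,i}$ and $R_{\lambda,j}$ of equal length in the same subtableau, keeping their internal order. Under the labelling above, this is post-composition of the listing by the unique order-preserving swap of the chains $\beta_{\lambda i}$ and $\beta_{\lambda j}$ that fixes all other vectors, which is an element of $\sf{sh}_0(\beta)$. Conversely, every element of $\sf{sh}_0(\beta)$ has this form, since by the remark after its definition it swaps two chains of the same length in the same generalised eigenspace in an order-preserving way. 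Finite sequences of moves therefore correspond to composites of elements of $\sf{sh}_0(\beta)$, i.e.\ to elements of $\sfop{Sh}_0(\beta)$. I will also check that $\sfop{Sh}_0(\beta)$ is closed under inverses (each swap is an involution), so the relation is an equivalence relation. Passing to classes then gives $C(\varphi)\cong\sfop{Sh}(\beta)/\sfop{Sh}_0(\beta)$. Here I read the right-hand side of the statement, written $\sfop{Sh}(\varphi)/\sfop{Sh}_0(\varphi)$, as this quotient taken via the identification of the first step.

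The main obstacle is bookkeeping: fixing a consistent convention for whether $\tau$ acts on positions or on vectors, so that the swap appears as left rather than right composition and matches $\tau'\circ\tau^{-1}$. I will fix the convention that $\tau$ maps positions to basis vectors, so that a swap $\sigma\in\sf{sh}_0(\beta)$ acts by $\tau\mapsto\sigma\circ\tau$, and then verify on the example with Jordan blocks of sizes $2,1$ (three classes) that the count agrees with \autoref{251017162851}. Finally, I will note that the bijection is independent of the chosen Jordan basis only up to the canonical relabelling of boxes. This suffices for the statement, because the statement fixes $\beta$.
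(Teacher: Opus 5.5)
Your proposal is correct and is essentially the argument the paper intends. The paper gives no separate proof of this lemma; it relies on the box-labelling behind \autoref{251017230210} and on the definitions of $\sfop{Sh}(\beta)$, $\sfop{Sh}_0(\beta)$ and the relation $\tau'\circ\tau^{-1}\in\sfop{Sh}_0(\beta)$ given just before it, which is exactly the identification you spell out, including reading $\sfop{Sh}(\varphi)/\sfop{Sh}_0(\varphi)$ as $\sfop{Sh}(\beta)/\sfop{Sh}_0(\beta)$. Your version is more careful than the paper's, since you fix the side of composition and check that the relation is an equivalence relation. The one small point to add is that matching rows of $\op{Yt}(\varphi)$ to chains of $\beta$ of equal length in the same generalised eigenspace involves a choice when several such rows exist, so the labelling is canonical only after such a matching is fixed.
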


\subsection{Invariant Flags as shuffled Jordan types}

The main result of this subsection is the following proposition.

\begin{proposition}
	\label{251017181852}
	Let $(E,\varphi)$ be a linear system.
	Any shuffled Jordan type determines a complete $\varphi$-invariant flag uniquely up to a $\varphi$-equivariant automorphism.
	Conversely, any complete $\varphi$-invariant flag determines a shuffled Jordan type.
	In particular, if two complete $\varphi$-invariant flags $F,F'$ are related by a $\varphi$-equivariant automorphism $\psi \in \Aut (E)$ (i.e., such that $\psi (F_i) = F'_i$ and $\psi \varphi = \varphi \psi$), then $F,F'$ determine the same shuffled Jordan type.
	In other words, the set $\sf{Fi}_\varphi (E) / {\sim}$ is finite and in bijection with the set of shuffled Jordan types on $(E,\varphi)$:
	\begin{eqn}
		\sf{Fi}_\varphi (E) / {\sim} \cong \sfop{Jt}_W (\varphi).
	\end{eqn}
\end{proposition}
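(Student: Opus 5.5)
The plan is to attach to each complete $\varphi$-invariant flag a shuffle through a Jordan basis \emph{adapted} to the flag, and to work throughout with the description $\sfop{ShJt}(\varphi) \cong \sfop{Sh}(\beta)/\sfop{Sh}_0(\beta)$ of \autoref{251117154608}. First I would handle the direction from shuffled Jordan types to flags. Fix a Jordan basis $\beta$ and a shuffle $\tau \in \sfop{Sh}(\beta)$. Order the vectors of $\beta$ by $\tau$, with the convention of \autoref{251018210001} that in each chain the eigenvector $e_{k-1}$ comes first and the head $e_0$ last. Let $F^\tau_i$ be the span of the first $i$ vectors. Since $\varphi e_j = \lambda e_j + e_{j+1}$ and $e_{j+1}$ precedes $e_j$, the matrix of $\varphi$ is upper-triangular in this ordered basis, so $F^\tau$ is $\varphi$-invariant. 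If $\tau' = \sigma\tau$ with $\sigma \in \sfop{Sh}_0(\beta)$, then $\sigma$ swaps equal-length chains in one generalised eigenspace. By \autoref{251018210001} the induced automorphism $\psi_\sigma$ commutes with $\varphi$, and it carries $F^\tau$ to $F^{\tau'}$. Hence each shuffled Jordan type gives a well-defined class in $\sf{Fi}_\varphi(E)/{\sim}$. Changing $\beta$ to another Jordan basis $\beta'$ changes nothing, because the linear map $\beta \to \beta'$ that preserves labels commutes with $\varphi$.

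Next, for the converse I would prove that every $\varphi$-invariant flag $F$ has an \emph{adapted} Jordan basis: a Jordan basis $\beta$ and $\tau \in \sfop{Sh}(\beta)$ with $F = F^\tau$. I would argue by induction on $n$, applied to the invariant hyperplane $F_{n-1}$. By induction, $F_{n-1}$ has an adapted Jordan basis $\beta'$ for $\varphi|_{F_{n-1}}$. The quotient $E/F_{n-1}$ is one-dimensional, with eigenvalue $\lambda$ say. Pick $v \notin F_{n-1}$; then $w \coleq (\varphi-\lambda)v \in F_{n-1}$. The task is to modify $v$ by an element of $F_{n-1}$ so that, for $\lambda$-chains in $\beta'$ of the same length, $w$ becomes either $0$ or exactly the head of one of them, the one $v$ then extends. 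Doing this may require re-choosing $\beta'$ inside $F_{n-1}$ while keeping it adapted to $F_\bullet$. I expect this to be the main obstacle. The delicate point is to choose the chain that $v$ extends among those whose head sits as late as possible in the flag, so that the new ordering of $\beta$ is still a shuffle producing $F$. I would attack it by a rank argument on $\ker \nu_\lambda^k \cap F_i$ for the various $i,k$.

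Then I would show that the resulting shuffled Jordan type does not depend on the adapted basis. To do this I would extract an intrinsic invariant: the family of Jordan types of the maps induced by $\varphi$ on the subquotients $F_j/F_i$, for all $0 \le i < j \le n$. Restrictions alone are not enough. For example, $F^0$ and $F'$ in \autoref{251019183645} have the same restricted Jordan types, but the quotient $F_2/F_0$ versus $E/F_1$ separates them. The key claim is that a shuffle modulo $\sfop{Sh}_0$ can be reconstructed from these subquotient types. Concretely, the box placed in position $i$ is determined by the eigenvalue on $F_i/F_{i-1}$ together with which chain it belongs to, up to equal-length swaps. The chain is read off from how the Jordan type of $F_n/F_{i-1}$ compares with that of $F_n/F_i$, and how the type of $F_i$ compares with that of $F_{i-1}$. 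The invariant is clearly preserved by a $\varphi$-equivariant $\psi$ with $\psi(F_i) = F'_i$, since $\psi$ induces $\varphi$-equivariant isomorphisms on all subquotients. This gives a well-defined map $\sf{Fi}_\varphi(E)/{\sim} \to \sfop{ShJt}(\varphi)$.

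Finally, I would check that the two constructions are mutually inverse. The flag-to-type-to-flag direction follows because $F = F^\tau$ for an adapted basis. The type-to-flag-to-type direction follows because computing the subquotient invariant of $F^\tau$ directly returns the class of $\tau$. Finiteness and the count then follow from \autoref{251017162851}. The case of all-distinct eigenvalues recovers \autoref{251018210133}.
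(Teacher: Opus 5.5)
\textbf{The gap.} The step you single out as the main obstacle, that every complete $\varphi$-invariant flag admits an adapted Jordan basis, cannot be carried out. It is false, and the proposition falls with it. Take $E=\CC^4$ and $\varphi=\idd+\nu$ with $\nu c=b$, $\nu b=a$, $\nu a=\nu d=0$ (one eigenvalue, Jordan blocks of sizes $3$ and $1$). Consider the flag $F=\big(\langle a\rangle\subset\langle a,\,b+d\rangle\subset\langle a,b,d\rangle\subset E\big)$. It is invariant, since $\nu(b+d)=a$ and $\nu\langle a,b,d\rangle=\langle a\rangle$.

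In an adapted Jordan basis the second vector lies in $F_2\smallsetminus F_1$, so it is $x(b+d)+ta$ with $x\neq0$. It is not an eigenvector, so it must be the middle vector of the $3$-chain. That forces it into $\operatorname{im}\nu=\langle a,b\rangle$, which it is not.

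Your inductive step breaks exactly here. $F_3$ does have the adapted basis $(a,\,b+d,\,d)$. But every lift $v=c+f$ with $f\in F_3$ has $w=\nu v\in b+\langle a\rangle$. Adaptedness forces the head of the $2$-chain of $\nu|_{F_3}$ to be $x(b+d)+ta$ and the $1$-chain to be $yd+ta$. So no re-choice inside $F_3$ makes $w$ zero or a chain head, and no rank argument on $\ker\nu^k\cap F_i$ can repair this.

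\textbf{A fifth class.} This flag is also a genuinely new equivalence class. Every $\psi$ commuting with $\nu$ preserves the canonical subspaces $\operatorname{im}\nu^2=\langle a\rangle$, $\ker\nu^2=\langle a,b,d\rangle$, $\operatorname{im}\nu=\langle a,b\rangle$ and $\ker\nu=\langle a,d\rangle$. The flag $F$ has $F_1=\operatorname{im}\nu^2$, $F_3=\ker\nu^2$, and $F_2\notin\{\operatorname{im}\nu,\ker\nu\}$. Now take any Jordan basis $(a',b',c';d')$ and the four shuffle flags built from the orders $a'b'c'd'$, $a'b'd'c'$, $a'd'b'c'$, $d'a'b'c'$. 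They have, respectively, $F_3\neq\ker\nu^2$, $F_2=\operatorname{im}\nu$, $F_2=\ker\nu$, and $F_1\neq\operatorname{im}\nu^2$. So $\mathsf{Fi}_\varphi(E)/{\sim}$ has at least five points, against the four shuffled Jordan types the paper itself lists for this Jordan type.

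Your subquotient invariant detects this rather than rescuing the argument. Here both $F_2$ and $E/F_2$ have type $(2)$. For a shuffle flag, $F_2$ of type $(2)$ forces $F_2=\operatorname{im}\nu$, hence $E/F_2$ of type $(1,1)$. So no shuffle can be reconstructed from the invariant of $F$.

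\textbf{The paper's proof has the same hole.} Its greedy construction takes $e_4=c$, whose chain $(a,b,c)$ sits in levels $1,3,4$. Then every $e_2\in F_2\smallsetminus F_1$ has $\nu e_2\in\langle a\rangle\smallsetminus\{0\}$, landing again in level $1$. The claim that the chains so produced are linearly independent is false.

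\textbf{What survives.} Your forward direction is correct, including the change of Jordan basis via the label-preserving map, which is cleaner than the paper's treatment. The all-distinct-eigenvalue case (\autoref{251018210133}) also holds. The claimed bijection with shuffled Jordan types does not.
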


\begin{proof}
	Given a shuffled Jordan type, choose any Jordan basis and order it according to any shuffle that represents the given shuffled Jordan type.
	The resulting basis $(e_1, \ldots, e_n)$ defines a $\varphi$-invariant flag $F$ by $F_i \coleq \inner{e_1, \ldots, e_i}$.
	Any other choice of Jordan basis and/or shuffle results in a potentially different basis $(e'_1, \ldots, e'_n)$ and hence a different complete $\varphi$-invariant flag $F'$ by $F'_i \coleq \inner{e'_1, \ldots, e'_i}$.
	However, the automorphism $\psi \in \Aut (E)$ obtained as the change-of-basis matrix from $(e_i)$ to $(e'_i)$ moves the flag $F$ to $F'$, and $\psi$ is the product of several changes of basis corresponding to the individual swaps of pairs of rows of the same length within the same subtableau.
	By \autoref{251018210001}, any such change-of-basis matrix commutes with $\varphi$, so $\psi \varphi = \varphi \psi$.
	
	Conversely, suppose $F$ is a complete $\varphi$-invariant flag.
	We will construct a basis of generalised eigenvectors adapted to $F$ which is a permutation of a Jordan basis that respects the order within each Jordan chain.
	Since $E$ decomposes into a direct sum of generalised eigenspaces, it is enough to focus on each individual generalised eigenspace $E_\lambda$ and construct an ordered basis of generalised eigenvectors adapted to the restriction of $F$.
	So, without loss of generality, let us assume that $\varphi$ has only one eigenvalue $\lambda$ so that the whole $n$-dimensional space $E$ is the generalised $\lambda$-eigenspace.
	
	Observe that whilst $\varphi$ satisfies $\varphi (F_i) = F_i$ for each $i$, its nilpotent part $\nu \coleq \varphi - \lambda \id$ satisfies $\nu (F_i) \subsetneq F_i$ for each $i$.
	Rephrased in more elementary terms, the matrix of $\varphi$ in any basis adapted to $F$ is upper-triangular but the matrix of $\nu$ is strictly-upper triangular.
	Start by picking any vector $e_n \in F_n \smallsetminus F_{n-1}$.
	It generates a Jordan chain $(\nu^{k-1} e_n, \nu^{k-2} e_n, \ldots, \nu e_n, e_n)$ of some length $k$.
	These are linearly independent vectors which must therefore belong to different levels of the flag $F$ in the following sense.
	For each $i = 0, \ldots, k-1$, there is some $n_i$ such that $e_{n_i} \coleq \nu^{i} e_n \in F_{n_i}$ yet $e_{n_i} \notin F_{n_i - 1}$.
	Note that $0 < n_{k-1} < n_{k-2} < \ldots < n_1 < n_0 = n$.
	As a result, we have constructed generalised eigenvectors $e_{i} \in F_{i} \smallsetminus F_{i-1}$ for all $i \in \set{ n_{k-1}, n_{k-2}, \ldots, n_1, n_0} \subset \set{ 1, \ldots, n}$.
	
	Next, let $m < n$ be the largest integer not equal to any $n_i$.
	We repeat the above argument by picking a vector $e_m \in F_m \smallsetminus F_{m-1}$ and using it to generate another Jordan chain $(\nu^{l-1} e_m, \ldots, \nu e_m, e_m)$ of some length $l$.
	Define $e_{m_i} \coleq \nu^i e_m$ where $m_i$ is such that $e_{m_i} \in F_{m_i} \smallsetminus F_{m_i - 1}$ for all $i = 0, \ldots, l-1$.
	Again, we have $m_{l-1} < \ldots < m_1 < m_0 = m$
	Since Jordan chains are linearly independent, it follows that all vectors $e_{m_i}, e_{n_j}$ are linearly independent, and so none of the $m_i$ and $n_j$ are equal.
	We have therefore constructed vectors $e_{i} \in F_{i} \smallsetminus F_{i-1}$ for all $i \in \set{ m_{l-1}, \ldots, m_0, n_{k-1}, \ldots, n_0} \subset \set{ 1, \ldots, n}$.
	
	We continue this process until we exhaust the set $\set{ 1, \ldots, n}$.
	This process stops after a finite number of steps, resulting in the desired basis $(e_1, \ldots, e_n)$ of generalised eigenvectors adapted to $F$.
	Furthermore, every vector $e_i$ is a member of a Jordan chain of vectors belonging to the basis $(e_1, \ldots, e_n)$, and each Jordan chain respects the order of this basis.
	By \autoref{251017230210}, it determines a shuffle and hence a shuffled Jordan type.
	
	Thus, we have a bijection between the set $\sf{Fi}_\varphi (E) / {\sim}$ and the set of shuffled Jordan types $\sfop{Jt}_W (\varphi)$ of $\varphi$ which is finite by \autoref{251017162851}.
\end{proof}

\subsection{Shuffled Jordan Matrices}
\label{251119161726}

A \textit{Jordan matrix} is any block-diagonal matrix whose diagonal blocks are of the form $\lambda \idd + \N$ where $\lambda \in \CC^\ast$ and $\N$ is the nilpotent matrix that has $1$'s on the superdiagonal and $0$'s elsewhere.
Let $J \subset B$ denote the subset of all invertible Jordan matrices.
If $\varphi \in \Aut (E)$ is an automorphism of an $n$-dimensional vector space, let $J (\varphi) \subset J$ denote the subset of all Jordan matrices that have the same Jordan type as $\varphi$:
\begin{eqn}
	J (\varphi) \coleq \set{ \J \in J ~\big|~ \op{Jt} (\J) = \op{Jt} (\varphi) }.
\end{eqn}
Recall that $J (\varphi)$ is a finite set whose cardinality equals the number of all possible permutations of the distinct eigenvalues of $\varphi$ and of the Jordan blocks for the same eigenvalue.
In particular, if $\varphi$ has all-distinct eigenvalues, then $| J (\varphi) | = n!$.

A \dfn{shuffled Jordan matrix} is any upper-triangular matrix of the form $\Z \coleq \tau \J \tau^{-1}$ where $\J \in J$ and $\tau \in W$.
Given a Jordan matrix $\J \in J$, in general the conjugate $\Z$ remains upper-triangular only for a subset $W (\J) \subset W$ of permutations $\tau$, which we call \dfn{(Jordan) shuffles} of $\J$.
In particular, if $\J$ is diagonal, then $W (\J) = W$; otherwise, $W (\J) \subset W$ is a proper subset.
We denote the subset of all shuffled Jordan matrices by $\sfop{Jt}_W \subset B$, and the subset of all shuffled Jordan matrices with the same Jordan type as $\varphi$ by $\sfop{Jt}_W (\varphi) \subset \sfop{Jt}_W$.
In symbols,
\begin{eqns}
	W (\J) 
	&\coleq \set{ \tau \in W ~\Big|~ \Z = \tau \J \tau^{-1} \in B },
	\\
	\sfop{Jt}_W
	&\coleq \set{ \Z = \tau \J \tau^{-1} \in B ~\Big|~ \J \in J,~ \tau \in W (\J)},
	\\
	\sfop{Jt}_W (\varphi)
	&\coleq \set{ \Z = \tau \J \tau^{-1} \in B ~\Big|~ \J \in J (\varphi),~ \tau \in W (\J)}.
\end{eqns}
Equivalently, $\sfop{Jt}_W$ is the set of all pairs $(\J, \tau) \in J \times W$ such that $\J_\tau$ is upper-triangular, considered up to equivalence $(\J, \tau) \sim (\J', \tau') ~\coliff~ \tau \J \tau^{-1} = \tau' \J' \tau'^{-1}$:
\begin{eqn}
	\sfop{Jt}_W
	\cong \set{ (\J, \tau) \in J {\times} W ~\Big|~ \tau \J \tau^{-1} \in B } \big/ {\sim}.
\end{eqn}
Similarly, we define a \dfn{generalised shuffled Jordan matrix} (resp. \dfn{special-generalised}) to be an upper-triangular matrix of the form $\J_\omega = \omega \J \omega^{-1}$ for a Jordan matrix $\J \in J$ and a generalised permutation matrix $\omega \in N$ (resp. special-generalised permutation matrix $\omega \in N_0$).
We denote their subsets similarly:
\begin{eqns}
	\sfop{Jt}_N
	&\coleq \set{ \Z = \omega \J \omega^{-1} \in B ~\Big|~ \J \in J,~ \omega \in N (\J)}
	\cong \set{ (\J, \omega) \in J {\times} N ~\Big|~ \omega \J \omega^{-1} \in B } \big/ {\sim},
	\\
	\sfop{Jt}_{N_0}
	&\coleq \set{ \Z = \omega \J \omega^{-1} \in B ~\Big|~ \J \in J,~ \omega \in N_0 (\J)}
	\cong \set{ (\J, \omega) \in J {\times} N_0 ~\Big|~ \omega \J \omega^{-1} \in B } \big/ {\sim}.
\end{eqns}

The natural surjective map $\op{Jt} : \sfop{Jt}_W \to \sfop{Jt}$ that extracts the Jordan type of a shuffled Jordan matrix has finite fibres.
In particular, observe that $\sfop{Jt}_W (\varphi)$ is a finite set for any $\varphi$; in fact, fixing a Jordan matrix $\J \in J (\varphi)$ determines a bijection $\sfop{Jt}_W (\varphi) \cong W (\J)$.
The cardinality of the fibre is largest over any generic Jordan type (i.e., with all-distinct eigenvalues) where it equals to $n!$ (the number of all possible permutations of $n$ eigenvalues) and this fibre is (non-canonically) in bijection with $W$.
In contrast, the cardinality of the fibre is smallest over any Jordan type consisting of a single Jordan block of length $n$ where it equals $1$.

There is also a well-defined surjective map $\diag : \sfop{Jt}_W \to T$ that extracts the diagonal part $\diag (\J_\tau) \in T$ of a shuffled Jordan matrix.
This map has finite fibres too, and in fact it is a bijection onto the dense open subset of $T$ consisting of diagonal matrices with all-distinct diagonal entries.
On the other hand, restricting $\diag$ over the lower- and lower-dimensional subvarieties of $T$ with coincident diagonal entries, we find more and more connected components of $\sfop{Jt}_W$, enumerated by the possible Jordan types, and the restriction of $\diag$ to each of these components is a bijection onto the given subvariety of $T$.
Consequently, $\sfop{Jt}_W$ is a stratified smooth algebraic variety with the largest stratum of dimension $n$ equal to the dimension of $T$.
This structure is easiest seen in low-dimensional examples.

\begin{example}
	\label{251123113725}
	If $n = 2$, then the preimage $\sfop{Jt}_W^0 = \diag^{-1} (T_0)$ of the dense open subset $T_0 \subset T = \set{ \diag (t_1, t_2) } \cong (\CC^\times)^2$ given by $t_1 \neq t_2$ consists of the diagonal matrices $\diag (t_1, t_2)$ with $t_1 \neq t_2$.
	The map $\diag$ restricts to the obvious bijection $\sfop{Jt}_W^0 \cong T_0$.
	A cartoon of this situation is depicted in \autoref{251124174351}.
	On the other hand, the preimage of the $1$-dimensional subvariety $T_1 \subset T$ given by $t_1 = t_2 \eqcol t$ has two connected components $\sfop{Jt}_W^{10}, \sfop{Jt}_W^{11}$ consisting of all the matrices
	\begin{eqn}
		\mtx{ t & 0 \\ 0 & t}
		\qtext{and}
		\mtx{ t & 1 \\ 0 & t}.
	\end{eqn}
	The map $\diag$ restricts to give the obvious bijections $\sfop{Jt}_W^{10} \cong T_1$ and $\sfop{Jt}_W^{11} \cong T_1$.
	Furthermore, just as $T_1$ is contained in the closure of $T_0$, the subset $\sfop{Jt}_W^{10}$ (but not the subset $\sfop{Jt}_W^{11}$) is contained in the closure of $\sfop{Jt}_W^0$.
	This gives a description of $\sfop{Jt}_W$ as a stratified algebraic variety with the largest stratum $\sfop{Jt}_W^0$ of dimension $2$ isomorphic to $T_0$, and two $1$-dimensional strata $\sfop{Jt}_W^{10}, \sfop{Jt}_W^{11}$, each isomorphic to $T_1$:
	\begin{eqn}
		\sfop{Jt}_W = \sfop{Jt}_W^0 ~\coprod~ \Coprod_{i=0,1} \sfop{Jt}_W^{1i}.
		\tag*{\qedhere}
	\end{eqn}
\end{example}

\begin{figure}[t]
	\centering
	\begin{subfigure}{0.45\textwidth}
		\includegraphics[width=\textwidth]{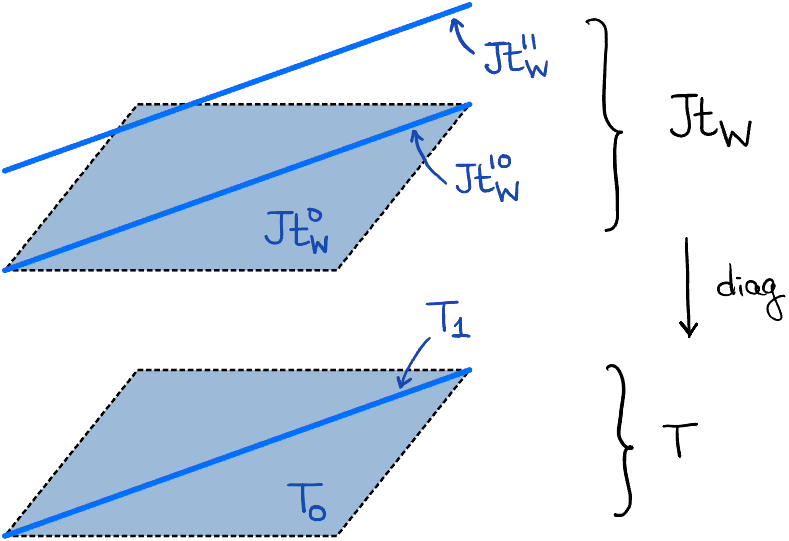}
		\caption{}
		\label{251124174351}
	\end{subfigure}
	\begin{subfigure}{0.45\textwidth}
		\includegraphics[width=\textwidth]{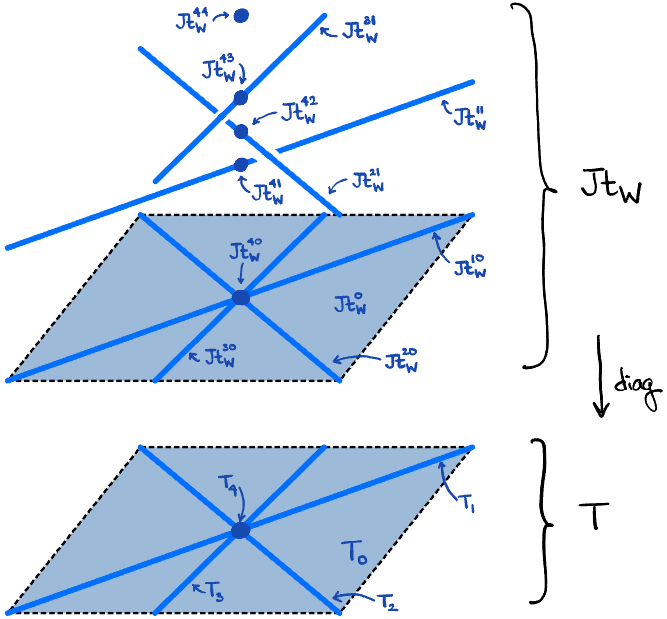}
		\caption{}
		\label{251124180859}
	\end{subfigure}
	\caption{}
	\label{251124181011}
\end{figure}

\begin{example}
	\label{251123113735}
	If $n = 3$, then $T = \set{ \diag (t_1, t_2, t_3) } \cong (\CC^\times)^3$ has the following relevant subsets; see \autoref{251124180859} for a cartoon.
	First, there is the dense open subset $T_0 \cong \set{ t_i \neq t_j } \subset T$.
	Second, there are three $2$-dimensional subsets $T_1 \cong \set{ t_2 = t_3 \eqcol t \neq t_1}$, $T_2 \cong \set{ t_1 = t_3 \eqcol t \neq t_2}$, and $T_3 \cong \set{ t_1 = t_2 \eqcol t \neq t_3}$.
	Finally, there is the $1$-dimensional subset $T_4 \cong \set{ t_1 = t_2 = t_3 }$.
	The preimage $\sfop{Jt}_W^0 \coleq \diag^{-1} (T_0)$ consists of the diagonal matrices $\diag (t_1, t_2, t_3)$ with $t_i \neq t_j$, hence $\sfop{Jt}_W^0 \cong T_0$.
	On the other hand, the preimages of $T_1, T_2, T_3$ have two connected components each.
	These preimages $\sfop{Jt}_W^{ij} = \diag^{-1} (T_i)$ with $j=0,1$ consist respectively of all matrices of the form
	\begin{eqn}
		\mtx{ t_1 & \ast & 0 \\ 0 & t & 0 \\ 0 & 0 & t }
		~,\qquad
		\mtx{ t & 0 & \ast \\ 0 & t_2 & 0 \\ 0 & 0 & t }
		~,\qquad
		\mtx{ t & 0 & 0 \\ 0 & t & \ast \\ 0 & 0 & t_3 }
		~,
	\end{eqn}
	where $\ast$ is $0$ or $1$, and $t, t_i$ have the same restrictions as those defining $T_1, T_2, T_3$ in each case.
	Thus, map $\diag$ determines a bijection $\sfop{Jt}_W^{ij} \cong T_i$.
	Finally, the preimage of $T_4$ has five connected components $\sfop{Jt}_W^{4j}, j = 0, \ldots, 4$, consisting of the matrices
	\begin{eqn}
		\mtx{ t & 0 & 0 \\ 0 & t & 0 \\ 0 & 0 & t }
		~,\qquad
		\mtx{ t & 1 & 0 \\ 0 & t & 0 \\ 0 & 0 & t }
		~,\qquad
		\mtx{ t & 0 & 1 \\ 0 & t & 0 \\ 0 & 0 & t }
		~,\qquad
		\mtx{ t & 0 & 0 \\ 0 & t & 1 \\ 0 & 0 & t }
		~,\qquad
		\mtx{ t & 1 & 0 \\ 0 & t & 1 \\ 0 & 0 & t }.
	\end{eqn}
	Notice that the matrix with $1$ in the $(1,3)$ entry is not a Jordan matrix, but it is a permutation of a Jordan matrix with two Jordan blocks of size $1$ and $2$.
	Again, the map $\diag$ restricts to give a bijection $\sfop{Jt}_W^{4j} \cong T_4$.
	Consequently, we obtain a description of $\sfop{Jt}_W$ as a stratified algebraic variety with the largest stratum $\sfop{Jt}_W^0$ of dimension $3$ isomorphic to $T_0$, and six $2$-dimensional strata $\sfop{Jt}_W^{ij}$ pairwise isomorphic to $T_1, T_2, T_3$, and five $1$-dimensional strata each isomorphic to $T_4$:
	\begin{eqn}
		\sfop{Jt}_W 
		= \sfop{Jt}_W^0 ~\coprod~ \Coprod_{\substack{i=1,2,3\\j=0,1}} \sfop{Jt}_W^{ij}
		~\coprod~ \Coprod_{\substack{j=0,\ldots,4}} \sfop{Jt}_W^{4j}.
		\tag*{\qedhere}
	\end{eqn}
\end{example}

Consequently, we can form the fibre product
\begin{eqn}
	\tilde{G} 
	\coleq G \tinyunderset{\times}{\op{Jt}} J_W
	= \set{ (\M, \Z) \in G \times J_W ~\Big|~ \op{Jt} (\M) = \op{Jt} (\Z) }
\end{eqn}
and conclude that the first projection $\tilde{G} \to G$ is a finite ramified covering of degree $n!$.

\subsection{$B$-Conjugacy Classes in $B$}

The of $G$-conjugacy classes in $G$ is in one-to-one correspondence with the set of Jordan types: $G / G \cong \sfop{Jt}$.
Similarly, we have the following result about conjugacy classes within the Borel subgroup $B \subset G$ of upper-triangular matrices.

\begin{proposition}
	\label{251122131732}
	The set $B / B$ of $B$-conjugacy classes in $B$ is in one-to-one correspondence with the set of all shuffled Jordan types $\sfop{Jt}_W$ in $G$.
	Similarly, the sets $B / U$ and $B / U$ of $U^\times$- and $U^\times$-conjugacy classes in $B$ are in one-to-one correspondence with the sets $\sfop{Jt}_N$ and $\sfop{Jt}_{N_0}$, respectively.
	\begin{eqn}
		B / B \cong \sfop{Jt}_W
		~,\qquad
		B / U \cong \sfop{Jt}_N
		~,\qquad
		B / U^\times \cong \sfop{Jt}_{N_0}.
	\end{eqn}
\end{proposition}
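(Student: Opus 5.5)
The plan is to translate $B$-conjugacy classes in $B$ into the moduli problem of \autoref{251018183337} and \autoref{251017181852}. Consider pairs $(\M, F)$ where $\M \in G$ and $F$ is an $\M$-invariant complete flag in $\CC^n$. The group $G$ acts on such pairs by $g.(\M,F) = (g\M g^{-1}, gF)$. Since $G$ acts transitively on complete flags with stabiliser $B$ at the standard flag $\CC^\bullet$, every orbit meets the slice $F = \CC^\bullet$. On that slice the invariance condition is exactly $\M \in B$, and the residual group is $B$ acting by conjugation. This gives a canonical bijection
\begin{eqn}
	B/B \cong \set{ (\M,F) : \M(F_i) \subset F_i } \big/ G.
\end{eqn}

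Next I fibre the right-hand side over $G/G \cong \sfop{Jt}$. Fixing a representative $\M = \J$ of a Jordan type, the fibre is $\sf{Fi}_\J(\CC^n)$ modulo the centraliser $Z_G(\J)$. This is precisely the moduli set $\sf{Fi}_\varphi(E)/{\sim}$, so by \autoref{251017181852} it is in bijection with the shuffled Jordan types of $\J$.

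To land in $\sfop{Jt}_W$ concretely, I map a shuffle $\tau \in W(\J)$ to the upper-triangular matrix $\tau\J\tau^{-1}$. The inverse direction uses the construction in the proof of \autoref{251017181852}. Given $b \in B$, I build a Jordan basis of $b$ adapted to $\CC^\bullet$, ordered according to a shuffle. The change of basis $g$ from that basis to the standard one is then upper triangular, so $g \in B$. It conjugates $b$ to $\tau\J\tau^{-1}$, where $\J$ is the Jordan form in the unshuffled order.

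Well-definedness has two parts. First, equivalent shuffles should give the same matrix: swapping equal-length rows of the same eigenvalue in both $\J$ and $\tau$ leaves $\tau\J\tau^{-1}$ unchanged, which is a check on the pair $(\J,\tau)$ modulo $\sim$. Second, distinct elements of $\sfop{Jt}_W$ should not be $B$-conjugate. Here $B$-conjugate shuffled Jordan matrices share a Jordan type, and they define $\CC^\bullet$ in configurations related by a $\varphi$-equivariant automorphism, so by \autoref{251017181852} they carry the same shuffled Jordan type. The remaining point is that the shuffled Jordan type determines the matrix $\tau\J\tau^{-1}$ uniquely, up to the ordering of eigenvalues, which is itself encoded in the shuffle.

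For $U$ and $U^\times$ I repeat the slice argument with framings instead of flags. A $U$-orbit in $B$ corresponds to $B$-orbit data together with the diagonal torus part. The torus $T = B/U$ acts on shuffled Jordan matrices by rescaling the superdiagonal $1$'s, so the normal forms become $\omega\J\omega^{-1}$ with $\omega$ a generalised permutation matrix, which yields $\sfop{Jt}_N$. For $U^\times$, quotienting by scalars reduces this to $N_0$.

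The main obstacle is making this torus bookkeeping precise. One must verify that the $T$-stabiliser of $\tau\J\tau^{-1}$ matches the diagonal freedom in $N(\J)$ exactly. The nonzero off-diagonal entries of a shuffled Jordan matrix sit in non-adjacent positions, as in the $(1,3)$ example, so I would check explicitly which diagonal rescalings preserve them.
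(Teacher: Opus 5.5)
The slice reduction is correct: $B/B\cong\{(\M,F)\}/G$, and the fibre over the class of a Jordan matrix $\J$ is $\mathsf{Fi}_{\J}(\CC^n)/Z_G(\J)$. The gap is in the inverse map, where you claim that every $b\in B$ has a Jordan basis adapted to $\CC^\bullet$.

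This is exactly what surjectivity of $\tau\J\tau^{-1}\mapsto[\tau\J\tau^{-1}]$ requires. Indeed, if $gbg^{-1}=\tau\J\tau^{-1}$ with $g\in B$, then the columns of $g^{-1}$ form such an adapted basis, and conversely.

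The claim is false. Take $n=4$ and $b=\idd+x$ with $xe_1=0$, $xe_2=e_1$, $xe_3=0$, $xe_4=e_2+e_3$. In matrix units, $x=E_{12}+E_{24}+E_{34}$, and $b$ is unipotent of Jordan type $(3,1)$.
\begin{itemize}
\item In an adapted basis, the second vector $v_2\in\langle e_1,e_2\rangle\smallsetminus\langle e_1\rangle$ satisfies $xv_2\neq 0=x^2v_2$.
\item So $v_2$ would have to be the middle vector of the length-$3$ chain, and hence lie in $\operatorname{im}x=\langle e_1,e_2+e_3\rangle$.
\item But $\operatorname{im}x$ meets $\langle e_1,e_2\rangle$ only in $\langle e_1\rangle$, a contradiction.
\end{itemize}
Hence the $B$-class of $b$ contains none of the four unipotent shuffled Jordan matrices of this type, namely $\idd$ plus $E_{12}+E_{23}$, $E_{12}+E_{24}$, $E_{13}+E_{34}$ or $E_{23}+E_{34}$. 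Correspondingly, $Z_G(x)$ has five orbits on $\mathsf{Fi}_x(\CC^4)$, not four.

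The same example shows that \autoref{251017181852}, on which your argument rests, is itself wrong. The paper gives no proof of the present proposition. The faulty step in the proof of \autoref{251017181852} is the claim that the greedily generated Jordan chains are linearly independent. Here the chain of $e_4$ is $(e_1,e_2+e_3,e_4)$, occupying levels $1,3,4$. Every vector of $F_2\smallsetminus F_1$ then generates a chain that ends again in $\langle e_1\rangle$.

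So the proposition, read as the normal-form statement you are proving, is false already for $n=4$. For $n\geq 6$ even finiteness over a fixed Jordan type fails: by a theorem of Kashin, $B$ has infinitely many conjugation orbits on strictly upper-triangular $n\times n$ matrices.

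The $U$- and $U^\times$-statements fail on the same $b$. For $\omega=\tau t\in N$, the matrix $\omega\J\omega^{-1}$ is conjugate by the diagonal matrix $\tau t\tau^{-1}\in B$ to $\tau\J\tau^{-1}$, which is upper triangular whenever $\omega\J\omega^{-1}$ is. So the torus bookkeeping you single out is not the real obstruction.

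What your argument does establish is weaker: $\sfop{Jt}_W$ maps onto the proper subset of $B/B$ consisting of classes of elements that admit a Jordan basis adapted to $\CC^\bullet$.
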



\printbibliography[title=References,heading=bibintoc]


\bigskip

\makeatletter
\newcommand\footnoteref[1]{\protected@xdef\@thefnmark{\ref{#1}}\@footnotemark}
\setcounter{footnote}{0}
\makeatother

\textsc{Benedetta Facciotti}\\
\small Universitat Polit\'ecnica de Catalunya%
\footnote{EPSEB, Avinguda del Dr Maranon 46-50,  08028 Barcelona, Spain.  benedetta.facciotti@upc.edu}

\smallskip

\textsc{Marta Mazzocco}\\
\small ICREA and Universitat Polit\'ecnica de Catalunya%
\footnote{EPSEB, Avinguda del Dr Maranon 46-50,  08028 Barcelona, Spain. marta.mazzocco@upc.edu}

\smallskip

\textsc{Nikita Nikolaev}\\
\small School of Mathematics, University of Birmingham%
\footnote{Watson Building, Edgbaston, Birmingham, B15 2TT, United Kingdom. n.nikolaev@bham.ac.uk}

\end{document}